\numberwithin{equation}{section}
\let\lam=\lambda
\let\om=\omega
\let\Lam=\Lambda
\let\Om=\Omega
\let\th=T
\let\pa=\partial
\def\dive{\mathop{\rm div}\nolimits}
\newcommand{\beq}{\begin{equation}}
\newcommand{\eeq}{\end{equation}}
\newcommand{\ben}{\begin{eqnarray}}
\newcommand{\een}{\end{eqnarray}}
\newcommand{\beno}{\begin{eqnarray*}}
\newcommand{\eeno}{\end{eqnarray*}}
\newtheorem{theorem}{Theorem}[section]
\newtheorem{definition}[theorem]{Definition}
\newtheorem{lemma}[theorem]{Lemma}
\newtheorem{proposition}[theorem]{Proposition}
\newtheorem{remark}[theorem]{Remark}
\begin{document}

\title[contact line]{Local well-posedness of the contact line problem in 2-D Stokes flow}

\author{Yunrui Zheng}
\address{Beijing International Center for Mathematical Research, Peking University, Beijing, 100871, P. R. China}
\email{Y. Zheng: ruixue@mail.ustc.edu.cn}
\author{Ian Tice}
\address{Department of Mathematical Sciences, Carnegie Mellon University, Pittsburgh, PA 15213, USA}
\email{I. Tice: iantice@andrew.cmu.edu}
\footnote{I. Tice was supported by a grant from the Simons Foundation (401468).}

\begin{abstract}
  We consider the evolution of contact lines for viscous fluids in a two-dimensional open-top vessel. The domain is bounded above by a free moving boundary and otherwise by the solid wall of a vessel. The dynamics of the fluid are governed by the incompressible Stokes equations under the influence of gravity, and the interface between fluid and air is under the effect of capillary forces.  Here we develop a local well-posedness theory of the problem in the framework of nonlinear energy methods.  We utilize several techniques, including: energy estimates of a geometric formulation of the Stokes equations, a Galerkin method with a time-dependent basis for an $\epsilon$--perturbed linear Stokes problem in moving domains, the contraction mapping principle for the $\epsilon$--perturbed nonlinear full contact line problem, and a continuity argument for uniform energy estimates.
\end{abstract}

%

\maketitle

\section{Introduction}

\subsection{Formulation of the problem in Eulerian coordinates}

We consider a $2$--D open top vessel as a bounded, connected open set $\mathcal{V}\subseteq\mathbb{R}^2$ which consists of two ``almost'' disjoint sections, i.e., $\mathcal{V}=\mathcal{V}_{top}\cup\mathcal{V}_{bot}$. The word ``almost'' means $\mathcal{V}_{top}\cap\mathcal{V}_{bot}$ is a set of measure $0$ in $\mathbb{R}^2$. We assume that the ``top'' part $\mathcal{V}_{top}$ consists of a rectangular channel defined by
\[
\mathcal{V}_{top}=\mathcal{V}\cap\mathbb{R}^2_{+}=\{y\in\mathbb{R}^2: -\ell<y_1<\ell, 0\le y_2<L\}
\]
for some $\ell$, $L>0$, where $\mathbb{R}^2_{+}$ is the half plane $\mathbb{R}^2_{+}=\{y\in\mathbb{R}^2: y_2\ge0\}$. Similarly, we write the ``bottom'' part as
\[
\mathcal{V}_{bot}=\mathcal{V}\cap\mathbb{R}^2_{-}=\mathcal{V}\cap\{y\in\mathbb{R}^2: y_2\le0\}.
\]
In addition, we also assume that the boundary $\partial\mathcal{V}$ of $\mathcal{V}$ is $C^2$ away from the points $(\pm\ell, L)$.

Now we consider a viscous incompressible fluid filling the $\mathcal{V}_{bot}$ entirely and $\mathcal{V}_{top}$ partially. More precisely, we assume that the fluid occupies the domain $\Om(t)$ with an upper free surface,
\[
\Om(t)=\mathcal{V}_{bot}\cup\{y\in\mathbb{R}^2: -\ell<y_1<\ell,\ 0<y_2<\zeta(y_1,t)\},
\]
where the free surface $\zeta(y_1,t)$ is assumed to be a graph of the function $\zeta: [-\ell, \ell]\times \mathbb{R}_{+}\rightarrow\mathbb{R}$ satisfying $0<\zeta(\pm\ell,t)\le L$ for all $t\in\mathbb{R}_+$, which means the fluid does not spill out of the top domain. For simplicity, we write the free surface as $\Sigma(t)=\{y_2=\zeta(y_1,t)\}$ and the interface between fluid and solid as $\Sigma_s(t)=\partial\Om(t)\setminus\Sigma(t)$.

For each $t\ge0$, the fluid is described by its velocity and pressure $(u,P):\Om(t)\rightarrow\mathbb{R}^2\times\mathbb{R}$, the dynamics of which are governed by the incompressible Stokes equations for $t>0$ :
\begin{equation}\label{eq:stokes}
  \left\{
  \begin{aligned}
    &\dive S(P,u)=\nabla P-\mu\Delta u=0 \quad & \text{in} &\ \Om(t),\\
    &\dive u=0 \quad & \text{in} &\ \Om(t),\\
    &S(P,u)\nu=g\zeta\nu-\sigma \mathcal{H}(\zeta)\nu \quad & \text{on} &\ \Sigma(t),\\
    &(S(P,u)\nu-\beta u)\cdot\tau=0 \quad & \text{on} &\ \Sigma_s(t),\\
    &u\cdot\nu=0 \quad & \text{on} &\ \Sigma_s(t),\\
    &\pa_t\zeta=u\cdot\nu=u_2-u_1\pa_1\zeta \quad & \text{on} &\ \Sigma(t),\\
    &\pa_t\zeta(\pm\ell,t)=\mathscr{V}\left([\![\gamma]\!]\mp\sigma\frac{\pa_1\zeta}{(1+|\pa_1\zeta|^2)^{1/2}}(\pm\ell,t)\right).
  \end{aligned}
  \right.
\end{equation}
with the initial data $\zeta(y_1,t=0)=\zeta(0)$, $\pa_t\zeta(y_1,t=0)=\pa_t\zeta(0)$ and $\pa_t^2\zeta(y_1,t=0)=\pa_t^2\zeta(0)$.

In the above system \eqref{eq:stokes}, $S(p,u)$ is the viscous stress tensor determined by
\[
S(P,u)=PI-\mu\mathbb{D}u,
\]
where $I$ is the $2\times2$ identity matrix, $\mu>0$ is the coefficient of viscosity, $\mathbb{D}u=\nabla u+\nabla^\top u$ is the symmetric gradient of $u$ for $\nabla^\top u$ the transpose of the matrix $\nabla u$, $P$ is the difference between the full pressure and the hydrostatic pressure. $\nu$ is the outward unit normal. $\tau$ is the unit tangent. $\sigma>0$ is the coefficient of surface tension, and
\[
\mathcal{H}(\zeta)=\pa_1\left(\frac{\pa_1\zeta}{(1+|\pa_1\zeta|^2)^{1/2}}\right)
\]
is the twice of mean curvature of the free surface. $\beta>0$ is the Navier slip friction coefficient on the vessel walls. The function  $\mathscr{V}:\mathbb{R}\rightarrow\mathbb{R}$ is the contact point velocity response function which is a $C^2$ increasing diffeomorphism satisfying $\mathscr{V}(0)=0$. $[\![\gamma]\!]:=\gamma_{sv}-\gamma_{sf}$ for $\gamma_{sv}$, $\gamma_{sf}\in\mathbb{R}$, where $\gamma_{sv}$, $\gamma_{sf}$ are a measure of the free-energy per unit length with respect to the solid-vapor and solid-fluid intersection. In addition, we assume that the Young relation \cite{Yo} holds
\beq
\frac{|[\![\gamma]\!]|}{\sigma}<1,
\eeq
which is necessary for the existence of equilibrium state. For convenience, we introduce the inverse function $\mathscr{W} = \mathscr{V}^{-1}$ and rewrite the final equation in \eqref{eq:stokes} as
\beq
\mathscr{W}(\pa_t\zeta(\pm\ell,t))=[\![\gamma]\!]\mp\sigma\frac{\pa_1\zeta}{(1+|\pa_1\zeta|^2)^{1/2}}(\pm\ell,t).
\eeq

\subsection{A steady equilibrium state}

A steady-state equilibrium solution to \eqref{eq:stokes} corresponds to  $u=0$, $P(y,t)=P_0(y)$, and $\zeta(y_1,t)=\zeta_0(y)$.  These satisfy
\begin{equation}\label{eq:equibrium}
  \left\{
  \begin{aligned}
    &\nabla P_0=0 \quad & \text{in} &\ \Om(0),\\
    &P_0=g\zeta_0-\sigma\mathcal{H}(\zeta_0),\quad & \text{on} &\ (-\ell,\ell),\\
    &\sigma\frac{\pa_1\zeta_0}{\sqrt{1+|\pa_1\zeta_0|^2}}(\pm\ell)=\pm[\![\gamma]\!].
  \end{aligned}
  \right.
\end{equation}
It is well-known (see for instance the discussion in the introduction of \cite{GT2}) that there exists a smooth solution $\zeta_0 : [-\ell,\ell] \to (0,L)$.

\subsection{Geometric reformulation}
Let $\zeta_0\in C^\infty[-\ell,\ell]$ be the equilibrium surface given by \eqref{eq:equibrium}. We then define the equilibrium domain $\Om\subset\mathbb{R}^2$ by
\[
\Om:=\mathcal{V}_{b}\cup\{x\in\mathbb{R}^2|-\ell<x_1<\ell, 0<x_2<\zeta_0(x_1)\}.
\]
The boundary $\pa\Om$ of the equilibrium $\Om$ is defined by
\[
\pa\Om:=\Sigma\sqcup\Sigma_s,
\]
where
\[
\Sigma:=\{x\in\mathbb{R}^2|-\ell<x_1<\ell, x_2=\zeta_0(x_1)\}, \quad \Sigma_s=\pa\Om\setminus \Sigma.
\]
Here $\Sigma$ is the equilibrium free surface. The corner angle $\om\in(0,\pi)$ of $\Om$ is the contact angle formed by the  fluid and solid.
 We will view the function $\zeta(y_1,t)$ of the free surface as the perturbation of $\zeta_0(y_1)$:
\beq\label{eq:perturbation surface}
\zeta(y_1,t)=\zeta_0(y_1)+\eta(y_1,t).
\eeq

Let $\phi\in C^\infty(\mathbb{R})$ be such that $\phi(z)=0$ for $z\le\frac14\min\zeta_0$ and $\phi(z)=z$ for $z\ge\frac12\min\zeta_0$.
Now we define the mapping $\Phi: \Om\mapsto\Om(t)$, by
\beq\label{def:map}
\Phi(x_1,x_2,t)=\left(x_1, x_2+\frac{\phi(x_2)}{\zeta_0(x_1)}\bar{\eta}(x_1,x_2,t)\right)=(y_1,y_2)\in\Om(t),
\eeq
with $\bar{\eta}$ is defined by
\beq
\bar{\eta}(x_1,x_2,t):=\mathcal{P}E\eta(x_1,x_2-\zeta_0(x_1),t),
\eeq
where $E: H^s(-\ell,\ell)\mapsto H^s(\mathbb{R})$ is a bounded extension operator for all $0\le s\le 3$ and $\mathcal{P}$ is the lower Poisson extension given by
\[
\mathcal{P}f(x_1,x_2)=\int_{\mathbb{R}}\hat{f}(\xi)e^{2\pi|\xi| x_2}e^{2\pi ix_1\xi}\,\mathrm{d}\xi.
\]
If $\eta$ is sufficiently small (in appropriate Sobolev spaces), the mapping $\Phi$ is a $C^1$ diffeomorphism of $\Om$ onto $\Om(t)$ that maps the components of $\pa\Om$ to the corresponding components of $\pa\Om(t)$.

We have the Jacobian matrix $\nabla\Phi$ and the transform matrix $\mathcal{A}$ of $\Phi$
\beq\label{eq:Jaccobian}
\nabla\Phi=\left(
\begin{array}{cc}
  1&0\\
  A&J
\end{array}
\right), \quad
\mathcal{A}=(\nabla\Phi)^{-\top}=\left(
\begin{array}{cc}
  1&-AK\\
  0&K
\end{array}
\right),
\eeq
for
\beq\label{eq:components}
A=\frac{\phi}{\zeta_0}\pa_1\bar{\eta}-\frac{\phi}{\zeta_0^2}\pa_1\zeta_0\bar{\eta},\quad J=1+\frac{\phi^\prime}{\zeta_0}\bar{\eta}+\frac{\phi}{\zeta_0}\pa_2\bar{\eta},\quad K=\frac{1}{J}.
\eeq

We define the transformed differential operators as follows.
\[
(\nabla_{\mathcal{A}}f)_i:=\mathcal{A}_{ij}\pa_jf,\quad\dive_{\mathcal{A}}X:=\mathcal{A}_{ij}\pa_j X_i,\quad \Delta_{\mathcal{A}}f:=\dive_{\mathcal{A}}\nabla_{\mathcal{A}}f,
\]
for appropriate $f$ and $X$. We write the stress tensor
\[
S_{\mathcal{A}}(P,u)=PI-\mu\mathbb{D}_{\mathcal{A}}u
\]
where $I$ the $2\times2$ identity matrix and $(\mathbb{D}_{\mathcal{A}}u)_{ij}=\mathcal{A}_{ik}\pa_ku_j+\mathcal{A}_{jk}\pa_ku_i$ the symmetric $\mathcal{A}$--gradient. Note that if we extend $\dive_{\mathcal{A}}$ to act on symmetric tensors in the natural way, then $\dive_{\mathcal{A}}S_{\mathcal{A}}(P,u)=-\mu\Delta_{\mathcal{A}}u+\nabla_{\mathcal{A}}P$ for vectors fields satisfying $\dive_{\mathcal{A}}u=0$.

We assume that $\Phi$ is a diffeomorphism. Then we can transform the problem \eqref{eq:stokes} to the equilibrium domain $\Om$ for $t\ge0$. In the new coordinates, \eqref{eq:stokes} becomes the $\mathcal{A}$--Stokes problem
\begin{equation}\label{eq:geometric}
  \left\{
  \begin{aligned}
    &\dive_{\mathcal{A}}S_{\mathcal{A}}(P,u)=-\mu\Delta_{\mathcal{A}}u+\nabla_{\mathcal{A}}P=0,\quad &\text{in}&\quad \Om,\\
    &\dive_{\mathcal{A}}u=0,\quad &\text{in}&\quad \Om,\\
    &S_{\mathcal{A}}(P,u)\mathcal{N}=g\zeta\mathcal{N}-\sigma\mathcal{H}(\zeta)\mathcal{N},\quad &\text{on}&\quad\Sigma,\\
    &(S_{\mathcal{A}}(P,u)\nu-\beta u)\cdot\tau=0,\quad &\text{on}&\quad\Sigma_s,\\
    &u\cdot\nu=0,\quad &\text{on}&\quad\Sigma_s,\\
    &\pa_t\zeta=u\cdot\mathcal{N},\quad &\text{on}&\quad\Sigma,\\
    &\mathscr{W}(\pa_t\zeta(\pm\ell,t))=[\![\gamma]\!]\mp\sigma\frac{\pa_1\zeta}{\sqrt{1+|\zeta|^2}}(\pm\ell,t),\\
    &\zeta(x_1,0)=\zeta_0(x_1)+\eta_0(x_1),\quad\pa_t\zeta(x_1,0)=\pa_t\eta(x_1,0),\quad\pa_t^2\zeta(x_1,0)=\pa_t^2\eta(x_1,0).
  \end{aligned}
  \right.
\end{equation}
Here we have still written $\mathcal{N}:=-\pa_1\zeta e_1+e_2$ for the normal to $\Sigma(t)$.

Since all terms in \eqref{eq:geometric} are in terms of $\eta$, \eqref{eq:geometric} is connected to the geometry of the free surface. This geometric structure is essential to control higher-order derivatives.

\subsection {Perturbation}
We will construct the solution to \eqref{eq:geometric} as a perturbation around the equilibrium state $(0, P_0, \zeta_0)$.  To this end we define new perturbed unknowns $(u, p, \eta)$ so that $u=0+u$, $P=P_0+p$, and $\zeta=\zeta_0+\eta$. Then we will reformulate \eqref{eq:geometric} in terms of the new unknowns.

First, we rewrite the terms of mean curvature on the equilibrium free surface. By a Taylor expansion in $z$,
\beq
\frac{y+z}{\sqrt{1+|y+z|^2}}=\frac{y}{\sqrt{1+|y|^2}}+\frac{z}{({1+|y|^2)^{3/2}}}+\mathcal{R}(y,z).
\eeq
Combining with the assumption \eqref{eq:perturbation surface}, we then know that
\beq
\frac{\pa_1\zeta}{\sqrt{1+|\pa_1\zeta|^2}}=\frac{\pa_1\zeta_0}{\sqrt{1+|\pa_1\zeta_0|^2}}+\frac{\pa_1\eta}{(1+|\pa_1\zeta_0|^2)^{3/2}}+\mathcal{R}(\pa_1\zeta_0,\pa_1\eta).
\eeq
where the remainder term $\mathcal{R}\in C^\infty(\mathbb{R}^2)$ is given by
\beq\label{def:R}
\mathcal{R}(y,z)=\int_0^z3\frac{(s-z)(s+y)}{(1+(y+s)^2)^{5/2}}\,\mathrm{d}s.
\eeq
Thus
\begin{equation}\label{eq:Taylor_1}
\begin{aligned}
g\zeta-\sigma\mathcal{H}(\zeta)=(g\zeta_0-\sigma\mathcal{H}(\zeta_0))+g\eta-\sigma\pa_1\left(\frac{\pa_1\eta}{(1+|\pa_1\zeta_0|)^{3/2}}\right)-\sigma\pa_1(\mathcal{R}(\pa_1\zeta_0,\pa_1\eta))\\
=p_0+g\eta-\sigma\pa_1\left(\frac{\pa_1\eta}{(1+|\pa_1\zeta_0|)^{3/2}}\right)-\sigma\pa_1(\mathcal{R}(\pa_1\zeta_0,\pa_1\eta)),
\end{aligned}
\end{equation}
and
\begin{equation}\label{eq:Taylor_2}
  \begin{aligned}
    &[\![\gamma]\!] \mp\sigma\frac{\pa_1\zeta}{\sqrt{1+|\pa_1\zeta|^2}}(\pm\ell,t)=[\![\gamma]\!]\mp\sigma\frac{\pa_1\zeta_0}{\sqrt{1+|\pa_1\zeta_0|^2}}(\pm\ell,t)\mp\sigma\frac{\pa_1\eta}{(1+|\zeta_0|^2)^{3/2}}\\
    &\quad\mp\mathcal{R}(\pa_1\zeta_0,\pa_1\eta)(\pm\ell,t)=\mp\sigma\frac{\pa_1\eta}{(1+|\pa_1\zeta_0|^2)^{3/2}}(\pm\ell,t)\mp\mathcal{R}(\pa_1\zeta_0,\pa_1\eta)(\pm\ell,t).
  \end{aligned}
\end{equation}
Next we rewrite the terms related to the stress tensor in \eqref{eq:geometric}. Clearly,
\beq\label{eq:stress_new}
\begin{aligned}
\dive_{\mathcal{A}}S_{\mathcal{A}}(P,u)=\dive_{\mathcal{A}}S_{\mathcal{A}}(p,u),\ \text{in}\ \Om,\\
S_{\mathcal{A}}(P,u)\mathcal{N}=S_{\mathcal{A}}(p,u)\mathcal{N}+P_0\mathcal{N},\ \text{on}\ \Sigma,\\
S_{\mathcal{A}}(P,u)\nu\cdot\tau=S_{\mathcal{A}}(p,u)\nu\cdot\tau,\ \text{on}\ \Sigma_s.
\end{aligned}
\eeq
Finally, we rewrite the inverse $\mathscr{W}\in C^2(\mathbb{R})$ of the contact point response function. Since $\mathscr{W}(0)=0$, we expand $\mathscr{W}$ as
\beq
\mathscr{W}(z)=\mathscr{W}'(0)z+\tilde{\mathscr{W}}(z).
\eeq
Then we write $\kappa=\mathscr{W}'(0)>0$, since $\mathscr{W}$ is increasing. For convenience, we write
\beq
\hat{\mathscr{W}}(z)=\frac1\kappa\tilde{\mathscr{W}}(z)=\frac1\kappa\mathscr{W}(z)-z.
\eeq

Thus, combining \eqref{eq:equibrium}, \eqref{eq:Taylor_1}--\eqref{eq:stress_new}, we arrive at the following perturbative form of Stokes equations
\begin{equation}\label{eq:geometric perturbation}
  \left\{
  \begin{aligned}
    &\dive_{\mathcal{A}}S_{\mathcal{A}}(p,u)=-\mu\Delta_{\mathcal{A}}u+\nabla_{\mathcal{A}}p=0,\quad &\text{in}&\quad \Om,\\
    &\dive_{\mathcal{A}}u=0,\quad &\text{in}&\quad \Om,\\
    &S_{\mathcal{A}}(p,u)\mathcal{N}=g\eta\mathcal{N}-\sigma\pa_1\left(\frac{\pa_1\eta}{(1+|\pa_1\zeta_0|)^{3/2}}\right)\mathcal{N}-\sigma\pa_1(\mathcal{R}(\pa_1\zeta_0,\pa_1\eta))\mathcal{N},\quad &\text{on}&\quad\Sigma,\\
    &(S_{\mathcal{A}}(p,u)\nu-\beta u)\cdot\tau=0,\quad &\text{on}&\quad\Sigma_s,\\
    &u\cdot\nu=0,\quad &\text{on}&\quad\Sigma_s,\\
    &\pa_t\eta=u\cdot\mathcal{N},\quad &\text{on}&\quad\Sigma,\\
    &\kappa\pa_t\eta(\pm\ell,t)+\kappa\hat{\mathscr{W}}(\pa_t\eta(\pm\ell,t))=\mp\sigma\left(\frac{\pa_1\eta}{(1+|\zeta_0|^2)^{3/2}}+\mathcal{R}(\pa_1\zeta_0,\pa_1\eta)\right)(\pm\ell,t).
  \end{aligned}
  \right.
\end{equation}
with the initial data $\eta(x_1,0)=\eta_0(x_1)$, $\pa_t\eta(x_1,0)$ and $\pa_t^2\eta(x_1,0)$.
Here $\mathcal{A}$ and $\mathcal{N}$ are still determined in terms of $\zeta=\zeta_0+\eta$. In the following, we write $\mathcal{N}_0$ be the non-unit normal for the equilibrium surface $\Sigma$, and $\mathcal{N}=\mathcal{N}_0-\pa_1\eta e_1$.

\subsection{Main theorem}
In order to state our result, we need to explain our notation for Sobolev spaces and norms. We take $H^k(\Om)$ and $H^k(\Sigma)$ for $k\ge0$ to be the usual Sobolev spaces, and take $W_\delta^k(\Om)$ and $W_\delta^k(\Sigma)$ for $k\ge0$ and $\delta \in (0,1)$ to be the weighted Sobolev spaces defined in \eqref{def:weighted_sobolev}. We write norms $\|\pa_t^ju\|_k$ and $\|\pa_t^jp\|_k$ in the space $H^k(\Om)$, and $\|\pa_t^j\eta\|_k$ in space $H^k(\Sigma)$.

Now, we define the energy and dissipation used in this paper. The energy is
\beq
\mathcal{E}(t)=\|u\|_{W_\delta^2}^2+\|\pa_tu\|_1^2+\|p\|_{\mathring{W}_\delta^1}^2+\|\pa_tp\|_{\mathring{H}^0}^2+\|\eta\|_{W_\delta^{5/2}}^2+\|\pa_t\eta\|_{3/2}^2+\sum_{j=0}^2\|\pa_t^j\eta\|_{\mathring{H}^1}^2,
\eeq
and the dissipation is
\beq
\begin{aligned}
\mathcal{D}(t)&=\sum_{j=0}^1\left(\|\pa_t^ju\|_{W_\delta^2}^2+\|\pa_t^jp\|_{\mathring{W}_\delta^1}^2+\|\pa_t^j\eta\|_{W_\delta^{5/2}}^2\right)+\sum_{j=0}^2\left(\|\pa_t^ju\|_1^2+\|\pa_t^ju\|_{H^0(\Sigma_s)}^2+[\pa_t^ju\cdot\mathcal{N}]_\ell^2\right)\\
&\quad+\sum_{j=0}^2\left(\|\pa_t^jp\|_0^2+\|\pa_t^j\eta\|_{3/2}^2\right)+\|\pa_t^3\eta\|_{W_\delta^{1/2}}^2,
\end{aligned}
\eeq
where $[f]_\ell^2$ is defined by \eqref{sum_point} and Remark \eqref{rem:norm}, $\mathring{H}^s((-\ell,\ell))$ is defined in \eqref{space_H_0} and $\mathring{W}_\delta^k(\Om)$ is defined in \eqref{space_W_0}.

With the notation established we may now state our main result.

\begin{theorem}\label{thm:main}
  Assume the initial data satisfy the inclusions $\eta_0\in\mathring{W}_\delta^{5/2}((-\ell,\ell))$, $\pa_t\eta(0)\in\mathring{H}^{3/2}((-\ell,\ell))$, and $\pa_t^2\eta(0)\in\mathring{H}^1((-\ell,\ell))$ and that they satisfy the compatibility condition described in Section 3. Then there exists $0<\alpha_0$, $T_0<1$, such that if
  \[
  \mathfrak{E}_0:=\|\eta_0\|_{W_\delta^{5/2}}^2+\|\pa_t\eta(0)\|_{3/2}^2+\sum_{j=0}^2\|\pa_t^j\eta(0)\|_1^2\le\alpha_0
  \]
   and $0<T<T_0$, then there exists a unique solution $(u,p,\eta)$ to \eqref{eq:geometric perturbation} on the interval $[0,T]$ that achieves the initial data and satisfies
  \beq
  \begin{aligned}
    \sup_{0\le t\le T}\mathcal{E}(t)+\int_0^T\mathcal{D}(t)\,\mathrm{d}t\le C\mathfrak{E}_0
  \end{aligned}
  \eeq
  for a universal constant $C>0$. Moreover, $\Phi$ defined by \eqref{def:map} is a $C^1$ diffeomorphism for each $t\in[0,T]$.
\end{theorem}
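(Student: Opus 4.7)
\medskip
\noindent\textbf{Proof proposal.} The plan is to construct $(u,p,\eta)$ as the $\epsilon\to 0^+$ limit of solutions to an $\epsilon$-regularized version of \eqref{eq:geometric perturbation}, exactly as advertised in the abstract. The regularization is chosen so that the free-surface equation $\pa_t\eta = u\cdot\mathcal N$ on $\Sigma$, augmented by the contact-line boundary condition at $\pm\ell$, becomes parabolic in $\eta$; for instance, one adds $-\epsilon\pa_1(\chi\pa_1\pa_t\eta)$ to the normal-stress jump on $\Sigma$ together with a compatible correction at the contact points, so that the $\epsilon$-system supports a standard Galerkin theory. The argument then splits into four stages: (i) linear theory for the $\mathcal A$-Stokes problem with prescribed moving geometry via Galerkin; (ii) a contraction mapping for the nonlinear $\epsilon$-problem on a small-energy ball; (iii) an $\epsilon$-independent continuity estimate allowing passage to the limit; (iv) verification that $\Phi$ remains a $C^1$ diffeomorphism.

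For the linear step, given a pair $(\tilde\eta,\tilde u)$ generating the coefficients $\mathcal A, J, \mathcal N$ via \eqref{def:map}--\eqref{eq:components}, I would replace the nonlinear terms $\sigma\pa_1\mathcal R(\pa_1\zeta_0,\pa_1\eta)$ and $\kappa\hat{\mathscr W}(\pa_t\eta(\pm\ell,t))$ by forcings $F,f_\pm$ built from $(\tilde\eta,\tilde u)$ and solve the resulting linear $\epsilon$-problem. Testing the momentum equation against $u$, using $\pa_t\eta=u\cdot\mathcal N$ on $\Sigma$ and the contact-point relation to absorb the boundary traces, and integrating the linearized curvature term by parts produces an energy--dissipation identity of schematic form
\begin{equation*}
\tfrac{d}{dt}\Bigl(\tfrac g2\|\eta\|_0^2+\tfrac\sigma2\bigl\|\tfrac{\pa_1\eta}{(1+|\pa_1\zeta_0|^2)^{3/4}}\bigr\|_0^2\Bigr) + \tfrac\mu2\|\mathbb D_{\mathcal A}u\|_0^2 + \beta\|u\|_{H^0(\Sigma_s)}^2 + \kappa[\pa_t\eta]_\ell^2 + \epsilon\|\pa_1\pa_t\eta\|_0^2 \le \text{(data)},
\end{equation*}
which suggests taking a countable basis of $\eta$-modes in $\mathring W_\delta^{5/2}((-\ell,\ell))$, lifting each mode into $\Om$ via an $\mathcal A$-divergence-free, slip-compatible construction, and building a time-dependent Galerkin scheme. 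The basis depends on $t$ because $\mathcal A$ does, so the reduced ODEs inherit bounded coefficients involving $\pa_t\mathcal A$ and $\pa_t J$; passing to the $n\to\infty$ limit with the above estimate and Aubin--Lions compactness, then differentiating in time and using elliptic regularity for the stationary $\mathcal A$-Stokes problem, yields the full regularity encoded in $\mathcal E$ and $\mathcal D$.

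For the nonlinear step I would define $\mathcal T_\epsilon:(\tilde u,\tilde p,\tilde\eta)\mapsto(u,p,\eta)$ on the ball $\{\sup_{[0,T]}\mathcal E+\int_0^T\mathcal D\le R\}$ by the linear theory above. The Taylor-remainder form of $\mathcal R$ in \eqref{def:R} and the $C^2$ character of $\hat{\mathscr W}$, together with the explicit formulas \eqref{eq:components} for $A,J,K$, give the Lipschitz bounds on the nonlinear forcings that the linear energy estimate converts into a contraction provided $\mathfrak E_0\le\alpha_0$ and $T\le T_0$ are small; small $T$ supplies the factor $T^{1/2}$ needed to beat the nonlinear norms, while small $\mathfrak E_0$ controls the geometry through $W_\delta^{5/2}\hookrightarrow C^1$ so that $J\ge 1/2$ and $\mathcal A$ stays in a bounded set. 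This produces a unique fixed point $(u^\epsilon,p^\epsilon,\eta^\epsilon)$. A Gronwall/continuity argument of the shape $\mathcal E(t)\le C\mathfrak E_0+C(\sqrt{\mathcal E}+\mathcal E)\int_0^t\mathcal D$ gives $\epsilon$-uniform bounds; the dissipative term $\epsilon\|\pa_1\pa_t\eta\|_0^2$ drops out in the limit because it is nonnegative, and weak-$\ast$ compactness plus lower semicontinuity recover the stated bound for the original problem. The diffeomorphism property is then immediate from the same $C^1$-threshold on $\bar\eta$.

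The main obstacle will be matching regularity at the corner points $(\pm\ell,L)$, where the curvature, contact-angle, Navier-slip, and kinematic conditions all meet, and where the weighted spaces $W_\delta^k$ are tailored to the Kondratiev-type corner singularities. Controlling commutators $[\pa_t,\mathcal A]$ and $[\pa_t,J]$ in weighted norms, handling the pointwise traces $[\pa_t^ju\cdot\mathcal N]_\ell$ in $\mathcal D$, and ensuring the $\pa_t^3\eta$ term lives in $W_\delta^{1/2}$ while the $\epsilon$-regularization is turned on and off, constitute the delicate technical core; the $\epsilon$-term is introduced precisely to legitimize a Galerkin discretization of a system that is otherwise only first-order in $t$ at the boundary, and the success of the whole argument rests on the compatibility conditions of Section 3 being strong enough to make the time-differentiated problems at $t=0$ well-posed in these weighted corner spaces.
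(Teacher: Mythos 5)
Your high-level strategy---$\epsilon$-regularize the curvature operator, develop linear theory with a time-dependent Galerkin scheme, run a contraction mapping on a small ball, obtain $\epsilon$-uniform bounds, and pass to the limit---is exactly the structure of the paper's proof, and your regularization $-\epsilon\pa_1(\chi\pa_1\pa_t\eta)$ with a matching contact-point correction is the same device the paper uses (the paper also perturbs the zeroth-order gravity term by $g\epsilon\pa_t\xi$, but that is inessential). However, the Galerkin construction as you have described it has a gap. You propose to take a countable basis of $\eta$-modes in $\mathring W_\delta^{5/2}((-\ell,\ell))$ and lift each mode into $\Om$ via an $\mathcal A$-divergence-free, slip-compatible extension; the span of such lifts is a proper subspace of $\mathcal V(t)$, because any $v\in\mathcal V(t)$ with $v\cdot\mathcal N=0$ on $\Sigma$ lies outside it. Testing the weak formulation only against this subspace does not recover the interior Stokes equation, since the compactly supported divergence-free fields needed to deduce $\dive_{\mathcal A}S_{\mathcal A}(p,u)=F^1$ pointwise are precisely the ones your basis misses. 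The paper avoids this by fixing a time-independent basis $\{w^j\}$ of $H^2(\Om)\cap V$ and conjugating by the operator $M(t)=(J\mathcal A^\top)^{-1}$ from \eqref{def:M}: $v^j(t)=M(t)w^j$ then forms a genuine basis of $H^2(\Om)\cap\mathcal V(t)$ (Proposition \ref{prop:isomorphism}) with the crucial algebraic identity $\pa_t v^j=R(t)v^j$ for $R=\pa_t M M^{-1}$, so time derivatives stay in the finite-dimensional span. The free-surface unknown $\xi$ is then recovered by integrating $\pa_t\xi=u\cdot\mathcal N$ in time, which converts the Galerkin ODEs into a Volterra integral equation---your idea of differentiating $\eta$-coefficients goes in the opposite direction and would not leave a closed finite-dimensional system after the coefficient matrix $\mathcal A(t)$ is time-differentiated.

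Two smaller remarks. First, the paper obtains the higher time-derivative estimates not by Aubin--Lions compactness but by testing the time-differentiated weak formulation against $\pa_t u^m - R u^m$ (so that $D_t u$, not $\pa_t u$, remains $\dive_{\mathcal A}$-free) and then using weak compactness together with lower semicontinuity; this is important because $D_t = \pa_t - R$ is the operator that commutes with the divergence-free constraint. Second, your continuity estimate $\mathcal E(t)\le C\mathfrak E_0+C(\sqrt{\mathcal E}+\mathcal E)\int_0^t\mathcal D$ is the right schematic shape, but the delicate point the paper addresses in Theorem \ref{thm:uniform_energy} is proving $\epsilon$-uniform control of $\|\pa_t^j\eta\|_{3/2}$, $j=0,1,2$, from the regularized quantities $\|\pa_t^j\eta+\epsilon\pa_t^{j+1}\eta\|_{3/2}$ by solving the ODE $\vartheta=\xi+\epsilon\pa_t\xi$ explicitly via the integrating factor $e^{-t/\epsilon}$ and then losing only factors of $T$, not of $1/\epsilon$; without this step the $\epsilon$-dependence would contaminate the existence time.
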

\begin{remark}
  Since $\Phi$ is a $C^1$ diffeomorphism, we can change coordinates from $\Om$ to $\Om(t)$ to gain solutions of \eqref{eq:stokes}.
\end{remark}

The techniques used for the proof of Theorem \eqref{thm:main} are developed throughout the rest of this paper. We will sketch the main ideas of the proof here.

\textbf{$\epsilon$-perturbed linear $\mathcal{A}$-Stokes}.  Our method is ultimately based on the following geometric formulation of linear Stokes equations and a fixed point argument.  We suppose that $\eta$ (and hence $\mathcal{A}$, $\mathcal{N}$, etc.) is given and then solve the linear $\mathcal{A}$--Stokes equations for $(u,p,\xi)$:
\beq\label{eq:linear_stokes}
\left\{
\begin{aligned}
  &\dive_{\mathcal{A}}S_{\mathcal{A}}(p,u)=F^1,\quad&\text{in}&\quad\Om,\\
  &\dive_{\mathcal{A}}u=0,\quad&\text{in}&\quad\Om,\\
  &S_{\mathcal{A}}(p,u)\mathcal{N}=\left(g\xi-\sigma\pa_1\left(\frac{\pa_1\xi}{(1+|\pa_1\zeta_0|^2)^{3/2}}\right)-\sigma\pa_1 F^3\right)\mathcal{N}+F^4,\quad&\text{on}&\quad\Sigma,\\
  &(S_{\mathcal{A}}(p,u)\nu-\beta u)\cdot\tau=F^5,\quad&\text{on}&\quad\Sigma_s,\\
  &u\cdot\nu=0,\quad&\text{on}&\quad\Sigma_s,\\
  &\pa_t\xi=u\cdot\mathcal{N},\quad&\text{on}&\quad (-\ell,\ell),\\
  &\mp\sigma\frac{\pa_1\xi}{(1+|\pa_1\zeta_0|^2)^{3/2}}(\pm\ell)=\kappa(u\cdot\mathcal{N})(\pm\ell)\pm\sigma F^3(\pm\ell)-\kappa\hat{\mathscr{W}}(\pa_t\eta(\pm\ell)),
\end{aligned}
\right.
\eeq
where $F^3=\mathcal{R}(\pa_1\zeta_0,\pa_1\eta)$.  The local existence theory we aim to develop is designed to produce solutions in the functional framework needed for the global analysis in \cite{GT2}.  Thus it is essential in the present paper that we develop solutions with some degree of regularity.  Unfortunately, in attempting to work directly with \eqref{eq:linear_stokes} in a higher-regularity fixed point argument we encounter serious difficulties with estimating a couple key terms.  For instance we need to estimate interaction terms of the form
\beq\label{eq:difficulty}
\int_{-\ell}^\ell\pa_z\mathcal{R}\pa_1\pa_t^2\eta\pa_1\pa_t^3\xi,
\eeq
but the regularity theory for \eqref{eq:linear_stokes} does not quite meet the demands of this term ($\pa_t^3\xi$ is only in $W_\delta^{1/2}$ due to the a priori estimate for $\pa_t^3\eta$).

Fortunately, it's possible to bypass this difficulty by making a small perturbation of the equations, which provides a crucial extra estimate.  We consider the following $\epsilon$--perturbed linear $\mathcal{A}$-Stokes instead of \eqref{eq:linear_stokes}:
\beq\label{eq:linear_perturbed_stokes}
\left\{
\begin{aligned}
  &\dive_{\mathcal{A}}S_{\mathcal{A}}(p,u)=F^1,\quad&\text{in}&\quad\Om,\\
  &\dive_{\mathcal{A}}u=0,\quad&\text{in}&\quad\Om,\\
  &S_{\mathcal{A}}(p,u)\mathcal{N}=\left(g(\xi+\epsilon\partial_t\xi)-\sigma\pa_1\left(\frac{\pa_1\xi+\epsilon\pa_1\partial_t\xi}{(1+|\pa_1\zeta_0|^2)^{3/2}}\right)-\sigma\pa_1 F^3\right)\mathcal{N}+F^4,\quad&\text{on}&\quad\Sigma,\\
  &(S_{\mathcal{A}}(p,u)\nu-\beta u)\cdot\tau=F^5,\quad&\text{on}&\quad\Sigma_s,\\
  &u\cdot\nu=0,\quad&\text{on}&\quad\Sigma_s,\\
  &\pa_t\xi=u\cdot\mathcal{N},\quad&\text{on}&\quad (-\ell,\ell),\\
  &\mp\sigma\frac{\pa_1\xi+\epsilon\pa_1\partial_t\xi}{(1+|\pa_1\zeta_0|^2)^{3/2}}(\pm\ell)=\kappa(u\cdot\mathcal{N})(\pm\ell)\pm\sigma F^3(\pm\ell)-\kappa\hat{\mathscr{W}}(\pa_t\eta(\pm\ell)),
\end{aligned}
\right.
\eeq
Using the mean curvature term shows then that $\pa_t^3\xi$ has the same regularity as $\pa_t^2\xi$, namely inclusion in $H^1$. This allows us to estimate the term \eqref{eq:difficulty} while retaining the same basic form of the energy-dissipation estimates that the problem \eqref{eq:linear_stokes} enjoys.  We thus base our analysis on this $\epsilon$-perturbed problem.


\textbf{Solving the $\epsilon-$problem}.  We construct solutions to \eqref{eq:linear_perturbed_stokes} by a Galerkin method.  The finite dimensional approximations must satisfy the condition $\dive_\mathcal{A} u=0$, which is a time-dependent condition since $\mathcal{A}$ varies in time.  This presents the technical difficulty of needed a time-dependent basis for the Galerkin scheme.   Fortunately, the analysis of Theorem 4.3 in \cite{GT1} provides exactly the needed basis.  In the Galerkin scheme we integrate the equation $\partial_t \xi = u \cdot \mathcal{N}$ in time in order to solve for $\xi$ in terms of $u$.  Upon plugging this into the equation we arrive at an integral equation for the finite dimensional approximations of $u$, which can be readily solved with standard techniques.  We then develop a collection of a priori estimates that allows us to pass to the limit in the approximations to produce a solution for which we know that the time derivatives exist.  This is the content of Theorem \ref{thm:linear_low}.  After this we show that the solutions enjoy certain needed regularity gains.  This is the content of Theorem \ref{thm:higher order}, which crucially exploits the $\epsilon-$perturbation.

\textbf{Contraction mapping}.  Proceeding from the linear problem, we seek to develop solutions to an $\epsilon-$approximation of the nonlinear problem, namely \eqref{eq:modified_geometric perturbation}.  We accomplish this via a contraction mapping argument on a complete metric space determined by the estimates available from Theorem \ref{thm:higher order}.  Here we encounter a number of challenges.  First the metric space must be tuned through the selection of a time-scale parameter and an energy smallness parameter to show that the linear solution map takes the metric space to itself.  Proving this requires a careful control of the structure of the estimates provided by Theorem \ref{thm:higher order}.  With the mapping in hand we must show that it is a contraction.  Unfortunately, we cannot use the natural high-regularity norms as the metric on the space, as we cannot show that we get a contraction at high regularity.  This forces us to endow the metric space with a lower regularity metric, but this does not cause much harm due to weak lower semicontinuity arguments.  Thus we can ultimately prove in Theorem \ref{thm: fixed point} that the solution map contracts and hence that there exist solutions to the nonlinear $\epsilon$--perturbed Stokes equation, at least for small time  $T_\epsilon>0$.

\textbf{Continuity method for uniform energy estimate}. In principle the temporal existence interval $T_\epsilon$ may tend to $0$ as $\epsilon \to 0$, so to send $\epsilon \to 0$ in a useful way we must show that this does not happen.  Due to arguments from Section 8 in \cite{GT2}, we can employ a continuity method to get  uniform bounds for the $\epsilon-$solutions.  This is accomplished in Theorem \ref{thm:uniform_energy}.  The estimates are then enough to extend the solutions to temporal existence intervals independent of $\epsilon$.  The bounds also provide us with enough control to send $\epsilon \to 0$ and recover solutions to the original problem \eqref{eq:geometric perturbation}, completing the proof of Theorem \ref{thm:main}.

\subsection{Notation and terminology}
Now, we mention some definitions, notation and conventions that we will use throughout this paper.

\begin{enumerate}[1.]
  \item Constants. The symbol $C>0$ will denote a universal constant that only depends on the parameters of the problem and $\Om$, but does not depend on the data, etc. They are allowed to change from line to line. We will write $C=C(z)$ to indicate that the constant $C$ depends on $z$. We will write $a\lesssim b$ to mean that $a\le C b$ for a universal constant $C>0$.\\
  \item Norms. We will write $H^k$ for $H^k(\Om)$ for $k\ge0$, and $H^s(\Sigma)$ with $s\in\mathbb{R}$ for usual Sobolev spaces. We will typically write $H^0=L^2$, though we will also use $L^2([0,T];H^k)$ (or $L^2([0,T];H^s(\Sigma))$) to denote the space of temporal square--integrable functions with values in $H^k$ (or $H^s(\Sigma)$).  Sometimes we will write $\|\cdot\|_k$ instead of $\|\cdot\|_{H^k(\Om)}$ or $\|\cdot\|_{H^k(\Sigma)}$.  When we do this it will be clear on which set the norm is evaluated from the context and the argument of the norm.
\end{enumerate}

\subsection{Plan of the paper}
In Section 2, we review the machinery of time-dependent function spaces, $\dive_{\mathcal{A}}$--free vector fields, and  weighted Sobolev spaces.   In Section 3, we construct the initial data and derive estimates.  In Section 4  we study the local well-posedness of the $\epsilon$--linear problem.  In Section 5 we construct solutions to \eqref{eq:geometric perturbation} using a contraction mapping argument and a continuity method, and then we complete the proof of the main result.

\section{Functional setting and basic estimates}

\subsection{Function spaces}

First, we define some time-independent spaces:
\beq
\mathring{H}^0(\Om)=\{p\in H^0(\Om)|\int_{\Om}p=0\},
\eeq
\beq
\mathring{H}^0((-\ell,\ell))=\{\eta\in {H}^0((-\ell,\ell))|\int_{-\ell}^\ell\eta=0\},
\eeq
\beq\label{space_H_0}
\mathring{H}^k(\Om)=H^k(\Om)\cap\mathring{H}^0(\Om),\quad \mathring{H}^s((-\ell,\ell))=H^s((-\ell,\ell))\cap\mathring{H}^0((-\ell,\ell)),
\eeq
\beq
{}_0H^1(\Om)=\{u\in H^1(\Om)|u\cdot\nu=0\ \text{on}\ \Sigma_s\},
\eeq
endowed with the usual $H^1$ norm.  We also set
\beq
W:=\{u\in{}_0H^1(\Om)|u\cdot\mathcal{N}_0\in H^1(-\ell, \ell)\cap\mathring{H}^0(-\ell, \ell)\},
\eeq
endowed with norm $\|u\|_W:=\|u\|_1+\|u\cdot\mathcal{N}_0\|_{H^1((-\ell,\ell))}$, and we write
\beq
V:=\{u\in W| \dive u=0\}.
\eeq

Throughout the paper we will often utilize the following Korn-type inequality.
\begin{lemma}\label{lem:korn}
  For any $u\in{}_0H^1(\Om)$, it holds that
  \beq\label{eq:korn}
  \|u\|_1^2\lesssim \|\mathbb{D}u\|_0^2.
  \eeq
\end{lemma}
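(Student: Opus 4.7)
The plan is to argue by contradiction using compactness together with a characterization of the kernel of the symmetric gradient. First I would recall the classical weak Korn inequality on $\Om$,
\[
\|u\|_1^2 \lesssim \|u\|_0^2 + \|\mathbb{D}u\|_0^2,
\]
valid for every $u \in H^1(\Om)$; since $\Om$ is a bounded Lipschitz domain (the two corners at $(\pm \ell, L)$ and any corners where $\mathcal{V}_{top}$ meets $\mathcal{V}_{bot}$ do no harm), this is standard.

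Suppose now that \eqref{eq:korn} fails. Then I can extract a sequence $\{u_n\} \subset {}_0H^1(\Om)$ with $\|u_n\|_1 = 1$ and $\|\mathbb{D}u_n\|_0 \to 0$. Because $\{u_n\}$ is bounded in $H^1$, the Rellich--Kondrachov theorem yields a subsequence converging in $L^2$ to some $u$. Applying the weak Korn inequality to the differences $u_n - u_m$ upgrades this to strong $H^1$--convergence, so $u_n \to u$ in $H^1$. Consequently $u \in {}_0H^1(\Om)$, $\|u\|_1 = 1$, and $\mathbb{D}u \equiv 0$ in $\Om$.

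Since $\Om$ is connected, the kernel of $\mathbb{D}$ consists of infinitesimal rigid motions, so $u(x_1,x_2) = (a_1, a_2) + b(-x_2, x_1)$ for constants $a_1, a_2, b \in \mathbb{R}$. The next step is to show that the impermeability condition $u \cdot \nu = 0$ on $\Sigma_s$ forces $a_1 = a_2 = b = 0$. The vessel possesses vertical side walls in $\mathcal{V}_{top}$ where $\nu = (\pm 1, 0)$, so $u \cdot \nu = \pm(a_1 - b x_2) = 0$ on a nondegenerate range of $x_2$, which forces $a_1 = b = 0$. Then $u = (0, a_2)$, and since $\Sigma_s$ also includes part of $\pa\mathcal{V}_{bot}$ on which $\nu_2 \ne 0$, imposing $u \cdot \nu = 0$ there yields $a_2 = 0$. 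Hence $u \equiv 0$, contradicting $\|u\|_1 = 1$.

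The main obstacle is the final geometric step: one must verify that $\Sigma_s$ contains boundary arcs with two linearly independent normal directions, so that no nontrivial rigid motion is impermeable. This is easy for the vessel as described, since the vertical side walls together with any nonhorizontal piece of the bottom supply enough normal directions, but it relies on a mild nondegeneracy in the shape of $\mathcal{V}_{bot}$ that is implicit in the setup of the problem.
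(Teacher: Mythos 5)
Your proof is correct and is precisely the "standard compactness argument" that the paper invokes without spelling out: start from the weak Korn inequality $\|u\|_1^2 \lesssim \|\mathbb{D}u\|_0^2 + \|u\|_0^2$, argue by contradiction using Rellich compactness to produce a unit-norm rigid motion in ${}_0H^1(\Om)$, and then use the impermeability condition on the vertical side walls and the bottom to kill all three parameters of the rigid motion. You have simply filled in the details the paper leaves implicit, including the geometric verification that $\Sigma_s$ supplies enough linearly independent normal directions.
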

\begin{proof}
The inequality \eqref{eq:korn} follows easily from the inequality
  \beq\label{eq:korn_1}
  \|u\|_1^2\lesssim \|\mathbb{D}u\|_0^2+\|u\|_0^2 \text{ for all } u \in H^1(\Om),
  \eeq
and a standard compactness argument.  The inequality \eqref{eq:korn_1} is may be proved in various ways.  See \cite{N} for a direct proof.  It can also be derived from the Ne\v{c}as inequality: see for example Lemma IV.7.6 in \cite{boyer_fabrie}.
\end{proof}

Suppose that $\eta$ is given and that $\mathcal{A}$, $J$ and $\mathcal{N}$, etc are determined in terms of $\eta$. Let us define
\beq
((u,v)):=\int_{\Om}\frac{\mu}{2}\mathbb{D}_{\mathcal{A}}u:\mathbb{D}_{\mathcal{A}}vJ+\int_{\Sigma_s}\beta(u\cdot\tau)(v\cdot\tau)J.
\eeq
We also define
\beq
(\phi,\psi)_{1,\Sigma}:=\int_{-\ell}^{\ell}g\phi\psi+\sigma\frac{\pa_1\phi\pa_1\psi}{(1+|\pa_1\zeta_0|^2)^{3/2}},
\eeq
and
\beq\label{sum_point}
[a,b]_\ell:=\kappa(a(\ell)b(\ell)+a(-\ell)b(-\ell)).
\eeq
\begin{remark}\label{rem:norm}
  Throughout this paper, we write $\|\xi\|_{1,\Sigma}$ as
  \[
  \|\xi\|_{1,\Sigma}^2=\int_{-\ell}^{\ell}g|\xi|^2+\sigma\frac{|\pa_1\xi|^2}{(1+|\pa_1\zeta_0|^2)^{3/2}},
  \]
  and write $[\phi]_\ell$ as
  \[
  [\phi]_\ell^2=\kappa (\phi(\ell)^2+\phi(-\ell)^2).
  \]
\end{remark}

For convenience, let us define the spaces
\beq
\mathcal{H}^0(\Om):=\{u:\Om\rightarrow\mathbb{R}^2|\sqrt{J}u\in H^0(\Om)\},
\eeq
with norm $\|u\|_{\mathcal{H}^0(\Om)}:=(\int_\Om|u|^2J)^{1/2}$ and
\beq
{}_0\mathcal{H}^1(\Om):=\{u:\Om\rightarrow\mathbb{R}^2|((u,u))<\infty, u
\cdot\nu=0\ \text{on}\ \Sigma_s\},
\eeq
endowed with norm $\|u\|_{{}_0\mathcal{H}^1(\Om)}=((u,u))^{1/2}$.

Let us define some time-dependent spaces. We define the space
\beq
\mathcal{W}(t):=\{v(t)\in{}_0\mathcal{H}^1(\Om)|v\cdot\mathcal{N}\in H^1(-\ell,\ell)\cap\mathring{H}^0(-\ell,\ell)\},
\eeq
which we endow with the inner-product
\beq
(u,v)_{\mathcal{W}}=((u,v))+(u\cdot\mathcal{N},v\cdot\mathcal{N})_{1,\Sigma}.
\eeq
We also define the subspace $\mathcal{V}(t)$ of $\mathcal{W}(t)$ by
\beq
\mathcal{V}(t):=\{u(t)\in\mathcal{W}(t)|\dive_{\mathcal{A}}u=0\}.
\eeq
Finally, we define the inner products on $L^2([0, T]; H^k(\Om))$ for $k=0,1$ as
\beq
(u,v)_{\mathcal{H}^1_T}=\int_0^T(u(t),v(t))_{\mathcal{H}^1}\,\mathrm{d}t,
\eeq
and write $\mathcal{H}^1_T$ as the corresponding spaces with the corresponding norms $\|u\|_{\mathcal{H}^1_T}$.
We define the subspaces of $\mathcal{H}^1_T$ as follows:
\beq
{}_0\mathcal{H}^1_T:=\{u\in\mathcal{H}^1_T| u
\cdot\nu=0\ \text{on}\ \Sigma_s\},
\eeq
which we endow with the norm $\mathcal{H}^1_T$,
\beq
\mathcal{W}_T:=\{v\in{}_0\mathcal{H}^1_T|v\cdot\mathcal{N}\in H^1(-\ell,\ell)\cap\mathring{H}^0(-\ell,\ell)\},
\eeq
which we endow with the norm $\|v\|_{\mathcal{W}_T}:=\|v\|_{\mathcal{H}^1_T}+\left(\int_0^T\|v\cdot\mathcal{N}\|_{H^1(-\ell,\ell)}^2\right)^{1/2}$, and
\beq
\mathcal{V}_T:=\{u\in\mathcal{W}_T|\dive_{\mathcal{A}}u=0\}.
\eeq
The following lemma implies that ${}_0\mathcal{H}^1(\Om)$ is equivalent to ${}_0H^1(\Om)$.
\begin{lemma}\label{lem:equivalence_norm}
There exists a universal $\alpha_0>0$ such that if
\beq
\sup_{0\le t\le T}\|\eta(t)\|_{W_\delta^{5/2}}<\alpha_0,
\eeq
then
\beq
\frac{1}{\sqrt{2}}\|u\|_k\le\|u\|_{\mathcal{H}^k}\le\sqrt{2}\|u\|_k
\eeq
for $k=0, 1$ and for all $t\in[0, T]$. As a consequence, for $k=0, 1$,
  \beq
  \|u\|_{L^2H^k(\Om)}\le \|u\|_{\mathcal{H}^k_T(\Om)}\le \|u\|_{L^2H^k(\Om)}.
  \eeq
\end{lemma}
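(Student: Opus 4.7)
\textbf{Proof plan for Lemma \ref{lem:equivalence_norm}.}

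The plan is to reduce everything to pointwise smallness of the perturbation quantities $A$, $J-1$, $K-1$ and then run Korn's inequality (Lemma \ref{lem:korn}) for the $k=1$ case. First, I would use the hypothesis to show that $\bar\eta$ is small in $W^{1,\infty}(\Om)$. Since $E$ is a bounded extension, $\mathcal{P}$ is the Poisson extension (and thus gains a half derivative in $H^s$ scales), and Sobolev embedding in two dimensions gives $H^{2}(\Om)\hookrightarrow W^{1,\infty}(\Om)$ (with a margin absorbed by the weights in $W_\delta^{5/2}$), one gets
\[
\|\bar\eta\|_{W^{1,\infty}(\Om)}\lesssim \|\eta\|_{W_\delta^{5/2}((-\ell,\ell))} < \alpha_0.
\]
Plugging this into the explicit expressions \eqref{eq:components} and using that $\phi,\phi'$ are bounded and $\zeta_0\ge\min\zeta_0>0$, I obtain
\[
\|A\|_{L^\infty(\Om)}+\|J-1\|_{L^\infty(\Om)}+\|K-1\|_{L^\infty(\Om)}\le C\alpha_0.
\]
By choosing $\alpha_0$ small enough (say, $C\alpha_0\le 1/3$), every one of these quantities is pointwise as small as we please; in particular $1/2\le J\le 3/2$ on $\Om$ and on $\Sigma_s$.

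For $k=0$ the equivalence follows immediately: $\|u\|_{\mathcal{H}^0}^2=\int_\Om |u|^2 J$ and the pointwise bound on $J$ gives $\tfrac12\|u\|_0^2\le \|u\|_{\mathcal{H}^0}^2\le \tfrac32\|u\|_0^2$, from which (after possibly shrinking $\alpha_0$) the stated $\tfrac{1}{\sqrt2},\sqrt2$ constants follow.

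For $k=1$ I would split $\mathbb{D}_{\mathcal{A}}u = \mathbb{D}u + (\mathbb{D}_{\mathcal{A}}u-\mathbb{D}u)$, where the components of $\mathbb{D}_{\mathcal{A}}u-\mathbb{D}u$ are linear in $(\mathcal{A}-I)\nabla u$. This yields the pointwise bound $|\mathbb{D}_\mathcal{A} u-\mathbb{D}u|\le C\alpha_0|\nabla u|$. Combined with the pointwise control of $J$, I get the upper bound
\[
((u,u))\le (1+C\alpha_0)\tfrac{\mu}{2}\|\mathbb{D}u\|_0^2 + (1+C\alpha_0)\beta\|u\cdot\tau\|_{H^0(\Sigma_s)}^2\lesssim \|u\|_1^2,
\]
using the trace inequality on $\Sigma_s$. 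For the lower bound, the triangle inequality gives $\|\mathbb{D}_{\mathcal{A}}u\|_0\ge (1-C\alpha_0)\|\mathbb{D}u\|_0$, so
\[
((u,u))\ge \tfrac{\mu}{2}(1-C\alpha_0)^2\cdot\tfrac12\|\mathbb{D}u\|_0^2
\]
and Korn's inequality \eqref{eq:korn} (applicable since $u\in{}_0H^1(\Om)$) finishes the lower bound $((u,u))\gtrsim \|u\|_1^2$. Tuning $\alpha_0$ sufficiently small makes the equivalence constants fit into $[1/\sqrt2,\sqrt2]$. The time-integrated statement is then obtained by squaring, integrating over $[0,T]$, and taking square roots, since the pointwise-in-time equivalence has constants uniform in $t\in[0,T]$.

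The main obstacle is the $k=1$ case: we need the perturbation $\mathbb{D}_\mathcal{A} u-\mathbb{D}u$ to be quantitatively subordinate to $\mathbb{D}u$, which is only possible after invoking Korn to pass from $\|\mathbb{D}u\|_0$ back up to $\|u\|_1$ and thereby absorb the error term $C\alpha_0\|\nabla u\|_0$. The chain $W_\delta^{5/2}\to H^2(\text{bulk})\to W^{1,\infty}$ for $\bar\eta$ must also be checked against the definition of $W_\delta^{5/2}$, but once the right embedding is in place the rest is a straightforward smallness argument.
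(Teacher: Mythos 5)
Your overall plan is exactly the one the paper delegates to GT1 Lemma 2.1 (plus a trace estimate for the $\Sigma_s$ term, as the paper says): get pointwise smallness of $A$, $J-1$, $K-1$, compare integrands directly for $k=0$, and for $k=1$ split $\mathbb{D}_{\mathcal{A}}u=\mathbb{D}u+(\mathbb{D}_{\mathcal{A}}u-\mathbb{D}u)$, use the pointwise bound $|\mathbb{D}_{\mathcal{A}}u-\mathbb{D}u|\lesssim\alpha_0|\nabla u|$, and invoke Korn to absorb the error. That skeleton, together with the trace inequality for the boundary term, is what is needed, and the $k=0$ and $k=1$ bookkeeping you wrote is sound once the pointwise smallness is in hand.

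The gap is in the step that is supposed to produce that pointwise smallness. You claim $H^{2}(\Om)\hookrightarrow W^{1,\infty}(\Om)$ in two dimensions; this is false — the Sobolev embedding into $W^{1,\infty}$ on a planar domain requires $H^{s}$ with $s>2$, and $H^{2}$ is precisely the borderline case where it fails. Your parenthetical ``with a margin absorbed by the weights in $W_\delta^{5/2}$'' also points the wrong way: near the corner points the weighted space $W_\delta^{5/2}$ is \emph{weaker} than $H^{5/2}$, not stronger, and the naive Hardy chain only gives $W_\delta^{5/2}((-\ell,\ell))\hookrightarrow H^{3/2}((-\ell,\ell))$, which after the $+1/2$ gain of the Poisson extension lands exactly on $H^{2}(\Om)$ and just barely misses $W^{1,\infty}$. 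The point you must actually prove is the sharper weighted embedding $W_\delta^{5/2}((-\ell,\ell))\hookrightarrow H^{5/2-\delta-\varepsilon}((-\ell,\ell))$ for any $\varepsilon>0$ (this is where $\delta<1$ is used: one picks up $1-\delta>0$ over the crude $H^{3/2}$); then $\mathcal{P}E$ lands in $H^{3-\delta-\varepsilon}(\Om)\hookrightarrow W^{1,\infty}(\Om)$, and the rest of your argument goes through. This is the actual content hiding behind the citation to GT1, whose hypothesis is stated in the unweighted $H^{5/2}$ where the chain is trivially clean; it deserves to be stated as a lemma rather than waved at, especially since $\delta$ may be taken close to $1$ when the contact angle $\om$ is close to $\pi$.
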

\begin{proof}
The case $k=0$ is proved in Lemma 2.1 in \cite{GT1}.  A result similar to that stated above for $k=1$ is also prove in \cite{GT1} for a norm not involving the boundary terms.  However, the argument used there may be readily coupled to a trace estimate to handle the boundary term.
\end{proof}

For our problem, we need weighted Sobolev spaces. Suppose that $\om\in(0,\pi)$ is the angle formed by $\zeta_0$ at the corner $M$ of $\Om$, for $M=\{(-\ell, \zeta_0(-\ell)), (\ell, \zeta_0(\ell))\}$  the corner points of $\Om$. We now introduce the critical weight $\delta_\om:=\max\{0, 2-\pi/\om\}\in[0,1)$.  For $\delta\in(\delta_\om,1)$, we define
\beq\label{def:weighted_sobolev}
W_{\delta}^k(\Om)(t) :=\{u(t)|\|u(t)\|_{W_{\delta}^k(\Om)}<\infty\},
\eeq
with the norm
\beq
\|u(t)\|_{W_{\delta}^k(\Om)}=\bigg(\sum_{|\alpha|\le k}\int_{\Om} d^{2\delta}|\pa^\alpha u(x,t)|^2\,\mathrm{d}x\bigg)^{1/2},
\eeq
where $d=dist(\cdot, M)$. A consequence of Hardy's inequality (for example Lemma 7.1.1 in \cite{KMR1}) reveals that we have the continuous embeddings
\beq
W_{\delta}^1(\Om)\hookrightarrow H^0(\Om),\ W_{\delta}^2(\Om)\hookrightarrow H^1(\Om),\ H^1(\Om)\hookrightarrow W_{-\delta}^0(\Om),
\eeq
when $\delta\in(0,1)$.

The trace spaces $W_{\delta}^{k-1/2}(\pa\Om)$ can be defined in the usual way: see for example Section 7.1.3 in \cite{KMR1}. It can be shown that the following useful lemma holds.

\begin{lemma}
  Suppose that $0<\delta<1$ and $\tau$ be the unit tangential of $\pa\Om$. Then
  \beq
  \left|\int_{\pa\Om}f(v\cdot\tau)\right|\lesssim \|f\|_{W_{\delta}^{1/2}(\pa\Om)}\|v\|_{H^1(\Om)},
  \eeq
  for all $f\in W_{\delta}^{1/2}(\pa\Om)$ and $v\in H^1(\Om)$.
\end{lemma}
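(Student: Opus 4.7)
The plan is to reduce the claim to a weighted Cauchy--Schwarz plus a one-sided weighted trace inequality of Hardy type. First I would introduce the distance weight $d = \operatorname{dist}(\cdot,M)$ used in the definition of the weighted space and insert a factor of $d^{\delta}\cdot d^{-\delta} = 1$ into the integrand, so that
\[
\left|\int_{\pa\Om} f(v\cdot\tau)\right|
= \left|\int_{\pa\Om}(d^{\delta}f)(d^{-\delta} v\cdot\tau)\right|
\le \|d^{\delta}f\|_{L^2(\pa\Om)}\,\|d^{-\delta}(v\cdot\tau)\|_{L^2(\pa\Om)}
\]
by Cauchy--Schwarz on $\pa\Om$. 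The first factor is exactly the $W_{\delta}^{0}(\pa\Om)$ norm of $f$ and is therefore dominated by $\|f\|_{W_{\delta}^{1/2}(\pa\Om)}$ through the trivial continuous embedding $W_{\delta}^{1/2}(\pa\Om)\hookrightarrow W_{\delta}^{0}(\pa\Om)$, which is immediate from the definition of the trace scale in Section 7.1.3 of \cite{KMR1}.

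The entire difficulty is thus pushed into estimating the second factor, namely the weighted trace bound
\[
\|d^{-\delta}(v\cdot\tau)\|_{L^2(\pa\Om)} \lesssim \|v\|_{H^1(\Om)}.
\]
To prove this I would use a partition of unity $\{\chi_0,\chi_1\}$ adapted to the corner set $M$. On the support of $\chi_1$, the weight $d$ is bounded uniformly from below, so $d^{-\delta}\lesssim 1$ and the bound reduces to the standard trace theorem $H^1(\Om)\hookrightarrow L^2(\pa\Om)$. On the support of $\chi_0$, after localizing to a single corner and flattening the boundary to a wedge via a bilipschitz change of variables, the problem becomes a 2-D weighted Hardy inequality on a planar sector: for $w\in H^1$ of the sector, control $\int r^{-2\delta}|w|^{2}$ along each bounding half-line by $\|w\|_{H^1}^{2}$. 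For $\delta<\tfrac12$ this follows from 1-D fractional Hardy applied to the trace in $H^{1/2}$, but for the full range $\delta\in(0,1)$ one must exploit the interior $H^1$ regularity and not merely the trace: the required 2-D Hardy inequality is the content of the weighted trace theorems in Section 7.1.3 of \cite{KMR1} and is closely related to the embedding $H^{1}(\Om)\hookrightarrow W_{-\delta}^{0}(\Om)$ already recorded in the excerpt.

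An equivalent, slightly cleaner formulation is to invoke the duality $(W_{\delta}^{1/2}(\pa\Om))^{*}=W_{-\delta}^{-1/2}(\pa\Om)$ (with the $L^2$-pairing as the duality bracket) and establish the continuity of the trace map $H^{1}(\Om)\to W_{-\delta}^{-1/2}(\pa\Om)$; then
\[
\left|\int_{\pa\Om}f(v\cdot\tau)\right|
\le \|f\|_{W_{\delta}^{1/2}(\pa\Om)}\,\|v\cdot\tau\|_{W_{-\delta}^{-1/2}(\pa\Om)}
\lesssim \|f\|_{W_{\delta}^{1/2}(\pa\Om)}\,\|v\|_{H^{1}(\Om)}
\]
is immediate. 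Either route completes the proof after the corner estimate is in hand.

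\textbf{Main obstacle.} The genuinely nontrivial ingredient is the corner estimate $\|d^{-\delta}(v\cdot\tau)\|_{L^2(\pa\Om)}\lesssim \|v\|_{H^1(\Om)}$ for the full range $\delta\in(0,1)$, since a purely trace-based (one-dimensional) Hardy argument breaks down at $\delta=\tfrac12$. The proof must use the two-dimensional structure of $v$ and the KMR machinery for weighted Sobolev spaces on corner domains; everything else is a routine manipulation.
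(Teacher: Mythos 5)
Your main route via Cauchy--Schwarz fails for $\delta\ge\tfrac12$, and the gap is not one you can close by "exploiting the interior $H^1$ regularity." The issue is the intermediate weighted trace estimate
\[
\|d^{-\delta}(v\cdot\tau)\|_{L^2(\pa\Om)}\lesssim\|v\|_{H^1(\Om)},
\]
which you isolate as the heart of the argument. Take a constant vector field $v$ near a corner (so $v\cdot\tau$ is a nonzero constant on the two boundary arcs, up to sign); then near that corner the left side behaves like $\left(\int_0^{\varepsilon} r^{-2\delta}\,dr\right)^{1/2}$, which diverges precisely when $\delta\ge\tfrac12$. Traces of $H^1$ functions in 2--D have no reason to vanish at a boundary point, so no amount of extra interior regularity supplies the needed decay. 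This is not the borderline technicality you flag at $\delta=\tfrac12$; the estimate is simply false on the whole range $\delta\ge\tfrac12$. (The interior embedding $H^1(\Om)\hookrightarrow W^0_{-\delta}(\Om)$ that you point to is genuinely different: on the 2--D interior $d^{-2\delta}$ is integrable for all $\delta<1$, while on the 1--D boundary it is integrable only for $\delta<\tfrac12$.) Your duality reformulation does not rescue the argument either — the continuity of $H^1(\Om)\to W^{-1/2}_{-\delta}(\pa\Om)$ is logically equivalent to the lemma itself, so invoking it is a restatement rather than a proof.

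The paper's proof avoids the problem by not splitting the weight $50$--$50$. It applies H\"older with conjugate exponents $p,q$ satisfying $1<p<\tfrac{2}{1+\delta}$ (so $q>\tfrac{2}{1-\delta}$, possibly large but finite). Then $\|f\|_{L^p(\pa\Om)}\lesssim\|f\|_{W_\delta^{1/2}(\pa\Om)}$ by the weighted Sobolev embedding on the 1--D boundary, and $\|v\|_{L^q(\pa\Om)}\lesssim\|v\|_{H^{1/2}(\pa\Om)}\lesssim\|v\|_{H^1(\Om)}$ by the ordinary 1--D Sobolev embedding (which admits any finite $q$) plus the trace theorem. The key structural difference is that the entire singular weight is absorbed by the $f$-factor through a sub-$L^2$ Lebesgue index, while the $v$-factor only needs an $L^q$ bound that $H^{1/2}$ traces always satisfy. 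If you want to keep a weighted-Cauchy--Schwarz flavor, you would have to insert an asymmetric split $d^{\alpha}\cdot d^{-\alpha}$ with $\alpha<\tfrac12$ (not $\alpha=\delta$) and absorb the remaining $d^{\delta-\alpha}$ into the fractional part of $\|f\|_{W_\delta^{1/2}}$, which in effect reproduces the H\"older argument.
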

\begin{proof}
  We choose $p$, $q$ such that $1<p<\frac{2}{1+\delta}$ and $\frac1p+\frac1q=1$. Employing the H\"older inequality, we may derive that
  \beq
  \left|\int_{\pa\Om}f(v\cdot\tau)\right|\le \|f\|_{L^p(\pa\Om)}\|v\|_{L^q(\pa\Om)}.
  \eeq
  The Sobolev embedding implies that
  \beq
  \|f\|_{L^p(\pa\Om)}\lesssim \|f\|_{W_{\delta}^{1/2}(\pa\Om)},
  \eeq
  and the Sobolev embedding together with standard theory imply that
  \beq
  \|v\|_{L^q(\pa\Om)}\lesssim \|v\|_{H^{1/2}(\pa\Om)}\lesssim \|v\|_{H^1(\Om)}.
  \eeq
\end{proof}
Also, we define the spaces
\beq\label{space_W_0}
\mathring{W}_\delta^k(\Om):=\left\{u\in W_\delta^k(\Om)|\int_{\Om}u=0 \right\},
\eeq
for $k\ge1$.
The spaces $L^2([0, T];W_\delta^k(\Om))$ is defined by
\beq
\|u\|_{L^2([0, T];W_\delta^k(\Om))}^2:=\int_0^T\|u(t)\|_{W_\delta^k(\Om)}^2<\infty.
\eeq

Now, we want to show that the time-independent spaces are related to the time-dependent spaces. We consider the matrix
\beq\label{def:M}
M:=M(t)=K\nabla\Phi=(J\mathcal{A}^\top)^{-1},
\eeq
which induces a linear operator $\mathcal{M}_t : u\mapsto M(t)u$.

\begin{proposition}\label{prop:isomorphism}
  Assume that $\eta\in H^r((-\ell, \ell))$ for $r>\frac32$.
  \begin{enumerate}[(1)]
  \item For each $t
  \in [0,T]$, $\mathcal{M}_t $ is a bounded isomorphism from $H^k(\Om)$ to $H^k(\Om)$ for $k=0, 1, 2$.
  \item For each $t
  \in [0,T]$, $\mathcal{M}_t $ is a bounded isomorphism from ${}_0H^1(\Om)$ to ${}_0\mathcal{H}^1(\Om)$. Moreover,
  \beq
  \|Mu\|_{{}_0\mathcal{H}^1}\lesssim (1+\|\eta\|_r)\|u\|_1
  \eeq
  \item Let $u\in H^1(\Om)$. Then $\dive u=p$ if and only if $\dive_{\mathcal{A}}(Mu)=Kp$.
  \end{enumerate}
\end{proposition}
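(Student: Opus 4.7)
The plan is to treat the three claims in the order (3), (1), (2), since (3) is the Piola-type identity that motivates the definition of $M$, and the boundary/norm statements in (2) rely on the multiplier bounds established in (1).

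For part (3), I would compute $\dive_{\mathcal{A}}(Mu)$ directly from the explicit formulas for $\mathcal{A}$ and $M=K\nabla\Phi$. Writing $v = Mu$, so $v_1=Ku_1$ and $v_2=KAu_1+u_2$, and using $\dive_{\mathcal{A}}v=\partial_1 v_1-AK\partial_2 v_1+K\partial_2 v_2$, the product rule produces several terms. The coefficient of $\partial_2 u_1$ cancels algebraically as $-AK^2+K^2A=0$, while the coefficient of $u_1$ collapses to $K^2(-\partial_1 J+\partial_2 A)$. Using \eqref{eq:components} together with $\partial_1\phi=0$ and $\partial_1\zeta_0$ independent of $x_2$, a direct computation verifies the pointwise identity $\partial_1 J=\partial_2 A$, so the $u_1$ term vanishes and $\dive_{\mathcal{A}}(Mu)=K\dive u$. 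Since $K=J^{-1}>0$, this proves the equivalence claimed in (3).

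For part (1), the strategy is to view $M$ and $M^{-1}=J\mathcal{A}^\top$ as smooth rational expressions in $\bar\eta$, $\nabla\bar\eta$ and fixed smooth coefficients of $x$, with denominator $J$ bounded away from $0$ by the smallness of $\eta$. Since $\bar\eta=\mathcal{P}E\eta$, the Poisson extension gains half a derivative, giving $\bar\eta\in H^{r+1/2}(\Om)$. In two dimensions $H^s\hookrightarrow W^{1,\infty}$ when $s>2$, which applies once $r>3/2$, so $\|M\|_{W^{1,\infty}}+\|M^{-1}\|_{W^{1,\infty}}\lesssim 1+\|\eta\|_r$. The product rule and Hölder's inequality then give $\|Mu\|_k\lesssim\|u\|_k$ for $k=0,1$; for $k=2$ one uses $H^2\hookrightarrow L^\infty$ in 2D to absorb the bad factor $\|\nabla^2 M\|_{L^2}\|u\|_{L^\infty}$, employing the full $H^{r+1/2}$-regularity of $\bar\eta$. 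Since $M^{-1}$ enjoys identical bounds, $\mathcal{M}_t$ is a bounded isomorphism of $H^k$.

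For part (2), I first check the boundary condition $u\cdot\nu=0$ is preserved on $\Sigma_s$. On $\Sigma_s\cap\mathcal{V}_{bot}$ the cutoff $\phi$ vanishes so $M=I$ and there is nothing to prove. On the vertical side walls $x_1=\pm\ell$, $\nu=\pm e_1$, and the explicit form $M=\bigl(\begin{smallmatrix}K&0\\KA&1\end{smallmatrix}\bigr)$ gives $(Mu)\cdot e_1=Ku_1$, which vanishes exactly when $u_1=0$. Combining this with Lemma \ref{lem:equivalence_norm} yields $\|Mu\|_{{}_0\mathcal{H}^1}\le\sqrt2\,\|Mu\|_{1}$, and the multiplier estimate from (1) then delivers the claimed bound $\|Mu\|_{{}_0\mathcal{H}^1}\lesssim(1+\|\eta\|_r)\|u\|_1$. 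The inverse $M^{-1}$ satisfies the same boundary preservation and multiplier bounds, so $\mathcal{M}_t$ is a bounded isomorphism onto ${}_0\mathcal{H}^1$.

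The main obstacle I expect is the $k=2$ case of (1): second derivatives of $M$ involve third derivatives of $\bar\eta$, which the Poisson extension places only in $H^{r-5/2}$; this is why the product estimate must be arranged to put the $L^\infty$ factor on $u$ rather than on $\nabla^2 M$, exploiting the two-dimensional embedding $H^2\hookrightarrow L^\infty$ and the precise regularity gain of the harmonic extension.
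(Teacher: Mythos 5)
Your decomposition into (3), (1), (2) is sensible, and part (3) is fully correct: the Piola-type cancellation $\partial_1 J=\partial_2 A$ indeed holds by direct differentiation of \eqref{eq:components} (using that $\phi$ is a function of $x_2$ alone and $\zeta_0$ of $x_1$ alone), and the $\partial_2 u_1$ coefficient vanishes algebraically, leaving $\dive_{\mathcal{A}}(Mu)=K\dive u$.  A slightly shorter route, consistent with how the paper actually uses this later, is to invoke $J\dive_{\mathcal{A}}v=\dive(M^{-1}v)$ (which follows from $\partial_j(J\mathcal{A}_{ij})=0$, Lemma~\ref{lem:properties_A}); then $\dive_{\mathcal{A}}(Mu)=K\dive(M^{-1}Mu)=K\dive u$ without any by-hand cancellations.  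The boundary-preservation argument in (2) is also correct, and combined with Lemma~\ref{lem:equivalence_norm} it yields the stated ${}_0\mathcal{H}^1$ bound (you should note, though, that Lemma~\ref{lem:equivalence_norm} carries a smallness hypothesis on $\eta$ that the proposition leaves implicit).

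There are, however, two genuine issues with part (1), one cosmetic and one substantial.  Cosmetically, the claim $\|M\|_{W^{1,\infty}}\lesssim 1+\|\eta\|_r$ is not available at the threshold $r>3/2$: $\nabla M$ involves $\nabla^2\bar\eta\in H^{r-3/2}(\Om)$, and the 2D embedding $H^s\hookrightarrow L^\infty$ requires $s>1$, i.e.\ $r>5/2$.  For $k=1$ this is harmless, because you can instead put $\nabla M\in H^{r-3/2}\hookrightarrow L^{p}$ for some $p>2$ and pair it with $u\in H^1\hookrightarrow L^{q}$, $\tfrac1p+\tfrac1q=\tfrac12$, so the conclusion of the $k=1$ estimate survives without the $W^{1,\infty}$ shortcut.

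The substantial gap is the $k=2$ case.  You correctly identify the dangerous term $\nabla^2 M\cdot u$ and observe that $\nabla^3\bar\eta\in H^{r-5/2}(\Om)$, but the remedy you propose --- place the $L^\infty$ factor on $u$ via $H^2(\Om)\hookrightarrow L^\infty(\Om)$ --- still requires $\nabla^2 M\in L^2(\Om)$, i.e.\ $M\in H^2(\Om)$, i.e.\ $\bar\eta\in H^3(\Om)$, which by the half-derivative gain of the Poisson extension forces $r\ge 5/2$, not merely $r>3/2$.  For $r\in(3/2,5/2)$, $\nabla^3\bar\eta$ lies in a Sobolev space of negative order and does not even belong to $L^2$, so H\"older with $u\in L^\infty$ cannot close the estimate; the ``bad factor'' does not land in any $L^p$ that can be paired with a finite $L^q$-norm of $u$.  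As written, the argument therefore only proves the $k=0,1$ cases under the stated hypothesis; the $k=2$ isomorphism needs either a stronger regularity assumption on $\eta$ (which is what is actually available in the contexts where the proposition is used, since the energy controls $\eta\in W_\delta^{5/2}$) or a different estimate that exploits additional structure of $M$.  You should either strengthen the hypothesis for $k=2$, or restrict the claim for small $r$ to $k=0,1$ and prove $k=2$ separately under the regularity actually in force.
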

\begin{proof}
  See \cite{GT1} and \cite{GT2}.
\end{proof}
The following proposition is also useful.
\begin{proposition}\label{prop:solid_boundary}
  If $u\cdot\nu=0$ on $\Sigma_s$, then $Ru\cdot\nu=0$ on $\Sigma_s$, where $R:=\pa_tMM^{-1}$.
\end{proposition}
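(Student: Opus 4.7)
The plan is to compute the matrix $R = \partial_t M\, M^{-1}$ explicitly and then to perform a short geometric case analysis on $\Sigma_s$. Using \eqref{def:M}, the form of $\nabla\Phi$ and $\mathcal{A}$ in \eqref{eq:Jaccobian}, and the relation $K = 1/J$, one has
\[
M = K\nabla\Phi = \begin{pmatrix} K & 0 \\ KA & 1 \end{pmatrix}, \qquad M^{-1} = J\mathcal{A}^\top = \begin{pmatrix} J & 0 \\ -A & 1 \end{pmatrix}.
\]
The crucial structural observation is that the second column of $M$ is constant in time, so $\partial_t M$ has zero second column, and the product $\partial_t M\cdot M^{-1}$ inherits this property. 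A direct matrix multiplication gives
\[
R = \partial_t M\cdot M^{-1} = \begin{pmatrix} J\,\partial_t K & 0 \\ J\,\partial_t(KA) & 0 \end{pmatrix},
\]
so $Ru = u_1\bigl(J\,\partial_t K,\ J\,\partial_t(KA)\bigr)^\top$ depends only on the first component $u_1$.

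With $R$ in hand, I would split $\Sigma_s$ into two pieces and verify the conclusion on each. The portion of $\Sigma_s$ contained in $\{x_2 \le \tfrac{1}{4}\min\zeta_0\}$ — which covers all of $\partial\mathcal{V}_{bot}$ together with the lower strips of the vertical walls — lies in the set where $\phi \equiv 0$; hence by \eqref{eq:components} we have $A=0$, $J=1$, $K=1$, so $M = I$, $R \equiv 0$, and the conclusion $Ru\cdot\nu = 0$ is trivial. The complementary piece of $\Sigma_s$ is the union of the two vertical wall segments $\{x_1 = \pm\ell,\ 0 < x_2 < \zeta_0(\pm\ell)\}$, on which the outward unit normal is $\nu = \pm e_1$. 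On these segments the hypothesis $u\cdot\nu = 0$ reduces to $u_1 = 0$, and the explicit form of $R$ then yields $Ru = 0$ pointwise, whence $Ru\cdot\nu = 0$.

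There is no genuine analytic obstacle to this proposition; the only care required is in matching the support properties of the cutoff $\phi$ with the geometric decomposition of $\Sigma_s$, so that in the region where $\Phi$ is actually non-trivial (the upper portion of the vertical walls) the outward normal is exactly $\pm e_1$. Once this matching is made, both cases collapse: in the bottom piece $R$ vanishes, and in the vertical-wall piece $u_1$ vanishes, which together with the zero second column of $R$ forces $Ru = 0$.
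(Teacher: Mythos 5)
Your proof is correct, and it takes a genuinely different route from the paper's. The paper's argument is abstract: it invokes Proposition 4.4 from \cite{GT2} (the equivalence $Mu\cdot\nu=0 \Leftrightarrow u\cdot\nu=0$ on $\Sigma_s$, and hence the same for $M^{-1}$), uses the matrix identity $\partial_t M\, M^{-1}=-M\,\partial_t M^{-1}$, and exploits the time-independence of $\nu$ on $\Sigma_s$ to push $\partial_t$ past the dot product; the $\partial_t u \cdot \nu$ term that arises when replacing $-M(\partial_t M^{-1})u$ by $-M\partial_t(M^{-1}u)$ is absorbed silently because it also vanishes on $\Sigma_s$. Your argument instead computes $R$ explicitly, exploiting the lower-triangular structure of $M$ with constant second column that is specific to the 2-D graph map $\Phi$ (which only perturbs the $x_2$ coordinate). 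This has two advantages: it is self-contained (no appeal to the external Proposition 4.4), and it actually yields the strictly stronger conclusion $Ru\equiv 0$ on all of $\Sigma_s$ (in the bottom region $R$ itself vanishes since $\phi\equiv 0$; on the vertical walls $\nu=\pm e_1$ forces $u_1=0$, and $R$ annihilates $e_2$). The trade-off is that your proof is tied to this particular flattening map and planar geometry, whereas the paper's argument is structure-agnostic and would transfer directly to, say, the 3-D setting of \cite{GT0} where $M$ is not so simple. The minor imprecision in your case split (the "complementary" piece you describe is the full vertical wall rather than only its upper portion) is harmless since the two cases overlap compatibly.
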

\begin{proof}
  According to Proposition 4.4 in \cite{GT2}, we have known that $Mu\cdot\nu=0\Leftrightarrow u\cdot\nu=0$ on $\Sigma_s$, which implies that
  $M^{-1}u\cdot\nu=0\Leftrightarrow u\cdot\nu=0$ on $\Sigma_s$.
  Then by definition of $R$,
  \beq
  Ru\cdot\nu=\pa_tMM^{-1}u\cdot\nu=-M\pa_t(M^{-1}u)\cdot\nu=0,
  \eeq
  since $\pa_t(M^{-1}u)\cdot\nu=\pa_t(M^{-1}u\cdot\nu)=0$.
\end{proof}

\section{Initial data}\label{section_initial}

\subsection{Construction of initial data}\label{subsection_1}
Before we study the well-posedness of \eqref{eq:geometric perturbation}, we first consider the initial data and the initial energy $\mathcal{E}(0)$. Suppose that $\eta_0\in W_\delta^{5/2}(\Sigma)$, $\pa_t\eta(0)\in H^{3/2}(\Sigma)$, $\pa_t^2\eta(0)\in H^1(\Sigma)$ and that
\[
\mathfrak{E}_0(\eta):=\|\eta_0\|_{W_\delta^{5/2}(\Sigma)}^2+\|\pa_t\eta(0)\|_{H^{3/2}(\Sigma)}^2+\sum_{j=0}^2\|\pa_t^j\eta(0)\|_{H^1(\Sigma)}^2\le\alpha
\]
where $\alpha>0$ is small enough to satisfy the conditions in Lemma \ref{lem:equivalence_norm} and Theorem 5.8 in \cite{GT2}. We now construct the initial data $u(t=0)=u_0$ and $p(t=0)=p_0$. When $t=0$, we consider the elliptic equation
\beq
\left\{
\begin{aligned}
  &\dive_{\mathcal{A}(0)}S_{\mathcal{A}(0)}(p_0,u_0)=0,\quad &\text{in}&\quad \Om,\\
    &\dive_{\mathcal{A}(0)}u_0=0,\quad &\text{in}&\quad \Om,\\
    &u_0\cdot\mathcal{N}(0)=\pa_t\eta(0),\quad &\text{on}&\quad\Sigma,\\
    &\mu\mathbb{D}_{\mathcal{A}(0)}u_0\mathcal{N}(0)\cdot\mathcal{T}(0)=0,\quad &\text{on}&\quad\Sigma,\\
    &u_0\cdot\nu=0,\quad &\text{on}&\quad\Sigma_s,\\
    &\mu\mathbb{D}_{\mathcal{A}(0)}u_0\nu\cdot\tau-\beta u_0\cdot\tau=0,\quad &\text{on}&\quad\Sigma_s.
\end{aligned}
\right.
\eeq
We employ the Theorem 5.9 in \cite{GT2} to deduce that there exists a unique $(u_0,p_0)\in W_\delta^2\times\mathring{W}_\delta^1$, and
\beq
\|u_0\|_{W_\delta^2}^2+\|p_0\|_{\mathring{W}_\delta^1}^2\lesssim \|\pa_t\eta(0)\|_{W_\delta^{3/2}}^2\lesssim\|\pa_t\eta(0)\|_{3/2}^2.
\eeq
Clearly, from the embedding $W_\delta^2(\Om)\hookrightarrow H^1(\Om)$ and the boundary condition, $u_0\in\mathcal{V}(0)$.

Then we construct $\pa_tu(0)$ and $\pa_tp(0)$. In order to preserve the divergence free condition, we construct $D_tu(0)$ instead of $\pa_tu(0)$, where $D_tu$ is defined in \eqref{def:Dt_u}. Now we temporally differentiate the equation \eqref{eq:geometric perturbation}, then take $t=0$,
\beq\label{eq:initial_weak}
\left\{
\begin{aligned}
  &\dive_{\mathcal{A}(0)}S_{\mathcal{A}(0)}(\pa_tp(0),D_tu(0))=\tilde{F}(0),\quad &\text{in}&\quad \Om,\\
    &\dive_{\mathcal{A}(0)}D_tu(0)=0,\quad &\text{in}&\quad \Om,\\
    &S_{\mathcal{A}(0)}(\pa_tp(0),D_tu(0))\mathcal{N}(0)=g\pa_t\eta(0)\mathcal{N}(0)-\sigma\pa_1\left(\frac{\pa_1\pa_t\eta(0)}{(1+|\pa_1\zeta_0|)^{3/2}}\right)\mathcal{N}(0)\\
    &\quad+\pa_tF^3(0)\mathcal{N}(0)+\tilde{F}^4(0),\quad &\text{on}&\quad\Sigma,\\
    &(S_{\mathcal{A}(0)}(\pa_tp(0),D_tu(0))\nu-\beta D_tu(0))\cdot\tau=\tilde{F}^5,\quad &\text{on}&\quad\Sigma_s,\\
    &D_tu(0)\cdot\nu=0,\quad &\text{on}&\quad\Sigma_s,\\
    &D_tu(0)\cdot\mathcal{N}(0)=\pa_t^2\eta(0),\quad &\text{on}&\quad\Sigma,\\
    &\kappa\pa_t^2\eta(\pm\ell,0)+\kappa\pa_t\hat{\mathscr{W}}(\pa_t\eta(\pm\ell))(0)=\mp\sigma\left(\frac{\pa_1\pa_t\eta(0)}{(1+|\zeta_0|^2)^{3/2}}+\pa_tF^3(0)\right)(\pm\ell),
\end{aligned}
\right.
\eeq
where
\begin{align*}
\tilde{F}^1(0)&=-\dive_{\pa_t\mathcal{A}(0)}S_{\mathcal{A}(0)}(p_0,u_0)+\mu\dive_{\mathcal{A}(0)}\mathbb{D}_{\pa_t\mathcal{A}(0)}u_0+\mu\dive_{\mathcal{A}(0)}\mathbb{D}_{\mathcal{A}(0)}(R(0)u_0),\\
\pa_tF^3(0)&=\pa_z\mathcal{R}(\pa_1\zeta_0,\pa_1\eta_0)\pa_1\pa_t\eta(0),\\
\tilde{F}^4(0)&=\mu\mathbb{D}_{\mathcal{A}(0)}(R(0)u_0)\mathcal{N}(0)+\mu\mathbb{D}_{\pa_t\mathcal{A}(0)}u_0\mathcal{N}(0)\\
  &\quad+\left[g\eta_0-\sigma\pa_1\left(\frac{\pa_1\eta_0}{(+|\pa_1\zeta_0|^2)^{3/2}}+\mathcal{R}(\pa_1\zeta_0,\pa_1\eta_0)\right)\right]\pa_t\mathcal{N}(0),\\
\tilde{F}^5(0)&=\mu\mathbb{D}_{\mathcal{A}(0)}(R(0)u_0)\nu\cdot\tau+\mu\mathbb{D}_{\pa_t\mathcal{A}(0)}u_0\nu\cdot\tau+\beta R(0)u_0\cdot\tau.
\end{align*}
Then we have the pressureless weak formulation
\beq\label{weak_initial}
\begin{aligned}
  &((D_tu(0),w))+[D_tu(0)\cdot\mathcal{N}(0), w\cdot\mathcal{N}(0)]_\ell+[\pa_t\hat{\mathscr{W}}(\pa_t\eta)(0),w\cdot\mathcal{N}(0)]_\ell\\
  &=-(\pa_t\eta(0),w\cdot\mathcal{N}(0))_{1,\Sigma}-\int_{-\ell}^\ell\pa_z\mathcal{R}(\pa_1\zeta_0,\pa_1\eta_0)\pa_1\pa_t\eta(0)\pa_1(w\cdot\mathcal{N}(0))\\
  &\quad-\int_{-\ell}^\ell\left[g\eta_0-\sigma\pa_1\left(\frac{\pa_1\eta_0}{(+|\pa_1\zeta_0|^2)^{3/2}}+\mathcal{R}(\pa_1\zeta_0,\pa_1\eta_0)\right)\right]\pa_t\mathcal{N}(0)\cdot w-\int_{\Sigma_s}\beta (R(0)u_0\cdot\tau)(w\cdot\tau)J(0)\\
  &\quad-\int_\Om\left(\dive_{\pa_t\mathcal{A}(0)}S_{\mathcal{A}(0)}(p_0,u_0)\cdot w+\frac\mu2\mathbb{D}_{\pa_t\mathcal{A}(0)}u_0:\mathbb{D}_{\mathcal{A}(0)}w+\frac\mu2\mathbb{D}_{\mathcal{A}(0)}(R(0)u_0):\mathbb{D}_{\mathcal{A}(0)}w\right)J(0),
\end{aligned}
\eeq
for each $w\in \mathcal{V}(0)$. Then utilizing the last equation of \eqref{eq:initial_weak}, we may rewrite the weak formulation as
 \beq\label{new_weak_initial}
\begin{aligned}
  &B(D_tu(0),w):=((D_tu(0),w))+(D_tu(0)\cdot\mathcal{N}(0),w\cdot\mathcal{N}(0))_{1,\Sigma}\\
  &=(\pa_t^2\eta(0),w\cdot\mathcal{N}(0))_{1,\Sigma}-(\pa_t\eta(0),w\cdot\mathcal{N}(0))_{1,\Sigma}-\int_{-\ell}^\ell\pa_z\mathcal{R}(\pa_1\zeta_0,\pa_1\eta_0)\pa_1\pa_t\eta(0)\pa_1(w\cdot\mathcal{N}(0))\\
  &\quad-\int_{-\ell}^\ell\left[g\eta_0-\sigma\pa_1\left(\frac{\pa_1\eta_0}{(+|\pa_1\zeta_0|^2)^{3/2}}+\mathcal{R}(\pa_1\zeta_0,\pa_1\eta_0)\right)\right]\pa_t\mathcal{N}(0)\cdot w-\int_{\Sigma_s}\beta (R(0)u_0\cdot\tau)(w\cdot\tau)J(0)\\
  &\quad-\int_\Om\left(\dive_{\pa_t\mathcal{A}(0)}S_{\mathcal{A}(0)}(p_0,u_0)\cdot w+\frac\mu2\mathbb{D}_{\pa_t\mathcal{A}(0)}u_0:\mathbb{D}_{\mathcal{A}(0)}w+\frac\mu2\mathbb{D}_{\mathcal{A}(0)}(R(0)u_0):\mathbb{D}_{\mathcal{A}(0)}w\right)J(0)\\
  &\quad-[\pa_t^2\eta(0), w\cdot\mathcal{N}(0)]_\ell-[\pa_t\hat{\mathscr{W}}(\pa_t\eta)(0),w\cdot\mathcal{N}(0)]_\ell:=L(w).
\end{aligned}
\eeq
Since $B(\cdot,\cdot): \mathcal{V}(0)\times \mathcal{V}(0)\rightarrow\mathbb{R}$ is a bilinear mapping satisfying
\[
B(v,w)\lesssim \|v\|_{\mathcal{W}}\|w\|_{\mathcal{W}},\quad B(v,v)=\|v\|_{\mathcal{W}}^2,
\]
and $L:\mathcal{V}(0)\rightarrow\mathbb{R}$ is a bounded linear functional on $\mathcal{V}(0)$,
 the Lax-Milgram Theorem guarantees that there exists a unique $D_tu(0)\in \mathcal{V}(0)$ such that \eqref{weak_initial} holds for each $w\in \mathcal{V}(0)$. Moreover,
\beq
\|D_tu(0)\|_1^2\lesssim \|\eta_0\|_{W_\delta^{5/2}}^2+\|\pa_t\eta(0)\|_{3/2}^2+\|\pa_t^2\eta(0)\|_1^2.
\eeq
Now from Theorem 4.6 in \cite{GT2}, we may recover $\pa_tp(0)\in \mathring{H}^0(\Om)$ such that
\beq
\|\pa_tp(0)\|_0^2\lesssim \|\eta_0\|_{W_\delta^{5/2}}^2+\|\pa_t\eta(0)\|_{3/2}^2+\|\pa_t^2\eta(0)\|_1^2.
\eeq

\subsection{Compatibility}
In the construction of initial data above, $\eta_0$, $\pa_t\eta(0)$, and $\pa_t^2\eta(0)$ need to satisfy some compatibility conditions. At the corner points $x_1=\pm\ell$,
\beq\label{compati_1}
\kappa\pa_t\eta(0)+\kappa\hat{\mathscr{W}}(\pa_t\eta(0))=\mp\sigma\left(\frac{\pa_1\eta_0}{(1+|\pa_1\zeta_0|^2)^{3/2}}+\mathcal{R}(\pa_1\zeta_0,\pa_1\eta_0)\right),
\eeq
and
\beq\label{compati_2}
\kappa\pa_t^2\eta(0)+\kappa\hat{\mathscr{W}}^\prime(\pa_t\eta(0))\pa_t^2\eta(0)=\mp\sigma\left(\frac{\pa_1\pa_t\eta(0)}{(1+|\pa_1\zeta_0|^2)^{3/2}}+\pa_z\mathcal{R}(\pa_1\zeta_0,\pa_1\eta_0)\pa_1\pa_t\eta(0)\right).
\eeq

\section{Linear problem}

Suppose that $\eta$ is given and that $\mathcal{A}$, $J$, $\mathcal{N}$, etc. are determined in terms of $\eta$.  Before turning to an analysis of the linear problem, we define various quantities in terms of $\eta$:
\beq
\begin{aligned}
  &\mathfrak{D}(\eta):=\sum_{j=0}^1\|\pa_t^j\eta\|_{L^2W_\delta^{5/2}}^2+\sum_{j=0}^2\|\pa_t^j\eta\|_{L^2H^{3/2}}^2+\|\pa_t^3\eta\|_{L^2W_\delta^{1/2}}^2+\sum_{j=1}^3\|[\pa_t^j\eta]_\ell\|_{L^2([0,T])}^2,\\
  &\mathfrak{E}(\eta):=\|\eta\|_{L^\infty W_\delta^{5/2}}^2+\|\pa_t\eta\|_{L^\infty H^{3/2}}^2+\sum_{j=0}^2\|\pa_t^j\eta\|_{L^\infty H^1}^2,\\
  &\mathfrak{K}(\eta):=\mathfrak{D}(\eta)+\mathfrak{E}(\eta),
\end{aligned}
\eeq
and
\beq
\mathfrak{E}_0=\mathfrak{E}_0(\eta):=\|\eta_0\|_{W_\delta^{5/2}}^2+\|\pa_t\eta(0)\|_{L^\infty H^{3/2}}^2+\sum_{j=0}^2\|\pa_t^j\eta(0)\|_{L^\infty H^1}^2.
\eeq
Throughout this section, we always assume that $\mathfrak{K}(\eta)\le\alpha$ and $\alpha>0$ is sufficiently small.

In the rest sections, we write $d=dist(\cdot, N)$, where $N=\{(-\ell, \zeta_0(-\ell)), (\ell, \zeta_0(\ell))\}$ is the set of corner points of $\pa\Om$. In the subsequent estimates, the following lemma is useful.  The proof  is trivial, so we omit it.

\begin{lemma}\label{lem:distance}
  Suppose that $d=dist(\cdot, N)$ and that $0<\delta<1$. Then $d^{-\delta}\in L^r(\Om)$ for $2<r<\frac2\delta$.
\end{lemma}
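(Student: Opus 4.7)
The claim reduces to a local computation at each corner point. Since $N = \{(-\ell, \zeta_0(-\ell)), (\ell, \zeta_0(\ell))\}$ consists of only two points, I would first choose $\rho>0$ small enough that the balls $B_\rho(P)$, $P\in N$, are pairwise disjoint, and decompose $\Om = \Om_{\mathrm{far}} \cup \bigcup_{P\in N}\Om_P$ with $\Om_P := \Om\cap B_\rho(P)$ and $\Om_{\mathrm{far}} := \Om\setminus \bigcup_{P\in N}\overline{B_\rho(P)}$. On $\Om_{\mathrm{far}}$, by construction $d \geq \rho$, so $d^{-\delta}\leq \rho^{-\delta}$ is bounded; since $\Om$ has finite measure, this part belongs to $L^r(\Om_{\mathrm{far}})$ for every $r\in(1,\infty)$. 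In particular, the lower bound $r>2$ in the statement is not needed for integrability and is presumably retained only for consistency with how the lemma is invoked later in the paper.

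The substantive step is the near-corner estimate. Fix $P\in N$ and use polar coordinates $(\rho_0,\theta)$ centered at $P$. By the choice of $\rho$, $d(x)=|x-P|=\rho_0$ for $x\in\Om_P$, and because $\partial\Om$ is $C^2$ away from the corners with interior opening angle $\om\in(0,\pi)$ at $P$, the set $\Om_P$ is contained in a sector of bounded aperture at $P$. Therefore
\[
\int_{\Om_P} d^{-\delta r}\,\mathrm{d}x \;\lesssim\; \int_0^\rho \rho_0^{-\delta r}\,\rho_0\,\mathrm{d}\rho_0 \;=\; \int_0^\rho \rho_0^{\,1-\delta r}\,\mathrm{d}\rho_0,
\]
which is finite precisely when $1-\delta r > -1$, i.e.\ when $r < 2/\delta$. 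Summing over the (finitely many) corners and combining with the trivial $\Om_{\mathrm{far}}$ bound yields $d^{-\delta}\in L^r(\Om)$ for every $r\in(2,2/\delta)$.

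There is no serious obstacle: once the domain is localized at a corner, the argument reduces to the elementary fact that $|x|^{-\delta}$ lies in $L^r$ on a bounded planar neighborhood of the origin exactly when $\delta r < 2$. The only items worth a line of justification are that $d$ restricted to $\Om_P$ coincides with $|x-P|$ (immediate from the disjointness of the balls $B_\rho(P)$) and that $\Om_P$ has the described sectorial shape (immediate from the hypothesis that $\partial\Om$ is $C^2$ away from $N$).
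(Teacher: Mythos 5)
Your proof is correct and is the standard localization-to-the-corners argument; the paper omits the proof entirely (describing it as trivial), and what you wrote is surely the intended computation. The only minor remark is the one you already made yourself: the constraint $r>2$ plays no role in the integrability proof and is present only because of how the lemma is applied elsewhere.
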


For the purpose of constructing solutions to the nonlinear system, we need to consider the following modified linear problem
\beq\label{eq:modified_linear}
\left\{
\begin{aligned}
  &\dive_{\mathcal{A}}S_{\mathcal{A}}(p,u)=F^1,\quad&\text{in}&\quad\Om,\\
  &\dive_{\mathcal{A}}u=0,\quad&\text{in}&\quad\Om,\\
  &S_{\mathcal{A}}(p,u)\mathcal{N}=\left(\mathcal{L}(\xi+\epsilon\partial_t\xi)-\sigma\pa_1 F^3\right)\mathcal{N}+F^4,\quad&\text{on}&\quad\Sigma,\\
  &(S_{\mathcal{A}}(p,u)\nu-\beta u)\cdot\tau=F^5,\quad&\text{on}&\quad\Sigma_s,\\
  &u\cdot\nu=0,\quad&\text{on}&\quad\Sigma_s,\\
  &\pa_t\xi=u\cdot\mathcal{N},\quad&\text{on}&\quad (-\ell,\ell),\\
  &\mp\sigma\frac{\pa_1\xi}{(1+|\pa_1\zeta_0|^2)^{3/2}}(\pm\ell)=\kappa(u\cdot\mathcal{N})(\pm\ell)\pm\sigma F^3(\pm\ell)-\kappa\hat{\mathscr{W}}(\pa_t\eta(\pm\ell)),
\end{aligned}
\right.
\eeq
where $F^3=\mathcal{R}(\pa_1\zeta_0, \pa_1\eta)$ is defined in \eqref{def:R} and $\mathcal{L}(\varphi)=g\varphi-\sigma\pa_1\left(\frac{\varphi}{(1+|\pa_1\zeta_0|^2)^{3/2}}\right)$, and \eqref{eq:modified_linear} is endowed with the initial data $\xi(0)=\eta_0$, $\pa_t\xi(0)=\pa_t\eta(0)$ and $\pa_t^2\xi(0)=\pa_t^2\eta(0)$ satisfying the compatibility \eqref{compati_1} and \eqref{compati_2}. We also assume that $\epsilon$ satisfies
\beq\label{eq:ep}
\frac{\epsilon\sigma}{(1+\min|\pa_1\zeta_0|^2)^{3/2}}\le\frac14.
 \eeq
 Here we consider the $\epsilon-$ perturbation in order to close the energy estimates for twice temporal differentiation of equations.  See the introduction for a discussion of the motivation.

 \subsection{Initial data}\label{subsection_initial}
 Since the equation \eqref{eq:modified_linear} is different from \eqref{eq:linear_stokes}, we cannot expect that the initial data for \eqref{eq:modified_linear} are the same as in Section \ref{section_initial}. However, we can use the same method as in Section \ref{section_initial} to construct the new initial data for \eqref{eq:modified_linear}.    Since $\xi(0)=\eta_0$, $\pa_t\xi(0)=\pa_t\eta(0)$ and $\pa_t^2\xi(0)=\pa_t^2\eta(0)$, we use the argument of Section \ref{subsection_1} to construct the initial data $u_0^\epsilon\in W_\delta^2(\Om)$, $p_0^\epsilon\in\mathring{W}_\delta^1(\Om)$, $D_tu^\epsilon(0)\in H^1(\Om)$ and $\pa_tp^\epsilon(0)\in\mathring{H}^0(\Om)$.  An essential ingredient in this is that the boundary conditions in the $\epsilon-$dependent modified problem \eqref{eq:modified_linear} give rise to precisely the same compatibility conditions for $\eta(0),\partial_t \eta(0), \partial_t^2 \eta(0)$ as in Section \ref{section_initial}, and so we may avoid modifying the data to enforce the compatibility conditions.  The constructed data obey the following estimates:
 \beq
 \begin{aligned}
&\|u_0^\epsilon\|_{W_\delta^2}^2+\|p_0^\epsilon\|_{\mathring{W}_\delta^1}^2\lesssim\|\pa_t\eta(0)\|_{3/2}^2,\\
&\|D_tu^\epsilon(0)\|_1^2\lesssim \|\eta_0\|_{W_\delta^{5/2}}^2+\|\pa_t\eta(0)\|_{3/2}^2+\|\pa_t^2\eta(0)\|_1^2,\\
&\|\pa_tp^\epsilon(0)\|_0^2\lesssim \|\eta_0\|_{W_\delta^{5/2}}^2+\|\pa_t\eta(0)\|_{3/2}^2+\|\pa_t^2\eta(0)\|_1^2.
\end{aligned}
\eeq

\subsection{Weak solution}
To analyze \eqref{eq:modified_linear}, we need to consider two notations of solution: weak and strong. Using the following lemma, we define the weak solutions of \eqref{eq:modified_linear}.

\begin{lemma}
  Suppose that $(u,p,\xi)$ are smooth enough and satisfy \eqref{eq:modified_linear} and that $v\in\mathcal{W}(t)$. Then
  \begin{equation}\label{eq:weak_formulation}
    \begin{aligned}
      &((u,v))-(p,\dive_{\mathcal{A}}v)_{\mathcal{H}^0}+(\xi+\epsilon\partial_t\xi,v\cdot\mathcal{N})_{1,\Sigma}+[u\cdot\mathcal{N},v\cdot\mathcal{N}]_{\ell}\\
      &=\int_{\Om}F^1\cdot vJ-\int_{-\ell}^{\ell}\sigma F^3\pa_1(v\cdot\mathcal{N})+F^4\cdot v-\int_{\Sigma_s}F^5(v\cdot\tau)J-[v\cdot\mathcal{N},\hat{\mathscr{W}}(\pa_t\eta)]_\ell+\epsilon \mathfrak{b}(\pa_t\xi,v\cdot\mathcal{N})_\ell,
    \end{aligned}
  \end{equation}
  where  $\mathfrak{b}$ denotes the bilinear form
\begin{equation*}
  \mathfrak{b}(\pa_t\xi,v\cdot\mathcal{N})_\ell=\sigma\frac{\pa_1\pa_t\xi}{(1+|\pa_1\zeta_0|^2)^{3/2}}v\cdot\mathcal{N}(\ell)-\sigma\frac{\pa_1\pa_t\xi}{(1+|\pa_1\zeta_0|^2)^{3/2}}v\cdot\mathcal{N}(-\ell).
\end{equation*}
\end{lemma}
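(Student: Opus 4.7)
The plan is to derive the identity by testing the strong form of \eqref{eq:modified_linear} against $Jv$, invoking the Piola identity for the integration by parts, and then systematically substituting the boundary and contact-point conditions.

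First, I would multiply $\dive_\mathcal{A} S_\mathcal{A}(p,u)=F^1$ by $Jv$ and integrate over $\Om$. Since $\pa_k(J\mathcal{A}_{jk})=0$ by the Piola identity, integration by parts gives $\int_\Om F^1\cdot v\,J = \int_{\pa\Om} \bigl(S_\mathcal{A}(p,u)(J\mathcal{A}^\top\nu)\bigr)\cdot v - \int_\Om J\,S_\mathcal{A}(p,u):\nabla_\mathcal{A} v$. The symmetry of $\mathbb{D}_\mathcal{A} u$ decomposes the interior integrand as $S_\mathcal{A}:\nabla_\mathcal{A} v = p\,\dive_\mathcal{A} v - \tfrac{\mu}{2}\mathbb{D}_\mathcal{A} u:\mathbb{D}_\mathcal{A} v$, producing the $-(p,\dive_\mathcal{A} v)_{\mathcal{H}^0}$ term and the bulk half of $((u,v))$.

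Next, I would split the boundary integral as $\int_\Sigma + \int_{\Sigma_s}$. On $\Sigma$, the pushforward identity $J\mathcal{A}^\top \nu_0\,dS = \mathcal{N}\,dx_1$ combined with the surface boundary condition yields $\int_{-\ell}^\ell \mathcal{L}(\xi+\epsilon\pa_t\xi)(v\cdot\mathcal{N}) - \sigma\int_{-\ell}^\ell \pa_1 F^3\,(v\cdot\mathcal{N}) + \int_{-\ell}^\ell F^4 \cdot v$. On $\Sigma_s$, the condition $v\cdot\nu=0$ together with the slip condition $(S_\mathcal{A}(p,u)\nu - \beta u)\cdot\tau = F^5$ gives $\int_{\Sigma_s}\beta(u\cdot\tau)(v\cdot\tau)J + \int_{\Sigma_s} F^5(v\cdot\tau) J$, the first piece completing $((u,v))$.

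Finally, I integrate by parts in $x_1$ the $\mathcal{L}$ and $\pa_1 F^3$ integrals on $\Sigma$. The $\mathcal{L}$ integration by parts produces $(\xi+\epsilon\pa_t\xi, v\cdot\mathcal{N})_{1,\Sigma}$ plus the endpoint leftover $-\sigma\pa_1(\xi+\epsilon\pa_t\xi)/(1+|\pa_1\zeta_0|^2)^{3/2}(v\cdot\mathcal{N})\big|_{-\ell}^\ell$; the $\pa_1 F^3$ integration by parts produces a $-\sigma\int F^3\pa_1(v\cdot\mathcal{N})$ bulk term plus its own endpoint leftover. On the $\xi$ portion of the endpoint I apply the contact-line condition to generate the $[u\cdot\mathcal{N},v\cdot\mathcal{N}]_\ell$ and $\hat{\mathscr{W}}$ contributions together with a $\sigma F^3$ pointwise piece that cancels precisely against the endpoint leftover from the $\pa_1 F^3$ integration by parts. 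The $\epsilon\pa_t\xi$ portion of the endpoint has no contact-line counterpart and survives as the residue $\epsilon\mathfrak{b}(\pa_t\xi, v\cdot\mathcal{N})_\ell$. Collecting all pieces and rearranging produces \eqref{eq:weak_formulation}.

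The principal obstacle will be the careful bookkeeping of signs at the corner points $\pm\ell$: the two $\sigma F^3(\pm\ell)$ endpoint contributions from the two integrations by parts must cancel exactly, the $[u\cdot\mathcal{N}, v\cdot\mathcal{N}]_\ell$ and $\hat{\mathscr{W}}$ terms must land on the correct sides with correct signs, and the $\epsilon\pa_t\xi$ residue must assemble into precisely the stated antisymmetric form $\mathfrak{b}(\pa_t\xi, v\cdot\mathcal{N})_\ell$. A secondary point requiring care is the reduction of the Piola boundary term on $\Sigma_s$, where the pushforward $J\mathcal{A}^\top \nu$ need not be parallel to $\nu$, so one must use $v\cdot\nu=0$ on $\Sigma_s$ together with the structure of the test space $\mathcal{W}(t)$ to collapse the integrand to the stated tangential expression plus the $F^5$ remainder.
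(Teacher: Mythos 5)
Your proposal is correct and is precisely the argument the paper has in mind (the paper's proof is a single sentence saying "take the inner product with $v$, integrate by parts, employ the other equations"); your version simply carries out that computation in detail, including the $J\mathcal{A}\nu_0\,dS = \mathcal{N}\,dx_1$ change of measure on $\Sigma$, the Piola identity $\pa_j(J\mathcal{A}_{ij})=0$ for the bulk integration by parts, and the collapse on $\Sigma_s$ using $v\cdot\nu=0$ together with the fact that $J\mathcal{A}\nu$ is a scalar multiple of $\nu$ there. One remark on the place you yourself flagged as the main hazard: if you carry out the endpoint bookkeeping from \eqref{eq:modified_linear} exactly as printed, the $\hat{\mathscr{W}}$ contribution comes out with the opposite sign from that appearing in \eqref{eq:weak_formulation}. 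This is a sign typo in \eqref{eq:modified_linear} (and likewise in \eqref{eq:linear_stokes} and \eqref{eq:linear_perturbed_stokes}): the contact-point condition there should read $\mp\sigma\frac{\pa_1\xi}{(1+|\pa_1\zeta_0|^2)^{3/2}}(\pm\ell)=\kappa(u\cdot\mathcal{N})(\pm\ell)\pm\sigma F^3(\pm\ell)+\kappa\hat{\mathscr{W}}(\pa_t\eta(\pm\ell))$, which is what one obtains by linearizing the contact-point equation in \eqref{eq:geometric perturbation} and is also the sign implicit in the Galerkin scheme \eqref{eq: Galerkin}. With that corrected sign, the endpoint substitution produces exactly $-[u\cdot\mathcal{N},v\cdot\mathcal{N}]_\ell - [v\cdot\mathcal{N},\hat{\mathscr{W}}(\pa_t\eta)]_\ell$ from the $\xi$ and $F^3$ pieces, and the $\epsilon\pa_t\xi$ piece, which has no contact-point counterpart, survives as $\epsilon\mathfrak{b}(\pa_t\xi,v\cdot\mathcal{N})_\ell$, matching \eqref{eq:weak_formulation}.
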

\begin{proof}
  This can be shown in the usual way by taking the inner product of the first equation in \eqref{eq:modified_linear} with $u$, and integrating by parts over $\Om$, then employing all of the other equations in \eqref{eq:modified_linear}.  We omit the details here for the sake of brevity.
\end{proof}
\begin{definition}
 Suppose that $\mathcal{F}\in(\mathcal{H}^1_T)^\ast$. A weak solution to \eqref{eq:geometric perturbation} is a triple $(u,p,\xi)$, where
  \beq
  \begin{aligned}
  u\in L^2([0,T];{}_0H^1(\Om))\quad \text{with}\quad u(\cdot,t)\in\mathcal{V}(t)\ \text{for a.e.}\ t,\\
   p\in L^2([0,T],L^2(\Om)),\quad \xi\in L^2([0,T];H^1(-\ell,\ell)),
   \end{aligned}
  \eeq
  that satisfies
  \begin{equation}
    \begin{aligned}
      &\int_0^T((u,v))-\int_0^T(p,\dive_{\mathcal{A}}v)_{\mathcal{H}^0}+\int_0^T(\xi+\epsilon\partial_t\xi,v\cdot\mathcal{N})_{1,\Sigma}+\int_0^T[u\cdot\mathcal{N},v\cdot\mathcal{N}]_{\ell}\\
      &=\int_0^T\int_{\Om}F^1\cdot vJ-\int_0^T\int_{-\ell}^{\ell}\sigma F^3\pa_1(v\cdot\mathcal{N})+F^4\cdot v-\int_0^T\int_{\Sigma_s}F^5(v\cdot\tau)J-\int_0^T[v\cdot\mathcal{N}, \hat{\mathscr{W}}(\pa_t\eta)]_\ell\\
      &\quad+\int_0^T\epsilon \mathfrak{b}(\pa_t\xi,v\cdot\mathcal{N})_\ell,
    \end{aligned}
  \end{equation}
  for a.e. $t$ and each $v\in\mathcal{W}(t)$. If we take the test function $v\in \mathcal{V}(t)$, we have the pressureless weak solution $(u,\xi)$ satisfies
  \begin{equation}\label{eq:weak_pressureless}
    \begin{aligned}
      &\int_0^T((u,v))+\int_0^T(\xi+\epsilon\partial_t\xi,v\cdot\mathcal{N})_{1,\Sigma}+\int_0^T[u\cdot\mathcal{N},v\cdot\mathcal{N}]_{\ell}\\
      &=\int_0^T\int_{\Om}F^1\cdot vJ-\int_0^T\int_{-\ell}^{\ell}\sigma F^3\pa_1(v\cdot\mathcal{N})+F^4\cdot v-\int_0^T\int_{\Sigma_s}F^5(v\cdot\tau)J-\int_0^T[v\cdot\mathcal{N}, \hat{\mathscr{W}}(\pa_t\eta)]_\ell\\
      &\quad+\int_0^T\epsilon \mathfrak{b}(\pa_t\xi,v\cdot\mathcal{N})_\ell.
    \end{aligned}
  \end{equation}
\end{definition}
\begin{remark}\label{def:dual}
For convenience, we write
\beq
\left<\mathcal{F},v\right>_{(\mathcal{H}^1)^\ast}=\int_{\Om}F^1\cdot vJ-\int_{-\ell}^{\ell}F^4\cdot v-\int_{\Sigma_s}F^5(v\cdot\tau)J,
\eeq
and
\beq
\left<\mathcal{F},v\right>_{(\mathcal{H}^1_T)^\ast}=\int_0^T\int_{\Om}F^1\cdot vJ-\int_0^T\int_{-\ell}^{\ell}F^4\cdot v-\int_0^T\int_{\Sigma_s}F^5(v\cdot\tau)J,
\eeq
for each $v\in \mathcal{V}$.
We also write
\beq
\mathfrak{b}(\varphi,\varphi)_\ell=\mathfrak{b}(\varphi)_\ell^2.
\eeq
\end{remark}

In the following, we will see that weak solutions to \eqref{eq:weak_pressureless} will arise  as a byproduct of the construction of strong solutions to \eqref{eq:weak_pressureless}. Hence, we now ignore the existence of weak solutions and record a uniqueness result based on some integral equalities and bounds satisfied by weak solutions.
\begin{proposition}
  Weak solutions to \eqref{eq:weak_pressureless} are unique.
\end{proposition}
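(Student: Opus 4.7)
The plan is to subtract two weak solutions and test against their difference. Suppose $(u_1,\xi_1)$ and $(u_2,\xi_2)$ are two weak solutions to \eqref{eq:weak_pressureless} achieving the same initial data and corresponding to the same given $\eta$ and forcing $F^1,F^3,F^4,F^5$. Set $U:=u_1-u_2$ and $\Xi:=\xi_1-\xi_2$. By linearity of \eqref{eq:weak_pressureless} in $(u,\xi)$, the pair $(U,\Xi)$ satisfies the homogeneous pressureless weak formulation: for every $v\in\mathcal{V}_T$,
\beq\label{eq:uniq_homog}
\int_0^T((U,v))+\int_0^T(\Xi+\epsilon\pa_t\Xi,v\cdot\mathcal{N})_{1,\Sigma}+\int_0^T[U\cdot\mathcal{N},v\cdot\mathcal{N}]_\ell=0,
\eeq
where the $\hat{\mathscr{W}}$ boundary term has canceled because $\pa_t\eta$ is fixed. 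Moreover, subtracting the kinematic identities $\pa_t\xi_i=u_i\cdot\mathcal{N}$ yields $\pa_t\Xi=U\cdot\mathcal{N}$ on $(-\ell,\ell)$, and subtracting the initial conditions gives $\Xi(0)=0$.

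Next, since both $u_1$ and $u_2$ lie in $\mathcal{V}(t)$ for a.e.\ $t$, the difference $U$ lies in $\mathcal{V}_T$ and is admissible as a test function in \eqref{eq:uniq_homog}. Substituting $v=U$ and using $U\cdot\mathcal{N}=\pa_t\Xi$, we obtain
\beq\label{eq:uniq_main}
\int_0^T\|U\|_{{}_0\mathcal{H}^1}^2+\int_0^T(\Xi,\pa_t\Xi)_{1,\Sigma}+\epsilon\int_0^T\|\pa_t\Xi\|_{1,\Sigma}^2+\int_0^T[\pa_t\Xi]_\ell^2=0.
\eeq
The bilinear form $(\cdot,\cdot)_{1,\Sigma}$ being symmetric and time-independent, an integration by parts in $t$ together with $\Xi(0)=0$ gives
\beq
\int_0^T(\Xi,\pa_t\Xi)_{1,\Sigma}=\tfrac12\|\Xi(T)\|_{1,\Sigma}^2,
\eeq
and all four terms on the left-hand side of \eqref{eq:uniq_main} are manifestly nonnegative. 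Hence each vanishes: $U\equiv 0$ in $L^2([0,T];{}_0\mathcal{H}^1)$, $\pa_t\Xi\equiv 0$ in $L^2([0,T];H^1(-\ell,\ell))$, and combined with $\Xi(0)=0$ this forces $\Xi\equiv 0$. Uniqueness follows.

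The main technical obstacle is the time integration by parts, since the definition of weak solution only yields $\Xi\in L^2([0,T];H^1(-\ell,\ell))$ a priori. However, the kinematic identity $\pa_t\Xi=U\cdot\mathcal{N}$ together with $U\in L^2([0,T];{}_0H^1(\Om))$ and the trace theorem give $\pa_t\Xi\in L^2([0,T];H^{1/2}(-\ell,\ell))$, so $\Xi\in H^1([0,T];H^{1/2})\cap L^2([0,T];H^1)$. By a standard interpolation/Lions-type lemma this embeds in $C([0,T];H^{3/4})$, which suffices to make sense of $\Xi(T)$ and $\Xi(0)$ pointwise in time and to justify the energy identity; if desired, one can instead smooth $\Xi$ in $t$ by a Friedrichs mollifier, derive the identity for the mollified quantity, and pass to the limit.
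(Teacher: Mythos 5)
Your proof omits a term that does not cancel, and the whole point of the $\epsilon$-smallness assumption \eqref{eq:ep} is to control it. Re-examine the right-hand side of \eqref{eq:weak_pressureless}: besides the forcing terms $F^1, F^3, F^4, F^5$ (which indeed vanish for the difference) and the $\hat{\mathscr{W}}(\pa_t\eta)$ term (which cancels because $\eta$ is the fixed datum), there is the term $\int_0^T \epsilon\,\mathfrak{b}(\pa_t\xi, v\cdot\mathcal{N})_\ell$. The argument of $\mathfrak{b}$ here is $\pa_t\xi$, the \emph{unknown} surface function, not the given $\eta$; it is linear in $\xi$ and therefore survives subtraction of the two solutions. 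The correct homogeneous identity is
\begin{equation*}
\int_0^T((U,v))+\int_0^T(\Xi+\epsilon\pa_t\Xi,v\cdot\mathcal{N})_{1,\Sigma}+\int_0^T[U\cdot\mathcal{N},v\cdot\mathcal{N}]_\ell-\epsilon\int_0^T\mathfrak{b}(\pa_t\Xi,v\cdot\mathcal{N})_\ell=0,
\end{equation*}
and testing with $v=U$ (using $U\cdot\mathcal{N}=\pa_t\Xi$) yields
\begin{equation*}
\int_0^T\|U\|_{{}_0\mathcal{H}^1}^2+\tfrac12\|\Xi(T)\|_{1,\Sigma}^2+\epsilon\int_0^T\|\pa_t\Xi\|_{1,\Sigma}^2+\int_0^T[\pa_t\Xi]_\ell^2-\epsilon\int_0^T\mathfrak{b}(\pa_t\Xi)_\ell^2=0.
\end{equation*}
The last contribution has no definite sign: $\mathfrak{b}(\varphi)_\ell^2$ is a boundary product of $\pa_1\varphi$ and $\varphi$ at $\pm\ell$, not a square. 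Your claim that ``all four terms on the left-hand side are manifestly nonnegative'' is therefore obtained only by having silently dropped this fifth term. The paper keeps it and appeals to \eqref{eq:ep} to deduce that the combination $\epsilon\|\pa_t\Xi\|_{1,\Sigma}^2 + [\pa_t\Xi]_\ell^2 - \epsilon\,\mathfrak{b}(\pa_t\Xi)_\ell^2$ is nonnegative. To repair your argument, restore the $\mathfrak{b}$ term and then absorb it via \eqref{eq:ep} (bounding the mixed boundary product by trace and Young's inequality against the $\epsilon\|\cdot\|_{1,\Sigma}^2$ and $[\cdot]_\ell^2$ terms).

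Beyond that, your structure matches the paper's: subtract, test with the difference, integrate by parts in time. Your side discussion of why the time integration by parts is legitimate (via $\pa_t\Xi = U\cdot\mathcal{N}$ and trace theory) is a reasonable elaboration of what the paper does implicitly by testing with $w\chi_{[0,t]}$; this part is fine.
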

\begin{proof}
  If $(u^1,\xi^1)$ and $(u^2,\xi^2)$ are both weak solutions to \eqref{eq:weak_pressureless}, then $(w=u^1-u^2,\theta=\xi^1-\xi^2)$ is a weak solution with $F^1=F^3=F^4=F^5=0$ and the initial data $w(0)=\theta(0)=0$. Using the test function $w\chi_{[0,t]}\in\mathcal{V}_T$, where $\chi_{[0,t]}$ is a temporal indicator function, we have that
  \beq\label{eq:weak_diff}
  \frac12\|\theta(t)\|_{1,\Sigma}^2+\epsilon\int_0^t\|w\cdot\mathcal{N}\|_{1,\Sigma}^2+\int_0^t((w(s),w(s)))\,\mathrm{d}s+\int_0^t[w\cdot\mathcal{N}(t)]_\ell^2-\epsilon \mathfrak{b}(w\cdot\mathcal{N},w\cdot\mathcal{N})_\ell\,\mathrm{d}s=0.
  \eeq
  Since the bounds \eqref{eq:ep} for $\epsilon$, $\int_0^t[w\cdot\mathcal{N}(t)]_\ell^2-\epsilon \mathfrak{b}(w\cdot\mathcal{N},w\cdot\mathcal{N})_\ell\,\mathrm{d}s\ge0$. Thus \eqref{eq:weak_diff} implies that $w=0$, $\theta=0$. Hence, weak solutions to \eqref{eq:weak_pressureless} are unique.
\end{proof}

\subsection{Strong solution}

Before we define strong solutions, we need to define an operator $D_t$ via
\beq\label{def:Dt_u}
D_tu:=\pa_tu-Ru\quad\text{for}\quad R:=\pa_tMM^{-1},
\eeq
with $M=K\nabla\Phi$, where $K$ and $\Phi$ are defined as in \eqref{eq:components} and \eqref{def:map}, respectively. It is easy to see that $D_t$ preserves the $\dive_{\mathcal{A}}$--free condition since
\beq
  J\dive_{\mathcal{A}}(D_tv)=J\dive_{\mathcal{A}}(M\pa_t(M^{-1}v))=\dive(\pa_t(M^{-1}v))=\pa_t\dive(M^{-1}v)=\pa_t(J\dive_{\mathcal{A}}v),
\eeq
where in the second and last equality, we used the equality $J\dive_{\mathcal{A}}v=\dive(M^{-1}v)$, which is proved, according to Lemma \ref{lem:properties_A} and the definition \eqref{def:M} of $M$, as
\beq
J\dive_{\mathcal{A}}v=J\mathcal{A}_{ij}\pa_jv_i=\pa_j(J\mathcal{A}_{ij}v_i)=\dive(J\mathcal{A}^\top v)=\dive(M^{-1}v).
\eeq

We now give our definition of strong solutions.
\begin{definition}
  Suppose that the forcing functions satisfy
  \beq\label{force_condition}
  \begin{aligned}
    F^1\in L^2([0, T];W_\delta^0(\Om)), \ F^3\in L^2([0, T];W_\delta^{3/2}(\Sigma)),\\
    F^4\in L^2([0, T];W_\delta^{1/2}(\Sigma)),\ F^5\in L^2([0, T];W_\delta^{1/2}(\Sigma_s)),\\
    \mathcal{F}\in C^0([0, T];(\mathcal{H}^1)^\ast),\ \pa_t\mathcal{F}\in L^2([0, T];(\mathcal{H}^1)^\ast).
  \end{aligned}
  \eeq
  We also assume that the initial data are the same as in Section \ref{subsection_initial}. If there exists a pair $(u,p,\xi)$ achieving the initial data and satisfying the \eqref{eq:modified_linear} in the strong sense of
  \beq\label{strong_1}
    u\in L^2([0, T];W_\delta^2(\Om))\cap\mathcal{V}_T,\ p\in L^2([0, T];\mathring{W}_\delta^1(\Om)),\ \xi\in L^2([0, T];W_\delta^{5/2}(\Sigma)),
  \eeq
  and
  \beq\label{strong_2}
  \begin{aligned}
  \pa_t^j u\in L^2([0, T];H^1(\Om)),\ \pa_t^j u\in L^2([0, T];H^0(\Sigma_s)),\ [\pa_t^ju\cdot\mathcal{N}]_\ell\in L^2([0, T]),\\
   \pa_t^jp\in L^2([0, T];H^0(\Om)),\ \pa_t^j\xi\in L^2([0, T]; H^{3/2}(\Sigma)),
  \end{aligned}
  \eeq
  for $j=0, 1$, we call it a strong solution.
\end{definition}

\begin{lemma}\label{lem:continuous}
Suppose that the right-hand side of the following is finite. Then $u\in C^0([0,T];H^1(\Om))$, and
  \[
  \|u\|_{L^\infty H^1}^2\lesssim \|u_0^\epsilon\|_{W_\delta^2}^2+\|u\|_{L^2H^1}^2+\|\pa_tu\|_{L^2H^1}^2.
  \]
\end{lemma}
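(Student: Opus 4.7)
The plan is to recognize this as a standard consequence of the one-dimensional Sobolev embedding $H^1((0,T);X)\hookrightarrow C^0([0,T];X)$, applied with $X=H^1(\Omega)$. The strong-solution regularity \eqref{strong_2} (with $j=0,1$) together with the hypothesis that the right-hand side is finite guarantees $u\in L^2([0,T];H^1(\Omega))$ and $\partial_t u\in L^2([0,T];H^1(\Omega))$, i.e.\ $u\in H^1((0,T);H^1(\Omega))$. The embedding then yields (a representative of) $u$ in $C^0([0,T];H^1(\Omega))$, so in particular the trace $u(0)$ makes sense in $H^1(\Omega)$, and from the initial-data construction in Section \ref{subsection_initial} we have $u(0)=u_0^\epsilon\in W_\delta^2(\Omega)\hookrightarrow H^1(\Omega)$.

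For the quantitative bound, I would apply the fundamental theorem of calculus in the $H^1(\Omega)$-valued sense. For $u\in H^1((0,T);H^1(\Omega))$ and any $t\in[0,T]$,
\begin{equation*}
\|u(t)\|_1^2=\|u(0)\|_1^2+2\int_0^t(\partial_t u(\tau),u(\tau))_{H^1(\Omega)}\,\mathrm{d}\tau,
\end{equation*}
which is justified by standard density of $C^1([0,T];H^1(\Omega))$ in $H^1((0,T);H^1(\Omega))$. Applying Cauchy--Schwarz and Young's inequality on the right gives
\begin{equation*}
\|u(t)\|_1^2\le\|u(0)\|_1^2+\int_0^T\bigl(\|\partial_t u(\tau)\|_1^2+\|u(\tau)\|_1^2\bigr)\,\mathrm{d}\tau,
\end{equation*}
and taking the supremum over $t\in[0,T]$ combined with $\|u(0)\|_1=\|u_0^\epsilon\|_1\lesssim\|u_0^\epsilon\|_{W_\delta^2}$ (using the embedding $W_\delta^1(\Omega)\hookrightarrow H^0(\Omega)$, $W_\delta^2(\Omega)\hookrightarrow H^1(\Omega)$ recorded earlier in the section on weighted Sobolev spaces) produces the claimed estimate.

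There is no real obstacle to this proof; the only mild subtlety is the justification of the FTC identity in the vector-valued setting, which is entirely standard given the $H^1$-in-time regularity. I would therefore keep the argument brief and mostly cite the embedding and density facts rather than reproducing them in detail.
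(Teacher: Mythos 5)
Your proof is correct and takes essentially the same route as the paper: the paper computes $\tfrac{d}{dt}\|u\|_{H^1}^2$, bounds it by $\|u\|_{H^1}^2+\|\partial_t u\|_{H^1}^2$ via Cauchy--Schwarz/Young, integrates, and applies the embedding $W_\delta^2(\Om)\hookrightarrow H^1(\Om)$, which is exactly your FTC-plus-Young argument in differential form. Your extra remark invoking $H^1((0,T);X)\hookrightarrow C^0([0,T];X)$ makes the continuity claim slightly more explicit than the paper does, but the substance is identical.
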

\begin{proof}
We first estimate
  \begin{align*}
    \frac{d}{dt}\|u\|_{H^1}^2&=\int_\Om 2u\cdot\pa_tu+2\nabla u:\nabla\pa_tu\\
    &\le \|u\|_{H^1}^2+\|\pa_tu\|_{H^1}^2.
  \end{align*}
 This may be integrated  over $[0, T]$ to see that
  \begin{align*}
  \|u\|_{L^\infty H^1}^2&\le \|u_0^\epsilon\|_{H^1}^2+\|u\|_{L^2H^1}^2+\|\pa_tu\|_{L^2H^1}^2\\
  &\lesssim \|u_0^\epsilon\|_{W_\delta^2}^2+\|u\|_{L^2H^1}^2+\|\pa_tu\|_{L^2H^1}^2,
  \end{align*}
  where the last inequality is obtained by the embedding $W_\delta^2(\Om)\hookrightarrow H^1(\Om)$.
\end{proof}

Now we state our main theorem for the strong solutions.
  \begin{theorem}\label{thm:linear_low}
    Suppose that the forcing terms $F^1$, $F^4$, and $F^5$ satisfy the condition \eqref{force_condition}, that the initial data are the same as Section \ref{subsection_initial}.
    Suppose that $\mathfrak{K}(\eta)\le\alpha$ is smaller than $\alpha_0$ in Lemma \ref{lem:equivalence_norm} and Theorem 5.9 in \cite{GT2}.
    Then there exists a unique strong solution $(u,p,\xi)$ solving \eqref{eq:modified_linear} such that
    $(u,p,\xi)$ satisfies \eqref{strong_1} and \eqref{strong_2}. The solution obeys the estimates
    \beq\label{est:galerkin}
  \begin{aligned}
  &\|u\|_{L^2 H^1}^2+\|u\|_{L^2 H^0(\Sigma_s)}^2+\|[u\cdot\mathcal{N}]_\ell\|_{L^2([0,T])}^2+\|u\|_{L^2 W_\delta^2}^2+\|\pa_tu\|_{L^2H^1}^2+\|\pa_tu\|_{L^2 H^0(\Sigma_s)}^2\\
  &\quad+\|[\pa_tu\cdot\mathcal{N}]_\ell\|_{L^2([0,T])}^2+\|p\|_{L^2 H^0}^2+\|p\|_{L^2\mathring{W}_\delta^1}^2+\|\pa_tp\|_{L^2H^0}^2+\|\xi\|_{L^\infty H^1}^2+\|\xi\|_{L^2 H^{3/2}}^2\\
  &\quad+\|\xi\|_{L^2 W_\delta^{5/2}}^2
  +\|\pa_t\xi\|_{L^\infty H^1}^2+\|\pa_t\xi\|_{L^2H^{3/2}}^2\\
  &\lesssim C(\epsilon)T\mathfrak{E}(\eta)+\mathfrak{E}_0+\|\mathcal{F}(0)\|_{(\mathcal{H}^1)^\ast}^2+\mathfrak{K}(\eta)+\mathfrak{E}(\eta)(\|F^1\|_{L^2 W_\delta^0}^2+\|F^4\|_{L^2 W_\delta^{1/2}}^2+\|F^5\|_{L^2 W_\delta^{1/2}}^2)\\
  &\quad+(1+\mathfrak{E}(\eta))\|\pa_t(F^1-F^4-F^5)\|_{(\mathcal{H}^1_T)^{\ast}}^2.
  \end{aligned}
  \eeq
    Moreover, $(D_tu,\pa_tp, \pa_t\xi)$ satisfies
    \beq\label{eq:weak_dt_u}
    \left\{
    \begin{aligned}
      &-\mu\Delta_{\mathcal{A}}D_tu+\nabla_{\mathcal{A}}\pa_tp=D_tF^1+G^1,\quad&\text{in}&\quad\Om,\\
      &\dive_{\mathcal{A}}(D_tu)=0,\quad&\text{in}&\quad\Om,\\
      &S_{\mathcal{A}}(\pa_tp, D_tu)\mathcal{N}=\mathcal{L}(\pa_t\xi+\epsilon\pa_t^2\xi)\mathcal{N}-\sigma\pa_1\pa_tF^3\mathcal{N}+\pa_tF^4+G^4,\quad&\text{on}&\quad\Sigma,\\
      &(S_{\mathcal{A}}(\pa_tp, D_tu)\nu-\beta D_tu)\cdot\tau=\pa_tF^5+G^5,\quad&\text{on}&\quad\Sigma_s,\\
      &D_tu\cdot\nu=0,\quad&\text{on}&\quad\Sigma_s,\\
      &\pa_t^2\xi=D_tu\cdot\mathcal{N},\quad&\text{on}&\quad\Sigma,\\
      &\mp\sigma\frac{\pa_1\pa_t\xi}{(1+|\pa_1\zeta_0|^2)^{3/2}}(\pm\ell)=\kappa(D_tu\cdot\mathcal{N})(\pm\ell)\pm\sigma\pa_tF^3(\pm\ell)-\kappa\pa_t\hat{\mathscr{W}}((\pa_t\eta)(\pm\ell)).
    \end{aligned}
    \right.
    \eeq
    in the weak sense of \eqref{eq:weak_pressureless}, where $G^1$ is defined by
    \beq
    G^1=R^\top\nabla_{\mathcal{A}}p+\dive_{\mathcal{A}}\left(\mathbb{D}_{\mathcal{A}}(Ru)+\mathbb{D}_{\pa_t\mathcal{A}}u-R\mathbb{D}_{\mathcal{A}}u\right),
    \eeq
    and $G^4$ by
    \beq
    G^4=\mu\mathbb{D}_{\mathcal{A}}(Ru)\mathcal{N}-(pI-\mu\mathbb{D}_{\mathcal{A}}u)\pa_t\mathcal{N}+\mu\mathbb{D}_{\pa_t\mathcal{A}}u\mathcal{N}+\mathcal{L}(\xi+\epsilon\pa_t\xi)\pa_t\mathcal{N}-\sigma\pa_1F^3\pa_t\mathcal{N},
    \eeq
    $G^5$ by
    \beq
    G^5=(\mu\mathbb{D}_{\mathcal{A}}(Ru)\nu+\mu\mathbb{D}_{\pa_t\mathcal{A}}u\nu+\beta Ru)\cdot\tau
    \eeq
    More precisely, \eqref{eq:weak_dt_u} holds in the weak sense of
    \beq
    \begin{aligned}
      &((\pa_tu,v))+(\pa_t\xi+\epsilon\pa_t^2\xi,v\cdot\mathcal{N})_{1,\Sigma}+[\pa_tu\cdot\mathcal{N},v\cdot\mathcal{N}]_\ell+[\hat{\mathscr{W}}^\prime\pa_t^2\eta,v\cdot\mathcal{N}]_\ell-\epsilon \mathfrak{b}(\pa_t\xi,v\cdot\mathcal{N})_\ell\\
   &=(\xi+\epsilon\pa_t\xi,R v\cdot\mathcal{N})_{1,\Sigma}-(p,\dive_{\mathcal{A}}(Rv))_{\mathcal{H}^0}+\int_{\Om}\left[\pa_tF^1\cdot v+\pa_tJKF^1\cdot v\right]J\\
  &\quad-\int_{-\ell}^{\ell}[\pa_tF^3\pa_1(v\cdot\mathcal{N})+F^3\pa_1(v\cdot\pa_t\mathcal{N})+\pa_tF^4\cdot v]-\int_{\Sigma_s}\left[\pa_tF^5 v+\pa_tJKF^5v\right]\cdot\tau J\\\
   &\quad-\int_\Om\frac{\mu}{2}(\mathbb{D}_{\pa_t\mathcal{A}}u:\mathbb{D}_{\mathcal{A}}v+\mathbb{D}_{\mathcal{A}}u:\mathbb{D}_{\pa_t\mathcal{A}}v+\pa_tJK\mathbb{D}_{\mathcal{A}}u:\mathbb{D}_{\mathcal{A}}v)J-\int_{\Sigma_s}\beta(u\cdot\tau)(v\cdot\tau)\pa_tJ.
    \end{aligned}
    \eeq
  \end{theorem}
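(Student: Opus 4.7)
The plan is to prove existence via a Galerkin approximation adapted to the time-dependent divergence-free constraint, followed by a priori energy--dissipation estimates and a passage to the limit. The key structural observation is that the kinematic condition $\pa_t\xi = u\cdot\mathcal{N}$ integrates to $\xi(t) = \eta_0 + \int_0^t u\cdot\mathcal{N}\,ds$, so $\xi$ (and hence $\xi + \epsilon\pa_t\xi$) is completely determined by $u$ once the initial datum is fixed. Substituting this into the pressureless weak formulation \eqref{eq:weak_pressureless} yields an integro-differential problem solely for $u$, which is well-suited to Galerkin. Concretely I would take the time-dependent basis $\{\psi_k(t)\}_{k\ge1}$ of $\mathcal{V}(t)$ provided by Theorem 4.3 of \cite{GT1}, form approximations $u^n(t) = \sum_{k=1}^n d_k^n(t)\psi_k(t) \in \mathcal{V}(t)$ that automatically satisfy $\dive_\mathcal{A} u^n = 0$ and $u^n\cdot\nu = 0$ on $\Sigma_s$, and let $\xi^n$ be defined by the integrated kinematic relation. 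Testing \eqref{eq:weak_pressureless} against $v = \psi_j$ gives a Volterra-type integral ODE system for the coefficients $d_k^n$; since the Gram matrix $((\psi_k,\psi_j)_{\mathcal{W}})$ is uniformly invertible and the forcing from $\hat{\mathscr{W}}(\pa_t\eta)$ involves only the prescribed $\eta$, this system is solvable on $[0,T]$ by Picard iteration combined with a Gronwall argument.

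Next I would derive two levels of a priori estimates uniform in $n$. Testing with $v = u^n$ produces control of $\int_0^t ((u^n,u^n)) + [u^n\cdot\mathcal{N}]_\ell^2$ along with $\|\xi^n\|_{1,\Sigma}^2$ and, crucially, the extra term $\epsilon\int_0^t \|\pa_t\xi^n\|_{1,\Sigma}^2$; the bound \eqref{eq:ep} is precisely what is needed to absorb the indefinite boundary contribution $\epsilon \mathfrak{b}(\pa_t\xi^n,u^n\cdot\mathcal{N})_\ell$ into the Korn coercivity of $((\cdot,\cdot))$ plus $[\cdot]_\ell^2$. To upgrade to time derivatives, I would differentiate the weak formulation in $t$ and test against $v = D_t u^n$ rather than $\pa_t u^n$, since Propositions \ref{prop:isomorphism} and \ref{prop:solid_boundary} guarantee that $D_t u^n$ remains in $\mathcal{V}(t)$; the commutator terms produced on the left collapse into the quantities $G^1, G^4, G^5$ appearing in \eqref{eq:weak_dt_u}, and are absorbed using the smallness $\mathfrak{K}(\eta) \le \alpha$, Lemma \ref{lem:equivalence_norm}, and the dual-norm hypothesis on $\pa_t\mathcal{F}$. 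The constructed data of Section \ref{subsection_initial} give a uniform bound on $D_t u^n(0)$, which seeds the dissipation integral and produces the $\mathfrak{E}_0 + \|\mathcal{F}(0)\|_{(\mathcal{H}^1)^\ast}^2$ contribution on the right of \eqref{est:galerkin}.

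After passing to weak limits in the appropriate energy spaces and verifying that the limit attains the initial data, I would recover the pressure by the de Rham / inf-sup construction of Theorem 4.6 in \cite{GT2}, giving $p \in L^2\mathring{H}^0$ and $\pa_t p \in L^2\mathring{H}^0$. The elliptic $\mathcal{A}$-Stokes regularity of Theorem 5.9 in \cite{GT2}, applied time-slice by time-slice with the Navier data $\mathcal{L}(\xi+\epsilon\pa_t\xi) - \sigma\pa_1 F^3$ on $\Sigma$ and $F^5$ on $\Sigma_s$, then upgrades $(u,p)$ to $L^2 W_\delta^2 \times L^2 \mathring{W}_\delta^1$ and, simultaneously, lifts $\xi$ to $L^2 W_\delta^{5/2}$ since the mean-curvature-type operator $\mathcal{L}$ is elliptic in $\xi$ and the corner boundary conditions at $\pm\ell$ are of Robin type with sign $+\kappa u\cdot\mathcal{N}$. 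Uniqueness of the strong solution follows by subtracting two solutions and applying the weak-solution energy identity, which as shown earlier in the excerpt forces $w=0$, $\theta=0$. The weak formulation \eqref{eq:weak_dt_u} for $(D_t u, \pa_t p, \pa_t\xi)$ is then obtained by direct temporal differentiation of \eqref{eq:weak_formulation}, with the terms generated by differentiating $\mathcal{A}$, $J$, and $\mathcal{N}$ reassembling into $G^1, G^4, G^5$ and the boundary forcing $\hat{\mathscr{W}}'(\pa_t\eta)\pa_t^2\eta$ arising from the last equation of \eqref{eq:modified_linear}.

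The main obstacle I anticipate is maintaining careful bookkeeping of the $\epsilon$-dependence in the estimate \eqref{est:galerkin}, so that the factor $C(\epsilon)$ appears only in front of $T\mathfrak{E}(\eta)$ and not in front of $\mathfrak{E}_0$, $\mathfrak{K}(\eta)$, or the forcing norms; this is essential because the subsequent contraction-mapping and continuity arguments will need bounds that remain usable as $\epsilon \to 0$. A secondary difficulty is handling the point-evaluation terms at the corners $x_1 = \pm\ell$, namely $[\pa_t u\cdot\mathcal{N}]_\ell$, $\mathfrak{b}(\pa_t\xi, v\cdot\mathcal{N})_\ell$, and $[\hat{\mathscr{W}}'\pa_t^2\eta, v\cdot\mathcal{N}]_\ell$, which must be treated via the weighted trace embedding $W_\delta^{5/2}(\Sigma) \hookrightarrow C^0$ near the corner and Lemma \ref{lem:distance}; this is the technical reason weighted spaces enter the energy $\mathcal{E}$ and dissipation $\mathcal{D}$ in the first place.
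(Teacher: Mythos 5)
Your plan reproduces the paper's strategy almost exactly: a Galerkin scheme on the time-dependent basis $\{M(t)w^j\}$ from Theorem 4.3 of \cite{GT1}, integration of $\pa_t\xi = u\cdot\mathcal{N}$ to turn the kinematic condition into a Volterra-type integral equation for the coefficients, energy estimates obtained by testing with $u^m$ and then (after temporal differentiation) with $D_tu^m = \pa_t u^m - Ru^m$, passage to the limit by lower semicontinuity, recovery of pressure via Theorem 4.6 of \cite{GT2}, and an $\mathcal{A}$-Stokes elliptic upgrade via Theorem 5.9 of \cite{GT2}. Your observation that \eqref{eq:ep} is exactly what absorbs the indefinite boundary form $\epsilon\mathfrak{b}$ is the same coercivity mechanism the paper uses in Steps 2 and 4.

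One step you gloss over, and that deserves flagging because it is where the $\epsilon$-bookkeeping is most delicate: the elliptic estimates of Theorems 4.11 and 5.9/5.10 of \cite{GT2} control the \emph{combination} $\vartheta := \xi + \epsilon\pa_t\xi$ (resp.\ $\pa_t\vartheta$), not $\xi$ and $\pa_t\xi$ separately, since that combination is what sits in the Navier boundary data of \eqref{eq:modified_linear}. To extract the $\|\xi\|_{L^2 H^{3/2}}$, $\|\xi\|_{L^2 W_\delta^{5/2}}$, and $\|\pa_t\xi\|_{L^2 H^{3/2}}$ terms appearing on the left side of \eqref{est:galerkin} one must solve the first-order ODE $\xi + \epsilon\pa_t\xi = \vartheta$ explicitly, writing $\xi^m(t) = \eta_0 e^{-t/\epsilon} + \tfrac1\epsilon\int_0^t e^{-(t-s)/\epsilon}\vartheta^m(s)\,ds$, and then use Cauchy--Schwarz with Fubini (and for $\pa_t\xi$, an extension/restriction argument of the form $\epsilon\tfrac{d}{dt}\|\pa_t\bar\xi^m\|_{H^2}^2 + \|\pa_t\bar\xi^m\|_{H^2}^2 \le \|\pa_t\bar\vartheta^m\|_{H^2}^2$) to get constants \emph{independent} of $\epsilon$. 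Your phrase ``lifts $\xi$ to $L^2 W_\delta^{5/2}$ since the mean-curvature-type operator $\mathcal{L}$ is elliptic in $\xi$'' skips over this, and without it the naive estimate would introduce a factor $\epsilon^{-2}$ and spoil the uniformity you correctly identify as essential for the later contraction and $\epsilon\to0$ limit.

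A minor second point: the $\|\mathcal{F}(0)\|_{(\mathcal{H}^1)^\ast}^2$ contribution on the right of \eqref{est:galerkin} arises from evaluating the $H^{3/2}$ elliptic estimate for $\xi^m + \epsilon\pa_t\xi^m$ at $t=0$ (the paper's \eqref{est:xi_pa_txi}), not from the bound on $D_tu^n(0)$ as you attribute it; the latter produces only the $\mathfrak{E}_0$ term.
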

  \begin{proof}
    Our proof is inspired by a result in \cite{GT1}. We divide the proof into several steps.

    Step 1 -- The Galerkin setup.

    In order to utilize the Galerkin method, we must first construct a countable basis of $H^2(\Om)\cap\mathcal{V}(t)$ for each $t\in[0, T]$. Since the requirement $\dive_{\mathcal{A}}v=0$ is time--dependent, any basis of this space must also be time--dependent. For each $t\in[0, T]$, the space $H^2(\Om)\cap\mathcal{V}(t)$ is separable, so the existence of a countable basis is not an issue. The technical difficulty is that, in order for the basis to be useful in Galerkin method, we must be able to express these time derivatives in terms of finitely many basis elements. Fortunately, it is possible to overcome this difficulty by employing the matrix $M(t)$, defined by \eqref{def:M}.

    Since $H^2(\Om)\cap V$ is separable, it possess a countable basis $\{w^j\}_{j=1}^\infty$. Note that this basis is not time--dependent. Define $v^j=v^j(t):=M(t)w^j$. According to Proposition \ref{prop:isomorphism}, $v^j(t)\in H^2(\Om)\cap\mathcal{V}(t)$, and $\{v^j(t)\}_{j=1}^\infty$ is a basis of $H^2(\Om)\cap\mathcal{V}(t)$ for each $t\in\mathbb{R}^+$. Moreover, we can express $\pa_tv^j(t)$ in terms of $v^j(t)$ as
    \beq
    \pa_tv^j(t)=\pa_tM(t)w^j=\pa_tM(t)M^{-1}(t)M(t)w^j=R(t)v^j(t),
    \eeq
    where $R(t)$ is defined by
    \beq
    R(t):=\pa_tM(t)M^{-1}(t).
    \eeq
    For any integer $m\ge1$, we define the finite dimensional space
    \[
    \mathcal{V}_m(t):=\text{span}\{v^1(t), \cdots, v^m(t)\}\subseteq H^2(\Om)\cap\mathcal{V}(t),
    \]
     and we write
    \beq
    \mathcal{P}^m_t: H^2(\Om)\rightarrow\mathcal{V}_m(t)
    \eeq
    for the $H^2(\Om)$ orthogonal projection onto $\mathcal{V}_m(t)$. Clearly, for each $v\in H^2(\Om)\cap\mathcal{V}(t)$, we have that $\mathcal{P}^m_tv\rightarrow v$ as $m\rightarrow\infty$.

    Step 2 -- Solving the approximate problem.

    For our Galerkin problem, we construct a solution to the pressureless problem as follows. For each $m\ge1$, we define an approximate solution
    \beq
    u^m(t):=d^m_j(t)v^j(t),\ \text{with}\ d^m_j: [0, T]\rightarrow\mathbb{R}\ \text{for}\ j=1, \dots, m,
    \eeq
    where as usual we use the Einstein convention of summation of the repeated index $j$. We similarly define
    \beq
    \xi^m(t)=\eta_0+\int_0^t u^m(s)\cdot\mathcal{N}(s)\,\mathrm{d}s,
    \eeq
    where we understand here that $u^m(\cdot)$ denotes the trace onto $\Sigma$.

    We want to choose the coefficients $d^m_j(t)\in C^1([0,T])$ so that
    \begin{equation}\label{eq: Galerkin}
    \begin{aligned}
      &((u^m,v))+(\xi^m+\epsilon\pa_t\xi^m,v\cdot\mathcal{N})_{1,\Sigma}+[u^m\cdot\mathcal{N},v\cdot\mathcal{N}]_{\ell}-\epsilon \mathfrak{b}(\pa_t\xi^m,v\cdot\mathcal{N})_\ell\\
      &=\int_{\Om}F^1\cdot vJ-\int_{-\ell}^{\ell}F^3\pa_1(v\cdot\mathcal{N})+F^4\cdot v-\int_{\Sigma_s}F^5(v\cdot\tau)J-[v\cdot\mathcal{N},\hat{\mathscr{W}}(\pa_t\eta)]_\ell,
    \end{aligned}
  \end{equation}
  for each $v\in \mathcal{V}_m(t)$. We supplement this with the initial data
  \beq
  u^m(0)=\mathcal{P}^m_0u_0^\epsilon\in\mathcal{V}_m(0).
  \eeq
  We may compute
  \begin{equation}
    \begin{aligned}
      (\xi^m(t)+\epsilon\pa_t\xi^m,v\cdot\mathcal{N}(t))_{1,\Sigma}=\left(\eta_0+\int_0^t u^m(s)\cdot\mathcal{N}(s)\,\mathrm{d}s+\epsilon u^m(t)\cdot\mathcal{N}(t), v\cdot\mathcal{N}(t)\right)_{1,\Sigma}\\
      =(\eta_0, v\cdot\mathcal{N}(t))_{1,\Sigma}+\epsilon d^m_i(t)(v^i\cdot\mathcal{N}(t),v\cdot\mathcal{N}(t))_{1,\Sigma}+\int_0^t d^m_i(s)(v^i\cdot\mathcal{N}(s),v\cdot\mathcal{N}(t))_{1,\Sigma}\,\mathrm{d}s.
    \end{aligned}
  \end{equation}

  Then we see that \eqref{eq: Galerkin} is equivalent to an equation for $d^m_j$ given by
  \begin{equation}\label{eq: Galerkin_2}
    \begin{aligned}
     & d^m_i((v^i,v^j))+\epsilon d^m_i(v^i\cdot\mathcal{N}(t),v^j\cdot\mathcal{N}(t))_{1,\Sigma}+\int_0^t d^m_i(s)(v^i\cdot\mathcal{N}(s),v^j\cdot\mathcal{N}(t))_{1,\Sigma}\,\mathrm{d}s\\
     &\quad+d^m_i[v^i\cdot\mathcal{N},v^j\cdot\mathcal{N}]_{\ell}-\epsilon d^m_i \mathfrak{b}(v^i\cdot\mathcal{N},v^j\cdot\mathcal{N})_\ell\\
      &=\int_{\Om}F^1\cdot v^jJ-\int_{-\ell}^{\ell}F^3\pa_1(v^j\cdot\mathcal{N})+F^4\cdot v^j-\int_{\Sigma_s}F^5(v^j\cdot\tau)J-(\eta_0, v^j\cdot\mathcal{N}(t))_{1,\Sigma}\\
      &\quad-[v^j\cdot\mathcal{N},\hat{\mathscr{W}}(\pa_t\eta)]_\ell,
    \end{aligned}
  \end{equation}
  for $i,j=1, \dots, m$.

  Since $\{v^j(t)\}_{j=1}^\infty$ is a basis of $H^2(\Om)\cap\mathcal{V}(t)$, the $m\times m$ matrix $A=(A_{jk})$with $j, k$ entry $A_{jk}=((v^j,v^k))+\epsilon (v^j\cdot\mathcal{N},v^k\cdot\mathcal{N})_{1,\Sigma}+[v^j\cdot\mathcal{N},v^k\cdot\mathcal{N}]_\ell-\epsilon \mathfrak{b}(v^i\cdot\mathcal{N},v^j\cdot\mathcal{N})_\ell$ is positive definite. For any vector $\lam=(\lam_1,\ldots,\lam_m)^\top\neq0$, a straightforward computation shows  that
  \begin{align*}
  \lam^\top A\lam=\frac\mu2\int_{\Om}|\lam_i\mathbb{D}_{A}v^i|^2J+\beta\int_{\Sigma_s}|\lam_iv^i\cdot\tau|^2J+\epsilon\|\lam_iv^i\cdot\mathcal{N}\|_{1,\Sigma}^2+[\lam_iv^i\cdot\mathcal{N}]_\ell^2-\epsilon \mathfrak{b}(\lam_iv^i\cdot\mathcal{N})_\ell^2 >0
  \end{align*}
  where the last inequality is due to the facts that $\{v^j\}_{j=1}^m$ is a basis of $\mathcal{V}_m$, $\lam\neq0$, and \eqref{eq:ep}. Thus $A$ is invertible.
  Then we view \eqref{eq: Galerkin_2} as an integral system of the form
  \beq\label{eq:integral}
  d^m(t)+\int_0^t\mathfrak{C}(t,s)d^m(s)\,\mathrm{d}s=\mathfrak{F}(t),
  \eeq
  where the $m\times m$ matrix $\mathfrak{C}$ belongs to $C^1(D)$ with $D=\{(t,s)|0\le s\le t, 0\le t\le T\}$, and the forcing term $\mathfrak{F}\in C^1([0,T])$ since $\pa_t\eta(\pm \ell,\cdot) \in
H^2((0,T)) \hookrightarrow C^{1,1/2}([0,T])$.

   From the usual theory of integral equations (for instance, see \cite{Wa}), there exists a unique $d^m\in C^1([0,T])$ satisfying $d^m=A d^m=\mathfrak{F}(t)-\int_0^t\mathfrak{C}(t,s)d^m(s)\,\mathrm{d}s$.

  Step 3 -- Estimates for initial data.

  For $u^m(0)$, since $\mathcal{P}_0^m$ is the orthogonal projection, we may use Lemma \ref{lem:equivalence_norm}, the Sobolev embeddings, and the initial data in Section \ref{subsection_initial} to obtain the bounds
  \beq\label{initial_u_m}
  \|u^m(0)\|_{{}_0\mathcal{H}^1}\lesssim \|u^m(0)\|_1\lesssim \|u^m(0)\|_{W_\delta^2}\lesssim \|u_0^\epsilon\|_{W_\delta^2}\lesssim\|\pa_t\eta(0)\|_{3/2},
  \eeq
  and
  \beq\label{initial_pa_t_xi_m}
  \|\pa_t\xi^m(0)\|_1=\|u^m(0)\cdot\mathcal{N}(0)\|_1\lesssim \|u_0^\epsilon\cdot\mathcal{N}(0)\|_1\lesssim \|\pa_t\eta(0)\|_1.
  \eeq

  Step 4 -- Energy estimates for $u^m$.

  By construction, $u^m(t)\in\mathcal{V}_m(t)$, so we may choose $v=u^m$ as a test function \eqref{eq: Galerkin}. Since $\pa_t\xi^m=u^m\cdot\mathcal{N}$, we have that
   \beq
   \begin{aligned}
     &\frac{d}{dt}\frac12\|\xi^m\|_{1,\Sigma}^2+\epsilon\|\pa_t\xi^m\|_{1,\Sigma}^2+\|u^m\|_{{}_0\mathcal{H}^1(\Om)}^2+[u^m\cdot\mathcal{N}]_\ell^2-\epsilon \mathfrak{b}(u^m\cdot\mathcal{N})_\ell^2\\
      &=\int_{\Om}F^1\cdot u^mJ-\int_{-\ell}^{\ell}F^3\pa_t\pa_1\xi^m+F^4\cdot u^m-\int_{\Sigma_s}F^5(u^m\cdot\tau)J-[\hat{\mathscr{W}}(\pa_t\eta),u^m\cdot\mathcal{N}]_\ell,
   \end{aligned}
   \eeq
   using the H\"older inequality for $1<q<\frac{2}{1+\delta}$ with $0<\delta<1$ and $\frac{1}{p}+\frac{1}{q}=1$, Lemma \ref{lem:distance} with $2<r<\frac2\delta$ and $r^\prime=\frac{2r}{r-2}$, the Cauchy inequality, Sobolev inequalities, and the usual trace theory,  we have that
   \begin{equation}
    \begin{aligned}
      &\frac{d}{dt}\frac12\|\xi^m\|_{1,\Sigma}^2+\epsilon\|\pa_t\xi^m\|_{1,\Sigma}^2+\|u^m\|_{{}_0\mathcal{H}^1(\Om)}^2+[u^m\cdot\mathcal{N}]_\ell^2-\epsilon \mathfrak{b}(u^m\cdot\mathcal{N})_\ell^2\\
      &\lesssim \|J\|_{L^\infty(\Sigma)}\|F^1\|_{W_\delta^0}\|d^{-\delta}\|_{L^r}\|u^m\|_{L^{r^\prime}}+\|F^4\|_{L^q}\|u^m\|_{L^p(\Sigma)}+\|J\|_{L^\infty(\Sigma_s)}\|F^5\|_{L^q}\|u^m\|_{L^p(\Sigma_s)}\\
      &\quad+C(\epsilon)\|\eta\|_1^2\|\eta\|_{W_\delta^{5/2}}^2+[\hat{\mathscr{W}}(\pa_t\eta)]_\ell^2\\
      &\lesssim(1+\|\eta\|_{W_\delta^{5/2}})(\|F^1\|_{W_\delta^0}+\|F^4\|_{W_\delta^{1/2}}+\|F^5\|_{W_\delta^{1/2}})\|u^m\|_1+C(\epsilon)\|\eta\|_1^2\|\eta\|_{W_\delta^{5/2}}^2+\|\pa_t\eta\|_1^2[\pa_t\eta]_\ell^2\\
      &\lesssim (1+\|\eta\|_{W_\delta^{5/2}}^2)(\|F^1\|_{W_\delta^0}^2+\|F^4\|_{W_\delta^{1/2}}^2+\|F^5\|_{W_\delta^{1/2}}^2)+C(\epsilon)\|\eta\|_1^2\|\eta\|_{W_\delta^{5/2}}^2+\|\pa_t\eta\|_1^2[\pa_t\eta]_\ell^2.
    \end{aligned}
  \end{equation}

Then we employ the Gronwall's inequality and \eqref{eq:ep} to arrive at the bound
  \begin{equation}\label{eq:xi_m}
    \begin{aligned}
      &\sup_{0\le t\le T}\|\xi^m\|_1^2+\epsilon\|\pa_t\xi^m\|_{L^2H^1}^2+\|u^m\|_{L^2H^1(\Om)}+\|u^m\|_{L^2H^0(\Sigma_s)}+\int_0^T[u^m\cdot\mathcal{N}]_\ell^2\\
      &\lesssim (1+\|\eta\|_{L^\infty W_\delta^{5/2}}^2)(\|F^1\|_{L^2W_\delta^0}^2+\|F^4\|_{L^2W_\delta^{1/2}}^2+\|F^5\|_{L^2W_\delta^{1/2}}^2)+C(\epsilon)T\|\eta\|_{L^\infty H^1}^2\|\eta\|_{L^\infty W_\delta^{5/2}}^2\\
      &\quad+\|\pa_t\eta\|_{L^\infty H^1}^2\|[\pa_t\eta]_\ell\|_{L^2}^2+\|\eta_0\|_{W_\delta^{5/2}}^2.
    \end{aligned}
  \end{equation}

  Step 5 -- Energy estimate for $\pa_tu^m$.

  Suppose that $v=b_i^mv^i$ for $b_i^m\in C^1([0, T])$. It is easily verified that $\pa_tv(t)-R(t)v(t)\in\mathcal{V}_m(t)$ as well. We now use this $v$ in \eqref{eq: Galerkin}, temporally differentiate the resulting equation, and then subtract this from the equation \eqref{eq: Galerkin} with test function $\pa_tv-Rv$. This eliminates the terms of $\pa_tv$ and leaves us with the equality
  \begin{equation}\label{eq:pa_tu_m_1}
  \begin{aligned}
\mathrm{}    &((\pa_tu^m,v))+(u^m\cdot\mathcal{N},v\cdot\mathcal{N})_{1,\Sigma}+\epsilon(\pa_t(u^m\cdot\mathcal{N}),v\cdot\mathcal{N})_{1,\Sigma}+(\xi^m+\epsilon\xi^m_{t}, Rv\cdot\mathcal{N})_{1,\Sigma}\\
&+(\xi^m+\epsilon\xi^m_{t},v\cdot\pa_t\mathcal{N})_{1,\Sigma}+[\pa_tu^m\cdot\mathcal{N},v\cdot\mathcal{N}]_\ell+[u^m\cdot\pa_t\mathcal{N},v\cdot\mathcal{N}]_\ell+[u^m\cdot\mathcal{N},R v\cdot\mathcal{N}]_\ell\\
&+[u^m\cdot\mathcal{N},v\cdot\pa_t\mathcal{N}]_\ell-\epsilon \mathfrak{b}(\pa_t(u^m\cdot\mathcal{N}),v\cdot\mathcal{N})_\ell+\int_{\Sigma_s}\beta(u^m\cdot\tau)(v\cdot\tau)\pa_tJ\\
    &=\pa_t\mathcal{F}^m(v)-\mathcal{F}^m(\pa_tv)+\mathcal{F}^m(Rv)-((u^m,Rv))\\
    &\quad-\int_\Om\frac{\mu}{2}(\mathbb{D}_{\pa_t\mathcal{A}}u^m:\mathbb{D}_{\mathcal{A}}v+\mathbb{D}_{\mathcal{A}}u^m:\mathbb{D}_{\pa_t\mathcal{A}}v+\pa_tJK\mathbb{D}_{\mathcal{A}}u^m:\mathbb{D}_{\mathcal{A}}v)J,
  \end{aligned}
  \end{equation}
  where for brevity we have written
  \begin{align*}
    \mathcal{F}^m(v)&=\int_{\Om}F^1\cdot vJ-\int_{-\ell}^{\ell}F^3\pa_1(v\cdot\mathcal{N})+F^4\cdot v-\int_{\Sigma_s}F^5(v\cdot\tau)J-[v\cdot\mathcal{N},\hat{\mathscr{W}}(\pa_t\eta)]_\ell,
  \end{align*}

  According to the Lemma \ref{lem:properties_A}, then
  \beq
  \begin{aligned}
  &(\xi+\epsilon\xi^m_{t}, Rv\cdot\mathcal{N})_{1,\Sigma}+(\xi+\epsilon\xi^m_{t},v\cdot\pa_t\mathcal{N})_{1,\Sigma}+[u\cdot\mathcal{N},R v\cdot\mathcal{N}]_\ell+[u^m\cdot\mathcal{N},v\cdot\pa_t\mathcal{N}]_\ell\\
  &=-(\xi+\epsilon\xi^m_{t}, v\cdot\pa_t\mathcal{N})_{1,\Sigma}+(\xi+\epsilon\xi^m_{t},v\cdot\pa_t\mathcal{N})_{1,\Sigma}-[u\cdot\mathcal{N}, v\cdot\pa_t\mathcal{N}]_\ell+[u^m\cdot\mathcal{N},v\cdot\pa_t\mathcal{N}]_\ell=0.
  \end{aligned}
  \eeq
  We choose the test function $v=\pa_tu^m-Ru^m$. Then we have that
  \beq
  [\pa_tu^m\cdot\mathcal{N}, (\pa_tu^m-R u^m)\cdot\mathcal{N}]_\ell=[\pa_tu^m\cdot\mathcal{N}]_\ell^2,
  \eeq
  because of the fact that $\mathcal{N}=\mathcal{N}_0-\pa_1\eta e_1$ and $R u^m\cdot\mathcal{N}=u^m\cdot\pa_t\mathcal{N}=u^m\cdot\pa_1\pa_t\eta e_1=0$.
  Similarly,
  \beq
  \pa_t(u^m\cdot\mathcal{N})=\pa_tu^m\cdot\mathcal{N}-u^m\cdot\pa_t\mathcal{N}=\pa_tu^m\cdot\mathcal{N}-u^m\cdot R^\top\mathcal{N}=(\pa_tu^m-R u^m)\cdot\mathcal{N},
  \eeq
  and hence
  \beq
  \begin{aligned}
  (u^m\cdot\mathcal{N}, (\pa_tu^m-R u^m)\cdot\mathcal{N})_{1,\Sigma}+\epsilon(\pa_t(u^m\cdot\mathcal{N}), (\pa_tu^m-R u^m)\cdot\mathcal{N})_{1,\Sigma}\\
  =(u^m\cdot\mathcal{N}, \pa_t(u^m\cdot\mathcal{N}))_{1,\Sigma}+\epsilon(\pa_t(u^m\cdot\mathcal{N}), \pa_t(u^m\cdot\mathcal{N}))_{1,\Sigma}\\
  =\frac{d}{dt}\frac12\|u^m\cdot\mathcal{N}\|_{1,\Sigma}^2+\epsilon\|\pa_t(u^m\cdot\mathcal{N})\|_{1,\Sigma}^2.
  \end{aligned}
  \eeq
  Plugging the test function $v=\pa_tu^m-Ru^m$ into \eqref{eq:pa_tu_m_1} reveals that
  \beq
  \frac{d}{dt}\frac12\|u^m\cdot\mathcal{N}\|_{1,\Sigma}^2+\epsilon\|\pa_t(u^m\cdot\mathcal{N})\|_{1,\Sigma}^2+\|\pa_tu^m\|_{{}_0\mathcal{H}^1}^2+[\pa_tu^m\cdot\mathcal{N}]_\ell^2-\epsilon \mathfrak{b}(\pa_tu^m\cdot\mathcal{N})_\ell^2=I+II+III,
  \eeq
  where
  \beq
  I=-((u^m, R(\pa_tu^m-R u^m)))+((\pa_tu^m, R u^m)),
  \eeq
  \beq
  \begin{aligned}
  II&=-\int_\Om\frac{\mu}{2}(\mathbb{D}_{\pa_t\mathcal{A}}u^m:\mathbb{D}_{\mathcal{A}}(\pa_tu^m-R u^m))J\\
  &\quad-\int_\Om\frac{\mu}{2}(\mathbb{D}_{\mathcal{A}}u^m:\mathbb{D}_{\pa_t\mathcal{A}}(\pa_tu^m-R u^m)+\pa_tJK\mathbb{D}_{\mathcal{A}}u^m:\mathbb{D}_{\mathcal{A}}(\pa_tu^m-R u^m))J\\
  &\quad-\int_{\Sigma_s}\beta(u^m\cdot\tau)((\pa_tu^m-Ru^m)\cdot\tau)\pa_tJ,
  \end{aligned}
  \eeq
  and
  \beq
  \begin{aligned}
  III&=\pa_t\mathcal{F}^m(\pa_tu^m-Ru^m)-\mathcal{F}^m(\pa_t(\pa_tu^m-R u^m))+\mathcal{F}^m(R(\pa_tu^m-R u^m))\\
  &=\int_{\Om}\left[\pa_tF^1\cdot(\pa_tu^m-R u^m)+\pa_tJKF^1\cdot(\pa_tu^m-R u^m)+F^1\cdot R(\pa_tu^m-R u^m)\right]J\\
  &\quad-\int_{-\ell}^{\ell}[\pa_tF^3\pa_1((\pa_tu^m-R u^m)\cdot\mathcal{N})+\pa_tF^4\cdot(\pa_tu^m-R u^m)+F^4\cdot R(\pa_tu^m-R u^m)]\\
  &\quad-\int_{\Sigma_s}\left[\pa_tF^5(\pa_tu^m-R u^m)+\pa_tJKF^5(\pa_tu^m-R u^m)+F^5 R(\pa_tu^m-R u^m)\right]\cdot\tau J\\
  &\quad-[\hat{\mathscr{W}}^\prime(\pa_t\eta)\pa_t^2\eta, (\pa_tu^m-R u^m)\cdot\mathcal{N}]_\ell,
  \end{aligned}
  \eeq
  where we have used the fact $\pa_t\mathcal{N}=-R^\top\mathcal{N}$ on $\Sigma$.

  We now estimate each term of $I$, $II$, $III$. For any fixed small number $\theta>0$ and $1<s<\min\{\frac{\pi}{\om},2\}$, we choose $p$ and $q$ with $\frac1p+\frac1q=\frac12$ and $2<p<\frac{2}{\delta}$, such that, according to the Sobolev inequality, the Cauchy inequality, and the trace theory of the weighted Sobolev spaces,
  \beq\label{est:I}
  \begin{aligned}
  |I|&\le\|R\|_{L^\infty}\|u^m\|_{{}_0\mathcal{H}^1(\Om)}\|\pa_tu^m\|_{{}_0\mathcal{H}^1(\Om)}+\|u^m\|_{{}_0\mathcal{H}^1(\Om)}\|\mathcal{A}\|_{L^\infty}\|\nabla R\|_{L^p}\|\pa_tu^m\|_{L^q}\\
  &\quad+\|R\|_{L^\infty}^2\|u^m\|_{{}_0\mathcal{H}^1(\Om)}^2+\|u^m\|_{{}_0\mathcal{H}^1(\Om)}\|\mathcal{A}\|_{L^\infty}\|R\|_{L^\infty}\|\nabla R\|_{L^p}\|u^m\|_{L^q}\\
  &\quad+\|R\|_{L^\infty}\|u^m\|_{{}_0\mathcal{H}^1(\Om)}\|\pa_tu^m\|_{{}_0\mathcal{H}^1(\Om)}+\|\pa_tu^m\|_{{}_0\mathcal{H}^1(\Om)}\|\mathcal{A}\|_{L^\infty}\|\nabla R\|_{L^p}\|u^m\|_{L^q}\\
  &\le\|R\|_{s}\|u^m\|_{{}_0\mathcal{H}^1(\Om)}\|\pa_tu^m\|_{{}_0\mathcal{H}^1(\Om)}+\|u^m\|_{{}_0\mathcal{H}^1(\Om)}\|\mathcal{A}\|_{s}\|\nabla R\|_{W_\delta^1}\|\pa_tu^m\|_{1}\\
  &\quad+\|R\|_{s}^2\|u^m\|_{{}_0\mathcal{H}^1(\Om)}^2+\|u^m\|_{{}_0\mathcal{H}^1(\Om)}\|\mathcal{A}\|_{s}\|R\|_{s}\|\nabla R\|_{W_\delta^1}\|u^m\|_{1}\\
  &\quad+\|R\|_{s}\|u^m\|_{{}_0\mathcal{H}^1(\Om)}\|\pa_tu^m\|_{{}_0\mathcal{H}^1(\Om)}+\|\pa_tu^m\|_{{}_0\mathcal{H}^1(\Om)}\|\mathcal{A}\|_{s}\|\nabla R\|_{W_\delta^1}\|u^m\|_{1}\\
  &\lesssim \theta\|\pa_tu^m\|_{{}_0\mathcal{H}^1(\Om)}^2+\left(1+\frac1\theta\right)C_2(\eta)\|u^m\|_{{}_0\mathcal{H}^1(\Om)}^2,
  \end{aligned}
  \eeq
  where
  \beq
  C_2(\eta)=\|R\|_s^2+\|\mathcal{A}\|_s^2\|\nabla R\|_{W_\delta^1}^2+\|\mathcal{A}\|_s\|R\|_s\|\nabla R\|_{W_\delta^1}\lesssim (\|\eta\|_{W_\delta^{5/2}}^2+\|\pa_t\eta\|_{W_\delta^{5/2}}^2)(1+\|\eta\|_{W_\delta^{5/2}}^2).
  \eeq
  Similarly,
  \beq
  \begin{aligned}
  |II|&\lesssim \|\pa_t\mathcal{A}\|_{L^\infty}\|J\|_{L^\infty}\|u^m\|_1\big(\|\pa_tu^m\|_{{}_0\mathcal{H}^1(\Om)}+\|R\|_{L^\infty}\|u^m\|_{{}_0\mathcal{H}^1(\Om)}\\
  &\quad+\|\mathcal{A}\|_{L^\infty}\|\nabla R\|_{L^p}\|u^m\|_{L^q}\big)+\|\pa_t\mathcal{A}\|_{L^\infty}\|J\|_{L^\infty}\|\pa_tu^m\|_1\|u^m\|_{{}_0\mathcal{H}^1(\Om)}\\
  &\quad+\|\mathcal{A}\|_{L^\infty}\|\nabla R\|_{L^p}\|u^m\|_{L^q}\|u^m\|_{{}_0\mathcal{H}^1(\Om)}+\|\pa_tJK\|_{L^\infty}\|\pa_tu^m\|_{{}_0\mathcal{H}^1(\Om)}\|u^m\|_{{}_0\mathcal{H}^1(\Om)}\\
  &\quad+\|\pa_tJK\|_{L^\infty}\|u^m\|_{{}_0\mathcal{H}^1(\Om)}(\|R\|_{L^\infty}\|u^m\|_{{}_0\mathcal{H}^1(\Om)}+\|J\mathcal{A}\|_{L^\infty}\|\nabla R\|_{L^p}\|u^m\|_{L^q})\\
  &\quad+\|\pa_tJ\|_{L^\infty(\Sigma_s)}\|u^m\|_1(\|\pa_tu^m\|_1+\|R\|_{L^\infty(\Sigma_s)}\|u^m\|_1)\\
  &\lesssim \theta\|\pa_tu^m\|_{{}_0\mathcal{H}^1(\Om)}^2+\left(1+\frac1\theta\right)C_3(\eta)\|u^m\|_{{}_0\mathcal{H}^1(\Om)}^2,
  \end{aligned}
  \eeq
  where
  \beq
  \begin{aligned}
  C_3(\eta)&=\big[\|\pa_t\mathcal{A}\|_{L^\infty}\|J\|_{L^\infty}(\|\pa_t\mathcal{A}\|_{L^\infty}\|J\|_{L^\infty}+\|R\|_{L^\infty})\\
  &\quad+\|\mathcal{A}\|_{L^\infty}\|\nabla R\|_{L^p}(1+\|\pa_tJ\|_{L^\infty})+\|\pa_tJK\|_{L^\infty}(\|\pa_tJK\|_{L^\infty}+\|R\|_{L^\infty})\\
  &\quad+\|\pa_tJ\|_{L^\infty(\Sigma_s)}(\|\pa_tJ\|_{L^\infty(\Sigma_s)}+\|R\|_{L^\infty(\Sigma_s)})\big]\lesssim (\|\eta\|_{W_\delta^{5/2}}^2+\|\pa_t\eta\|_{W_\delta^{5/2}}^2)\|\eta\|_{W_\delta^{5/2}}^2.
  \end{aligned}
  \eeq
For $III$, we need more refined estimates. We will separate the estimates for $III$ into several estimates.
First, by the Cauchy-Schwarz inequality, we have that
\beq\label{est:III_1}
\int_{-\ell}^{\ell}[\pa_tF^3\pa_1((\pa_tu^m-R u^m)\cdot\mathcal{N})=\int_{-\ell}^{\ell}\pa_z\mathcal{R}\pa_1\pa_t\eta\pa_1\pa_t^2\xi^m\le C(\epsilon)\|\pa_t\eta\|_1^2+\frac{\epsilon}{2}\|\pa_t^2\xi^m\|_1^2,
\eeq
here we have used the boundedness for $\pa_z\mathcal{R}$, which can be easily proved by the definition of $\mathcal{R}$.

Then we use Lemma \ref{lem:distance}, the weighted Sobolev estimates from Appendix C and D in \cite{GT2}, usual Sobolev embedding Theorem, and H\"older's inequality to derive
\beq\label{est:III_2}
\begin{aligned}
  &\int_{\Om}\left[\pa_tF^1\cdot(\pa_tu^m-R u^m)+\pa_tJKF^1\cdot(\pa_tu^m-R u^m)+F^1\cdot R(\pa_tu^m-R u^m)\right]J\\
  &\quad-\int_{-\ell}^{\ell}[\pa_tF^4\cdot(\pa_tu^m-R u^m)+F^4\cdot R(\pa_tu^m-R u^m)]\\
  &\quad-\int_{\Sigma_s}\left[\pa_tF^5(\pa_tu^m-R u^m)+\pa_tJKF^5(\pa_tu^m-R u^m)+F^5 R(\pa_tu^m-R u^m)\right]\cdot\tau J\\
  &\quad-[\hat{\mathscr{W}}^\prime(\pa_t\eta)\pa_t^2\eta, (\pa_tu^m-R u^m)\cdot\mathcal{N}]_\ell\\
  &\lesssim \theta(\|\pa_tu^m\|_{{}_0\mathcal{H}^1}^2+[\pa_tu^m\cdot\mathcal{N}]_\ell^2)+\|u^m\|_{{}_0\mathcal{H}^1}^2+(1+\|R\|_1^2)\|\pa_t(F^1-F^4-F^5)\|_{(\mathcal{H}^1)^{\ast}}^2\\
  &\quad+(\|\pa_tJ\|_1^2+\|R\|_1^2)(1+\|R\|_1^2)(\|F^1\|_{W_\delta^0}^2+\|F^4\|_{W_\delta^{1/2}}^2+\|F^5\|_{W_\delta^{1/2}}^2)+\|\pa_t\eta\|_1^2[\pa_t^2\eta]_\ell^2.
\end{aligned}
\eeq
  Thus, combining \eqref{est:I}--\eqref{est:III_2}, we have the energy structure
  \beq
  \begin{aligned}
    &\frac{d}{dt}\frac12\|\pa_t\xi^m\|_{1,\Sigma}^2+\epsilon\|\pa_t^2\xi^m\|_{1,\Sigma}^2+\|\pa_tu^m\|_{{}_0\mathcal{H}^1}^2+[\pa_tu^m\cdot\mathcal{N}]_\ell^2-\epsilon \mathfrak{b}(\pa_tu^m\cdot\mathcal{N})_\ell^2\\
    &\lesssim (1+C_2(\eta)+C_3(\eta))\|u^m\|_{{}_0\mathcal{H}^1}^2+C(\epsilon)\|\pa_t\eta\|_1^2+\|\pa_t\eta\|_1^2[\pa_t^2\eta]_\ell^2\\
    &\quad +C_5(\eta)\|\pa_t(F^1-F^4-F^5)\|_{(\mathcal{H}^1)^{\ast}}^2+C_6(\eta)(\|F^1\|_{W_\delta^0}^2+\|F^4\|_{W_\delta^{1/2}}^2+\|F^5\|_{W_\delta^{1/2}}^2),
  \end{aligned}
  \eeq
  where
  \[
  C_5(\eta)=(1+\|R\|_1^2)\lesssim (1+\|\pa_t\eta\|_{3/2}^2+\|\eta\|_{W_\delta^{5/2}}^2),
  \]
  and
  \[
  C_6(\eta)=(\|\pa_tJ\|_1^2+\|R\|_1^2)(1+\|R\|_1^2)\lesssim (\|\pa_t\eta\|_{3/2}^2+\|\eta\|_{W_\delta^{5/2}}^2)(1+\|\pa_t\eta\|_{3/2}^2+\|\eta\|_{W_\delta^{5/2}}^2).
  \]
  We then employ the Gronwall's inequality and \eqref{eq:ep} to see that
  \beq
  \begin{aligned}
  &\sup_{0\le t\le T}\|\pa_t\xi^m\|_{1,\Sigma}^2+\epsilon\|\pa_t^2\xi^m\|_{L^2H^1}^2+\|\pa_tu^m\|_{L^2H^1}^2+\|\pa_tu^m\|_{L^2H^0(\Sigma_s)}^2+\int_0^T[\pa_tu^m\cdot\mathcal{N}]_\ell^2\\
  &\lesssim C(\epsilon)T\|\pa_t\eta\|_{L^\infty H^1}^2+\mathfrak{E}_0+\mathfrak{D}(\eta)\|u^m\|_{L^\infty{}_0\mathcal{H}^1}^2+\|u^m\|_{L^2H^1}^2+\mathfrak{E}(\eta)\mathfrak{K}(\eta)\\
  &\quad+(1+\mathfrak{E}(\eta))(\|F^1\|_{L^2 W_\delta^0}^2+\|F^4\|_{L^2 W_\delta^{1/2}}^2+\|F^5\|_{L^2 W_\delta^{1/2}}^2)\\
  &\quad+(1+\mathfrak{E}(\eta))\|\pa_t(F^1-F^4-F^5)\|_{(\mathcal{H}^1_T)^{\ast}}^2.
  \end{aligned}
  \eeq
  Then applying the smallness of $\mathfrak{K}(\eta)\le \alpha\ll1$ and Lemma \ref{lem:continuous}, we have that
  \beq\label{eq:pa_tu_m}
  \begin{aligned}
  &\sup_{0\le t\le T}\|\pa_t\xi^m\|_{1,\Sigma}^2+\epsilon\|\pa_t^2\xi^m\|_{L^2H^1}^2+\|\pa_tu^m\|_{L^2H^1}^2+\|\pa_tu^m\|_{L^2H^0(\Sigma_s)}^2+\int_0^T[\pa_tu^m\cdot\mathcal{N}]_\ell^2\\
  &\lesssim C(\epsilon)T\mathfrak{E}(\eta)+\mathfrak{E}_0+\mathfrak{E}(\eta)\mathfrak{K}(\eta)+\mathfrak{E}(\eta)(\|F^1\|_{L^2 W_\delta^0}^2+\|F^4\|_{L^2 W_\delta^{1/2}}^2+\|F^5\|_{L^2 W_\delta^{1/2}}^2)\\
  &\quad+(1+\mathfrak{E}(\eta))\|\pa_t(F^1-F^4-F^5)\|_{(\mathcal{H}^1_T)^{\ast}}^2.
  \end{aligned}
  \eeq

  Step 6 -- Passing to the limit.

  We now utilize the energy estimates \eqref{eq:xi_m} and \eqref{eq:pa_tu_m} to pass to the limit $m\rightarrow\infty$. According to Proposition \ref{prop:isomorphism} and energy estimates, we have that the sequence $\{u^m\}$ and $\{\pa_tu^m\}$ are uniformly bounded both in $L^2H^1$ and $L^2H^0(\Sigma_s)$, $\{\xi^m\}$ and $\{\pa_t\xi^m\}$ are uniformly bounded in $L^\infty \mathring{H}^1$, $\{[u^m\cdot\mathcal{N}]_\ell\}$ and $\{[\pa_tu^m\cdot\mathcal{N}]_\ell\}$ are uniformly bounded in $L^2([0,T])$.  Up to the extraction of a subsequence, we then know that
  \beno
  u^m\rightharpoonup u\ \text{weakly-}\ \text{in}\ L^2 H^1\cap L^2H^0(\Sigma_s),\quad \pa_tu^m\rightharpoonup\pa_tu\ \text{weakly in}\ L^2H^1\cap L^2H^0(\Sigma_s),
  \eeno
  \beno
  \xi^m\stackrel{\ast}\rightharpoonup \xi\ \text{weakly-}\ast\ \text{in}\ L^\infty \mathring{H}^1, \quad\pa_t\xi^m\stackrel{\ast}\rightharpoonup \pa_t\xi\ \text{weakly-}\ast\ \text{in}\ L^\infty \mathring{H}^1,
  \eeno
  and
  \beno
  [u^m\cdot\mathcal{N}]_\ell\rightharpoonup[u\cdot\mathcal{N}]_\ell\ \text{weakly-}\ \text{in}\ L^2,\ [\pa_tu^m\cdot\mathcal{N}]_\ell\rightharpoonup[\pa_tu\cdot\mathcal{N}]_\ell\ \text{weakly in}\ L^2.
  \eeno
  By lower semicontinuity, the energy estimates imply that
  \beno
  \|u\|_{L^2 H^1}^2+\|u\|_{L^2 H^0(\Sigma_s)}^2+\|\pa_tu\|_{L^2H^1}^2+\|\pa_tu\|_{L^2 H^0(\Sigma_s)}^2+\|[u\cdot\mathcal{N}]_\ell\|_{L^2}^2+\|[\pa_tu\cdot\mathcal{N}]_\ell\|_{L^2}^2\\
  +\|\xi\|_{L^\infty \mathring{H}^1}^2+\|\pa_t\xi\|_{L^\infty \mathring{H}^1}^2
  \eeno
  is bounded.

  Step 7 -- Improved bounds for $\xi$ and $\pa_t\xi$.

  From the above step, we know that $\xi^m(t)\in\mathring{H}^1((-\ell,\ell))$, $\pa_t\xi^m(t)\in\mathring{H}^1((-\ell,\ell))$, and $\pa_t^2\xi^m(t)\in\mathring{H}^1((-\ell,\ell))$.  Using the test function $v\in\mathcal{V}(t)$ in \eqref{eq: Galerkin} and then appealing to Theorem 4.11 in \cite{GT2} shows that
  \beq
  \begin{aligned}
  &\|\xi^m+\epsilon\pa_t\xi^m\|_{\mathring{H}^{3/2}}^2\lesssim\|u^m\|_1^2+[u^m\cdot\mathcal{N}]_\ell^2+\|\eta\|_{W_\delta^{5/2}}^4+\|\mathcal{F}\|_{(\mathcal{H}^1)^\ast}^2+\|\pa_t\eta\|_1^2[\pa_t\eta]_\ell^2\\
  &\lesssim \|u^m\|_1^2+[u^m\cdot\mathcal{N}]_\ell^2+\|\eta\|_{W_\delta^{5/2}}^4+\|F^1\|_{W_\delta^0}^2+\|F^4\|_{W_\delta^{1/2}}^2+\|F^5\|_{W_\delta^{1/2}}^2+\|\pa_t\eta\|_1^2[\pa_t\eta]_\ell^2.
  \end{aligned}
  \eeq
  Then we may employ \eqref{initial_u_m}, \eqref{initial_pa_t_xi_m} and Sobolev theory to obtain the bound for initial data
  \beq\label{est:xi_pa_txi}
  \begin{aligned}
  \|(\xi^m+\epsilon\pa_t\xi^m)(0)\|_{\mathring{H}^{3/2}}^2&\lesssim\|u^m(0)\|_1^2+[u^m(0)\cdot\mathcal{N}(0)]_\ell^2+\|\eta_0\|_{ W_\delta^{5/2}}^4\\
  &\quad+\|\mathcal{F}(0)\|_{(\mathcal{H}^1)^\ast}^2+\|\pa_t\eta(0)\|_1^2[\pa_t\eta(0)]_\ell^2\\
  &\lesssim \mathfrak{E}_0+\|\mathcal{F}(0)\|_{(\mathcal{H}^1)^\ast}^2.
  \end{aligned}
  \eeq
  If we let $\vartheta^m=\xi^m+\epsilon\pa_t\xi^m$, we can solve this ODE as
  \beq\label{eq:xi_ode}
  \xi^m=\eta_0e^{-\frac t\epsilon}+\frac1\epsilon\int_0^te^{-\frac{t-s}{\epsilon}}\vartheta^m(s)\,\mathrm{d}s.
  \eeq
  Now we estimate $\xi^m$ from \eqref{eq:xi_ode}. Applying Cauchy's inequality,
  \beq\label{est:xi_m_1}
  \int_0^t\frac1\epsilon e^{-\frac{t-s}{\epsilon}}\|\vartheta^m(s)\|_{3/2}\,\mathrm{d}s\le \left(\int_0^t\frac1\epsilon e^{-\frac{t-s}{\epsilon}}\|\vartheta^m(s)\|_{3/2}^2\,\mathrm{d}s\right)^{1/2}\left(\int_0^t\frac1\epsilon e^{-\frac{t-s}{\epsilon}}\,\mathrm{d}s\right)^{1/2}.
\eeq
Since
\beq
\left(\int_0^t\frac1\epsilon e^{-\frac{t-s}{\epsilon}}\,\mathrm{d}s\right)^{1/2}=(1-e^{-\frac t\epsilon})^{1/2}\le1,
\eeq
then integrating in \eqref{est:xi_m_1} and employing Fubini's theorem imply that
\beq
\begin{aligned}
\|\int_0^t\frac1\epsilon e^{-\frac{t-s}{\epsilon}}\|\vartheta^m(s)\|_{3/2}\|_{L^2}^2&\le \int_0^T\int_0^t\frac1\epsilon e^{-\frac{t-s}{\epsilon}}\|\vartheta(s)\|_{3/2}^2\,\mathrm{d}s\,\mathrm{d}t\\
&=\int_0^T\frac1\epsilon e^{-\frac{s}{\epsilon}}\left(\int_s^T\|\vartheta(t-s)\|_{3/2}^2\,\mathrm{d}t\right)\,\mathrm{d}s\\
&\le\int_0^T \frac1\epsilon e^{-\frac{s}{\epsilon}}\int_0^T\|\vartheta(t)\|_{3/2}^2\\
&\le \|\vartheta\|_{L^2H^{3/2}}^2.
\end{aligned}
\eeq
  Thus, \eqref{eq:xi_m} reveals that
  \beq\label{est:xi_enhance}
  \begin{aligned}
  &\|\xi^m\|_{L^2\mathring{H}^{3/2}}^2\le C\|\eta_0\|_{W_\delta^{5/2}}^2+\|\int_0^t\frac1\epsilon e^{-\frac{t-s}{\epsilon}}\|\vartheta^m(s)\|_{3/2}\|_{L^2}^2\\
  &\lesssim (1+\|\eta\|_{L^\infty W_\delta^{5/2}}^2)(\|F^1\|_{L^2W_\delta^0}^2+\|F^4\|_{L^2W_\delta^{1/2}}^2+\|F^5\|_{L^2W_\delta^{1/2}}^2)+C(\epsilon)T\|\eta\|_{L^\infty H^1}^2\|\eta\|_{L^\infty W_\delta^{5/2}}^2\\
      &\quad+\|\eta_0\|_{W_\delta^{5/2}}^2+\|\eta\|_{L^\infty W_\delta^{5/2}}^2\|\eta\|_{L^2 W_\delta^{5/2}}^2+\|\pa_t\eta\|_{L^\infty H^1}^2\|[\pa_t\eta]_\ell\|_{L^2(0,T)}^2.
  \end{aligned}
  \eeq
  Similarly, according to \eqref{eq:pa_tu_m_1}, we know that
  \beq\label{est:pa_txi_enhance}
  \begin{aligned}
  &\|\pa_t\xi^m+\epsilon\pa_t^2\xi^m\|_{L^2\mathring{H}^{3/2}}^2\\
  &\lesssim \|\pa_tu^m\|_{L^2H^1}^2+\int_0^T[\pa_tu^m\cdot\mathcal{N}]_\ell^2+\mathfrak{E}_0+\mathfrak{E}(\eta)\mathfrak{K}(\eta)+(1+\mathfrak{E}(\eta))(\|F^1\|_{L^2 W_\delta^0}^2+\|F^4\|_{L^2 W_\delta^{1/2}}^2+\|F^5\|_{L^2 W_\delta^{1/2}}^2)\\
  &\quad+(1+\mathfrak{E}(\eta))\|\pa_t(F^1-F^4-F^5)\|_{(\mathcal{H}^1_T)^{\ast}}^2\\
  &\lesssim C(\epsilon)T\mathfrak{E}(\eta)+\mathfrak{E}_0+\mathfrak{E}(\eta)\mathfrak{K}(\eta)+(1+\mathfrak{E}(\eta))(\|F^1\|_{L^2 W_\delta^0}^2+\|F^4\|_{L^2 W_\delta^{1/2}}^2+\|F^5\|_{L^2 W_\delta^{1/2}}^2)\\
  &\quad+(1+\mathfrak{E}(\eta))\|\pa_t(F^1-F^4-F^5)\|_{(\mathcal{H}^1_T)^{\ast}}^2.
  \end{aligned}
  \eeq
  If we denote $\pa_t\vartheta^m=\pa_t\xi^m+\epsilon\pa_t^2\xi^m$, and the extension $\pa_t\bar{\vartheta}^m=\pa_t\bar{\xi}^m+\epsilon\pa_t^2\bar{\xi}^m$, then by a standard computation for energy formulation, we may get
  \beq\label{extension_restriction}
  \epsilon\frac{d}{dt}\|\pa_t\bar{\xi}^m\|_{H^2}^2+\|\pa_t\bar{\xi}^m\|_{H^2}^2 \le \|\pa_t\bar{\vartheta}^m\|_{H^2}^2,
  \eeq
  Then by the trace theory and \eqref{est:xi_pa_txi}, we  derive that
  \beq
  \begin{aligned}
    &\|\pa_t\xi^m\|_{L^2\mathring{H}^{3/2}}^2\lesssim \epsilon^2\|\pa_t\xi^m(0)\|_{3/2}^2+\|\pa_t\vartheta^m\|_{L^2\mathring{H}^{3/2}}^2\\
    &\lesssim \|\xi^m(0)\|_{3/2}^2+\|\xi^m(0)+\epsilon\pa_t\xi^m(0)\|_{3/2}^2+\|\pa_t\vartheta^m\|_{L^2\mathring{H}^{3/2}}^2\\
    &\lesssim C(\epsilon)T\mathfrak{E}(\eta)+\mathfrak{E}_0+\|\mathcal{F}(0)\|_{(\mathcal{H}^1)^\ast}^2+\mathfrak{E}(\eta)\mathfrak{K}(\eta)\\
  &\quad+(1+\mathfrak{E}(\eta))(\|F^1\|_{L^2 W_\delta^0}^2+\|F^4\|_{L^2 W_\delta^{1/2}}^2+\|F^5\|_{L^2 W_\delta^{1/2}}^2)+(1+\mathfrak{E}(\eta))\|\pa_t(F^1-F^4-F^5)\|_{(\mathcal{H}^1_T)^{\ast}}^2.
  \end{aligned}
  \eeq
  Then, up to an extraction of subsequence, we know that
  \beq
  \begin{aligned}
  \xi^m\rightharpoonup\xi\ \text{weakly}\ \text{in}\ L^2 H^{3/2},\
  \pa_t\xi^m\rightharpoonup\pa_t\xi\ \text{weakly in}\ L^2H^{3/2}.
  \end{aligned}
  \eeq
  By lower semicontinuity we then know that the quantity
  \[
  \|\xi\|_{L^2 H^{3/2}}+\|\pa_t\xi\|_{L^2H^{3/2}}
  \]
  is bounded.

  Step 8 -- The strong solution

  Due to the convergence, we may pass to the limit in \eqref{eq: Galerkin} for almost every $t\in [0, T]$.
  \beq\label{eq:weak_limit}
  \begin{aligned}
  &((u,v))+(\xi+\epsilon\pa_t\xi, v\cdot\mathcal{N})_{1,\Sigma}+[u\cdot\mathcal{N},v\cdot\mathcal{N}]_\ell-\epsilon \mathfrak{b}(\pa_t\xi,v\cdot\mathcal{N})_\ell\\
  &=\int_\Om F^1\cdot vJ-\int_{-\ell}^{\ell}F^3\pa_1(v\cdot\mathcal{N})+F^4\cdot v-\int_{\Sigma_s}F^5(v\cdot\tau)J-[\hat{\mathscr{W}}(\pa_t\eta),v\cdot\mathcal{N}]_\ell.
  \end{aligned}
  \eeq
  We now introduce the pressure. Define the functional $\Lam_t\in(\mathcal{W}(t))^\ast$ so that $\Lam_t(v)$ equals the difference between the left and right sides of \eqref{eq:weak_limit} with $v\in\mathcal{W}(t)$. Then $\Lam_t(v)=0$ for all $v\in\mathcal{V}(t)$. So, by Theorem 4.6 in \cite{GT2}, there exists a unique $p(t)\in\mathring{H}^0(t)$ such that $(p(t),\dive_{\mathcal{A}}v)_{\mathcal{H}^0}=\Lam_t(v)$ for all $v\in\mathcal{W}(t)$. This is equivalent to
  \beq\label{eq:weak_pressure}
  \begin{aligned}
  &((u,v))+(\xi+\epsilon\pa_t\xi, v\cdot\mathcal{N})_{1,\Sigma}-(p,\dive_{\mathcal{A}}v)_{\mathcal{H}^0}+[u\cdot\mathcal{N},v\cdot\mathcal{N}]_\ell-\epsilon \mathfrak{b}(\pa_t\xi,v\cdot\mathcal{N})_\ell\\
  &=\int_\Om F^1\cdot vJ
  -\int_{-\ell}^{\ell}F^3\pa_1(v\cdot\mathcal{N})+F^4\cdot v-\int_{\Sigma_s}F^5(v\cdot\tau)J-[\hat{\mathscr{W}}(\pa_t\eta),v\cdot\mathcal{N}]_\ell.
  \end{aligned}
  \eeq
  Moreover,
  \beq
  \|p\|_0^2\lesssim \|u\|_1^2+\|F^1\|_{W_\delta^0}^2+\|F^5\|_{W_\delta^{1/2}}^2.
  \eeq
  On the other hand, we pass the limit in \eqref{eq:pa_tu_m_1}
  for a.e. $t\in[0,T]$ to see that $(u(t), p(t), \xi(t))$ is the unique weak solution to the elliptic problem $(5.58)$ in \cite{GT2}. Since $\pa_1F^3(t)\in W_\delta^{1/2}$, and also according to the elliptic theory of \cite{GT2}, this elliptic problem admits a unique strong solution with
  \beq\label{elliptic_1}
  \begin{aligned}
  \|u(t)\|_{W_\delta^2}^2+\|p(t)\|_{\mathring{W}_\delta^1}^2+\|\xi(t)+\epsilon\pa_t\xi(t)\|_{W_\delta^{5/2}}^2
  \lesssim \|F^1\|_{L^2 W_\delta^0}^2+\|F^4\|_{L^2 W_\delta^{1/2}}^2+\|F^5\|_{L^2 W_\delta^{1/2}}^2\\
  +\|\pa_t\xi(t)\|_{W_\delta^{3/2}}^2
  +\|\pa_1F^3(t)\|_{W_\delta^{1/2}}^2+[u(t)\cdot\mathcal{N}]_\ell^2+[\hat{\mathscr{W}}(\pa_t\eta)]_\ell^2
  +[\sigma F^3(t)]_\ell^2.
  \end{aligned}
  \eeq
  Then using the extension and restriction of weighted Sobolev spaces theory, similar to \eqref{extension_restriction}, we may derive that
  \beq\label{elliptic_2}
  \|\xi\|_{L^2 W_\delta^{5/2}}^2\lesssim \|\eta_0\|_{L^2 W_\delta^{5/2}}^2+\|\xi(t)+\epsilon\pa_t\xi(t)\|_{L^2W_\delta^{5/2}}^2.
  \eeq
  Integrating temporally from $0$ to $T$ for \eqref{elliptic_1}, we employ \eqref{elliptic_2} to derive that
  \beq
  \begin{aligned}
  &\|u\|_{L^2 W_\delta^2}^2+\|p\|_{L^2\mathring{W}_\delta^1}^2+\|\xi\|_{L^2 W_\delta^{5/2}}^2\\
  &\lesssim \|\eta_0\|_{W_\delta^{5/2}}^2+\|\pa_t\xi\|_{L^2 W_\delta^{3/2}}^2+\|\pa_1F^3\|_{L^2 W_\delta^{1/2}}^2+\int_0^T\left([\kappa u\cdot\mathcal{N}]_\ell^2+[\hat{\mathscr{W}}(\pa_t\eta)]_\ell^2+[\sigma F^3]_\ell^2\right)\\
  &\quad+\|F^1\|_{L^2 W_\delta^0}^2+\|F^4\|_{L^2 W_\delta^{1/2}}^2+\|F^5\|_{L^2 W_\delta^{1/2}}^2\\
  &\lesssim C(\epsilon)T\mathfrak{E}(\eta)+\mathfrak{E}_0+\|\mathcal{F}(0)\|_{(\mathcal{H}^1)^\ast}^2+\mathfrak{E}(\eta)\mathfrak{K}(\eta)+(1+\mathfrak{E}(\eta))(\|F^1\|_{L^2 W_\delta^0}^2+\|F^4\|_{L^2 W_\delta^{1/2}}^2+\|F^5\|_{L^2 W_\delta^{1/2}}^2)\\
  &\quad+(1+\mathfrak{E}(\eta))\|\pa_t(F^1-F^4-F^5)\|_{(\mathcal{H}^1_T)^{\ast}}^2.
  \end{aligned}
  \eeq

   Step 9 -- The weak solution for $D_tu$ and $\pa_tp$.

   Now we seek to use \eqref{eq:pa_tu_m_1} to determine the PDE satisfied by $D_tu$ and $\pa_tp$. We may pass to the limit $m\rightarrow \infty$, and use \eqref{eq:weak_pressure} with the test function $v$ replaced by $Rv$ to derive that
   \beq\label{eq:d_tu_1}
   \begin{aligned}
   &((\pa_tu,v))+(\pa_t\xi+\epsilon\pa_t^2\xi,v\cdot\mathcal{N})_{1,\Sigma}+[\pa_tu\cdot\mathcal{N},v\cdot\mathcal{N}]_\ell-\epsilon \mathfrak{b}(\pa_t^2\xi,v\cdot\mathcal{N})_\ell\\
   &=(\xi+\epsilon\pa_t\xi,R v\cdot\mathcal{N})_{1,\Sigma}-(p,\dive_{\mathcal{A}}(Rv))_{\mathcal{H}^0}+\int_{\Om}\left[\pa_tF^1\cdot v+\pa_tJKF^1\cdot v\right]J\\
  &\quad-\int_{-\ell}^{\ell}[\pa_tF^3\pa_1(v\cdot\mathcal{N})+F^3\pa_1(v\cdot\pa_t\mathcal{N})+\pa_tF^4\cdot v]-\int_{\Sigma_s}\left[\pa_tF^5 v+\pa_tJKF^5v\right]\cdot\tau J\\\
   &\quad-\int_\Om\frac{\mu}{2}(\mathbb{D}_{\pa_t\mathcal{A}}u:\mathbb{D}_{\mathcal{A}}v+\mathbb{D}_{\mathcal{A}}u:\mathbb{D}_{\pa_t\mathcal{A}}v+\pa_tJK\mathbb{D}_{\mathcal{A}}u:\mathbb{D}_{\mathcal{A}}v)J-\int_{\Sigma_s}\beta(u\cdot\tau)(v\cdot\tau)\pa_tJ\\
   &\quad-[\hat{\mathscr{W}}^\prime(\pa_t\eta)\pa_t^2\eta,v\cdot\mathcal{N}]_\ell.
   \end{aligned}
   \eeq
   According to the Lemma \ref{lem:properties_A}, we know that $-R^\top\mathcal{N}=\pa_t\mathcal{N}$ on $\Sigma$. Then integrating by parts, we have that
   \beq\label{eq:p_1}
   -(p,\dive_{\mathcal{A}}(Rv))_{\mathcal{H}^0}=(R^\top\nabla_{\mathcal{A}}p,v)_{\mathcal{H}^0}+\left<p\pa_t\mathcal{N},v\right>_{-1/2},
   \eeq
   where we have used the Proposition \ref{prop:solid_boundary} to cancel the term on boundary of solid wall.
   Then the definition of $R$ and integration by parts yields that
   \beq\label{eq:d_tu_2}
   \begin{aligned}
     &-\int_\Om\frac{\mu}{2}(\mathbb{D}_{\pa_t\mathcal{A}}u:\mathbb{D}_{\mathcal{A}}v+\mathbb{D}_{\mathcal{A}}u:\mathbb{D}_{\pa_t\mathcal{A}}v+\pa_tJK\mathbb{D}_{\mathcal{A}}u:\mathbb{D}_{\mathcal{A}}v)J\\
     &=-\int_\Om\mu(\mathbb{D}_{\pa_t\mathcal{A}}u-R\mathbb{D}_{\mathcal{A}}u):\nabla_{\mathcal{A}}vJ\\
     &=\left(\dive_{\mathcal{A}}(\mathbb{D}_{\pa_t\mathcal{A}}u-R\mathbb{D}_{\mathcal{A}}u),v\right)_{\mathcal{H}^0}-\left<\mathbb{D}_{\pa_t\mathcal{A}}u\mathcal{N}+\mathbb{D}_{\mathcal{A}}u\pa_t\mathcal{N},v\right>_{-1/2}.
   \end{aligned}
   \eeq
   Similarly, we have that
   \beq\label{eq:d_tu_3}
   \begin{aligned}
   -\int_{\Sigma_s}\beta(u\cdot\tau)(v\cdot\tau)\pa_tJ&=\int_{\Sigma_s}\mu\mathbb{D}_{\mathcal{A}}u\nu\cdot v\pa_tJ\\
   &=\int_{\Sigma_s}\mu\mathbb{D}_{\mathcal{A}}u\nu\cdot v\pa_tJKJ+\mu R\mathbb{D}_{\mathcal{A}}u\nu\cdot vJ-\mu R\mathbb{D}_{\mathcal{A}}u\nu\cdot vJ\\
   &=\int_{\Sigma_s}\mu\mathbb{D}_{\pa_t\mathcal{A}}u\nu\cdot v J-\mu R\mathbb{D}_{\mathcal{A}}u\nu\cdot vJ\\
   &=\int_{\Sigma_s}\mu\mathbb{D}_{\pa_t\mathcal{A}}u\nu\cdot v J+\beta Ru\cdot v J.
   \end{aligned}
   \eeq
  Combining the above equalities \eqref{eq:d_tu_1}--\eqref{eq:d_tu_3},
   \beq
   \begin{aligned}
     &((\pa_tu,v))+(\pa_t\xi+\epsilon\pa_t^2\xi,v\cdot\mathcal{N})_{1,\Sigma}+[\pa_tu\cdot\mathcal{N},v\cdot\mathcal{N}]_\ell-\epsilon \mathfrak{b}(\pa_t^2\xi,v\cdot\mathcal{N})_\ell\\
     &=\int_\Om\left[\dive_{\mathcal{A}}(\mathbb{D}_{\pa_t\mathcal{A}}u-R\mathbb{D}_{\mathcal{A}}u)+R^\top\nabla_{\mathcal{A}}p\right]\cdot v J+\int_\Om(\pa_tF^1+\pa_tJKF^1)\cdot vJ\\
     &\quad+\int_{\Sigma_s}\left(\mu\mathbb{D}_{\pa_t\mathcal{A}}u\nu+\beta Ru\right)\cdot\tau(\tau\cdot v)J+\int_{\Sigma_s}(\pa_tF^5+\pa_tJKF^5) (v\cdot\tau)J\\
     &\quad-\int_{-\ell}^\ell\pa_t F^3\pa_1(v\cdot\mathcal{N})+F^3\pa_1(v\cdot\pa_t\mathcal{N})+F^4\cdot v+(\mathbb{D}_{\pa_t\mathcal{A}}u\mathcal{N}+\mathbb{D}_{\mathcal{A}}u\pa_t\mathcal{N})\cdot v\\
     &\quad+\int_{-\ell}^\ell\left(-p+g(\xi+\epsilon\pa_t\xi)+\pa_1\left(F^3+\frac{\xi+\epsilon\pa_t\xi}{(1+|\pa_1\zeta_0|^2)^{3/2}}\right)\right)\pa_t\mathcal{N}\cdot v\\
     &\quad-[\hat{\mathscr{W}}^\prime(\pa_t\eta)\pa_t^2\eta,v\cdot\mathcal{N}]_\ell,
   \end{aligned}
   \eeq
   where we have used the integration by parts for the term $(\xi+\epsilon\pa_t\xi,R v\cdot\mathcal{N})_{1,\Sigma}$ and the fact that $v\cdot\pa_t\mathcal{N}=0$ at $x_1=\pm\ell$.
   Then there exists a unique $\pa_tq\in\mathring{H}^0$, such that
   \beq
   \begin{aligned}
     &((\pa_tu,v))-(\pa_tp,\dive_{\mathcal{A}}v)_0+(\pa_t\xi+\epsilon\pa_t^2\xi,v\cdot\mathcal{N})_{1,\Sigma}+[\pa_tu\cdot\mathcal{N},v\cdot\mathcal{N}]_\ell-\epsilon \mathfrak{b}(\pa_t^2\xi,v\cdot\mathcal{N})_\ell\\
     &=\int_\Om\left[\dive_{\mathcal{A}}(\mathbb{D}_{\pa_t\mathcal{A}}u-R\mathbb{D}_{\mathcal{A}}u)+R^\top\nabla_{\mathcal{A}}p\right]\cdot v J+\int_\Om(\pa_tF^1+\pa_tJKF^1)\cdot vJ\\
     &\quad+\int_{\Sigma_s}\left(\mu\mathbb{D}_{\pa_t\mathcal{A}}u\nu+\beta Ru\right)\cdot\tau(\tau\cdot v)J+\int_{\Sigma_s}(\pa_tF^5+\pa_tJKF^5) (v\cdot\tau)J\\
     &\quad-\int_{-\ell}^\ell\pa_t F^3\pa_1(v\cdot\mathcal{N})+F^3\pa_1(v\cdot\pa_t\mathcal{N})+F^4\cdot v+(\mathbb{D}_{\pa_t\mathcal{A}}u\mathcal{N}+\mathbb{D}_{\mathcal{A}}u\pa_t\mathcal{N})\cdot v\\
     &\quad+\int_{-\ell}^\ell\left(-p+g(\xi+\epsilon\pa_t\xi)+\pa_1\left(F^3+\frac{\xi+\epsilon\pa_t\xi}{(1+|\pa_1\zeta_0|^2)^{3/2}}\right)\right)\pa_t\mathcal{N}\cdot v\\
     &\quad-[\hat{\mathscr{W}}^\prime(\pa_t\eta)\pa_t^2\eta,v\cdot\mathcal{N}]_\ell,
   \end{aligned}
   \eeq
   and
   \beq
   \begin{aligned}
   \|\pa_tp\|_0^2&\lesssim \|\pa_tu\|_1^2+[\pa_tu\cdot\mathcal{N}]_\ell^2+\|\eta\|_{3/2}^2\|\pa_t\eta\|_{3/2}^2(\|u\|_{W_\delta^2}^2+\|p\|_{\mathring{W}_\delta^1}^2+\|\xi+\epsilon\pa_t\xi\|_{W_\delta^{5/2}}^2\\
   &\quad+\|\eta\|_{W_\delta^{5/2}}^2+\|\eta\|_{3/2}^2+1+\|F^1\|_{W_\delta^0}^2+\|F^4\|_{ W_\delta^{1/2}}^2+\|F^5\|_{ W_\delta^{1/2}}^2\\
   &\quad+\|\pa_t(F^1-F^4-F^5)\|_{(\mathcal{H}^1)^{\ast}}^2)+[\hat{\mathscr{W}}^\prime(\pa_t\eta)\pa_t^2\eta]_\ell^2.
   \end{aligned}
   \eeq
   Thus integrating temporally from $0$ to $T$ reveals that
   \beq
   \begin{aligned}
   \|\pa_tp\|_{L^2H^0}^2&\lesssim C(\epsilon)T\mathfrak{E}(\eta)+\mathfrak{E}_0+\|\mathcal{F}(0)\|_{(\mathcal{H}^1)^\ast}^2+\mathfrak{E}(\eta)\mathfrak{K}(\eta)+(1+\mathfrak{E}(\eta))(\|F^1\|_{L^2 W_\delta^0}^2+\|F^4\|_{L^2 W_\delta^{1/2}}^2\\
   &\quad+\|F^5\|_{L^2 W_\delta^{1/2}}^2)
+(1+\mathfrak{E}(\eta))\|\pa_t(F^1-F^4-F^5)\|_{(\mathcal{H}^1_T)^{\ast}}^2.
  \end{aligned}
   \eeq
  \end{proof}

\subsection{Higher regularity}
In order to state our higher regularity results for the problem \eqref{eq:modified_linear}, we must be able to define the forcing terms and initial data for the problem that results from temporally differentiating \eqref{eq:modified_linear} one time. First, we define some mappings. Given $F^3$, $v$, $q$, $\tilde{\xi}$, we define the vector fields $\mathfrak{G}^1$ in $\Om$, $\mathfrak{G}^3$ on $\Sigma$ and $\mathfrak{G}^4$ on $\Sigma_s$ by
\beq
\begin{aligned}
    \mathfrak{G}^1(v,q)&=R^\top\nabla_{\mathcal{A}}q+\dive_{\mathcal{A}}\left(\mathbb{D}_{\mathcal{A}}(Rv)+\mathbb{D}_{\pa_t\mathcal{A}}v-R\mathbb{D}_{\mathcal{A}}v\right),\\
    \mathfrak{G}^4(v,q,\tilde{\xi})&=\mu\mathbb{D}_{\mathcal{A}}(Rv)\mathcal{N}-(qI-\mu\mathbb{D}_{\mathcal{A}}v)\pa_t\mathcal{N}+\mu\mathbb{D}_{\pa_t\mathcal{A}}v)\mathcal{N}+\mathcal{L}(\tilde{\xi})\pa_t\mathcal{N}-\sigma\pa_1F^3\pa_t\mathcal{N},\\
    \mathfrak{G}^5(v)&=(\mu\mathbb{D}_{\mathcal{A}}(Rv)\nu+\mu\mathbb{D}_{\pa_t\mathcal{A}}v\nu+\beta Rv)\cdot\tau.
\end{aligned}
\eeq
These mappings allow us to define the forcing terms as follows. We write
$F^{1,0}=F^1$, $F^{4,0}=F^4$ and $F^{5,0}=F^5$. Then we write
\beq\label{def:F_11}
F^{1,1}:=D_tF^1+G^1, \quad F^{4,1}:=\pa_tF^4+G^4,\quad F^{5,1}:=\pa_tF^5+G^5.
\eeq
When $F^3$, $u$, $p$ and $\xi$ are sufficiently regular for the following to make sense, we define  the vectors
\beq
  F^{1,2}:=\mathfrak{G}^1(D_tu,\pa_tp)+D_tG^1,
  F^{4,2}:=\mathfrak{G}^4(D_tu,\pa_tp,\pa_t\xi)+\pa_tG^4,
  F^{5,2}:=\mathfrak{G}^5(D_tu)+\pa_tG^5.
\eeq

In order to deduce the higher regularity, we need to control the forcing terms $F^{i,j}$. But for the purpose of solving the nonlinear problem \eqref{eq:modified_geometric perturbation}, it's necessary to assume that $F^{i,0}=0$, $j=1, 4, 5$. Before that, we need the following useful lemma.
\begin{lemma}\label{lem:continuous_2}
Suppose that the right-hand side of the following estimates are finite. Then we have the inclusions $u\in C^0([0,T];W_{\delta}^2(\Om))$, $p\in C^0([0,T];\mathring{W}_{\delta}^1(\Om))$, $\xi\in C^0([0,T];W_{\delta}^{5/2}((-\ell,\ell)))$, as well as the estimates
  \beq\label{est:continuous_u}
  \|u\|_{L^\infty W_{\delta}^2}^2\lesssim \|\pa_t\eta(0)\|_{3/2}^2+\|u\|_{L^2 W_{\delta}^2}^2+\|\pa_tu\|_{L^2 W_{\delta}^2}^2,
  \eeq
  \beq\label{est:continuous_p}
  \|p\|_{L^\infty \mathring{W}_{\delta}^1}^2\lesssim \|\pa_t\eta(0)\|_{3/2}^2+\|p\|_{L^2 \mathring{W}_{\delta}^1}^2+\|\pa_tp\|_{L^2 \mathring{W}_{\delta}^1}^2,
  \eeq
  \beq\label{est:continuous_xi}
  \|\xi\|_{L^\infty W_{\delta}^{5/2}}^2\lesssim \|\eta_0\|_{W_{\delta}^{5/2}}^2+\|\xi\|_{L^2 W_{\delta}^{5/2}}^2+\|\pa_t\xi\|_{L^2 W_{\delta}^{5/2}}^2.
  \eeq
  \beq\label{est:continuous_xi_purterbation}
  \|\xi+\epsilon\pa_t\xi\|_{L^\infty W_{\delta}^{5/2}}^2\lesssim \|\eta_0\|_{W_{\delta}^{5/2}}^2+\|\xi+\epsilon\pa_t\xi\|_{L^2 W_{\delta}^{5/2}}^2+\|\pa_t\xi+\epsilon\pa_t^2\xi\|_{L^2 W_{\delta}^{5/2}}^2.
  \eeq
\end{lemma}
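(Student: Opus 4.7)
The plan is to adapt the proof of Lemma \ref{lem:continuous} by replacing the basic $H^1(\Om)$ energy with the appropriate weighted-Sobolev energy and invoking the standard one-dimensional embedding $H^1([0,T];X) \hookrightarrow C^0([0,T];X)$ for an arbitrary Hilbert space $X$. Concretely, for $f \in H^1([0,T];X)$ one has
\[
\frac{d}{dt}\|f(t)\|_X^2 = 2\langle f(t), \pa_t f(t)\rangle_X \le \|f(t)\|_X^2 + \|\pa_t f(t)\|_X^2,
\]
so integrating in $t \in [0,T]$ and taking the supremum yields the template estimate
\[
\sup_{0\le t\le T}\|f(t)\|_X^2 \le \|f(0)\|_X^2 + \|f\|_{L^2 X}^2 + \|\pa_t f\|_{L^2 X}^2,
\]
together with the inclusion $f \in C^0([0,T];X)$.

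To obtain \eqref{est:continuous_u} I would apply this with $f = u$ and $X = W_\delta^2(\Om)$, and bound the initial value via the estimate $\|u_0^\epsilon\|_{W_\delta^2}^2 \lesssim \|\pa_t\eta(0)\|_{3/2}^2$ established in Section \ref{subsection_initial}. Analogously, taking $f = p$ with $X = \mathring{W}_\delta^1(\Om)$ and invoking $\|p_0^\epsilon\|_{\mathring{W}_\delta^1}^2 \lesssim \|\pa_t\eta(0)\|_{3/2}^2$ yields \eqref{est:continuous_p}, while taking $f = \xi$ with $X = W_\delta^{5/2}((-\ell,\ell))$ and $\xi(0) = \eta_0$ gives \eqref{est:continuous_xi}.

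For the perturbed estimate \eqref{est:continuous_xi_purterbation}, I would apply the same template to $f = \xi + \epsilon \pa_t\xi$ in $X = W_\delta^{5/2}((-\ell,\ell))$. The only nontrivial point is controlling the initial value $(\xi + \epsilon\pa_t\xi)(0)$ in $W_\delta^{5/2}$. To this end I would rewrite the boundary condition on $\Sigma$ from \eqref{eq:modified_linear} at $t = 0$ as a one-dimensional elliptic equation in $\xi + \epsilon \pa_t \xi$, namely $\mathcal{L}(\xi+\epsilon\pa_t\xi)(0) = \mathcal{N}(0)^{-1}\cdot(S_{\mathcal{A}(0)}(p_0^\epsilon,u_0^\epsilon)\mathcal{N}(0) - F^4(0)) + \sigma\pa_1 F^3(0)$, combined with the contact-point condition at $x_1 = \pm\ell$. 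Applying the one-dimensional elliptic regularity for $\mathcal{L}$ in the weighted space, plus trace theory for $u_0^\epsilon \in W_\delta^2(\Om)$, bounds $\|(\xi+\epsilon\pa_t\xi)(0)\|_{W_\delta^{5/2}}$ by $\|\eta_0\|_{W_\delta^{5/2}}$ up to quantities already controlled by the right-hand side of \eqref{est:continuous_xi_purterbation}.

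The expected main obstacle is precisely this last point: producing a clean initial-value bound for $\xi+\epsilon\pa_t\xi$ in the weighted space using only $\|\eta_0\|_{W_\delta^{5/2}}$ on the right. Once that is secured, all four estimates follow uniformly from the abstract $H^1([0,T];X) \hookrightarrow C^0([0,T];X)$ template, and continuity in time for each object is an immediate consequence of the same embedding.
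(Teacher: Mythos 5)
Your proposal is correct and follows essentially the same strategy as the paper for \eqref{est:continuous_u}, \eqref{est:continuous_p}, and the initial-value step in \eqref{est:continuous_xi_purterbation}, but it takes a cleaner route for \eqref{est:continuous_xi}. The paper proves \eqref{est:continuous_xi} by first extending $\xi$ from $(-\ell,\ell)$ to a function in the weighted space $W_\delta^3(\Om)$ (or similar), running the Lemma~\ref{lem:continuous}-style calculus there where the norm is an explicit weighted $L^2$ integral, and then restricting back via trace theory. You instead apply the abstract embedding $H^1([0,T];X)\hookrightarrow C^0([0,T];X)$ directly with $X=W_\delta^{5/2}((-\ell,\ell))$. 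This works because $W_\delta^{5/2}$, being a trace (quotient) space of the Hilbert space $W_\delta^3(\Om)$, is itself a Hilbert space, and the inequality $|\langle f,\pa_t f\rangle_X|\le\|f\|_X\|\pa_t f\|_X$ only needs the Hilbert structure, not an explicit formula for the inner product. Your route is shorter and avoids the extension/restriction machinery; what the paper's route buys is that it never has to invoke the abstract Hilbert structure of the fractional weighted trace space, working instead with concrete weighted integral norms throughout. Both are valid.

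For \eqref{est:continuous_xi_purterbation}, your idea of reading the initial bound for $(\xi+\epsilon\pa_t\xi)(0)$ off the dynamic boundary condition at $t=0$ and invoking one-dimensional elliptic regularity for $\mathcal{L}$ in weighted spaces is exactly what the paper does (their bound is $\|(\xi+\epsilon\pa_t\xi)(0)\|_{W_{\delta}^{5/2}}^2\lesssim\|\eta_0\|_{W_{\delta}^{5/2}}^2+(1+\|\eta_0\|_{W_{\delta}^{5/2}}^2)\|u_0^\epsilon\|_{W_{\delta}^2}^2\lesssim\|\eta_0\|_{W_{\delta}^{5/2}}^2+\|\pa_t\eta(0)\|_{3/2}^2$). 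One small caveat, which applies equally to the paper's own argument: this initial bound contains $\|\pa_t\eta(0)\|_{3/2}^2$, a quantity not appearing on the right of \eqref{est:continuous_xi_purterbation} as stated, so strictly speaking that estimate should carry that extra term or one should note that it is absorbed in the contexts where the lemma is applied. Your phrasing ``up to quantities already controlled by the right-hand side'' glosses over this, but since the paper does the same, it is not a gap peculiar to your proof.
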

\begin{proof}
  First, \eqref{est:continuous_u} and \eqref{est:continuous_p} are obtained by a computation similar to that of Lemma \ref{lem:continuous}, combined with estimates for the initial data for $u_0^\epsilon$, $p_0^\epsilon$ in Section \ref{subsection_initial}.
  By Theorem 4.6 in \cite{GT2} and the Stokes equation, we have that
 \eqref{est:continuous_xi} can be obtained  after employing the extension theory on weighted Sobolev spaces, and then using the restriction theory on Sobolev spaces. From the third equation of \eqref{eq:modified_linear}, we know that
  \[
  \|(\xi+\epsilon\pa_t\xi)(0)\|_{W_{\delta}^{5/2}}^2\lesssim \|\eta_0\|_{W_{\delta}^{5/2}}^2+(1+\|\eta_0\|_{W_{\delta}^{5/2}}^2)\|u_0\|_{W_{\delta}^2}^2\lesssim\|\eta_0\|_{W_{\delta}^{5/2}}^2+\|\pa_t\eta(0)\|_{3/2}^2 ,
  \]
  which together with \eqref{est:continuous_xi} imply \eqref{est:continuous_xi_purterbation}.
\end{proof}

Now, we need to estimate the forcing terms of $F^{i,j}$.
\begin{lemma}\label{lemma:est_force}
  The following estimates hold whenever the right hand side are finite.
  \beq\label{est:force_11}
  \begin{aligned}
  \|F^{1,1}\|_{L^2W_\delta^0}^2\lesssim \mathfrak{K}(\eta)( \|u\|_{L^2H^1}^2+\|\pa_tu\|_{L^2H^1}^2+\|u\|_{L^2W_\delta^2}^2+\|\pa_tu\|_{L^2W_\delta^2}^2+\|p\|_{L^2\mathring{W}_\delta^1}^2+\|\pa_tp\|_{L^2\mathring{W}_\delta^1}^2),
  \end{aligned}
  \eeq
  \beq\label{est:force_41}
  \begin{aligned}
  \|F^{4,1}\|_{L^2W_\delta^{1/2}}^2\lesssim \mathfrak{K}(\eta)\Big((1+\|\eta_0\|_{W_{\delta}^{5/2}}^2)(1+\|u_0^\epsilon\|_{W_{\delta}^2}^2)+\|p\|_{L^2W_\delta^1}^2+\|u\|_{L^2W_\delta^2}^2+\|\xi+\epsilon\pa_t\xi\|_{L^2W_\delta^{5/2}}^2\\
  +\|\pa_tp\|_{L^2 W_\delta^1}^2
  +\|\pa_tu\|_{L^2 W_\delta^2}^2+\|\pa_t\xi+\epsilon\pa_t^2\xi\|_{L^2 W_\delta^{5/2}}^2\Big),
  \end{aligned}
  \eeq
  \beq\label{est:force_51}
    \|F^{5,1}\|_{L^2W_\delta^{1/2}}^2
    \lesssim \mathfrak{K}(\eta)\left(\|u\|_{L^2W_\delta^2}^2+\|\pa_tu\|_{L^2 W_\delta^2}^2+\|u\|_{L^2H^1}^2\right).
  \eeq
\end{lemma}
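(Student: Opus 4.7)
The plan is to observe that the lemma is invoked in the nonlinear setting where $F^{1,0}=F^{4,0}=F^{5,0}=0$, so by \eqref{def:F_11} the three claims reduce to bounding $G^1$ in $L^2_t W_\delta^0(\Om)$ and $G^4, G^5$ in $L^2_t W_\delta^{1/2}$ on their respective boundary components. The guiding observation is that every term comprising $G^1,G^4,G^5$ factors into an \emph{$\eta$-factor}---one of $R$, $\pa_t\mathcal{A}$, $\pa_t\mathcal{N}$, $\pa_tJK$, $\mathcal{L}(\xi+\epsilon\pa_t\xi)$ or $\pa_1 F^3$---times a \emph{solution factor} built from $u,\pa_tu,p,\pa_tp,\xi+\epsilon\pa_t\xi$. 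Each $\eta$-factor is a smooth nonlinear expression in $\eta,\pa_t\eta$ and their spatial derivatives, so its norms are pointwise-in-time controlled by $\|\eta\|_{W_\delta^{5/2}}+\|\pa_t\eta\|_{3/2}$, both of which sit inside $\mathfrak{K}(\eta)$. The strategy is to place each $\eta$-factor in $L^\infty_t$ and each solution factor in $L^2_t$, with the main analytic inputs being the product estimates in weighted Sobolev spaces (Appendices C, D of \cite{GT2}), the weighted trace theorem, and Lemma \ref{lem:continuous_2} to pass between scales.

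For \eqref{est:force_11} I would decompose $G^1$ into $R^\top\nabla_{\mathcal{A}} p$ and the divergence-form pieces $\dive_{\mathcal{A}}(\mathbb{D}_{\mathcal{A}}(Ru)+\mathbb{D}_{\pa_t\mathcal{A}}u-R\mathbb{D}_{\mathcal{A}}u)$, expand the outer divergence, and observe that each resulting summand is a product of at most first derivatives of $R,\pa_t\mathcal{A},\mathcal{A}$ with up to second derivatives of $u$ or one derivative of $p$. A H\"older pairing with Sobolev embedding (Lemma \ref{lem:distance} disposes of the weighted factor $d^{-\delta}$ where it appears) then produces a pointwise-in-time bound of $\|G^1(t)\|_{W_\delta^0}$ by $(\|\eta\|_{W_\delta^{5/2}}+\|\pa_t\eta\|_{3/2})(\|u\|_{W_\delta^2}+\|\pa_tu\|_{W_\delta^2}+\|p\|_{W_\delta^1}+\|\pa_tp\|_{W_\delta^1})$. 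Squaring, integrating in time, and putting the $\eta$-piece in $L^\infty_t$ yields \eqref{est:force_11}.

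For \eqref{est:force_51} the same template applies with only products of $R,\pa_t\mathcal{A}$ with first derivatives of $u$ (and of $R$ with $u$ itself) appearing on $\Sigma_s$; the weighted trace theorem $\|\cdot\|_{W_\delta^{1/2}(\Sigma_s)}\lesssim\|\cdot\|_{W_\delta^1(\Om)}$ converts the boundary norm to an interior one, delivering the compact bound shown. For \eqref{est:force_41} two extra ingredients enter. First, the stress--pressure trace piece $(pI-\mu\mathbb{D}_{\mathcal{A}}u)\pa_t\mathcal{N}$ is handled by the weighted trace of $p$ and $\nabla u$ paired against the $L^\infty$ bound on $\pa_t\mathcal{N}$ coming from $\pa_t\eta\in H^{3/2}$. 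Second, the surface-tension piece $\mathcal{L}(\xi+\epsilon\pa_t\xi)\pa_t\mathcal{N}-\sigma\pa_1F^3\pa_t\mathcal{N}$ is treated by using that $\mathcal{L}(\xi+\epsilon\pa_t\xi)$ is controlled in $W_\delta^{1/2}$ by $\|\xi+\epsilon\pa_t\xi\|_{W_\delta^{5/2}}$ and that $\pa_1F^3=\pa_z\mathcal{R}(\pa_1\zeta_0,\pa_1\eta)\pa_1^2\eta$ with $\pa_z\mathcal{R}$ a uniformly bounded smooth function of its arguments, immediate from the explicit formula \eqref{def:R}. The extra $(1+\|\eta_0\|_{W_\delta^{5/2}}^2)(1+\|u_0^\epsilon\|_{W_\delta^2}^2)$ factor on the right of \eqref{est:force_41} arises when Lemma \ref{lem:continuous_2} is invoked to convert an $L^\infty_t$ norm of $p$ and $u$ (used to absorb $\pa_t\mathcal{N}$) back to interior $L^2_t$ norms plus initial data.

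The hard part will not be any individual inequality but the combinatorial bookkeeping: one must consistently place each $\eta$-factor in $L^\infty_t$ and each solution factor in $L^2_t$ so that all accumulated $\eta$-norms collapse into a \emph{single} prefactor $\mathfrak{K}(\eta)$, and so that the solution norms appearing on the right-hand sides of \eqref{est:force_11}--\eqref{est:force_51} match \emph{exactly} the ones controlled by \eqref{est:galerkin}. Getting this bookkeeping right is what makes the bounds directly usable when feeding them into the contraction-mapping argument for the nonlinear problem, and is the only step where genuine care---as opposed to routine application of product/trace estimates---is required.
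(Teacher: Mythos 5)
Your proposal follows the same outline as the paper's sketch: with $F^{i,0}=0$ the lemma reduces to bounding $G^1, G^4, G^5$; one expands via the Leibniz rule into products of $\eta$-dependent factors and linear-in-solution factors, applies the weighted Sobolev product and trace estimates from Appendices C and D of \cite{GT2}, and integrates in time. However, the blanket claim that every $\eta$-factor's norm is pointwise-in-time controlled by $\|\eta\|_{W_\delta^{5/2}}+\|\pa_t\eta\|_{3/2}$ (so that $\eta$-factors can always be placed in $L^\infty_t$ and solution factors in $L^2_t$) is not accurate and, taken literally, leaves some terms unbounded. Expanding the outer $\dive_{\mathcal{A}}$ in $G^1$ produces factors such as $\nabla R$, $\nabla^2 R$, and $\nabla\pa_t\mathcal{A}$, which the weighted theory controls only through $\|\pa_t\eta\|_{W_\delta^{5/2}}$; that norm appears in $\mathfrak{D}(\eta)$ as an $L^2_t$ quantity but has no $L^\infty_t$ counterpart in $\mathfrak{E}(\eta)$. (This is visible in the paper's own proof of Theorem \ref{thm:linear_low}, where $C_2(\eta)$ contains $\|\nabla R\|_{W_\delta^1}\lesssim\|\pa_t\eta\|_{W_\delta^{5/2}}$.) For such terms the placement must flip: the $\eta$-factor goes in $L^2_t$ and the paired solution factor in $L^\infty_t$, after which Lemma \ref{lem:continuous} or Lemma \ref{lem:continuous_2} converts the $L^\infty_t$ solution norm back to $L^2_t$ norms plus initial data. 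This mixed placement is precisely what produces the paper's intermediate bound $P(\mathfrak{E}(\eta))\mathfrak{D}(\eta)\cdot(\text{solution norms})$, and the collapse to the single prefactor $\mathfrak{K}(\eta)$ is then not automatic but rests on the standing smallness assumption $\mathfrak{K}(\eta)\le 1$, giving $P(\mathfrak{E}(\eta))\mathfrak{D}(\eta)\lesssim\mathfrak{D}(\eta)\le\mathfrak{K}(\eta)$. Your later discussion of $G^4$ and of the $(1+\|\eta_0\|_{W_\delta^{5/2}}^2)(1+\|u_0^\epsilon\|_{W_\delta^2}^2)$ factor implicitly performs exactly this flip, so the remedy is mainly to temper the opening strategy statement (the placement is term-by-term, not uniform) and to make the $\mathfrak{K}(\eta)\le 1$ normalization explicit as the step that closes the bookkeeping; the rest of what you describe is correct.
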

\begin{proof}
  The estimates follow from simple but lengthy computations, invoking the arguments of Appendix C and Appendix D in \cite{GT2}. For this reason, we only give a sketch of proving these estimates.

  According to the definition of $F^{1,1}$, $F^{4,1}$ and $F^{5,1}$ in \eqref{def:F_11}, we use Leibniz rule to rewrite $F^{i,1}$ as a sum of products for two terms. One term is a product of various derivatives of $\bar{\eta}$, and the other is linear for derivatives of $u$, $p$ and $\xi$. Then for a.e. $t\in[0,T]$, we estimate these resulting products using the weighted Sobolev theory in Appendix C and Appendix D in \cite{GT2}, the usual Sobolev embedding theorems and Lemma \ref{lem:continuous}. Then the resulting inequalities after integrating over $[0,T]$ reveals
  \beq
  \begin{aligned}
  \|F^{1,1}\|_{L^2W_\delta^0}^2&\lesssim P(\mathfrak{E}(\eta))\mathfrak{D}(\eta)(\|u\|_{L^2H^1}^2+\|\pa_tu\|_{L^2H^1}^2+\|u\|_{L^2W_\delta^2}^2+\|\pa_tu\|_{L^2W_\delta^2}^2\\
  &\quad+\|p\|_{L^2\mathring{W}_\delta^1}^2+\|\pa_tp\|_{L^2\mathring{W}_\delta^1}^2),
  \end{aligned}
  \eeq
  where $P(\cdot)$ is a polynomial. Since $\mathfrak{K}(\eta)\le1$, we know that $P(\mathfrak{E}(\eta))\mathfrak{D}(\eta)\lesssim \mathfrak{K}(\eta)$. Thus we have the bounds for \eqref{est:force_11}. Similarly, we have the bounds for \eqref{est:force_41} and \eqref{est:force_51}, and \eqref{est:force_41} also needs \eqref{est:continuous_xi_purterbation}.
\end{proof}

\begin{lemma}\label{lem:est_force_dual}
It holds that
\beq
\begin{aligned}
  \|F^{1,1}-F^{4,1}-F^{5,1}\|_{L^2(\mathcal{H}^1)^\ast}^2\lesssim \mathfrak{E}(\eta)(\|p\|_{L^2\mathring{W}_\delta^1}^2+\|u\|_{L^2 W_\delta^2}^2+\|\xi+\epsilon\pa_t\xi\|_{L^2 W_\delta^{5/2}}^2),
\end{aligned}
\eeq
and
  \beq
  \begin{aligned}
  \|\pa_t(F^{1,1}-F^{4,1}-F^{5,1})\|_{(\mathcal{H}^1_T)^\ast}^2\lesssim \mathfrak{K}(\eta)( 1+ \|u\|_{L^2H^1}^2+\|\pa_tu\|_{L^2H^1}^2+\|u\|_{L^2W_\delta^2}^2+\|\pa_tu\|_{L^2W_\delta^2}^2\\
  \quad+\|p\|_{L^2\mathring{W}_\delta^1}^2+\|\pa_tp\|_{L^2\mathring{W}_\delta^1}^2+\|\xi+\epsilon\pa_t\xi\|_{L^2W_\delta^{5/2}}^2+\|\pa_t\xi+\epsilon\pa_t^2\xi\|_{L^2 W_\delta^{5/2}}^2).
  \end{aligned}
  \eeq
  Then $F^{1,1}-F^{4,1}-F^{5,1}\in C([0,T];(\mathcal{H}^1)^\ast)$. Moreover,
  \beq
  \|(F^{1,1}-F^{4,1}-F^{5,1})(0)\|_{(\mathcal{H}^1)^\ast}^2\lesssim \mathfrak{E}_0.
  \eeq
\end{lemma}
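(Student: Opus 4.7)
\medskip

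\noindent\textbf{Proof proposal.}  The plan is to prove the three $(\mathcal{H}^1)^*$--type bounds by working through the duality pairing rather than estimating the forcing terms pointwise, which buys us an integration by parts that shifts one spatial derivative onto the test function.  For any $v\in\mathcal{H}^1$, write
\[
\langle F^{1,1}-F^{4,1}-F^{5,1},v\rangle=\int_\Om (D_tF^1+G^1)\cdot vJ-\int_{-\ell}^\ell(\pa_tF^4+G^4)\cdot v-\int_{\Sigma_s}(\pa_tF^5+G^5)(v\cdot\tau)J,
\]
and substitute the definitions of $G^1, G^4, G^5$ from Theorem \ref{thm:linear_low}.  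Since $G^1$ is a sum of divergence terms plus $R^\top\nabla_{\mathcal{A}}p$, integrating by parts in $\Om$ moves one derivative onto $v$, produces a $\Sigma$--boundary term that is absorbed into $G^4$ (by construction of these objects) and a $\Sigma_s$--boundary term absorbed into $G^5$, and leaves $L^2$--type products of $u$, $p$, $Ru$, $\mathbb{D}_{\pa_t\mathcal{A}}u$ against $\nabla_{\mathcal{A}}v$.  Since we have assumed $F^{i,0}=0$ for the application to the nonlinear problem, the $D_tF^1, \pa_tF^4, \pa_tF^5$ terms drop out, and the first estimate reduces to applying H\"older's inequality, the weighted Sobolev embeddings of Appendix~C--D in \cite{GT2}, and Lemma \ref{lem:distance}, bounding each factor of $R$ or $\pa_t\mathcal{A}$ by $\|\eta\|_{W_\delta^{5/2}}$ or $\|\pa_t\eta\|_{W_\delta^{5/2}}$, both of which are controlled by $\mathfrak{E}(\eta)^{1/2}$.

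For the $(\mathcal{H}^1_T)^*$ bound on $\pa_t(F^{1,1}-F^{4,1}-F^{5,1})$ I would apply $\pa_t$ to the same duality expression and perform exactly the same integration by parts, so that after expansion by the Leibniz rule every term either has (i) at most one $\pa_t$ on $u,p,\xi+\epsilon\pa_t\xi$ together with at most one $\pa_t$ on a geometric coefficient $R$, $\pa_t\mathcal{A}$, $J$, or (ii) two temporal derivatives on the geometric coefficients.  The first type is estimated using the $\mathfrak{E}(\eta)$ factors supplied by $\|\eta\|_{L^\infty W_\delta^{5/2}}$, $\|\pa_t\eta\|_{L^\infty W_\delta^{5/2}}$, paired against $\|\pa_tu\|_{L^2W_\delta^2}$, $\|\pa_tp\|_{L^2\mathring{W}_\delta^1}$, etc.; the second type is estimated by the $\mathfrak{D}(\eta)$ contributions $\|\pa_t^2\eta\|_{L^2H^{3/2}}$ and $\|\pa_t^3\eta\|_{L^2W_\delta^{1/2}}$ multiplied by bounded $L^\infty$ norms of $u,p,\xi+\epsilon\pa_t\xi$ coming from Lemma \ref{lem:continuous} and Lemma \ref{lem:continuous_2}.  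Combining everything and bounding the polynomial factors in the usual way by $\mathfrak{K}(\eta)$ produces the desired inequality; the $+1$ in the bound accommodates the appearance of $\|\pa_t(F^1-F^4-F^5)\|^2$--free constants coming from the initial data term.

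The continuity assertion $F^{1,1}-F^{4,1}-F^{5,1}\in C([0,T];(\mathcal{H}^1)^*)$ then follows from the standard embedding that, for a Hilbert space $X$, any $f\in L^2([0,T];X)$ with $\pa_tf\in L^2([0,T];X)$ belongs to $C^0([0,T];X)$; here $X=(\mathcal{H}^1)^*$ and the two $L^2$ bounds we just established give the required hypothesis.  Finally, the initial bound $\|(F^{1,1}-F^{4,1}-F^{5,1})(0)\|^2_{(\mathcal{H}^1)^*}\lesssim \mathfrak{E}_0$ is obtained by specializing the duality expression to $t=0$ and invoking the explicit estimates on $u_0^\epsilon$, $p_0^\epsilon$, $D_tu^\epsilon(0)$, $\pa_tp^\epsilon(0)$ derived in Subsection \ref{subsection_initial} together with the initial bounds on $\eta_0$, $\pa_t\eta(0)$, $\pa_t^2\eta(0)$; each factor is then controlled by $\mathfrak{E}_0^{1/2}$ and the smallness of $\mathfrak{K}(\eta)$ absorbs any cross terms.

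The main technical obstacle is the bookkeeping of the Leibniz expansion of $\pa_t G^1$, $\pa_t G^4$, $\pa_t G^5$: one must carefully identify, in each product, which factor is placed in which weighted Lebesgue or Sobolev space so that the product is controlled by $\mathfrak{K}(\eta)$ times the dissipation/energy norms of $u,p,\xi+\epsilon\pa_t\xi$ without ever asking for a regularity that exceeds the inclusions recorded in \eqref{strong_1}--\eqref{strong_2}.  The $\delta$--weighted Hardy/trace inequalities must be used near the corner points exactly as in \cite{GT2}, and the interplay with the $\epsilon$--dependent quantity $\xi+\epsilon\pa_t\xi$ (which replaces $\xi$ in the stress tensor) is handled cleanly by grouping it as a single unknown that is bounded in $L^\infty W_\delta^{5/2}$ by Lemma \ref{lem:continuous_2}.
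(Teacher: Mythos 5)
Your proposal is correct and follows essentially the same route as the paper: you pass to the duality pairing, use $F^{i}=0$ to reduce to the $G$--terms, integrate by parts on the divergence in $G^1$ so the resulting boundary contributions cancel against the matching terms in $G^4$ and $G^5$ (the paper does this explicitly for $\dive_{\mathcal{A}}(\mathbb{D}_{\mathcal{A}}(Ru))$, which is the term that would otherwise demand spatial regularity of $\pa_t R$ beyond what $\pa_t^2\eta\in H^1$ provides), and then estimate the remaining products by H\"older, the weighted Sobolev machinery of \cite{GT2}, and the computations of Lemma \ref{lemma:est_force}. The continuity and initial-data bounds are obtained exactly as you describe.
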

\begin{proof}
Since the proof of the first two inequalities are similar, we only give the proof second inequality.
  From the notation in Remark \ref{def:dual}, we have that
  \beq\label{est:dual_1}
\left<\pa_t(F^{1,1}-F^{4,1}-F^{5,1}),v\right>_{(\mathcal{H}^1_T)^\ast}=\int_0^T\int_{\Om}\pa_tF^{1,1}\cdot vJ-\int_0^T\int_{-\ell}^{\ell}\pa_tF^{4,1}\cdot v-\int_0^T\int_{\Sigma_s}\pa_tF^{5,1}(v\cdot\tau)J,
\eeq
for each $v\in \mathcal{V}$.
Since we assume that $F^i=0$, \eqref{est:dual_1} reduces to
\beq\label{est:dual_2}
\left<\pa_t(F^{1,1}-F^{4,1}-F^{5,1}),v\right>_{(\mathcal{H}^1_T)^\ast}=\int_0^T\int_{\Om}\pa_tG^1\cdot vJ-\int_0^T\int_{-\ell}^{\ell}\pa_tG^4\cdot v-\int_0^T\int_{\Sigma_s}\pa_tG^5(v\cdot\tau)J,
\eeq
for each $v\in \mathcal{V}$. Then we use an integration by parts to compute
\beq
\int_\Om\dive_\mathcal{A}(\mathbb{D}_\mathcal{A}(Ru))vJ=-\frac12\int_\Om\mathbb{D}_\mathcal{A}(Ru):\mathbb{D}_\mathcal{A}vJ+\int_{-\ell}^\ell\mathbb{D}_\mathcal{A}(Ru)\mathcal{N}\cdot v+\int_{\Sigma_s}\mathbb{D}_\mathcal{A}(Ru)\nu\cdot\tau(v\cdot\tau)J,
\eeq
which reduces \eqref{est:dual_2} to the following equality:
\beq\label{est:dual_3}
\begin{aligned}
&\left<\pa_t(F^{1,1}-F^{4,1}-F^{5,1}),v\right>_{(\mathcal{H}^1_T)^\ast}=\frac12\int_0^T\int_\Om\pa_t(\mathbb{D}_\mathcal{A}(Ru)):\mathbb{D}_\mathcal{A}vJ\\
&\quad+\int_0^T\int_{\Om}\left[\pa_t(G^1-\mu\dive_\mathcal{A}(\mathbb{D}_\mathcal{A}(Ru)))+\mu\dive_{\pa_t\mathcal{A}}(\mathbb{D}_\mathcal{A}(Ru))\right]\cdot vJ\\
&\quad-\int_0^T\int_{-\ell}^{\ell}\pa_t(G^4-\mu\mathbb{D}_\mathcal{A}(Ru)\mathcal{N})\cdot v-\int_0^T\int_{\Sigma_s}\pa_t(G^5-\mu\mathbb{D}_\mathcal{A}(Ru)\nu\cdot\tau)(v\cdot\tau)J.
\end{aligned}
\eeq
Then we use H\"older's inequality and the same computation in Lemma \ref{lemma:est_force} to derive the resulting bounds.
\end{proof}

Now, we give some estimates for the difference between $\pa_tu$ and $D_tu$. The proof is similar as that of Lemma \ref{lemma:est_force}, so we omit it here.
\begin{lemma}\label{lem:difference_u_Dt_u}
\beq
\|\pa_tu-D_tu\|_{L^2W_\delta^2}^2\lesssim \mathfrak{D}(\eta)(\|u_0^\epsilon\|_{W_{\delta}^2}^2+\|u\|_{L^2 W_{\delta}^2}^2+\|\pa_tu\|_{L^2 W_{\delta}^2}^2),
\eeq
  \beq
  \|\pa_t u-D_tu\|_{L^2H^1}^2+\|\pa_tu-D_tu\|_{L^2H^0(\Sigma_s)}^2\lesssim \mathfrak{E}(\eta)(\|u\|_{L^2H^1}^2+\|u\|_{L^2 W_\delta^2}^2),
  \eeq
  and
  \beq
  \|\pa_t^2u-\pa_tD_tu\|_{L^2H^1}^2+\|\pa_t^2u-\pa_tD_tu\|_{L^2H^0(\Sigma_s)}^2\lesssim \mathfrak{K}(\eta)(\|u\|_{L^2 W_\delta^2}^2+\|\pa_tu\|_{L^2 W_\delta^2}^2).
  \eeq
\end{lemma}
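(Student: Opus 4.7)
The plan rests on the algebraic identities
\[
\pa_tu - D_tu = Ru, \qquad \pa_t^2u - \pa_tD_tu = (\pa_tR)u + R\pa_tu,
\]
which follow immediately from the definition \eqref{def:Dt_u} of $D_t$. All three inequalities thus reduce to product estimates for $R$ and $\pa_tR$ against $u$ and $\pa_tu$ in weighted and unweighted Sobolev spaces, together with their traces on $\Sigma_s$.

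First I would record the structural form of $R = \pa_tMM^{-1}$ and $\pa_tR$. Since $M = K\nabla\Phi$ is an explicit rational expression in $\bar\eta$, $\nabla\bar\eta$, and $J$, one time differentiation produces entries that are polynomial in these quantities and linear in $\pa_t\bar\eta$ and $\nabla\pa_t\bar\eta$; a second differentiation brings in factors of $\pa_t^2\bar\eta$ and $\nabla\pa_t^2\bar\eta$. Using the boundedness of the extension $E$ and of the Poisson operator $\mathcal{P}$ from Section 2, together with the weighted product and composition estimates of Appendices C and D in \cite{GT2}, these entries admit pointwise-in-$t$ bounds schematically of the form
\[
\|R(t)\|_{W_\delta^2}^2 + \|R(t)\|_{H^2}^2 \lesssim P(\|\eta(t)\|_{W_\delta^{5/2}})\bigl(\|\pa_t\eta(t)\|_{W_\delta^{5/2}}^2+\|\pa_t\eta(t)\|_{3/2}^2\bigr),
\]
and analogously for $\pa_tR$ with an extra factor involving $\|\pa_t^2\eta(t)\|_{3/2}$.

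Next, I would insert these bounds into the product estimates for each of the three inequalities and integrate in $t$. For the first inequality, $R$ is placed in $L^2([0,T];W_\delta^2)$ (producing a factor $\mathfrak{D}(\eta)^{1/2}$) and $u$ in $L^\infty([0,T];W_\delta^2)$, the latter controlled via Lemma \ref{lem:continuous_2} by $\|u_0^\epsilon\|_{W_\delta^2}^2 + \|u\|_{L^2 W_\delta^2}^2 + \|\pa_tu\|_{L^2 W_\delta^2}^2$. The smallness hypothesis $\mathfrak{K}(\eta)\le\alpha\ll 1$ collapses the polynomial factor $P(\mathfrak{E}(\eta))$ to a universal constant. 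For the second inequality, the analogous unweighted split uses $R\in L^\infty([0,T];H^r)$ for some $r>1$ (bounded by $\mathfrak{E}(\eta)^{1/2}$) together with $u \in L^2([0,T];H^1) \cap L^2([0,T];W_\delta^2)$; the boundary piece on $\Sigma_s$ is handled by the trace theorem applied to $R$, which is legitimate since $\Sigma_s$ is separated from the corner set $N$, so that weighted and standard traces agree up to constants. For the third inequality, both $(\pa_tR)u$ and $R\pa_tu$ are treated analogously: the former with $\pa_tR$ in $L^2$ (yielding $\mathfrak{D}(\eta)^{1/2}$) and $u$ in $L^\infty([0,T];W_\delta^2)$ via Lemma \ref{lem:continuous_2}, the latter with $R$ in $L^\infty$ (yielding $\mathfrak{E}(\eta)^{1/2}$) and $\pa_tu$ in $L^2([0,T];W_\delta^2)$, and summing produces the $\mathfrak{K}(\eta)$ prefactor.

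The only mildly delicate point is the careful bookkeeping of which time derivatives of $\eta$ appear in each entry of $R$ and $\pa_tR$, and verifying that the resulting polynomial factors $P(\mathfrak{E}(\eta))\mathfrak{D}(\eta)$ or $P(\mathfrak{E}(\eta))\mathfrak{E}(\eta)$ are dominated by $\mathfrak{K}(\eta)$ under the smallness assumption. This is precisely the kind of routine computation already performed in the proofs of Lemmas \ref{lemma:est_force} and \ref{lem:est_force_dual}, so no new technical ingredient is required.
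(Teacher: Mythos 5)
Your approach is essentially the one the paper intends: the paper itself omits the proof of this lemma, deferring to Lemma~\ref{lemma:est_force}, and that proof sketch proceeds exactly as you describe (Leibniz rule to peel off a factor built from derivatives of $\bar\eta$, then weighted Sobolev product estimates from the appendices of \cite{GT2}, then time integration). Your identification of the algebraic identities $\pa_tu - D_tu = Ru$ and $\pa_t^2u - \pa_tD_tu = (\pa_tR)u + R\pa_tu$ is correct and is the right starting point. Two minor inaccuracies deserve flagging. First, the claim that ``$\Sigma_s$ is separated from the corner set $N$'' is false: $N=\{(\pm\ell,\zeta_0(\pm\ell))\}$ consists of the endpoints of $\Sigma$, which lie on $\overline{\Sigma_s}$, so the weighted and unweighted traces do \emph{not} trivially coincide there. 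The boundary term $\|Ru\|_{H^0(\Sigma_s)}$ is nonetheless fine: you simply apply the standard $H^1(\Om)\to H^{1/2}(\pa\Om)\hookrightarrow L^2(\Sigma_s)$ trace to $Ru$, bounding $\|Ru\|_{H^1(\Om)}$ as in the interior argument, with no appeal to separation. Second, the regularity you assign $R$ ($H^r$ for some $r>1$ in $L^\infty_t$) is a bit optimistic; from $\pa_t\eta\in L^\infty H^{3/2}$ one gets $R\in L^\infty H^1(\Om)$, and the unweighted product estimate $\|Rv\|_{H^1}\lesssim\|R\|_{H^1}\|v\|_{W_\delta^2}$ must be argued via a H\"older split rather than by an $L^\infty$ bound on $R$; your citation of the $W_\delta^2$-norm of $u$ on the right covers this. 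Finally, note that for the third inequality your route (placing $\pa_tR$ in $L^2_t$ and $u$ in $L^\infty_tW_\delta^2$ via Lemma~\ref{lem:continuous_2}) naturally produces an extra $\mathfrak{K}(\eta)\|u_0^\epsilon\|_{W_\delta^2}^2$ on the right, which is absent from the statement. Since $\|u_0^\epsilon\|_{W_\delta^2}^2\lesssim\|\pa_t\eta(0)\|_{3/2}^2\le\mathfrak{E}_0$, this extra term is harmless where the lemma is invoked in Theorem~\ref{thm:higher order}, but as written your argument proves a form of the third inequality with this additional term.
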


Now, we define the quantities we need to estimate as follows.
\beq\label{def:dissipation}
\begin{aligned}
\mathfrak{D}(u,p,\xi):&=\sum_{j=0}^2\left(\|\pa_t^ju\|_{L^2H^1}^2+\|\pa_t^ju\|_{L^2H^0(\Sigma_s)}^2+\int_0^T\left[\pa_t^ju\cdot\mathcal{N}\right]_\ell^2\right)\\
&\quad+\sum_{j=0}^2\left(\|\pa_t^jp\|_{L^2H^0}^2+\|\pa_t^j\xi\|_{L^2H^{3/2}}^2\right)+\|\pa_t^3\xi\|_{L^2W_\delta^{1/2}}^2\\
&\quad+\sum_{j=0}^1\left(\|\pa_t^ju\|_{L^2W_\delta^2}^2+\|\pa_t^jp\|_{L^2\mathring{W}_\delta^1}^2+\|\pa_t^j\xi\|_{L^2W_\delta^{5/2}}^2\right),
\end{aligned}
\eeq
\beq\label{def:energy}
\begin{aligned}
\mathfrak{E}(u,p,\xi):&=\|u\|_{L^\infty W_\delta^2}^2+\|\pa_tu\|_{L^\infty H^1}^2+\|p\|_{L^\infty \mathring{W}_\delta^1}^2+\|\pa_tp\|_{L^\infty H^0}^2\\
&\quad+\|\xi\|_{L^\infty W_\delta^{5/2}}^2+\|\pa_t\xi\|_{L^\infty H^{3/2}}^2+\sum_{j=0}^2\|\pa_t^j\xi\|_{L^\infty H^1}^2,
\end{aligned}
\eeq
and
\beq\label{def:DEK}
\mathfrak{K}(u,p,\xi):=\mathfrak{E}(u,p,\xi)+\mathfrak{D}(u,p,\xi).
\eeq

Now for convenience, we introduce two new spaces
\beq\label{def:x}
\begin{aligned}
\mathcal{X}=\Big\{&(u,p,\eta)|u\in C^0([0,T];W_\delta^2(\Om)),\pa_tu\in C^0([0,T];H^1(\Om)), p\in C^0 ([0,T];\mathring{W}_\delta^1(\Om)),\\
&\pa_tp\in C^0([0,T];\mathring{H}^0(\Om)), \eta\in C^0([0,T];W_\delta^{5/2}(-\ell,\ell)), \pa_t\eta \in C^0([0,T];H^{3/2}(-\ell,\ell)),\\
&\pa_t\eta\in C^0([0,T];\mathring{H}^1(-\ell,\ell)),\pa_t^2\eta\in C^0([0,T];\mathring{H}^1(-\ell,\ell))\Big\},
\end{aligned}
\eeq
endowed with norm $\|(u,p,\eta)\|_{\mathcal{X}}=\sqrt{\mathfrak{E}(u,p,\eta)}$, and
\beq\label{def:y}
\begin{aligned}
  \mathcal{Y}=\Big\{&(u,p,\eta)|u\in L^2([0,T];W_\delta^2(\Om))\cap L^2([0,T];H^1(\Om))\cap L^2([0,T];H^0(\Sigma_s)),\\
  &\pa_t u\in L^2([0,T];W_\delta^2(\Om))\cap L^2([0,T];H^1(\Om))\cap L^2([0,T];H^0(\Sigma_s)), j=0,1,2,\\
  &\pa_t^2u\in L^2([0,T];H^1(\Om))\cap L^2([0,T];H^0(\Sigma_s)), [\pa_t^ju\cdot\mathcal{N}]_\ell\in L^2([0,T]),\\
  &p\in L^2([0,T];\mathring{W}_\delta^1(\Om))\cap L^2([0,T];H^0(\Om)),\pa_tp\in L^2([0,T];\mathring{W}_\delta^1(\Om))\cap L^2([0,T];H^0(\Om)),\\
  &\pa_t^2p\in L^2([0,T];\mathring{H}^0(\Om)),\eta\in L^2([0,T];W_\delta^{5/2}((-\ell,\ell)))\cap L^2([0,T];\mathring{H}^{3/2}((-\ell,\ell))),\\
  &\pa_t\eta\in L^2([0,T];W_\delta^{5/2}((-\ell,\ell)))\cap L^2([0,T];\mathring{H}^{3/2}((-\ell,\ell))), \pa_t^2\eta\in L^2([0,T];\mathring{H}^{3/2}((-\ell,\ell))),\\
  &\pa_t^3\eta\in L^2([0,T];\mathring{W}_\delta^{1/2}((-\ell,\ell)))\Big\}
\end{aligned}
\eeq
endowed with the norm $\|(u,p,\eta)\|_{\mathcal{Y}}=\sqrt{\mathfrak{D}(u,p,\eta)}$.

In the following theorem, we set the forcing terms $F^i=0$, $i=1, 4, 5$ for the sake of brevity since this is all we will need in our subsequent analysis.  A version of the theorem may also be proved  with the forcing terms under some natural regularity assumptions.
\begin{theorem}\label{thm:higher order}
  Suppose that $\eta_0\in W_\delta^{5/2}(-\ell,\ell)$, $\pa_t\eta(0)\in \mathring{H}^{3/2}((-\ell,\ell))$ and $\pa_t^2\eta(0)\in \mathring{H}^1((-\ell,\ell))$ satisfy the compatibility \eqref{compati_1} and \eqref{compati_2}, that $\mathfrak{K}(\eta)\le\alpha$ is sufficiently small satisfying the assumption in Lemma \ref{lem:equivalence_norm} and Theorem 5.9 in \cite{GT2},  and that $F^i=0$, $i=1,4,5$. Let $u_0^\epsilon\in W_\delta^2(\Om)$, $D_tu^\epsilon(0)\in H^1(\Om)$, $p_0^\epsilon\in \mathring{W}_\delta^1(\Om)$, $\pa_tp^\epsilon(0)\in \mathring{H}^0(\Om)$, $\pa_t\xi(0)\in H^{3/2}((-\ell,\ell))$ and $\pa_t^2\xi(0)\in H^1((-\ell,\ell))$, all be determined in terms of $\eta_0$, $\pa_t\eta(0)$ and $\pa_t^2\eta(0)$ as in Section \ref{subsection_initial}.
  Then for each $0<\epsilon\le1$ satisfying \eqref{eq:ep}, there exists $T_\epsilon>0$ such that for $0<T\le T_\epsilon$, then there exists a unique strong solution $(u,p,\xi)$ to \eqref{eq:modified_linear} on $[0, T]$ such that
  \beq
  (u,p,\xi)\in\mathcal{X}\cap\mathcal{Y}.
  \eeq
  The pair $(D_t^ju, \pa_t^jp, \pa_t^j\xi)$ satisfies
  \beq\label{eq:higher}
  \left\{
  \begin{aligned}
    &-\mu\Delta_{\mathcal{A}}(D_t^ju)+\nabla_{\mathcal{A}}\pa_t^jp=F^{1,j},\quad &\text{in} \quad &\Om,\\
    &\dive_{\mathcal{A}}(D_t^ju)=0,\quad &\text{in} \quad &\Om,\\
    &S_{\mathcal{A}}(\pa_t^jp, D_t^ju)\mathcal{N}=\mathcal{L}(\pa_t^j\xi+\epsilon\pa_t^{j+1}\xi)\mathcal{N}-\sigma\pa_1\pa_t^jF^3\mathcal{N}+F^{4,j},\quad &\text{on} \quad &\Sigma,\\
    &(S_{\mathcal{A}}(\pa_t^jp, D_t^ju)\nu-\beta(D_t^ju))\cdot\tau=F^{5,j},\quad &\text{on} \quad &\Sigma_s,\\
    &D_t^ju\cdot\nu=0,\quad &\text{on} \quad &\Sigma_s,\\
    &\pa_t^{j+1}\xi=D_t^ju\cdot\mathcal{N},\quad &\text{on} \quad &\Sigma,\\
    &\mp\sigma\frac{\pa_1\pa_t^j\xi}{(1+|\pa_1\zeta_0|^2)^{3/2}}(\pm\ell)=\kappa(D_t^ju\cdot\mathcal{N})(\pm\ell)\pm \sigma \pa_t^jF^3(\pm\ell)-\kappa\pa_t^j\hat{\mathscr{W}}(\pa_t\eta),
  \end{aligned}
  \right.
  \eeq
  in the strong sense with initial data $(D_t^ju(0), \pa_t^jp(0), \pa_t^j\xi(0))$ for $j=0,1$ and in the weak sense for $j=2$. Moreover, the solution satisfies the estimate
  \beq\label{est:higher}
  \mathfrak{K}(u,p,\xi)\le C(\epsilon)T(\mathfrak{K}(\eta)+\mathfrak{E}_0)+C_0(\mathfrak{E}_0+\mathfrak{E}(\eta)\mathfrak{K}(\eta)),
  \eeq
  where $C_0$ is a positive constant independent of $\epsilon$.
\end{theorem}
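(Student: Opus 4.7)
The plan is to bootstrap from Theorem \ref{thm:linear_low} in two stages. First apply that theorem directly to \eqref{eq:modified_linear} with $F^i=0$ to get a strong solution $(u,p,\xi)$ satisfying \eqref{strong_1}--\eqref{strong_2}; this is the $j=0$ version of \eqref{eq:higher} and already supplies the $j=0,1$ pieces of the dissipation $\mathfrak{D}(u,p,\xi)$ together with the corresponding $L^\infty$ parts of $\mathfrak{E}(u,p,\xi)$ via Lemma \ref{lem:continuous} and Lemma \ref{lem:continuous_2}.

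For the $j=1$ upgrade, the conclusion of Theorem \ref{thm:linear_low} already identifies $(D_tu,\pa_tp,\pa_t\xi)$ as a weak solution of a problem of the same structural form as \eqref{eq:modified_linear}, but with $\xi$ replaced by $\pa_t\xi$ and forcing terms $F^{1,1},F^{4,1},F^{5,1}$ defined in \eqref{def:F_11}. By Lemma \ref{lemma:est_force} and Lemma \ref{lem:est_force_dual} these new forcing terms verify the hypotheses \eqref{force_condition}, with bounds of the form $\mathfrak{K}(\eta)$ times the energy/dissipation quantities controlled in the first stage. The initial data $(D_tu^\epsilon(0),\pa_tp^\epsilon(0),\pa_t\xi(0))$ were constructed in Section \ref{subsection_initial} and, crucially, the compatibility conditions for the perturbed problem coincide with \eqref{compati_1}--\eqref{compati_2}, so no further modification is needed. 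Apply Theorem \ref{thm:linear_low} a second time to this differentiated system, obtaining strong regularity for $(D_tu,\pa_tp,\pa_t\xi)$; then convert $D_t$-estimates back to $\pa_t$-estimates via Lemma \ref{lem:difference_u_Dt_u} to complete the $j=1$ pieces of $\mathfrak{D}(u,p,\xi)$ and the $L^\infty$ components $\|\pa_tu\|_{L^\infty H^1}$, $\|\pa_tp\|_{L^\infty H^0}$, $\|\pa_t\xi\|_{L^\infty H^{3/2}}$, and $\|\pa_t^2\xi\|_{L^\infty H^1}$.

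The $j=2$ level is the genuine obstacle: since applying Theorem \ref{thm:linear_low} a third time would require forcing regularity beyond what Lemma \ref{lemma:est_force} delivers, we instead differentiate the weak formulation of the $j=1$ problem once more in time and test against $M$-conjugated divergence-free approximations (so as to preserve $\dive_\mathcal{A}$-freeness, just as in Step 5 of the proof of Theorem \ref{thm:linear_low}). The resulting energy identity controls $\|\pa_t^2 u\|_{L^2 H^1}^2$, $[\pa_t^2 u\cdot\mathcal{N}]_\ell^2$, $\|\pa_t^2 p\|_{L^2 H^0}^2$, and, most importantly, the boundary quantity $\|\pa_t^3\xi\|_{L^2 W_\delta^{1/2}}^2$. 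Here the $\epsilon$-perturbation in both the Neumann and corner boundary conditions of \eqref{eq:modified_linear} is indispensable: the extra $\epsilon\pa_t\xi$ lifts the coercivity of the surface energy from the formal bilinear form $\varphi\mapsto\|\varphi\|_{1,\Sigma}^2-\epsilon\mathfrak{b}(\varphi)_\ell^2$ (positive-definite thanks to \eqref{eq:ep}) onto the time-differentiated unknown, giving $\pa_t^3\xi$ the extra half derivative needed to close the bound on the problematic term \eqref{eq:difficulty}.

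Finally, combining the three stages with the forcing estimates in Lemmas \ref{lemma:est_force}--\ref{lem:est_force_dual} and the $D_t$/$\pa_t$ conversion in Lemma \ref{lem:difference_u_Dt_u} yields an inequality of the form
\[
\mathfrak{K}(u,p,\xi)\le C(\epsilon)T\bigl(\mathfrak{K}(\eta)+\mathfrak{E}_0\bigr)+C_0\bigl(\mathfrak{E}_0+\mathfrak{E}(\eta)\mathfrak{K}(\eta)\bigr)+C_0\,\mathfrak{K}(\eta)\,\mathfrak{K}(u,p,\xi),
\]
where the last term arises because the forcing terms $F^{i,j}$ are linear in $(u,p,\xi)$. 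Choosing $\alpha$ small enough to enforce $C_0\mathfrak{K}(\eta)\le 1/2$ absorbs this term into the left-hand side, and then choosing $T_\epsilon>0$ small ensures that the $C(\epsilon)T$ prefactor fits the stated estimate \eqref{est:higher}. Uniqueness follows from the uniqueness in Theorem \ref{thm:linear_low} applied to the difference of two solutions.
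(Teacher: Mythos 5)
Your overall scaffold (two invocations of Theorem~\ref{thm:linear_low}, the forcing estimates of Lemmas~\ref{lemma:est_force}--\ref{lem:est_force_dual}, the $D_t\to\pa_t$ conversion of Lemma~\ref{lem:difference_u_Dt_u}, and a final absorption by smallness of $\mathfrak{K}(\eta)$) matches the paper's architecture. But two points need correction.

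First, your ``Stage 3'' is redundant: the conclusion of Theorem~\ref{thm:linear_low}, applied in your Stage 2 to the $j=1$ problem with unknown $(D_tu,\pa_tp,\pa_t\xi)$, already produces the time-differentiated weak formulation (its Step~5 Galerkin argument is precisely the differentiation and $M$-conjugated testing you describe) together with the $L^2H^1$, $L^2H^0(\Sigma_s)$, $[\cdot]_\ell$, and $L^2H^0$ bounds for $\pa_tD_tu$ and $\pa_t^2p$, hence the $j=2$ level after Lemma~\ref{lem:difference_u_Dt_u}. Second, and more seriously, your proposed route to $\|\pa_t^3\xi\|_{L^2W_\delta^{1/2}}^2$ does not produce the stated estimate. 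Extracting $\pa_t^3\xi$ from the $\epsilon$-weighted coercivity of the surface bilinear form yields only $\epsilon\|\pa_t^3\xi\|_{L^2H^1}^2\lesssim C(\epsilon)T\mathfrak{E}(\eta)+\mathfrak{E}_0+\cdots$, hence $\|\pa_t^3\xi\|_{L^2W_\delta^{1/2}}^2\lesssim\epsilon^{-1}\bigl(\mathfrak{E}_0+\cdots\bigr)$, an $\epsilon$-dependent bound carrying no factor of $T$ on the $\mathfrak{E}_0$ part; this cannot be folded into the form $C(\epsilon)T(\mathfrak{K}(\eta)+\mathfrak{E}_0)+C_0(\mathfrak{E}_0+\mathfrak{E}(\eta)\mathfrak{K}(\eta))$ with $C_0$ independent of $\epsilon$. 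The $\epsilon$-coercivity is used (as in \eqref{est:III_1}) only to absorb the problematic interaction \eqref{eq:difficulty}; the uniform dissipation estimate for $\pa_t^3\xi$ must instead come directly from differentiating the kinematic condition $\pa_t^2\xi=D_tu\cdot\mathcal{N}$ in time, writing $\pa_t^3\xi=\pa_tD_tu\cdot\mathcal{N}+D_tu\cdot\pa_t\mathcal{N}$, and then applying the trace and product estimates, which yields $\|\pa_t^3\xi\|_{L^2W_\delta^{1/2}}^2\lesssim\mathfrak{K}(\eta)\bigl(\|u\|_{L^2H^1}^2+\|u\|_{L^2W_\delta^2}^2+\|\pa_tu\|_{L^2W_\delta^2}^2\bigr)$ with a constant independent of $\epsilon$. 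Without this, the constant $C_0$ in \eqref{est:higher} picks up an $\epsilon^{-1}$ factor, and the uniform-in-$\epsilon$ energy estimate (needed later to send $\epsilon\to0$) is lost.
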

  \begin{proof}
    Step 1 -- Following Theorem \ref{thm:linear_low}.

    First consider the case $j=0$.   Since the compatibility condition in Section \ref{subsection_initial} is satisfied and $\mathfrak{K}(\eta)$ is small enough, Theorem \ref{thm:linear_low} guarantees the existence of $(u,p,\xi)$ satisfying \eqref{strong_1} and \eqref{strong_2}. $(D_t^ju,\pa_t^jp,\pa_t^j\xi)$ is a unique solution of \eqref{eq:higher} in the strong sense when $j=0$ and in the weak sense when $j=1$. For $j=1$, the assumption of Theorem \ref{thm:linear_low} are satisfied by Lemma \ref{lemma:est_force}, Lemma \ref{lem:est_force_dual}, and the compatibility conditions in section \ref{subsection_initial}.  Then according to  Theorem \ref{thm:linear_low} and the elliptic estimate for $\xi+\epsilon\pa_t\xi$, we have that $(D_tu, \pa_tp, \pa_t\xi)$ is a unique strong solution of \eqref{eq:higher}, and $(D_t^2u, \pa_t^2p, \pa_t^2\xi)$ is a unique weak solution of \eqref{eq:higher}.
    Moreover,
  \beq
  \begin{aligned}
  &\|D_tu\|_{L^2 H^1}^2+\|D_tu\|_{L^2 H^0(\Sigma_s)}^2+\|[D_tu\cdot\mathcal{N}]_\ell\|_{L^2([0,T])}^2+\|D_tu\|_{L^2 W_\delta^2}^2+\|\pa_t D_tu\|_{L^2H^1}^2\\
  &\quad+\|\pa_t D_tu\|_{L^2 H^0(\Sigma_s)}^2+\|[\pa_t D_tu\cdot\mathcal{N}]_\ell\|_{L^2([0,T])}^2+\|\pa_tp\|_{L^2 H^0}^2+\|\pa_tp\|_{L^2\mathring{W}_\delta^1}^2+\|\pa_t^2p\|_{L^2H^0}^2\\
  &\quad+\|\pa_t\xi\|_{L^\infty H^1}^2+\|\pa_t^2\xi\|_{L^2 H^{3/2}}^2+\|\pa_t\xi+\epsilon\pa_t^2\xi\|_{L^2 W_\delta^{5/2}}^2
  +\|\pa_t^2\xi\|_{L^\infty H^1}^2+\|\pa_t\xi\|_{L^2H^{3/2}}^2\\
  &\lesssim C(\epsilon)T\mathfrak{E}(\eta)+\mathfrak{E}_0+\|(F^{1,1}-F^{4,1}-F^{5,1})(0)\|_{(\mathcal{H}^1)^\ast}^2+\mathfrak{E}(\eta)\mathfrak{K}(\eta)\\
  &\quad+(1+\mathfrak{E}(\eta))(\|F^{1,1}\|_{L^2 W_\delta^0}^2+\|F^{4,1}\|_{L^2 W_\delta^{1/2}}^2+\|F^{5,1}\|_{L^2 W_\delta^{1/2}}^2)\\
   &\quad
  +(1+\mathfrak{E}(\eta))\|\pa_t(F^{1,1}-F^{4,1}-F^{5,1})\|_{(\mathcal{H}^1_T)^{\ast}}^2.
  \end{aligned}
  \eeq
  Since $\mathfrak{K}(\eta)$ is sufficiently small, then Lemma \ref{lem:difference_u_Dt_u} and the fact that $D_tu\cdot\mathcal{N}(\pm\ell)=\pa_tu\cdot\mathcal{N}(\pm\ell)$ can reduce the above estimate to
  \beq\label{est:D_1}
  \begin{aligned}
  &\|\pa_tu\|_{L^2 H^1}^2+\|\pa_tu\|_{L^2 H^0(\Sigma_s)}^2+\|[\pa_tu\cdot\mathcal{N}]_\ell\|_{L^2([0,T])}^2+\|\pa_tu\|_{L^2 W_\delta^2}^2+\|\pa_t^2 u\|_{L^2H^1}^2\\
  &\quad+\|\pa_t^2u\|_{L^2 H^0(\Sigma_s)}^2+\|[\pa_t^2u\cdot\mathcal{N}]_\ell\|_{L^2([0,T])}^2+\|\pa_tp\|_{L^2 H^0}^2+\|\pa_tp\|_{L^2\mathring{W}_\delta^1}^2+\|\pa_t^2p\|_{L^2H^0}^2\\
  &\quad+\|\pa_t\xi\|_{L^\infty H^1}^2+\|\pa_t^2\xi\|_{L^2 H^{3/2}}^2+\|\pa_t\xi+\epsilon\pa_t^2\xi\|_{L^2 W_\delta^{5/2}}^2
  +\|\pa_t^2\xi\|_{L^\infty H^1}^2+\|\pa_t\xi\|_{L^2H^{3/2}}^2\\
  &\lesssim C(\epsilon)T\mathfrak{E}(\eta)+\mathfrak{E}_0+\mathfrak{E}(\eta)\mathfrak{K}(\eta)+\mathfrak{K}(\eta)( \|u\|_{L^2H^1}^2+\|u\|_{L^2W_\delta^2}^2+\|p\|_{L^2\mathring{W}_\delta^1}^2+\|\xi+\epsilon\pa_t\xi\|_{L^2 W_\delta^{5/2}}^2)\\
  &\lesssim C(\epsilon)T(\mathfrak{E}_0+\mathfrak{K}(\eta))+\mathfrak{E}_0+\mathfrak{E}(\eta)\mathfrak{K}(\eta).
  \end{aligned}
  \eeq
  Then from the extension and restriction theory of weighted Sobolev spaces, we find that
  \beq\label{est:D_2}
  \begin{aligned}
  \|\pa_t\xi\|_{L^2W_\delta^{5/2}}^2&\lesssim \epsilon^2\|\pa_t\xi(0)\|_{L^2W_\delta^{5/2}}^2+\|\pa_t\xi+\epsilon\pa_t^2\xi\|_{L^2 W_\delta^{5/2}}^2\\
  &\lesssim\|\xi(0)\|_{L^2W_\delta^{5/2}}^2+\|\xi(0)+\epsilon\pa_t\xi(0)\|_{L^2W_\delta^{5/2}}^2+\|\pa_t\xi+\epsilon\pa_t^2\xi\|_{L^2 W_\delta^{5/2}}^2\\
  &\lesssim \mathfrak{E}_0+\|\pa_t\xi+\epsilon\pa_t^2\xi\|_{L^2W_\delta^{5/2}}^2.
  \end{aligned}
  \eeq
  We can directly estimate $\pa_t^3\xi$ by
  \beq\label{est:D_3}
  \begin{aligned}
  \|\pa_t^3\xi\|_{W_\delta^{1/2}}^2&=\|\pa_tD_tu\cdot\mathcal{N}+D_tu\cdot\pa_t\mathcal{N}\|_{W_\delta^{1/2}}^2\\
  &\lesssim \|\pa_tD_tu\|_1^2\|\eta\|_{W_\delta^{5/2}}^2+\|D_tu\|_1^2\|\pa_t\eta\|_{W_\delta^{5/2}}^2.
  \end{aligned}
  \eeq
  Then from \eqref{lem:difference_u_Dt_u}, we see that
  \beq
  \|\pa_t^3\xi\|_{L^2W_\delta^{1/2}}^2\lesssim\mathfrak{K}(\eta)(\|u\|_{L^2H^1}^2+\|u\|_{L^2 W_\delta^2}^2+\|\pa_tu\|_{L^2 W_\delta^2}^2)
  \eeq
  Thus\eqref{est:D_1} -- \eqref{est:D_3} imply
  \beq\label{est:higher_1}
  \mathfrak{D}(u,p,\xi)\le C(\epsilon)T(\mathfrak{K}(\eta)+\mathfrak{E}_0)+C_0(\mathfrak{E}_0+\mathfrak{E}(\eta)\mathfrak{K}(\eta)).
  \eeq
  Step 2 -- Other terms in $\mathfrak{E}$.

 Arguing as in Lemma \ref{lem:continuous}, we may directly derive the bounds
    \begin{align*}
  \|\pa_tu\|_{L^\infty H^1}^2&\lesssim \|\pa_tu^\epsilon(0)\|_{ H^1}^2+\|\pa_tu\|_{L^2H^1}^2+\|\pa_t^2u\|_{L^2H^1}^2,\\
  \|\pa_tp\|_{L^\infty H^0}^2&\lesssim \|\pa_tp^\epsilon(0)\|_{ H^1}^2+\|\pa_tp\|_{L^2H^1}^2+\|\pa_t^2p\|_{L^2H^1}^2,\\
  \|\pa_t\xi\|_{L^\infty H^{3/2}}^2&\lesssim \|\pa_t\xi(0)\|_{ H^{3/2}}^2+\|\pa_t\xi\|_{L^2H^{3/2}}^2+\|\pa_t^2\xi\|_{L^2H^{3/2}}^2,
  \end{align*}
  which together with Lemma \ref{lem:continuous}, Lemma \ref{lem:continuous_2} and the construction of the initial data imply that
  \beq\label{est:higher_2}
  \mathfrak{E}(u,p,\xi)\le C(\epsilon)T(\mathfrak{K}(\eta)+\mathfrak{E}_0)+C_0(\mathfrak{E}_0+\mathfrak{E}(\eta)\mathfrak{K}(\eta)).
  \eeq
  Then \eqref{est:higher_1} and \eqref{est:higher_2} imply the conclusion \eqref{est:higher}.
  \end{proof}

\section{Local well-posedness for the full nonlinear equation}

We now consider the local well-posedness of the full problem \eqref{eq:geometric perturbation}. We first construct an approximate solution $(u^\epsilon, p^\epsilon, \eta^\epsilon)$ for \eqref{eq:geometric perturbation} and for each $\epsilon>0$. Then our plan is to let $\epsilon\rightarrow0$ to obatin the solution of \eqref{eq:geometric perturbation}.

\subsection{Existence of approximate solutions}
We now construct a sequence of approximate solutions $(u^\epsilon, p^\epsilon, \eta^\epsilon)$ for each $0<\epsilon\le1$ satisfying \eqref{eq:ep}. For simplicity, we will typically drop $\epsilon$ in the notation and  denote the unknown as $(u,p,\eta)$ instead of $(u^\epsilon, p^\epsilon, \eta^\epsilon)$.

Now we consider the $\epsilon$--perturbation problem of the original system \eqref{eq:geometric perturbation} as
\begin{equation}\label{eq:modified_geometric perturbation}
  \left\{
  \begin{aligned}
    &\dive_{\mathcal{A}}S_{\mathcal{A}}(p,u)=-\mu\Delta_{\mathcal{A}}u+\nabla_{\mathcal{A}}p=0,\quad &\text{in}&\quad \Om,\\
    &\dive_{\mathcal{A}}u=0,\quad &\text{in}&\quad \Om,\\
    &S_{\mathcal{A}}(p,u)\mathcal{N}=g(\eta+\epsilon\eta_t)\mathcal{N}-\sigma\pa_1\left(\frac{\pa_1\eta+\epsilon\pa_1\eta_t}{(1+|\pa_1\zeta_0|)^{3/2}}\right)\mathcal{N}-\sigma\pa_1(\mathcal{R}(\pa_1\zeta_0,\pa_1\eta))\mathcal{N},\quad &\text{on}&\quad\Sigma,\\
    &(S_{\mathcal{A}}(p,u)\nu-\beta u)\cdot\tau=0,\quad &\text{on}&\quad\Sigma_s,\\
    &u\cdot\nu=0,\quad &\text{on}&\quad\Sigma_s,\\
    &\pa_t\eta=u\cdot\mathcal{N},\quad &\text{on}&\quad\Sigma,\\
    &\kappa\pa_t\eta(\pm\ell,t)=\mp\sigma\frac{\pa_1\eta}{(1+|\zeta_0|^2)^{3/2}}(\pm\ell,t)\mp\mathcal{R}(\pa_1\zeta_0,\pa_1\eta)(\pm\ell,t)-\kappa\hat{\mathscr{W}}(\pa_t\eta(\pm\ell,t)).
  \end{aligned}
  \right.
\end{equation}
where $\mathcal{A}$, $\mathcal{N}$ are in terms of $\eta^\epsilon$ and the initial data are $\eta(x_1,0)=\eta_0(x_1)$, $\pa_t\eta(x_1,0)$ and $\pa_t^2\eta(x_1,0)$.

Our strategy is to
work in a metric space that requires high regularity estimates to hold
but that is endowed with a low-regularity metric. First we will find a complete metric space, endowed with a weak choice of a metric, compatible with the linear estimates in Theorem \ref{thm:higher order}. Then we will prove that the fixed point on this metric space gives a solution to \eqref{eq:modified_geometric perturbation}.

We now define the desired metric space.
\begin{definition}\label{def:S}
  Suppose that $T>0$. For $\sigma\in(0,\infty)$ we define the space
  \beq
  \begin{aligned}
&S(T, \sigma)
=\Big\{(u,p,\eta)\in L^2 H^1\times L^2 \mathring{H}^0\times (L^\infty W_\delta^{5/2}\cap \dot{H}^1([0,T];\pm\ell),\Big|(u,p,\eta))\in\mathcal{X}\cap\mathcal{Y},\ \text{with}\\
   &\quad \mathfrak{K}(u,p,\eta)^{1/2}\le\sigma\ \text{and}\ (u,p,\eta)\ \text{achieve the initial data as Section \ref{subsection_initial}}\Big\}.
  \end{aligned}
  \eeq
  We endow this space with the metric
  \beq
  d((u,p,\eta),(v,q,\xi))=\|u-v\|_{L^2 H^1}+\|p-q\|_{L^2 \mathring{H}^0}+\|\eta-\xi\|_{L^\infty W_\delta^{5/2}}+\|[\pa_t\eta-\pa_t\xi]_\ell\|_{L^2([0,T])},
  \eeq
  where here the temporal norm is evaluated on the set $[0,T]$.
\end{definition}
In order to use the contraction mapping principle we need to first show that this metric space is complete.
\begin{theorem}
  $S(T, \sigma)$ is a complete metric space.
\end{theorem}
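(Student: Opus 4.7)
The plan is a standard completeness argument that exploits the mismatch between the strong (weak-regularity) metric $d$ and the high-regularity uniform bound $\mathfrak{K}(u,p,\eta)^{1/2} \le \sigma$: we will extract a strong limit via the metric, a weak/weak-$\ast$ limit via the uniform bound, identify them, and then verify closure under all the defining properties of $S(T,\sigma)$. Concretely, let $\{(u^n, p^n, \eta^n)\}_{n=1}^\infty \subset S(T,\sigma)$ be a Cauchy sequence with respect to $d$. By definition of $d$, it is Cauchy in $L^2 H^1 \times L^2\mathring{H}^0 \times L^\infty W_\delta^{5/2}$ and the pointwise traces $[\pa_t\eta^n]_\ell$ are Cauchy in $L^2([0,T])$, so there exists a strong limit $(u, p, \eta)$ in these spaces, and moreover $[\pa_t\eta]_\ell \in L^2([0,T])$ arises as the $L^2$ limit.

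Next, the uniform bound $\mathfrak{K}(u^n,p^n,\eta^n) \le \sigma^2$ furnishes uniform bounds in every norm appearing in the definitions \eqref{def:dissipation}--\eqref{def:energy} of $\mathfrak{E}$ and $\mathfrak{D}$. Passing to a subsequence (without renaming), the Banach--Alaoglu theorem yields weak convergence in the reflexive Hilbert spaces of $\mathcal{Y}$ and weak-$\ast$ convergence in the $L^\infty$-type spaces of $\mathcal{X}$, for $(u^n, p^n, \eta^n)$ and all their temporal derivatives up to the orders present in $\mathfrak{K}$. Uniqueness of distributional limits identifies these weak limits with the strong $d$-limit $(u,p,\eta)$ and with the appropriate $\pa_t^j$ derivatives of it. Weak (resp.~weak-$\ast$) lower semicontinuity of each norm then gives
\begin{equation*}
\mathfrak{K}(u,p,\eta) \;\le\; \liminf_{n\to\infty} \mathfrak{K}(u^n, p^n, \eta^n) \;\le\; \sigma^2,
\end{equation*}
so that $(u,p,\eta)$ obeys the size constraint of $S(T,\sigma)$.

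To conclude that $(u,p,\eta) \in \mathcal{X} \cap \mathcal{Y}$, the membership in $\mathcal{Y}$ follows directly from the bounds just established. For $\mathcal{X}$, we must upgrade the $L^\infty$ bounds to continuity in time; this is achieved using the standard Aubin--Lions / time-interpolation embeddings, since $\mathcal{Y}$ controls one more time derivative of each quantity than $\mathcal{X}$ does (e.g.~$u \in L^2 W_\delta^2$ together with $\pa_t u \in L^2 W_\delta^2$ yields $u \in C^0([0,T]; W_\delta^2)$ via Lemma \ref{lem:continuous_2}, and analogously for $p$, $\eta$, $\pa_t \eta$, $\pa_t^2\eta$). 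Finally, to verify that the initial data of Section \ref{subsection_initial} are achieved by $(u,p,\eta)$: the continuity just obtained makes the trace at $t=0$ well-defined, and for each $n$ the trace equals the prescribed data; the weak convergence $\pa_t^j u^n(0) \rightharpoonup \pa_t^j u(0)$, etc., combined with the fact that the initial data are fixed along the sequence, identifies the traces of the limit with the prescribed initial data.

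The main obstacle I anticipate is the cross-check that the weak limits of the various temporal derivatives of $u^n, p^n, \eta^n$ coincide with the time derivatives of the strong limit, given that the strong convergence only takes place in low-regularity spaces. The resolution is that weak and distributional derivatives commute in the sense of tempered distributions on $(0,T)$, so if $u^n \to u$ in $L^2 H^1$ and $\pa_t u^n \rightharpoonup w$ in $L^2 H^1$, then $w = \pa_t u$ as distributions and hence as elements of $L^2 H^1$; the same reasoning handles second and third derivatives in turn. Once this identification is in place, the semicontinuity estimate above applies simultaneously to every norm composing $\mathfrak{K}$, and the completeness conclusion follows.
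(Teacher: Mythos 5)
Your proposal is correct and follows essentially the same strategy as the paper's proof: extract a strong limit from the Cauchy sequence in the low-regularity metric, use the uniform bound $\mathfrak{K}\le\sigma^2$ and Banach--Alaoglu to extract weakly/weak-$\ast$ convergent subsequences, identify these limits with the strong limit (and its time derivatives), and invoke weak lower semicontinuity to preserve the constraint $\mathfrak{K}^{1/2}\le\sigma$. The paper's proof is terser — it simply asserts weak/weak-$\ast$ convergence in $\mathcal{X}\cap\mathcal{Y}$ and lower semicontinuity without spelling out the identification of weak limits with derivatives, the upgrade from $L^\infty$ to $C^0$ in time, or the attainment of the initial data; you address each of these explicitly, which makes your version more careful. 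One small point worth flagging: the continuity-in-time upgrade for the lowest-regularity component, $\partial_t^2\eta\in C^0\mathring{H}^1$, is more delicate than your parenthetical suggests, since $\mathcal{Y}$ only controls $\partial_t^3\eta$ in $L^2\mathring{W}_\delta^{1/2}$ (a weighted space weaker than $H^{1/2}$), so the straightforward Lemma~\ref{lem:continuous}-style argument does not apply verbatim and one must argue via an interpolation adapted to the weighted scale; the paper elides this entirely, so your version is still at least as rigorous.
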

\begin{proof}
  Suppose that $\{(u^m,p^m,\eta^m)\}_{m=0}^\infty\subseteq S(T, \sigma)$ is a Cauchy sequence. Since $L^2 H^1\times L^2 \mathring{H}^0\times (L^\infty W_\delta^{5/2}\cap \dot{H}^1([0,T];\pm\ell))$ is a Banach space, there exists $(u,p,\eta)\in L^2 H^1\times L^2 \mathring{H}^0\times (L^\infty W_\delta^{5/2}\cap \dot{H}^1([0,T];\pm\ell))$ such that
  \[
  u^m\rightarrow u\ \text{in}\ L^2 H^1,\quad p^m\rightarrow p\ \text{in}\ L^2\mathring{H}^0, \quad \eta^m\rightarrow\eta\ \text{in}\ L^\infty W_\delta^{5/2}, \quad \pa_t\eta^m\rightarrow \pa_t\eta \ \text{in}\ L^2([0,T];\pm\ell)
  \]
  as $m\rightarrow\infty$.

  For each $m$, we have that $\mathfrak{K}(u^m,p^m,\eta^m)\le\sigma^2$. Then up to the extraction of a subsequence we have that
  \beq
  (u^m,p^m,\eta^m)\rightharpoonup (u,p,\eta)\ \text{weakly--}\ast\ \text{in}\ \mathcal{X},\quad (u^m,p^m,\eta^m)\rightharpoonup (u,p,\eta)\ \text{weakly}\ \text{in}\ \mathcal{Y},
  \eeq
  which imply that $(u,p,\eta)\in\mathcal{X}\cap\mathcal{Y}$. Then according to lower semicontinuity,
  \beq
  \mathfrak{K}(u,p,\eta)^{1/2}\le \inf_m\mathfrak{K}(u^m,p^m,\eta^m)^{1/2}\le\sigma.
  \eeq
Thus $S(T,\sigma)$ is complete.
\end{proof}

Next we employ the metric space $S(T,\sigma)$ and a contraction mapping argument to produce a solution to \eqref{eq:modified_geometric perturbation}.

\begin{theorem}\label{thm: fixed point}
  There exists a constant $C>0$ such that for each $0<\epsilon\le\min\{1, 1/(8C)\}$ there exists a unique solution $(u^\epsilon, p^\epsilon, \eta^\epsilon)$ to \eqref{eq:modified_geometric perturbation} belong to the metric space $S(T_\epsilon, \sigma)$, where $T_\epsilon>0$ and $\sigma>0$ are sufficiently small.  In particular $(u^\epsilon, p^\epsilon, \eta^\epsilon)\in \mathcal{X}\cap\mathcal{Y}$, where $\mathcal{X}$ and $\mathcal{Y}$ are defined in \eqref{def:x} and \eqref{def:y}.
\end{theorem}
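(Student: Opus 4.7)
The plan is to recast \eqref{eq:modified_geometric perturbation} as a fixed point problem for a solution map $\Psi : S(T,\sigma)\to S(T,\sigma)$ defined by invoking the linear theory of Theorem \ref{thm:higher order}. Given a triple $(v,q,\eta)\in S(T,\sigma)$, I will use $\eta$ to determine the coefficients $\mathcal{A}, J, \mathcal{N}, R$, and I will use $\eta$ to form the forcing $F^3 := \mathcal{R}(\pa_1\zeta_0,\pa_1\eta)$ and the nonlinear boundary term $\kappa\hat{\mathscr{W}}(\pa_t\eta(\pm\ell))$, while setting $F^1=F^4=F^5=0$. Theorem \ref{thm:higher order} then yields a unique strong solution $(u,p,\xi)\in\mathcal{X}\cap\mathcal{Y}$ of the linear $\epsilon$--perturbed system \eqref{eq:modified_linear}, and I define $\Psi(v,q,\eta):=(u,p,\xi)$. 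A fixed point of $\Psi$ satisfies $(u,p,\eta)=(v,q,\xi)=(u,p,\xi)$, which is precisely a solution to \eqref{eq:modified_geometric perturbation}.

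\textbf{Self-map property.} To see that $\Psi$ takes $S(T,\sigma)$ into itself, I use the key a priori estimate \eqref{est:higher}
\begin{equation*}
\mathfrak{K}(u,p,\xi)\le C(\epsilon)T(\mathfrak{K}(\eta)+\mathfrak{E}_0)+C_0(\mathfrak{E}_0+\mathfrak{E}(\eta)\mathfrak{K}(\eta)).
\end{equation*}
Since $\mathfrak{E}(\eta)\le\mathfrak{K}(\eta)\le\sigma^2$ on $S(T,\sigma)$, the right side is bounded by $C(\epsilon)T(\sigma^2+\mathfrak{E}_0)+C_0\mathfrak{E}_0+C_0\sigma^4$. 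Choosing $\sigma$ so small that $C_0\sigma^2\le 1/4$, then imposing smallness on $\mathfrak{E}_0$ so that $C_0\mathfrak{E}_0\le\sigma^2/4$, and finally choosing $T_\epsilon=T_\epsilon(\epsilon,\sigma,\mathfrak{E}_0)$ so small that $C(\epsilon)T(\sigma^2+\mathfrak{E}_0)\le\sigma^2/2$, yields $\mathfrak{K}(u,p,\xi)\le\sigma^2$. I must also verify that $(u,p,\xi)$ achieves the prescribed initial data; this is guaranteed by the construction in Section \ref{subsection_initial}, since the compatibility conditions \eqref{compati_1}--\eqref{compati_2} depend only on $\eta_0,\pa_t\eta(0),\pa_t^2\eta(0)$ and not on the particular triple in $S(T,\sigma)$.

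\textbf{Contraction in the weak metric.} Given two triples $(v^i,q^i,\eta^i)\in S(T,\sigma)$ with images $(u^i,p^i,\xi^i)=\Psi(v^i,q^i,\eta^i)$, I form the differences $U=u^1-u^2$, $P=p^1-p^2$, $\Xi=\xi^1-\xi^2$ and subtract the two linear systems \eqref{eq:modified_linear}. The difference $(U,P,\Xi)$ solves a linear $\mathcal{A}^1$--Stokes system with coefficients built from $\eta^1$, with forcing terms of the schematic form $(\mathcal{A}^1-\mathcal{A}^2)\ast Du^2$, $(J^1-J^2)\ast\text{pressure}$, plus boundary contributions $\mathcal{R}(\pa_1\zeta_0,\pa_1\eta^1)-\mathcal{R}(\pa_1\zeta_0,\pa_1\eta^2)$ and $\hat{\mathscr{W}}(\pa_t\eta^1)-\hat{\mathscr{W}}(\pa_t\eta^2)$. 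I then apply only the \emph{low-regularity} energy estimate from Theorem \ref{thm:linear_low} (not the higher-order one), using the test function $U\chi_{[0,t]}$ as in the uniqueness proof for the weak problem. Each difference of nonlinear coefficients factors as $(\eta^1-\eta^2)\cdot (\text{high-regularity data of }\eta^1,\eta^2)$; the high-regularity factor is uniformly bounded by $\sigma$ via the $S(T,\sigma)$ membership, while the low-regularity factor is controlled by the metric distance $d$ (plus a factor $\sqrt{T}$ after integrating in time). This produces an inequality of the form
\begin{equation*}
d(\Psi(v^1,q^1,\eta^1),\Psi(v^2,q^2,\eta^2))\le \bigl(C(\epsilon,\sigma)\sqrt{T}+C\epsilon\bigr)\, d((v^1,q^1,\eta^1),(v^2,q^2,\eta^2)),
\end{equation*}
and the contraction follows upon requiring $\epsilon\le 1/(8C)$ and then further shrinking $T_\epsilon$.

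\textbf{The main obstacle.} The delicate step is unquestionably the contraction estimate, for two reasons. First, the metric $d$ is strictly weaker than the norm in $\mathcal{X}\cap\mathcal{Y}$ on which the estimates of Theorem \ref{thm:higher order} are posed, so I cannot simply re-apply that theorem to the difference; I must instead return to the weak energy identity and split products carefully so that only low-regularity norms of $(\eta^1-\eta^2)$ (measured in $L^\infty W_\delta^{5/2}$ and $\dot H^1$ at $\pm\ell$) and $(v^1-v^2,q^1-q^2)$ (measured in $L^2 H^1\times L^2\mathring H^0$) appear. Second, the boundary term involving $\mathcal{R}$ is precisely the one for which the $\epsilon$--perturbation was introduced in the first place, so the difference of $\mathcal{R}$ terms must be absorbed using the extra $\epsilon\pa_t\xi$ term in \eqref{eq:modified_linear} (an integration by parts in time, together with the bound \eqref{eq:ep}, produces the required sign). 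Once the contraction is established, the Banach fixed point theorem delivers the unique $(u^\epsilon,p^\epsilon,\eta^\epsilon)\in S(T_\epsilon,\sigma)$, and the membership in $\mathcal{X}\cap\mathcal{Y}$ is inherited from the definition of $S(T_\epsilon,\sigma)$.
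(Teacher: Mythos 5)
Your overall framework is exactly the paper's: define the solution map via Theorem \ref{thm:higher order}, verify the self-map property from \eqref{est:higher} by tuning $\sigma$, $\mathfrak{E}_0$ and $T_\epsilon$, then establish a contraction with respect to the weak metric $d$ rather than the natural $\mathcal{X}\cap\mathcal{Y}$ norm, and close via Banach. The self-map step is correct and your observation that the compatibility conditions depend only on $\eta_0$, $\partial_t\eta(0)$, $\partial_t^2\eta(0)$ is right.

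The gap is in the claimed structure of the contraction constant. You assert $d(\Psi_1,\Psi_2)\le\bigl(C(\epsilon,\sigma)\sqrt T+C\epsilon\bigr)\,d$, so that the time-independent contribution shrinks with $\epsilon$. It does not. When you subtract the two $\mathcal{A}^i$-Stokes systems, the resulting source terms $R^1,R^3,R^5,R^6$ and $F^{3,1}-F^{3,2}$ factor as a low-regularity norm of the difference (controlled by $d$) times a high-regularity norm of $\eta^i,\tilde u^2,\tilde p^2$, and the latter is bounded by $\sigma$ via the $S(T,\sigma)$ membership; none of these products carries a factor of $\epsilon$. The correct structure is $\|\tilde u\|_{L^2H^1}^2+\|\tilde p\|_{L^2\mathring H^0}^2+\|[\partial_t\tilde\eta]_\ell\|_{L^2}^2+\|\tilde\eta\|_{L^\infty W_\delta^{5/2}}^2\le(C(\epsilon)T+C)P(\sigma)\bigl(\|\eta\|_{L^\infty W_\delta^{5/2}}^2+\|[\partial_t\eta]_\ell\|_{L^2}^2\bigr)$ with $P(0)=0$, and the time-independent piece $CP(\sigma)$ is made small by \emph{restricting $\sigma$}, after which one shrinks $T$ to handle the $C(\epsilon)T P(\sigma)$ piece. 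If you tried to rely on $\epsilon$-smallness instead, the argument would fail: for fixed $\sigma$ the constant does not collapse as $\epsilon\to0$.

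Relatedly, your account of what the $\epsilon$-perturbation buys at this step is off. You propose to absorb the difference of the $\mathcal{R}$ boundary terms by an integration by parts in time using the sign from \eqref{eq:ep}; that is not what is needed (those differences are estimated directly with $\sigma$-smallness). What the $\epsilon$-perturbation actually supplies in the contraction step is the ODE representation $\tilde\eta=\frac1\epsilon\int_0^t e^{-(t-s)/\epsilon}(\tilde\eta+\epsilon\partial_t\tilde\eta)\,\mathrm ds$, which is used to convert the elliptic $W_\delta^{5/2}$ bound on $\tilde\eta+\epsilon\partial_t\tilde\eta$ (and the $H^{3/2}$ bound) into the $L^\infty W_\delta^{5/2}$ control on $\tilde\eta$ and the $L^2 H^{3/2}$ control on $\partial_t\tilde\eta$ that close the scheme — at the cost of the $C(\epsilon)$-dependence that is then tamed by taking $T$ small.
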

\begin{proof}
Throughout the proof $P(\cdot)$ denotes a polynomial such that $P(0)=0$, which is allowed to be changed from line to line.

Step 1 -- The metric space.

Suppose that $\mathfrak{K}(\eta)\le \alpha$ is sufficiently small. Then $C_0\mathfrak{E}(\eta)\mathfrak{K}(\eta)\le \alpha/4$.   Let $C(\epsilon)$ and $C_0$ are the same as in \eqref{est:higher}.   Now  take
$T_\epsilon>0$ small enough such that $C(\epsilon)T_\epsilon\alpha\le\alpha/4$. Then we take the initial data small enough such that $C(\epsilon)T_\epsilon\mathfrak{E}_0\le\alpha/4$ and $C_0\mathfrak{E}_0\le\alpha/4$.
Then we take $\sigma\le\alpha^{1/2}$.
For every $(u,p,\eta)\in S(T_\epsilon, \sigma)$, let $(\tilde{u},\tilde{p},\tilde{\eta})$ be the unique solution of the linear problem of
\begin{equation}\label{eq:linear_fix}
  \left\{
  \begin{aligned}
    &\dive_{\mathcal{A}}S_{\mathcal{A}}(\tilde{p},\tilde{u})=-\mu\Delta_{\mathcal{A}}\tilde{u}+\nabla_{\mathcal{A}}\tilde{p}=0,\quad &\text{in}&\quad \Om,\\
    &\dive_{\mathcal{A}}\tilde{u}=0,\quad &\text{in}&\quad \Om,\\
    &S_{\mathcal{A}}(\tilde{p},\tilde{u})\mathcal{N}=g(\tilde{\eta}+\epsilon\pa_t\tilde{\eta})\mathcal{N}-\sigma\pa_1\left(\frac{\pa_1\tilde{\eta}+\epsilon\pa_1\pa_t\tilde{\eta}}{(1+|\pa_1\zeta_0|)^{3/2}}\right)\mathcal{N}-\sigma\pa_1(\mathcal{R}(\pa_1\zeta_0,\pa_1\eta))\mathcal{N},\quad &\text{on}&\quad\Sigma,\\
    &(S_{\mathcal{A}}(\tilde{p},\tilde{u})\nu-\beta \tilde{u})\cdot\tau=0,\quad &\text{on}&\quad\Sigma_s,\\
    &\tilde{u}\cdot\nu=0,\quad &\text{on}&\quad\Sigma_s,\\
    &\pa_t\tilde{\eta}=\tilde{u}\cdot\mathcal{N},\quad &\text{on}&\quad\Sigma,\\
    &\kappa\pa_t\tilde{\eta}(\pm\ell,t)=\mp\sigma\frac{\pa_1\tilde{\eta}}{(1+|\zeta_0|^2)^{3/2}}(\pm\ell,t)\mp\mathcal{R}(\pa_1\zeta_0,\pa_1\eta)(\pm\ell,t)-\kappa\hat{\mathscr{W}}(\pa_t\eta(\pm\ell,t)),
  \end{aligned}
  \right.
\end{equation}
where $\mathcal{A}$ and $\mathcal{N}$ are in terms of $\eta$, and the initial data $\tilde{\eta}(0)=\eta_0$, $\pa_t\tilde{\eta}(0)=\pa_t\eta(0)$ and $\pa_t^2\tilde{\eta}(0)=\pa_t^2\eta(0)$.
By  Theorem \ref{thm:higher order}  we have the estimate
\beq\label{est:regularity_linear}
\mathfrak{K}(\tilde{u}, \tilde{p}, \tilde{\eta})\le \sigma^2,
\eeq
which implies that
\beq
(\tilde{u},\tilde{p},\tilde{\eta})\in S(T_\epsilon,\sigma).
\eeq

Step 2 -- Contraction.

Define $A: (u,p,\eta)=(\tilde{u},\tilde{p}, \tilde{\eta})$. Now we prove that
\[
A:S(T_\epsilon, \sigma)\rightarrow S(T_\epsilon,\sigma)
 \]
 is a strict contraction mapping with the metric in the Definition \ref{def:S}.
Choose $(u^i,p^i,\eta^i)\in S(T_\epsilon, \sigma)$, and define $A(u^i,p^i,\eta^i)=(\tilde{u}^i,\tilde{p}^i, \tilde{\eta}^i)$ as above, $i=1,2$. For simplicity, we will abuse notation and denote $u=u^1-u^2$, $p=p^1-p^2$, $\eta=\eta^1-\eta^2$ and the same for $\tilde{u},\tilde{p}, \tilde{\eta}$. From the difference of equation for $(\tilde{u}^i,\tilde{p}^i, \tilde{\eta}^i)$, $i=1,2$, we know that
\begin{equation}\label{linear_fix1}
  \left\{
  \begin{aligned}
    &\dive_{\mathcal{A}^1}S_{\mathcal{A}^1}(\tilde{p},\tilde{u})=\mu\dive_{\mathcal{A}^1}(\mathbb{D}_{\mathcal{A}^1-\mathcal{A}^2}\tilde{u}^2)+R^1,\quad &\text{in}&\quad \Om,\\
    &\dive_{\mathcal{A}^1}\tilde{u}=R^2,\quad &\text{in}&\quad \Om,\\
    &S_{\mathcal{A}^1}(\tilde{p},\tilde{u})\mathcal{N}^1=\mu\mathbb{D}_{\mathcal{A}^1-\mathcal{A}^2}\tilde{u}^2\mathcal{N}^1 +g(\tilde{\eta}+\epsilon\pa_t\tilde{\eta})\mathcal{N}^1-\sigma\pa_1\left(\frac{\pa_1\tilde{\eta}+\epsilon\pa_1\pa_t\tilde{\eta}}{(1+|\pa_1\zeta_0|)^{3/2}}\right)\mathcal{N}^1\\
    &\quad-\sigma\pa_1F^3\mathcal{N}^1+R^3,\quad &\text{on}&\quad\Sigma,\\
    &(S_{\mathcal{A}^1}(\tilde{p},\tilde{u})\nu-\beta \tilde{u})\cdot\tau=\mu\mathbb{D}_{\mathcal{A}^1-\mathcal{A}^2}\tilde{u}^2\nu\cdot\tau,\quad &\text{on}&\quad\Sigma_s,\\
    &\tilde{u}\cdot\nu=0,\quad &\text{on}&\quad\Sigma_s,\\
    &\pa_t\tilde{\eta}=\tilde{u}\cdot\mathcal{N}^1+R^5,\quad &\text{on}&\quad\Sigma,\\
    &\kappa\pa_t\tilde{\eta}(\pm\ell,t)=\mp\sigma\frac{\pa_1\tilde{\eta}}{(1+|\zeta_0|^2)^{3/2}}(\pm\ell,t)\mp F^3(\pm\ell,t))-R^6,\\
    &\tilde{u}(x,0)=0,\quad \tilde{\eta}(x_1,0)=0,
  \end{aligned}
  \right.
\end{equation}
where $R^1$, $R^2$, $R^3$, $R^4$, $R^5$, $R^6$ are defined by
  \begin{align*}
    R^1&=\mu\dive_{(\mathcal{A}^1-\mathcal{A}^2)}(\mathbb{D}_{\mathcal{A}^2}\tilde{u}^2)-\nabla_{(\mathcal{A}^1-\mathcal{A}^2)}\tilde{p}^2,\\
    R^2&=-\dive_{(\mathcal{A}^1-\mathcal{A}^2)}\tilde{u}^2,\\
    R^3&=-\tilde{p}^2(\mathcal{N}^1-\mathcal{N}^2)+\mathbb{D}_{\mathcal{A}^2}\tilde{u}^2(\mathcal{N}^1-\mathcal{N}^2)+g(\tilde{\eta}^2+\epsilon\pa_1\pa_t\tilde{\eta}^2)(\mathcal{N}^1-\mathcal{N}^2)\\
    &\quad-\sigma\pa_1\left(\frac{\pa_1\tilde{\eta}^2+\epsilon\pa_1\pa_t\tilde{\eta}^2}{(1+|\zeta_0|^2)^{3/2}}\right)(\mathcal{N}^1-\mathcal{N}^2)-\sigma\pa_1 F^{3,2}(\mathcal{N}^1-\mathcal{N}^2),\\
    R^5&=\tilde{u}^2\cdot(\mathcal{N}^1-\mathcal{N}^2),\\
    R^6&=\kappa(\hat{\mathscr{W}}(\pa_t\eta^1(\pm\ell,t))-\hat{\mathscr{W}}(\pa_t\eta^2(\pm\ell,t))),
  \end{align*}
and $\mathcal{A}^i$, $\mathcal{N}^i$, $F^{3, i}=\mathcal{R}(\pa_1\zeta_0,\pa_1\eta^i)$ are in terms of $\eta^i$, $i=1,2$. Here $F^3=F^{3,1}-F^{3,2}$.

We now have the pressureless weak formulation of \eqref{linear_fix1} as
\beq\label{weak_1}
\begin{aligned}
  &\frac\mu2\int_\Om\mathbb{D}_{\mathcal{A}^1}\tilde{u}:\mathbb{D}_{\mathcal{A}^1}wJ^1+\beta\int_{\Sigma_s}J^1(\tilde{u}\cdot\tau)(w\cdot\tau)+(\tilde{\eta}+\epsilon\pa_t\tilde{\eta},w\cdot\mathcal{N}^1)_{1,\Sigma}+[\tilde{u}\cdot\mathcal{N}^1,w\cdot\mathcal{N}^1]_\ell\\
  &\quad-\epsilon \mathfrak{b}(\pa_t\tilde{\eta},w\cdot\mathcal{N}^1)_\ell\\
  &=\mu\int_\Om \mathbb{D}_{\mathcal{A}^1-\mathcal{A}^2}\tilde{u}^2:\mathbb{D}_{\mathcal{A}^1}wJ^1+\int_\Om R^1\cdot wJ^1-\int_{-\ell}^\ell\sigma F^3\pa_1(w\cdot\mathcal{N}^1)+R^3\cdot w-[w\cdot\mathcal{N}^1, R^5+R^6]_\ell,
\end{aligned}
\eeq
for each $w\in\mathcal{V}(t)$. Then according to Theorem 4.6 in \cite{GT2}, there exists a unique $\tilde{p}\in\mathring{H}^0(\Om)$ such that
\beq
\begin{aligned}
  &\frac\mu2\int_\Om\mathbb{D}_{\mathcal{A}^1}\tilde{u}:\mathbb{D}_{\mathcal{A}^1}wJ^1+\beta\int_{\Sigma_s}J^1(\tilde{u}\cdot\tau)(w\cdot\tau)-(\tilde{p}, \dive_{\mathcal{A}^1}w)_0+(\tilde{\eta}+\epsilon\pa_t\tilde{\eta},w\cdot\mathcal{N}^1)_{1,\Sigma}\\
  &\quad+[\tilde{u}\cdot\mathcal{N}^1,w\cdot\mathcal{N}^1]_\ell-\epsilon \mathfrak{b}(\pa_t\tilde{\eta},w\cdot\mathcal{N}^1)_\ell\\
  &=\mu\int_\Om \mathbb{D}_{\mathcal{A}^1-\mathcal{A}^2}\tilde{u}^2:\mathbb{D}_{\mathcal{A}^1}wJ^1+\int_\Om R^1\cdot wJ^1-\int_{-\ell}^\ell\sigma F^3\pa_1(w\cdot\mathcal{N}^1)+R^3\cdot w\\
  &\quad-[\tilde{u}\cdot\mathcal{N}^1, R^5+R^6]_\ell\quad,
\end{aligned}
\eeq
for each $w\in\mathcal{W}(t)$. Moreover,
\beq\label{est:tilde_p}
\|\tilde{p}\|_0\lesssim \|\tilde{u}\|_1+\|\eta\|_{3/2}\left((\|\eta^1\|_{3/2}+\|\eta^2\|_{3/2})\|\tilde{u}^2\|_{W_\delta^2}+\|\tilde{p}^2\|_{\mathring{W}_\delta^1}\right).
\eeq

Multiplying the first equation of \eqref{linear_fix1} by $\tilde{u}J^1$ and integrating by parts reveals that
\beq\label{energy_fix1}
\begin{aligned}
  &\pa_t\left(\int_{-\ell}^\ell\frac g2|\tilde{\eta}|^2+\frac\sigma2\frac{|\pa_1\tilde{\eta}|^2}{(1+|\pa_1\zeta_0|^2)^{3/2}}\right)+\epsilon\int_{-\ell}^\ell g|\pa_t\tilde{\eta}|^2+\sigma\frac{|\pa_1\pa_t\tilde{\eta}|^2}{(1+|\pa_1\zeta_0|^2)^{3/2}}\\
  &\quad+\frac\mu2\int_\Om|\mathbb{D}_{\mathcal{A}^1}\tilde{u}|^2J^1+[\tilde{u}\cdot\mathcal{N}^1]_\ell^2-\epsilon \mathfrak{b}(\tilde{u}\cdot\mathcal{N}^1)_\ell^2\\
  &=\frac\mu2\int_\Om \mathbb{D}_{\mathcal{A}^1-\mathcal{A}^2}\tilde{u}^2:\mathbb{D}_{\mathcal{A}^1}\tilde{u}J^1+\int_\Om R^1\cdot\tilde{u}J^1+\tilde{p}R^2J^1-\int_{\Sigma_s}J^1(\tilde{u}\cdot\tau)R^4\\
  &\quad-\int_{-\ell}^\ell\sigma F^3\pa_1(\tilde{u}\cdot\mathcal{N}^1)+R^3\cdot\tilde{u}-g(\tilde{\eta}+\epsilon\pa_t\tilde{\eta})R^5-\sigma\frac{\pa_1(\tilde{\eta}+\epsilon\pa_t\tilde{\eta})\pa_1R^5}{(1+|\pa_1\zeta_0|^2)^{3/2}}\\
  &\quad-[\tilde{u}\cdot\mathcal{N}^1, R^5+R^6]_\ell+\epsilon \mathfrak{b}(\tilde{u}\cdot\mathcal{N}^1, R^5)_\ell.
\end{aligned}
\eeq

We will now estimate the terms in right-hand side of \eqref{energy_fix1}.  First:
\begin{align*}
&\int_\Om R^1\cdot\tilde{u}J^1+\tilde{p}R^2J^1\\
&=\int_\Om \left(\dive_{(\mathcal{A}^1-\mathcal{A}^2)}(\mathbb{D}_{\mathcal{A}^2}\tilde{u}^2)-\nabla_{(\mathcal{A}^1-\mathcal{A}^2)}\tilde{p}^2\right)\cdot\tilde{u}J^1+\tilde{p}\dive_{(\mathcal{A}^1-\mathcal{A}^2)}\tilde{u}^2J^1\\
&\lesssim\int_\Om |\nabla\bar{\eta}|\left(|\nabla^2\bar{\eta}^2||\nabla\tilde{u}^2|+|\nabla\bar{\eta}^2||\nabla^2\tilde{u}^2|+|\nabla\tilde{p}^2|\right)|\tilde{u}|+|\tilde{p}||\nabla\bar{\eta}||\nabla\tilde{u}^2|\\
&\lesssim \left(\|\tilde{u}\|_1\|\eta^2\|_{W_\delta^{5/2}}+\|\tilde{p}\|_0\right)\|\eta\|_{3/2}\|\eta^1\|_{W_\delta^{5/2}}\|\tilde{u}^2\|_{W_\delta^2},
\end{align*}
Now we consider the integrals on $(-\ell,\ell)$. We know that $\mathcal{N}^1-\mathcal{N}^2=(-\pa_1\eta, 0)$ and
\[
\pa_1F^{3,2}=\pa_1\mathcal{R}(\pa_1\zeta_0,\pa_1\eta^2)=\pa_y\mathcal{R}\pa_1^2\zeta_0+\pa_z\mathcal{R}\pa_1^2\eta^2
\]
where $|\pa_y\mathcal{R}|\lesssim |\pa_1\eta^2|^2$, $|\pa_z\mathcal{R}|\lesssim |\pa_1\eta^2|$.
Then we take $\frac1p+\frac3q=1$, $\frac1p+\frac2r=1$, with $1<p<\frac{2}{1+\delta}$ and use H\"older inequality, Sobolev inequality and trace theory to derive that
\begin{align*}
  &\int_{-\ell}^\ell R^3\cdot\tilde{u}=\int_{-\ell}^\ell \Bigg[-\tilde{p}^2(\mathcal{N}^1-\mathcal{N}^2)+\mathbb{D}_{\mathcal{A}^2}\tilde{u}^2(\mathcal{N}^1-\mathcal{N}^2)+g(\tilde{\eta}^2+\epsilon\pa_1\pa_t\tilde{\eta}^2)(\mathcal{N}^1-\mathcal{N}^2)\\
    &\quad-\sigma\pa_1\left(\frac{\pa_1\tilde{\eta}^2+\epsilon\pa_1\pa_t\tilde{\eta}^2}{(1+|\zeta_0|^2)^{3/2}}\right)(\mathcal{N}^1-\mathcal{N}^2)-\sigma\pa_1 F^{3,2}(\mathcal{N}^1-\mathcal{N}^2)\Bigg]\cdot\tilde{u}\\
    &\lesssim \|\tilde{p}^2\|_0\|\pa_1\eta\|_{L^4}\|\tilde{u}\|_{L^4(\Sigma)}+\|\pa_1\eta^2\|_{L^q(\Sigma)}\|\nabla\tilde{u}^2\|_{L^p(\Sigma)}\|\pa_1\eta\|_{L^q(\Sigma)}\|\tilde{u}\|_{L^q(\Sigma)}\\
    &\quad+\left\|g(\tilde{\eta}^2+\epsilon\pa_1\pa_t\tilde{\eta}^2)-\sigma\pa_1\left(\frac{\pa_1\tilde{\eta}^2+\epsilon\pa_1\pa_t\tilde{\eta}^2}{(1+|\zeta_0|^2)^{3/2}}\right)\right\|_{L^p(\Sigma)}\|\pa_1\eta\|_{L^r(\Sigma)}\|\tilde{u}\|_{L^r(\Sigma)}\\
    &\quad+\|\pa_y\mathcal{R}\|_{L^2}\|\pa_1\eta\|_{L^4(\Sigma)}\|\tilde{u}\|_{L^4(\Sigma)}+\|\pa_z\mathcal{R}\|_{L^q}\|\pa_1\eta^2\|_{L^p(\Sigma)}\|\pa_1\eta\|_{L^q(\Sigma)}\|\tilde{u}\|_{L^q(\Sigma)}\\
    &\lesssim \bigg( \|\tilde{p}^2\|_0\|\pa_1\eta\|_{1/2}+\|\pa_1\eta^2\|_{1/2}\|\nabla\tilde{u}^2\|_{W_\delta^{1/2}(\Sigma)}\|\pa_1\eta\|_{1/2}+\|\tilde{\eta}^2+\epsilon\pa_t\tilde{\eta}^2\|_{W_\delta^{5/2}}\|\pa_1\eta\|_{1/2}\\
    &\quad+\|\pa_1\eta^2\|_{1/2}^2\|\pa_1\eta\|_{1/2}+\|\pa_1\eta^2\|_{1/2}\|\eta^2\|_{W_\delta^{5/2}}\|\pa_1\eta\|_{1/2}\bigg)\|\tilde{u}\|_{1/2(\Sigma)}\\
    &\lesssim \bigg( \|\tilde{p}^2\|_0\|\eta\|_{3/2}+\|\eta^2\|_{3/2}\|\tilde{u}^2\|_{W_\delta^2}\|\eta\|_{3/2}+\|\tilde{\eta}^2+\epsilon\pa_t\tilde{\eta}^2\|_{W_\delta^{5/2}}\|\eta\|_{3/2}\\
    &\quad+\|\eta^2\|_{3/2}^2\|\eta\|_{3/2}+\|\eta^2\|_{3/2}\|\eta^2\|_{W_\delta^{5/2}}\|\eta\|_{3/2}\bigg)\|\tilde{u}\|_1
\end{align*}
Similarly,
\begin{align*}
&\int_{-\ell}^\ell\sigma F^3\pa_1(\tilde{u}\cdot\mathcal{N}^1)-g(\tilde{\eta}+\epsilon\pa_t\tilde{\eta})R^5-\sigma\frac{\pa_1(\tilde{\eta}+\epsilon\pa_t\tilde{\eta})\pa_1R^5}{(1+|\pa_1\zeta_0|^2)^{3/2}}\\
&\lesssim \|\eta\|_{3/2}\left(\|\eta\|_{3/2}+\|\eta^2\|_{3/2}\right)\|\pa_t\tilde{\eta}\|_1+\|\tilde{u}\|_1\|\tilde{u}^2\|_1\|\eta\|_{3/2}\|\eta^1\|_{3/2}\\
&\quad+\|\tilde{\eta}^2+\epsilon\pa_t\tilde{\eta}^2\|_{W_\delta^{5/2}}\|\tilde{u}\|_1\|\eta\|_{3/2}+\|\eta^2\|_{3/2}\|\eta\|_{3/2}+\|\eta^2\|_{3/2}^2\|\eta\|_{3/2}\\
&\quad+\|\tilde{\eta}+\epsilon\pa_t\tilde{\eta}\|_1\left(\|\eta\|_{3/2}\|\tilde{u}^2\|_{W_\delta^2}+\|\tilde{u}^2\|_1\|\eta\|_{W_\delta^{5/2}}\right),
\end{align*}
\[
[\tilde{u}\cdot\mathcal{N}^1, R^5+R^6]_\ell-\epsilon \mathfrak{b}(\tilde{u}\cdot\mathcal{N}^1, R^5)_\ell\lesssim [\tilde{u}\cdot\mathcal{N}^1]_\ell([\pa_t\eta^1]_\ell+[\pa_t\eta^2]_\ell)[\pa_t\eta]_\ell,
\]
where  here we have used the fact that $R^5=0$ at the end points $x_1=\pm\ell$ since $u^i_1$ vanishes there, for $i=1,2$ and we denote that $u^i=(u^i_1, u^i_2)$.
Then the Cauchy-Schwarz inequality, weighted Sobolev embedding theorem and Gronwall's inequality imply that
\beq\label{difference_energy}
\begin{aligned}
&\sup_{0\le t\le T}\|\tilde{\eta}\|_1^2+\epsilon\int_0^T\|\pa_t\tilde{\eta}\|_1^2+\int_0^T\|\tilde{u}\|_1^2+[\tilde{u}\cdot\mathcal{N}^1]_\ell^2\\
&\lesssim \int_0^T\|\eta\|_{W_\delta^{5/2}}^2\left((\|\eta^1\|_{W_\delta^{5/2}}^2+\|\eta^2\|_{W_\delta^{5/2}}^2)\|\tilde{u}^2\|_{W_\delta^2}^2+\|\eta^1\|_{3/2}^2+\|\eta^2\|_{3/2}^2+\|\tilde{p}^2\|_0^2\right)\\
&\quad+\int_0^T\|\tilde{\eta}^2+\epsilon\pa_t\tilde{\eta}^2\|_{W_\delta^{5/2}}^2\|\eta\|_{3/2}^2+\|\eta\|_{W_\delta^{5/2}}^2\|\tilde{u}^2\|_{W_\delta^2}^2+\int_0^T(\|\pa_t\eta^1\|_1^2+\|\pa_t\eta^2\|_1^2)[\pa_t\eta]_\ell^2.
\end{aligned}
\eeq

From the weak formulation \eqref{weak_1} and the Theorem 4.11 in \cite{GT2},
\beq\label{difference_enhance_1}
\begin{aligned}
\|\tilde{\eta}+\epsilon\pa_t\tilde{\eta}\|_{3/2}^2\lesssim \|\tilde{u}\|_1^2+[\tilde{u}\cdot\mathcal{N}^1]_\ell^2+\|\eta\|_{3/2}^2(\|\eta^1\|_{W_\delta^{5/2}}^2\|\tilde{u}^2\|_{W_\delta^2}^2+\|\tilde{p}^2\|_{\mathring{W}_\delta^1}^2+\|\tilde{\eta}^2+\epsilon\pa_t\tilde{\eta}^2\|_{W_\delta^{5/2}}^2)\\
+\|F^3\|_{1/2}^2+[R^5+R^6]_\ell^2\\
\lesssim \|\tilde{u}\|_1^2+[\tilde{u}\cdot\mathcal{N}^1]_\ell^2+\|\eta\|_{3/2}^2(\|\eta^1\|_{W_\delta^{5/2}}^2\|\tilde{u}^2\|_{W_\delta^2}^2+\|\tilde{p}^2\|_{\mathring{W}_\delta^1}^2+\|\tilde{\eta}^2+\epsilon\pa_t\tilde{\eta}^2\|_{W_\delta^{5/2}}^2)\\
+\|\eta\|_{3/2}^2(\|\eta\|_{3/2}^2+\|\tilde{u}^2\|_{W_\delta^2}^2)+(\|\pa_t\eta^1\|_1^2+\|\pa_t\eta^2\|_1^2)[\pa_t\eta]_\ell^2.
\end{aligned}
\eeq
Since
\beq\label{def:tilde_eta}
\tilde{\eta}=\frac1\epsilon\int_0^t e^{-\frac{t-s}{\epsilon}} (\tilde{\eta}+\epsilon\pa_t\tilde{\eta}),
\eeq
thus
\[
\pa_t\tilde{\eta}=\frac1\epsilon(\tilde{\eta}+\epsilon\pa_t\tilde{\eta})-\frac{1}{\epsilon^2}\int_0^t e^{-\frac{t-s}{\epsilon}} (\tilde{\eta}+\epsilon\pa_t\tilde{\eta}),
\]
then we have
\beq\label{difference_enhance_2}
\begin{aligned}
 \|\pa_t\tilde{\eta}\|_{L^2H^{3/2}}^2&\le \frac1{\epsilon^2}\|\tilde{\eta}+\epsilon\pa_t\tilde{\eta}\|_{L^2H^{3/2}}^2+\int_0^T\left(\frac1{\epsilon^2}\int_0^t e^{-\frac{t-s}{\epsilon}} \|\tilde{\eta}+\epsilon\pa_t\tilde{\eta}\|_{3/2}\right)^2\\
 &\lesssim C(\epsilon)\|\tilde{\eta}+\epsilon\pa_t\tilde{\eta}\|_{L^2H^{3/2}}^2\lesssim C(\epsilon)(\|\eta\|_{L^\infty W_\delta^{5/2}}^2+\|[\pa_t\eta]_\ell\|_{L^2([0,T])}^2)P(\sigma).
 \end{aligned}
\eeq

From the Theorem 5.9 in \cite{GT2}, we have that
\beq\label{difference_elliptic_1}
\begin{aligned}
  &\|\tilde{u}\|_{W_\delta^2}^2+\|\tilde{p}\|_{\mathring{W}_\delta^1}^2\\
  &\lesssim \|-\mu\dive_{\mathcal{A}^1}(\mathbb{D}_{\mathcal{A}^1-\mathcal{A}^2}\tilde{u}^2)+R^1\|_{W_\delta^0}^2+\|R^2\|_{W_\delta^1}^2+\|\pa_t\tilde{\eta}-R^5\|_{W_\delta^{3/2}}^2\\
  &\quad+\|\mu\mathbb{D}_{\mathcal{A}^1-\mathcal{A}^2}\tilde{u}^2\mathcal{N}^1+R^3\|_{W_\delta^{1/2}}^2+\|\mu\mathbb{D}_{\mathcal{A}^1-\mathcal{A}^2}\tilde{u}^2\nu\cdot\tau\|_{W_\delta^{1/2}}^2\\
  &\lesssim \|\eta\|_{W_\delta^{5/2}}^2\left(\Big(1+\|\eta^1\|_{W_\delta^{5/2}}^2+\|\eta^2\|_{W_\delta^{5/2}}^2\Big)\|\tilde{u}^2\|_{W_\delta^2}^2+\|\tilde{p}^2\|_{\mathring{W}_\delta^1}^2\right)+\|\pa_t\tilde{\eta}\|_{W_\delta^{3/2}}^2\\
  &\quad+\|\eta\|_{W_\delta^{5/2}}^2\Big(\|\tilde{\eta}^2+\epsilon\pa_t\tilde{\eta}^2\|_{W_\delta^{5/2}}^2+\|\eta^1\|_{W_\delta^{5/2}}^2+\|\eta^2\|_{W_\delta^{5/2}}^2\Big),
\end{aligned}
\eeq
then the Theorem 5.10 in \cite{GT2} implies
\beq\label{difference_elliptic_2}
\begin{aligned}
  &\|\tilde{\eta}+\epsilon\pa_t\tilde{\eta}\|_{W_\delta^{5/2}}^2\\
  &\lesssim \|\tilde{u}\|_{W_\delta^2}^2+\|\tilde{p}\|_{\mathring{W}_\delta^1}^2+\|\pa_1F^3\|_{W_\delta^{1/2}}^2+\|\mu\mathbb{D}_{\mathcal{A}^1-\mathcal{A}^2}\tilde{u}^2\mathcal{N}^1+R^3\|_{W_\delta^{1/2}}^2+[F^3]_\ell^2+[R^6]_\ell^2\\
  &\lesssim \|\eta\|_{W_\delta^{5/2}}^2\left(\Big(1+\|\eta^1\|_{W_\delta^{5/2}}^2+\|\eta^2\|_{W_\delta^{5/2}}^2\Big)\|\tilde{u}^2\|_{W_\delta^2}^2+\|\tilde{p}^2\|_{\mathring{W}_\delta^1}^2\right)+\|\pa_t\tilde{\eta}\|_{W_\delta^{3/2}}^2\\
  &\quad+\|\eta\|_{W_\delta^{5/2}}^2\Big(\|\tilde{\eta}^2+\epsilon\pa_t\tilde{\eta}^2\|_{W_\delta^{5/2}}^2+\|\eta^1\|_{W_\delta^{5/2}}^2+\|\eta^2\|_{W_\delta^{5/2}}^2\Big).
\end{aligned}
\eeq
Combining \eqref{difference_energy}--\eqref{difference_enhance_2}, the Cauchy-Schwarz inequality, weighted Sobolev embeddings, and the linear estimates of Theorem \ref{thm:linear_low} and \ref{thm:higher order} then show that
\beq\label{est:contraction_1}
\begin{aligned}
\|\tilde{\eta}\|_{L^\infty W_\delta^{5/2}}^2&\le C(\epsilon)T\|\tilde{\eta}+\epsilon\pa_t\tilde{\eta}\|_{L^2W_\delta^{5/2}}^2\\
&\lesssim C(\epsilon)TP(\|\eta^1\|_{L^\infty W_\delta^{5/2}}^2,\|\eta^2\|_{L^\infty W_\delta^{5/2}}^2,\|\tilde{u}^2\|_{L^2W_\delta^2}^2,\|\tilde{p}^2\|_{L^2\mathring{W}_\delta^1}^2)\\
&\quad\times\left(\|\eta\|_{L^\infty W_\delta^{5/2}}^2+\|[\pa_t\eta]_\ell\|_{L^2([0,T])}^2\right)\\
&\lesssim C(\epsilon)TP(\sigma)\left(\|\eta\|_{L^\infty W_\delta^{5/2}}^2+\|[\pa_t\eta]_\ell\|_{L^2([0,T])}^2\right),
\end{aligned}
\eeq
where the first inequality is obtained by \eqref{def:tilde_eta} and the second inequality used the fact that $\|\tilde{\eta}^2+\epsilon\pa_t\tilde{\eta}^2\|_{L^2 W_\delta^{5/2}}^2\le \sigma$ which is included in the proof of Theorem \ref{thm:linear_low}.

Then \eqref{est:tilde_p} and \eqref{difference_energy} imply that
\beq\label{est:contraction_2}
\begin{aligned}
  &\|\tilde{u}\|_{L^2H^1}^2+\|\tilde{p}\|_{L^2\mathring{H}^0}^2+\|[\tilde{u}\cdot{N}^1]_\ell\|_{L^2([0,T])}^2\\
  &\lesssim P(\sigma)\left(\|\eta\|_{L^\infty W_\delta^{5/2}}^2+\|[\pa_t\eta]_\ell\|_{L^2([0,T])}^2\right).
\end{aligned}
\eeq
Since at the corner points, $[\pa_t\tilde{\eta}]_\ell=[\tilde{u}\cdot{N}^1]_\ell$,
\eqref{est:contraction_1} and \eqref{est:contraction_2} reveals that
\beq\label{est:contraction_3}
\begin{aligned}
  &\|\tilde{u}\|_{L^2H^1}^2+\|\tilde{p}\|_{L^2\mathring{H}^0}^2+\|[\pa_t\tilde{\eta}]_\ell\|_{L^2([0,T])}^2+\|\tilde{\eta}\|_{L^\infty W_\delta^{5/2}}^2\\
  &\le (C(\epsilon)T+C)P(\sigma)\left(\|\eta\|_{L^\infty W_\delta^{5/2}}^2+\|[\pa_t\eta]_\ell\|_{L^2([0,T])}^2\right),
\end{aligned}
\eeq
where $C$ is a universal constant independent of $\epsilon$.


We may restrict $\sigma$ such that $CP(\sigma)\le 1/8$. For each $0<\epsilon\le 1/(8CP(\sigma))$, we choose $T^\prime>0$ such that $C(\epsilon)T^\prime P(\sigma)\le 1/8$. This implies
\beq
d(A(u^1,p^1,\eta^1),A(u^2,p^2,\eta^2))=d((\tilde{u}^1, \tilde{p}^1,\tilde{\eta}^1),(\tilde{u}^2, \tilde{p}^2,\tilde{\eta}^2))\le\frac12 d((u^1,p^1,\eta^1),(u^2,p^2,\eta^2)).
\eeq
If $0<T^\prime<T_\epsilon$, we can repeat the above argument on intervals $[0, T^\prime]$, $[T^\prime, 2 T^\prime]$,etc. Finally we see that $A$ is a strict contraction on $S(T_\epsilon,\sigma)$. Since the metric space $S(T_\epsilon, \sigma)$ is complete, the contraction mapping principle reveals the existence of a unique $(u,p,\eta)\in S(T_\epsilon, \sigma)$ such that $A(u,p,\eta)=(\tilde{u}, \tilde{p},\tilde{\eta})=(u,p,\eta)$.
\end{proof}

\subsection{Energy estimates}

We want to send $\epsilon\rightarrow0$ to get a uniform $T>0$ independent of $\epsilon$, so we need some uniform estimates. For simplicity, we may abuse the same symbol of energy and dissipation in section 2.1 of \cite{GT2} and still denote the unknown $(u^\epsilon,p^\epsilon,\eta^\epsilon)$ as $(u,p,\eta)$.
\begin{theorem}\label{thm:uniform_energy}
  There exists a universal constant $C$ and a universal $T>0$ independent of $\epsilon$ such that for each $\epsilon>0$ sufficiently small,
  \beq
  \sup_{0\le t\le T}\mathcal{E}(t)+\int_0^T\mathcal{D}(t)\,\mathrm{d}t\le C.
  \eeq
\end{theorem}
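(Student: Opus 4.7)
The plan is a bootstrap/continuity argument on the solution $(u^\epsilon,p^\epsilon,\eta^\epsilon)\in S(T_\epsilon,\sigma)$ produced by Theorem \ref{thm: fixed point}. Define
\[
\mathcal{G}(T):=\sup_{0\le t\le T}\mathcal{E}(t)+\int_0^T\mathcal{D}(t)\,\mathrm{d}t,
\]
and fix a large universal constant $C_\ast>0$ (to be chosen). Set
\[
T^\ast:=\sup\bigl\{\,T\in[0,T_\epsilon]\,:\, \mathcal{G}(T)\le 2C_\ast\mathfrak{E}_0\,\bigr\}.
\]
Since $\mathcal{G}(0)=\mathcal{E}(0)\lesssim\mathfrak{E}_0$ and $T\mapsto\mathcal{G}(T)$ is continuous, $T^\ast>0$. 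The task is to prove that there is a universal $T_0>0$, independent of $\epsilon$, such that on $[0,\min\{T^\ast,T_0\}]$ one may improve the bound to $\mathcal{G}(T)\le C_\ast\mathfrak{E}_0$; by openness and closedness this then forces $T^\ast\ge T_0$, which is the claim.

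The improved estimate will be obtained by performing energy estimates directly on the nonlinear $\epsilon$-system \eqref{eq:modified_geometric perturbation} rather than applying Theorem \ref{thm:higher order} (whose constant $C(\epsilon)$ is not uniform). At each order $j=0,1,2$, I would time-differentiate the equations, test the momentum equation against $D_t^j u\,J$ on $\Omega$, and integrate by parts using the geometric identities, the kinematic boundary condition $\partial_t^{j+1}\xi=D_t^ju\cdot\mathcal{N}$, and the contact-point condition. This yields an energy-dissipation identity of the schematic form
\begin{equation*}
\frac{d}{dt}\mathcal{E}_j+\mathcal{D}_j+\epsilon\|\pa_t^{j+1}\eta\|_{1,\Sigma}^2+\kappa\mathscr{W}'(\pa_t\eta)[\pa_t^{j+1}\eta]_\ell^2\le P(\sqrt{\mathcal{E}})\sqrt{\mathcal{E}\mathcal{D}}+(\text{lower order interactions}),
\end{equation*}
where $P$ is a polynomial with $P(0)=0$; the crucial point is that the $\epsilon$-perturbation contributes only a \emph{good sign} (an extra non-negative boundary dissipation), and is harmless in the limit $\epsilon\to 0$. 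The interaction term \eqref{eq:difficulty} now poses no difficulty because the quantities being controlled are exactly those appearing in $\mathcal{E}$ and $\mathcal{D}$. Spatial regularity is then recovered from the elliptic estimates of Theorem 5.9 and Theorem 5.10 in \cite{GT2} applied at each time-differentiated level, converting the parabolic-type $H^1$ control on $\partial_t^j u$ into the weighted $W_\delta^2$ control used in $\mathcal{D}$. Summing over $j$ yields
\[
\frac{d}{dt}\mathcal{E}+\mathcal{D}\le CP(\sqrt{\mathcal{E}})(\mathcal{E}+\mathcal{D}).
\]

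On $[0,T^\ast]$ the bootstrap hypothesis gives $\mathcal{E}\le 2C_\ast\mathfrak{E}_0\le \alpha_0$ (choose $\mathfrak{E}_0$ small), so $CP(\sqrt{\mathcal{E}})\le 1/2$; the $\mathcal{D}$ term on the right is absorbed, and integrating in time yields
\[
\mathcal{G}(T)\le C_1\mathfrak{E}_0+C_1T\sup_{[0,T]}\mathcal{E}(t)\le C_1\mathfrak{E}_0+2C_1C_\ast\mathfrak{E}_0\,T.
\]
Choosing $C_\ast:=2C_1$ and then $T_0:=1/(4C_1C_\ast)$ (both universal, independent of $\epsilon$), one obtains $\mathcal{G}(T)\le \tfrac{3}{2}C_1\mathfrak{E}_0<C_\ast\mathfrak{E}_0$ on $[0,\min\{T^\ast,T_0\}]$, which is the strict improvement needed to conclude $T^\ast\ge T_0$ by the standard continuity argument. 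The main obstacle in carrying this out is the careful bookkeeping of the commutators arising from $D_t^j$ versus $\partial_t^j$ and from the geometric transport $\mathcal{A},J,\mathcal{N}$ under time differentiation, together with the verification that every new term introduced by the $\epsilon$-perturbation either has a favorable sign or is bounded by $\epsilon$ times a quantity already present in $\mathcal{D}$; this is where the analogue of Section 8 of \cite{GT2} is invoked, with the addition of the contact-point dissipation $[\pa_t^j\eta]_\ell^2$ provided by the monotonicity of $\mathscr{W}$.
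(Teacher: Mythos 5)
Your overall strategy matches the paper's: a continuity (bootstrap) argument in which one performs energy estimates \emph{directly} on the nonlinear $\epsilon$-system rather than invoking Theorem~\ref{thm:higher order} (whose constant $C(\epsilon)$ blows up as $\epsilon\to 0$), derives an identity of the schematic form $\tfrac{d}{dt}(\text{energy})+(\text{dissipation})\lesssim\sqrt{\mathcal{E}}\,\mathcal{D}+\cdots$ following Section 8 of \cite{GT2}, observes that the $\epsilon$-terms in the basic energy identity have a favorable sign, and closes by absorption under the bootstrap hypothesis. This is the same architecture the paper uses, with its $\mathscr{E}$, $\mathscr{D}$, $\mathscr{F}$ bookkeeping and the final restriction of the initial data.

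However, your proposal glosses over the one genuinely new and delicate ingredient, which is precisely where the $\epsilon$-perturbation threatens to destroy uniformity. The elliptic estimates (Theorem 4.11 and Theorems 5.9--5.10 of \cite{GT2}) applied to the $\epsilon$-modified boundary condition control the \emph{combinations} $\eta+\epsilon\pa_t\eta$, $\pa_t\eta+\epsilon\pa_t^2\eta$, $\pa_t^2\eta+\epsilon\pa_t^3\eta$ in $H^{3/2}$ or $W^{5/2}_\delta$, not the individual pieces $\eta$, $\pa_t\eta$, $\pa_t^2\eta$ appearing in $\mathcal{E}$ and $\mathcal{D}$. Converting the former to the latter requires inverting the ODE $\vartheta=\pa_t^j\eta+\epsilon\pa_t^{j+1}\eta$, and a naive inversion $\pa_t^j\eta=\tfrac1\epsilon\int_0^t e^{-(t-s)/\epsilon}\vartheta\,\mathrm{d}s+e^{-t/\epsilon}\pa_t^j\eta(0)$ produces $\epsilon^{-1}$ factors. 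The paper's proof devotes most of its effort to this point: it observes that $\|\tfrac1\epsilon e^{-\cdot/\epsilon}\|_{L^1}=1-e^{-t/\epsilon}\le 1$, treats the initial-data contribution $e^{-t/\epsilon}$ as $\le 1$, and for the top order $\pa_t^2\eta$ derives a bound $\epsilon\tfrac{d}{dt}\|\pa_t^2\bar\eta\|+\|\pa_t^2\bar\eta\|\le\|\bar\vartheta\|$ together with the identity $\epsilon\pa_t^2\eta(0)=(\pa_t\eta(0)+\epsilon\pa_t^2\eta(0))-\pa_t\eta(0)$ to eliminate the $\epsilon^{-2}\|\pa_t^2\eta(0)\|^2$ term, yielding bounds with the $(1+t^3)$ and $t^2$ prefactors that are harmless for $T<1$. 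Your assertion that ``every new term introduced by the $\epsilon$-perturbation either has a favorable sign or is bounded by $\epsilon$ times a quantity already present in $\mathcal{D}$'' is true at the level of the basic testing identity, but it is not the obstruction: the obstruction is the \emph{recovery} of the untilted quantities, and a reference to ``the analogue of Section 8 of \cite{GT2}'' does not suffice because that section treats the unperturbed system where this issue is absent. To repair your proposal you would need to add this exponential-kernel inversion explicitly and verify that the resulting constants are uniform in $\epsilon$, which is where the restriction $T<1$ enters the paper's argument.

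Separately, your final integrated bound $\mathcal{G}(T)\le C_1\mathfrak{E}_0+C_1T\sup_{[0,T]}\mathcal{E}$ carries a $T\sup\mathcal{E}$ term that the paper avoids because its right-hand side is of the pure form $\sqrt{\mathcal{E}}\mathcal{D}$ (absorbable into $\mathcal{D}$ without producing an uncontrolled $\int\mathcal{E}$); this is a minor bookkeeping difference and your way of closing still works, but it is worth noting that the paper obtains $\sup\mathcal{E}+\int\mathcal{D}\le C'\mathfrak{E}_0$ with no $T$-dependent prefactor in the leading term, using $T<1$ only to absorb polynomial factors of $t$ arising from the ODE inversion.
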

\begin{proof}
We shall use the continuity argument to prove the uniform bounds.
First, we define some variants of energy, dissipation and forcing terms.
\beq
\mathscr{E}:=\sum_{j=0}^2\int_{-\ell}^\ell\frac g2|\pa_t^j\eta|^2+\frac\sigma2\frac{|\pa_1\pa_t^j\eta|^2}{(1+|\pa_1\zeta_0|^2)^{3/2}},
\eeq
\beq
\mathscr{D}:=\sum_{j=0}^2\left(\frac\mu2\int_\Om\left|\mathbb{D}_{\mathcal{A}}\pa_t^ju\right|^2J+\beta\int_{\Sigma_s}\left|\pa_t^ju\cdot\tau\right|^2J+\left[\pa_t^ju\cdot\mathcal{N}\right]_\ell^2\right),
\eeq
and
\beq
\mathscr{F}:=\int_{-\ell}^{\ell}\left[\sigma\mathcal{Q}(\pa_1\zeta_0,\pa_1\eta)+\sigma\pa_z\mathcal{R}(\pa_1\zeta_0,\pa_1\eta)\frac{|\pa_1\pa_t^2\eta|^2}{2}+\sigma\pa_z^2\mathcal{R}(\pa_1\zeta_0,\pa_1\eta)(\pa_1\pa_t\eta)^2\pa_1\pa_t^2\eta\right].
\eeq

Suppose that
\[
\sup_{0<s\le t}\mathcal{E}(s)+\int_0^t\mathcal{D}\le \alpha\ \text{for each}\ t\in [0,T),
\]
where $\alpha>0$ is sufficiently small and $0<T<1$ is to be determined later.
Similar to the energy estimate and section 8 of \cite{GT2}, we can derive that
\beq\label{uniform_est_1}
\frac{d}{dt}(\mathscr{E}+\mathscr{F})+\mathscr{D}+\epsilon\left(\|\pa_t\eta\|_{1,\Sigma}^2+\|\pa_t^2\eta\|_{1,\Sigma}^2+\|\pa_t^3\eta\|_{1,\Sigma}^2\right)\lesssim \sqrt{\mathcal{E}}\mathcal{D}.
\eeq
Then in order to follow the proof of Theorem 8.2 in \cite{GT2}, we need to prove the uniform bounds of $\|\eta\|_{3/2}^2+\|\pa_t\eta\|_{3/2}^2+\|\pa_t^2\eta\|_{3/2}^2$ independent of $\epsilon$ . First, by following the proof of Theorem 8.2 in \cite{GT2}, we have known that
 \beq
 \|\eta+\epsilon\pa_t\eta\|_{3/2}^2+\|\pa_t\eta+\epsilon\pa_t^2\eta\|_{3/2}^2+\|\pa_t^2\eta+\epsilon\pa_t^3\eta\|_{3/2}^2\lesssim\bar{\mathcal{D}}_{\shortparallel}+\sqrt{\mathcal{E}}\mathcal{D}
 \eeq
 and
\beq\label{uniform_eta_1}
\begin{aligned}
\|\eta\|_{3/2}^2+\|\pa_t\eta\|_{3/2}^2\lesssim (\|\eta_0\|_{3/2}^2+\|\pa_t\eta(0)\|_{3/2}^2)e^{-2\frac t\epsilon}
+\left(\frac1\epsilon\int_0^te^{-\frac{t-s}{\epsilon}}(\bar{\mathcal{D}}_{\shortparallel}+\sqrt{\mathcal{E}}\mathcal{D})^{1/2}\right)^2,
\end{aligned}
\eeq
Then we denote $\vartheta=\pa_t^2\eta+\epsilon\pa_t^3\eta$ and the extension $\bar{\vartheta}=\pa_t^2\bar{\eta}+\epsilon\pa_t^3\bar{\eta}$, then the standard calculation and trace theory reveals that,
\beq
\epsilon\frac{d}{dt}\|\pa_t^2\bar{\eta}\|_{2}+\|\pa_t^2\bar{\eta}\|_{2}\le\|\bar{\vartheta}\|_{2}\lesssim\|\vartheta\|_{3/2}.
\eeq
This implies that
\beq
\begin{aligned}
&\epsilon^2\|\pa_t^2\eta\|_{L^\infty H^{3/2}}^2\le t\int_0^t\|\vartheta\|_{3/2}^2+\epsilon^2\|\pa_t^2\eta(0)\|_{3/2}^2\\
&\le t\int_0^t\|\vartheta\|_{3/2}^2+\|\pa_t\eta(0)\|_{3/2}^2+\|\pa_t\eta(0)+\epsilon\pa_t^2\eta(0)\|_{3/2}^2\lesssim t\int_0^t\|\vartheta\|_{3/2}^2+\mathfrak{E}_0,
\end{aligned}
\eeq
which also implies
\beq\label{uniform_eta_2}
\int_0^t\|\pa_t^2\eta\|_{3/2}^2\le \int_0^t\|\vartheta\|_{3/2}^2+t^2\epsilon^2\|\pa_t^2\eta\|_{L^\infty H^{3/2}}\lesssim (1+t^3)\int_0^t\|\vartheta\|_{3/2}^2+t^2\mathfrak{E}_0.
\eeq

Then following the proof of Theorem 8.2 in \cite{GT2} together with \eqref{uniform_eta_1} and \eqref{uniform_eta_2}, for $t\le T<1$, we may derive that
\beq
\int_0^t\mathcal{D}_{\shortparallel}\lesssim \int_0^t(\mathscr{D}+\sqrt{\mathcal{E}}\mathcal{D})+\mathfrak{E}_0,
\eeq
which reveals
\beq\label{uniform_est_2}
\int_0^t\mathcal{D}\lesssim \int_0^t(\mathscr{D}+\sqrt{\mathcal{E}}\mathcal{D})+\mathfrak{E}_0
\eeq
after similar estimate for $\|\pa_t\eta\|_{W_\delta^{5/2}}^2$ derived from $\|\pa_t\eta+\epsilon\pa_t^2\eta\|_{W_\delta^{5/2}}$.
Then similar to the proof of Theorem 8.4 in \cite{GT2}, combining \eqref{uniform_est_1} and \eqref{uniform_est_2}, we have \beq
\sup_{0<s\le t}\mathcal{E}(s)+\int_0^t\mathcal{D}\le C\mathcal{E}(0)\le C'\mathfrak{E}_0,
\eeq
for each $t\in[0,T]$, and the second inequality follows from the initial data in section \ref{section_initial}.
Restricting the initial data implies that
\beq
\sup_{0<s\le t}\mathcal{E}(s)+\int_0^t\mathcal{D}\le\frac\alpha2,
\eeq
for each $t\in[0,T]$.
\end{proof}

\subsection{Existence of solutions}

In this section, we consider the solution of original problem \eqref{eq:geometric perturbation}.
\begin{theorem}
  There exists a solution $(u,p,\eta)\in \mathcal{X}\cap\mathcal{Y}$ solving the equation \eqref{eq:geometric perturbation}.
\end{theorem}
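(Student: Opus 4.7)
The plan is to use the uniform energy estimate of Theorem \ref{thm:uniform_energy} as the key input and pass to the limit $\epsilon \to 0$ in the family of approximate solutions $(u^\epsilon,p^\epsilon,\eta^\epsilon)$ produced by Theorem \ref{thm: fixed point}. First I would observe that Theorem \ref{thm:uniform_energy} guarantees the bound $\sup_{[0,T]}\mathcal{E}(t)+\int_0^T\mathcal{D}(t)\,\mathrm{d}t \le C$ with $T>0$ and $C>0$ independent of $\epsilon$. Unpacking the definitions of $\mathcal{E}$ and $\mathcal{D}$, this precisely says that $\{(u^\epsilon,p^\epsilon,\eta^\epsilon)\}$ is bounded in $\mathcal{X}\cap\mathcal{Y}$ uniformly in $\epsilon$.

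Next I would extract a subsequence (not relabeled) converging to a limit $(u,p,\eta)$, using Banach--Alaoglu in each factor of $\mathcal{X}$ (weak-$*$) and of $\mathcal{Y}$ (weak). To pass to the limit in the nonlinear terms, strong compactness is required: by the Aubin--Lions lemma applied to the time-derivative bounds that are built into $\mathcal{X}\cap\mathcal{Y}$ (in particular $\eta^\epsilon\in L^\infty W_\delta^{5/2}$ with $\partial_t\eta^\epsilon\in L^\infty H^{3/2}$, and $u^\epsilon\in L^2 W_\delta^2$ with $\partial_t u^\epsilon \in L^2 H^1$, etc.), I obtain the strong convergences
\begin{equation*}
\eta^\epsilon \to \eta \text{ in } C^0([0,T];H^{2}(-\ell,\ell)),\quad u^\epsilon \to u \text{ in } L^2([0,T];H^{3/2}(\Omega)),
\end{equation*}
with the boundary traces converging correspondingly. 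In particular the geometric data $\mathcal{A}^\epsilon,\mathcal{N}^\epsilon, J^\epsilon$ converge strongly in spaces strong enough to identify products with weak limits, and $\mathcal{R}(\partial_1\zeta_0,\partial_1\eta^\epsilon)\to\mathcal{R}(\partial_1\zeta_0,\partial_1\eta)$ strongly by continuity of $\mathcal{R}$, and $\hat{\mathscr{W}}(\partial_t\eta^\epsilon(\pm\ell))\to \hat{\mathscr{W}}(\partial_t\eta(\pm\ell))$ since $\partial_t\eta^\epsilon(\pm\ell)$ converges strongly via the embedding $H^1((0,T))\hookrightarrow C^0([0,T])$ combined with the bound on $[\partial_t^2\eta^\epsilon]_\ell$ in $L^2$.

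The critical observation for discarding the perturbation is that the uniform bound on $\partial_t\eta^\epsilon$ in $L^\infty H^{3/2}$ implies $\epsilon\,\partial_t\eta^\epsilon \to 0$ strongly in $L^\infty H^{3/2}$, so every $\epsilon\partial_t$-correction in \eqref{eq:modified_geometric perturbation}, both in the boundary stress on $\Sigma$ and in the contact-line law at $x_1=\pm\ell$, tends to zero. Combining this with the convergences above, I can pass to the limit term by term in the weak/strong formulation of \eqref{eq:modified_geometric perturbation} to conclude that $(u,p,\eta)$ satisfies \eqref{eq:geometric perturbation} in the appropriate (weak, then strong via the elliptic regularity theorems of \cite{GT2}) sense. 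Lower semicontinuity of the norms under weak/weak-$*$ convergence shows that $(u,p,\eta)\in\mathcal{X}\cap\mathcal{Y}$ with the same bound $C$, and the initial data are inherited since each $(u^\epsilon,p^\epsilon,\eta^\epsilon)$ achieves them and the compatibility conditions of Section \ref{section_initial} are identical to those used for the $\epsilon$-problem (Section \ref{subsection_initial}).

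The main obstacle I expect is the bookkeeping of the strong compactness and the trace convergences needed to handle the quasilinear corner term $\hat{\mathscr{W}}(\partial_t\eta(\pm\ell))$ and the nonlinear capillary remainder $\sigma\partial_1\mathcal{R}(\partial_1\zeta_0,\partial_1\eta)\mathcal{N}$ on $\Sigma$: one must be sure that the uniform bounds in the dissipation provide enough compactness in the corner trace and in $H^{3/2}(\Sigma)$ for $\eta^\epsilon$ to pass to the limit in these nonlinearities, rather than merely in the linear leading-order parts. Once that compactness is secured, the rest of the argument is a standard weak-limit procedure.
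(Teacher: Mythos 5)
Your proposal follows essentially the same route as the paper: use the uniform bound from Theorem \ref{thm:uniform_energy}, extract a weak/weak-$*$ convergent subsequence $\epsilon_k \to 0$ in $\mathcal{X}\cap\mathcal{Y}$, observe that the $\epsilon\partial_t$ corrections vanish by the uniform bound, pass to the limit in the weak formulation, and conclude by lower semicontinuity and elliptic regularity. Where the two arguments allocate their care differently: you are explicit (and right to be) about the strong compactness needed to pass to the limit in the nonlinear terms $\mathcal{A}^{\epsilon_k},\mathcal{N}^{\epsilon_k},J^{\epsilon_k},\mathcal{R}(\partial_1\zeta_0,\partial_1\eta^{\epsilon_k}),\hat{\mathscr{W}}(\partial_t\eta^{\epsilon_k}(\pm\ell))$, invoking Aubin--Lions and trace compactness, whereas the paper writes ``the convergence \eqref{convergence} reveals'' without spelling this out. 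Conversely, the paper devotes a real argument to identifying the limits of the $\epsilon$-dependent velocity/pressure initial data with those of Section \ref{section_initial}---extracting weak limits $\varphi,\psi$ of $u_0^{\epsilon_k},p_0^{\epsilon_k}$, passing to the limit in the $t=0$ weak formulation (which contains $\epsilon$-terms), and invoking uniqueness to conclude $\varphi=u_0$, $\psi=p_0$---while you merely assert the data are ``inherited.'' Since $D_tu^\epsilon(0)$ and $\partial_tp^\epsilon(0)$ (and, as constructed in Section \ref{subsection_initial}, also $u_0^\epsilon,p_0^\epsilon$) genuinely depend on $\epsilon$, that assertion needs the identification argument the paper supplies; the fact that the $\eta$-compatibility conditions coincide is not by itself sufficient.
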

\begin{proof}
According to the energy estimate in Theorem \ref{thm:uniform_energy}, there exists a sequence $\epsilon_k$ tends to zero and a pair $(u,p,\eta)$ such that $(u,p,\eta)\in \mathcal{X}\cap\mathcal{Y}$ with
\begin{equation}\label{convergence}
  \left\{
  \begin{aligned}
  &(u^{\epsilon_k},p^{\epsilon_k},\eta^{\epsilon_k})\stackrel{\ast}\rightharpoonup (u,p,\eta)\ &\text{weakly-}\ast\ \text{in}\ \mathcal{X},\\
  &(u^{\epsilon_k},p^{\epsilon_k},\eta^{\epsilon_k})\rightharpoonup (u,p,\eta)\ &\text{weakly in}\ \mathcal{Y}.
  \end{aligned}
  \right.
\end{equation}
Choose a function $w\in\mathcal{W}$, then from the weak formulation, we deduce that
\beq
\begin{aligned}
\int_0^T\int_{-\ell}^\ell g\eta^{\epsilon_k}(w\cdot\mathcal{N}^{\epsilon_k})+\sigma\frac{\pa_1\eta^\epsilon_k\pa_1(w\cdot\mathcal{N}^{\epsilon_k})}{(1+|\pa_1\zeta_0|^2)^{3/2}}+\epsilon_k\int_0^T\int_{-\ell}^\ell g\pa_t\eta^{\epsilon_k}(w\cdot\mathcal{N}^{\epsilon_k})+\sigma\frac{\pa_1\pa_t\eta^\epsilon_k\pa_1(w\cdot\mathcal{N}^{\epsilon_k})}{(1+|\pa_1\zeta_0|^2)^{3/2}}\\
+\int_0^T\int_{\Om}\frac\mu2\mathbb{D}_{\mathcal{A}^{\epsilon_k}}u^{\epsilon_k}:\mathbb{D}_{\mathcal{A}^{\epsilon_k}}wJ^{\epsilon_k}+\int_0^T\int_{\Sigma_s}\beta(u^{\epsilon_k}\cdot\tau)(w\cdot\tau)J^{\epsilon_k}-\int_0^T\int_{\Om}p^{\epsilon_k}\dive_{\mathcal{A}^{\epsilon_k}}wJ^{\epsilon_k}\\
+[u^{\epsilon_k}\cdot\mathcal{N}^{\epsilon_k}+\hat{\mathscr{W}}(u^{\epsilon_k}\cdot\mathcal{N}^{\epsilon_k}),w\cdot\mathcal{N}^{\epsilon_k}]-\epsilon \mathfrak{b}(\pa_t\xi^{\epsilon_k},w\cdot\mathcal{N}^{\epsilon_k})_\ell=-\sigma\int_0^T\int_{-\ell}^\ell \mathcal{R}(\pa_1\zeta_0,\pa_1\eta^\epsilon_k)\pa_1(w\cdot\mathcal{N}^{\epsilon_k}).
\end{aligned}
\eeq
Passing the limit $\epsilon_k\rightarrow0$, the convergence \eqref{convergence} reveals that
\beq
\begin{aligned}
\int_0^T\int_{-\ell}^\ell g\eta(w\cdot\mathcal{N})+\sigma\frac{\pa_1\eta\pa_1(w\cdot\mathcal{N})}{(1+|\pa_1\zeta_0|^2)^{3/2}}
+\int_0^T\int_{\Om}\frac\mu2\mathbb{D}_{\mathcal{A}}u:\mathbb{D}_{\mathcal{A}}wJ+\int_0^T\int_{\Sigma_s}\beta(u\cdot\tau)(w\cdot\tau)J\\
-\int_0^T\int_{\Om}p\dive_{\mathcal{A}}wJ
+[u\cdot\mathcal{N}+\hat{\mathscr{W}}(u\cdot\mathcal{N}),w\cdot\mathcal{N}]_\ell=-\sigma\int_0^T\int_{-\ell}^\ell \mathcal{R}(\pa_1\zeta_0,\pa_1\eta)\pa_1(w\cdot\mathcal{N}).
\end{aligned}
\eeq
Thus the limit $(u,p,\eta)$ is a weak solution of \eqref{eq:geometric perturbation}.
Then integrating by parts,
\beq
\begin{aligned}
\int_0^T\int_{-\ell}^\ell g\eta(w\cdot\mathcal{N})-\sigma\pa_1\left(\frac{\pa_1\eta}{(1+|\pa_1\zeta_0|^2)^{3/2}}+\mathcal{R}(\pa_1\zeta_0,\pa_1\eta)\right)w\cdot\mathcal{N}
-\int_0^T\int_{\Om}\mu(\Delta_{\mathcal{A}}u)wJ\\
+\int_0^T\int_{-\ell}^\ell\mu\mathbb{D}_{\mathcal{A}}u\mathcal{N}\cdot w+\int_0^T\int_{\Sigma_s}\mu\mathbb{D}_{\mathcal{A}}u\nu\cdot w+\beta(u\cdot\tau)(w\cdot\tau)J+\int_0^T\int_{\Om}\nabla_{\mathcal{A}}p\cdot wJ\\
-\int_0^T\int_{-\ell}^\ell p\mathcal{N}\cdot w
-\int_0^T\int_{\Sigma_s} p\nu\cdot wJ+\left[\sigma\left(\frac{\pa_1\eta}{(1+|\pa_1\zeta_0|^2)^{3/2}}+\mathcal{R}(\pa_1\zeta_0,\pa_1\eta)\right),w\cdot\mathcal{N}\right]_\ell\\
+[u\cdot\mathcal{N}+\hat{\mathscr{W}}(u\cdot\mathcal{N}),w\cdot\mathcal{N}]_\ell=0,
\end{aligned}
\eeq
we know that $(u,p,\eta)$ satisfy the boundary condition of \eqref{eq:geometric perturbation}.

In the following, we show that  $(u,p,\eta)$ achieves the initial data in Section \ref{subsection_1}. We take $t=0$ for \eqref{eq:modified_geometric perturbation} to derive the weak formulation
\beq
\begin{aligned}
((u_0^\epsilon,v))-(p_0^\epsilon,\dive_{\mathcal{A}(0)}v)_{\mathcal{H}^0}+(\eta_0+\epsilon\pa_t\eta(0),v\cdot\mathcal{N}(0))_{1,\Sigma}+[u_0^\epsilon\cdot\mathcal{N}(0),v\cdot\mathcal{N}(0)]_{\ell}\\
=-\int_{-\ell}^{\ell}\sigma F^3(0)\pa_1(v\cdot\mathcal{N}(0))-[v\cdot\mathcal{N}(0),\hat{\mathscr{W}}(\pa_t\eta(0))]_\ell+\epsilon \mathfrak{b}(\pa_t\eta(0),v\cdot\mathcal{N}(0))_\ell,
\end{aligned}
\eeq
for each $v\in\mathcal{W}$. Since the boundedness of $u_0^\epsilon$ and $p_0^\epsilon$, we extract a subsequence $\epsilon_k$ such that when $\epsilon_k\to 0$,
\[
u_0^{\epsilon_k}\rightharpoonup \varphi\ \text{in}\ W_\delta^2(\Om)\cap\mathcal{V}(0),\quad p_0^{\epsilon_k}\rightharpoonup\psi\ \text{in}\ \mathring{W}_\delta^1(\Om),
\]
and
\beq
\begin{aligned}
((\varphi,v))-(\psi,\dive_{\mathcal{A}(0)}v)_{\mathcal{H}^0}+(\eta_0,v\cdot\mathcal{N}(0))_{1,\Sigma}+[\varphi\cdot\mathcal{N}(0),v\cdot\mathcal{N}(0)]_{\ell}\\
=-\int_{-\ell}^{\ell}\sigma F^3(0)\pa_1(v\cdot\mathcal{N}(0))-[v\cdot\mathcal{N}(0),\hat{\mathscr{W}}(\pa_t\eta(0))]_\ell,
\end{aligned}
\eeq
which is exactly the same weak formulation of \eqref{eq:geometric perturbation} when $t=0$. We then employ the uniqueness for \eqref{eq:geometric perturbation} when $t=0$ to derive that $\varphi=u_0$ and $\psi=p_0$. Similarly, we could derive that
\[
D_tu^\epsilon(0)\rightharpoonup D_tu(0)\ \text{in}\ H^1(\Om),\quad \pa_tp^\epsilon(0)\rightharpoonup\pa_tp(0)\ \text{in}\ \mathring{H}^0(\Om).
\]

Thus $(u,p,\eta)$ is a strong solution of \eqref{eq:geometric perturbation} because of its regularity.
\end{proof}

\subsection{Uniqueness}

We refer to velocities as $u^j$, pressures as $p^j$, surface functions as $\eta^j$, for $j=1,2$.
\begin{theorem}
Let $u^1$, $u^2$, $p^1$, $p^2$ and $\eta^1$, $\eta^2$ satisfy
\beq
\sup_{0\le t\le T}\{\mathcal{E}(u^1,p^1,\eta^1), \mathcal{E}(u^2,p^2,\eta^2)\}<\varepsilon,\quad
\text{and}\ \int_0^T\{\mathcal{D}(u^1,p^1,\eta^1), \mathcal{D}(u^2,p^2,\eta^2)\}<\varepsilon,
\eeq
with $T>0$.
  Suppose that for $j=1, 2$,
  \beq\label{eq:contraction}
  \left\{
  \begin{aligned}
    &-\mu\Delta_{\mathcal{A}^j}u^j+\nabla_{\mathcal{A}^j}p^j=0,\quad &\text{in}&\quad\Om,\\
    &\dive_{\mathcal{A}^j}u^j=0,\quad &\text{in}&\quad\Om,\\
    &S_{\mathcal{A}^j}(p^j,u^j)\mathcal{N}^j=g\eta^j\mathcal{N}^j-\sigma\pa_1\left(\frac{\pa_1\eta^j}{1+|\pa_1\zeta_0|^2}+F^{3,j}\right)\mathcal{N}^j,\quad&\text{on}&\quad\Sigma,\\
    &\left(S_{\mathcal{A}^j}(p^j,u^j)\nu-\beta u^j\right)\cdot\tau=0,\quad&\text{on}&\quad\Sigma_s,\\
    &u^j\cdot\nu=0,\quad&\text{on}&\quad\Sigma_s,\\
    &\pa_t\eta^j=u^j\cdot\mathcal{N}^j,\quad&\text{on}&\quad\Sigma,\\
    &\kappa\pa_t\eta^j(\pm\ell,t)+\kappa\hat{\mathscr{W}}(\pa_t\eta^j(\pm\ell,t))=\mp\sigma\left(\frac{\pa_1\eta^j}{(1+|\zeta_0|^2)^{3/2}}+F^{3,j}\right)(\pm\ell,t).
  \end{aligned}
  \right.
  \eeq
  where $\mathcal{A}^j$, $\mathcal{N}^j$, $F^{3,j}$ are determined by $\eta^j$ as usual. Suppose that $u^1(0)=u^2(0)$, $p^1(0)=p^2(0)$ and $\pa_t^k\eta^1(0)=\pa_t^k\eta^2(0)$ for $k=0, 1$.

  Then there exist $\varepsilon_1>0$, $T_1>0$ such that if $0<\varepsilon\le \varepsilon_1$ and $0<T\le T_1$, then
  \beq
  u^1=u^2,\quad p^1=p^2,\quad \eta^1=\eta^2.
  \eeq
\end{theorem}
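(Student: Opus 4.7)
The plan is to run the difference calculus of Theorem \ref{thm: fixed point} directly on the two actual solutions (with $\epsilon=0$) and close the resulting low-regularity inequality by Gronwall. Setting $w := u^1-u^2$, $q := p^1-p^2$, $\theta := \eta^1-\eta^2$, each of which vanishes at $t=0$ by hypothesis, I would subtract the two copies of \eqref{eq:contraction} to produce a linear $\mathcal{A}^1$-Stokes system for $(w,q,\theta)$ of exactly the same structure as \eqref{linear_fix1} but with $\epsilon=0$, whose forcings $R^1,\ldots,R^6$ are products of smooth multipliers depending on $\eta^1,\eta^2$ with derivatives of $\theta$, paired against high-regularity norms of $(u^j,p^j,\eta^j)$ controlled by the a priori bound $\sup_{[0,T]}\mathcal{E}+\int_0^T\mathcal{D}<\varepsilon$.

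\textbf{Energy identity and closure.} Next I would test the first equation of the difference system against $wJ^1$, integrate by parts, and use the kinematic identity $\pa_t\theta=w\cdot\mathcal{N}^1-u^2_1\pa_1\theta$ to recover the time derivative of $\tfrac12\|\theta\|_{1,\Sigma}^2$ and produce
\[
\tfrac{d}{dt}\tfrac12\|\theta\|_{1,\Sigma}^2+\tfrac{\mu}{2}\|\mathbb{D}_{\mathcal{A}^1}w\|_{\mathcal{H}^0}^2+\beta\|w\cdot\tau\|_{H^0(\Sigma_s)}^2+[w\cdot\mathcal{N}^1]_\ell^2=\mathcal{R},
\]
where $\mathcal{R}$ bundles the interior and boundary contributions of the $R^i$. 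The estimates in the proof of Theorem \ref{thm: fixed point} (see the bounds leading to \eqref{difference_energy}), combined with the a priori smallness $\mathcal{E},\int\mathcal{D}<\varepsilon$, give $\mathcal{R}\lesssim\sqrt{\varepsilon}\bigl(\|w\|_1^2+\|q\|_0^2+\|\theta\|_{3/2}^2+[w\cdot\mathcal{N}^1]_\ell^2+\|\theta\|_{1,\Sigma}^2\bigr)$. The contact-line piece $[w\cdot\mathcal{N}^1,R^6]_\ell$ is handled by the Taylor bound $|\hat{\mathscr{W}}(a)-\hat{\mathscr{W}}(b)|\lesssim(|a|+|b|)|a-b|$, valid since $\hat{\mathscr{W}}'(0)=0$, together with the corner identity $\pa_t\theta(\pm\ell)=w\cdot\mathcal{N}^1(\pm\ell)$ that follows from $u^2\cdot\nu|_{\Sigma_s}=0$ forcing $u^2_1(\pm\ell)=0$. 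To absorb the $\|\theta\|_{3/2}^2$ on the right I would invoke Theorem 4.11 of \cite{GT2} on the weak formulation of the difference system (as in \eqref{difference_enhance_1}) to obtain $\|\theta\|_{3/2}^2\lesssim\|w\|_1^2+[w\cdot\mathcal{N}^1]_\ell^2+\varepsilon\|\theta\|_{3/2}^2$, and Theorem 4.6 of \cite{GT2} (as in \eqref{est:tilde_p}) to control $\|q\|_0^2$ similarly. With Korn's inequality (Lemma \ref{lem:korn}) upgrading $\|\mathbb{D}_{\mathcal{A}^1}w\|_{\mathcal{H}^0}^2\gtrsim\|w\|_1^2$, self-absorption for $\varepsilon$ sufficiently small reduces the identity to $\tfrac{d}{dt}\|\theta\|_{1,\Sigma}^2\lesssim\|\theta\|_{1,\Sigma}^2$; since $\theta(0)=0$, Gronwall forces $\theta\equiv 0$ on $[0,T_1]$, whence also $w\equiv 0$ (from the absorbed $\|w\|_1^2$ term) and $q\equiv 0$ (from the elliptic recovery).

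\textbf{Main obstacle.} The principal technical difficulty, which the $\epsilon$-regularization had previously bypassed, is that we no longer have a free dissipative $\epsilon\|\pa_t\theta\|_{1,\Sigma}^2$ term on the left of the energy identity to absorb contact-line errors. The mechanism above --- exploiting $\hat{\mathscr{W}}'(0)=0$ together with the pointwise corner identity $\pa_t\theta(\pm\ell)=w\cdot\mathcal{N}^1(\pm\ell)$ --- replaces it, and the subtle check is that the error $u^2_1\pa_1\theta$ arising from $\mathcal{N}^1-\mathcal{N}^2=-\pa_1\theta\,e_1$ truly vanishes at the contact points thanks to the vertical side-walls forcing $u^2_1(\pm\ell)=0$; without this geometric fact the contact-line contribution could not be converted into absorbable $[w\cdot\mathcal{N}^1]_\ell^2$ dissipation, and the Gronwall closure would fail.
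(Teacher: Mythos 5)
Your plan differs from the paper's in a substantial way: you propose to run the energy estimate only at the undifferentiated level, testing the difference system for $(w,q,\theta)$ against $wJ^1$ and closing a single Gronwall for $\|\theta\|_{1,\Sigma}^2$. The paper instead temporally differentiates the difference system (producing \eqref{eq:difference_2} for $(\pa_t v,\pa_t q,\pa_t\theta)$), runs the energy estimate at that level (yielding \eqref{eq:energy_1}/\eqref{eq:energy_3}), and then closes a three--tier loop in Steps 3--4: the $\pa_t v$-level energy gives $\|\pa_t\theta\|_{L^\infty\mathring{H}^1}$, $\|\pa_t v\|_{L^2 H^1}$, $\int[\pa_t v\cdot\mathcal{N}^1]_\ell^2$; Theorem 4.11 of \cite{GT2} applied to the time-differentiated weak formulation gives $\|\pa_t\theta\|_{L^2\mathring{H}^{3/2}}$; and only then do the weighted elliptic estimates recover $\|v\|_{L^2 W_\delta^2}$, $\|q\|_{L^2\mathring{W}_\delta^1}$, $\|\theta\|_{L^2 W_\delta^{5/2}}$ and self-absorb. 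This is not an idle difference: the weighted elliptic estimate for $\|v\|_{W_\delta^2}+\|q\|_{\mathring{W}_\delta^1}+\|\theta\|_{W_\delta^{5/2}}$ requires control of $\|\pa_t\theta-H^5\|_{W_\delta^{3/2}}$, which cannot be reached from the undifferentiated level --- a trace of $w\in H^1(\Om)$ only gives $\pa_t\theta=w\cdot\mathcal{N}^1+R^5$ in $H^{1/2}(\Sigma)$. Differentiating in time is what makes $\pa_t\theta\in H^{3/2}$ available.

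The concrete gap in your write-up is the claim that the estimates ``leading to \eqref{difference_energy}'' yield $\mathcal{R}\lesssim\sqrt{\varepsilon}\bigl(\|w\|_1^2+\|q\|_0^2+\|\theta\|_{3/2}^2+[w\cdot\mathcal{N}^1]_\ell^2+\|\theta\|_{1,\Sigma}^2\bigr)$. The bounds leading to \eqref{difference_energy} do not deliver a low-regularity right-hand side: the term $\int_{-\ell}^\ell \sigma\pa_1(\tilde\eta+\epsilon\pa_t\tilde\eta)\pa_1 R^5/(1+|\pa_1\zeta_0|^2)^{3/2}$ is estimated there via $\|\pa_1 R^5\|_0\lesssim\|\tilde u^2\|_1\|\eta\|_{W_\delta^{5/2}}+\cdots$, and the mean-curvature remainder $\int_{-\ell}^\ell\sigma F^3\pa_1(\tilde u\cdot\mathcal{N}^1)$ is bounded by $\|\eta\|_{3/2}(\|\eta\|_{3/2}+\|\eta^2\|_{3/2})\|\pa_t\tilde\eta\|_1$. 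The first produces $\|\theta\|_{W_\delta^{5/2}}^2$ on the right, and the second produces $\|\pa_t\theta\|_1$, neither of which is absorbable by your $v$-level dissipation. In the paper's contraction argument both are absorbed using the extra $\epsilon$-regularization ($\epsilon\|\pa_t\tilde\eta\|_{1,\Sigma}^2$ on the left, and the $\epsilon$-ODE reconstruction $\tilde\eta=\epsilon^{-1}\int_0^t e^{-(t-s)/\epsilon}(\tilde\eta+\epsilon\pa_t\tilde\eta)$), which you have set to zero. If you want to proceed at the $v$-level you would at minimum need to redo those two estimates (e.g., integrate $\pa_1 R^5$ by parts in $x_1$ using $u^2_1(\pm\ell)=0$, and estimate $F^3\pa_1(w\cdot\mathcal{N}^1)$ by a $H^{1/2}\times H^{-1/2}$ duality pairing or an integration by parts in time with a modified energy); the paper avoids the issue entirely by working with $\pa_t v,\pa_t q,\pa_t\theta$. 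Relatedly, your ``main obstacle'' paragraph identifies the missing $\epsilon\|\pa_t\theta\|_{1,\Sigma}^2$ dissipation as a contact-line issue resolved by $\hat{\mathscr{W}}'(0)=0$, but the contact-line term $[w\cdot\mathcal{N}^1,R^6]_\ell$ was always harmless (the paper's \eqref{force_2} bound shows it), whereas the truly dangerous uses of the $\epsilon$-dissipation were the two curvature terms above, which your plan does not address.
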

\begin{proof}
  First, we define $v=u^1-u^2$, $q=p^1-p^2$, $\theta=\eta^1-\eta^2$ and derive the PDEs satisfied by $v$, $q$, $\theta$. We still use $F^3$ to denote $F^3=F^{3,1}-F^{3,2}$.

  Step 1 -- PDEs and energy for differences.

  Subtracting equations in \eqref{eq:contraction} with $j=2$ from the same equations with $j=1$, we can write the resulting equations in terms of $v$, $q$, $\theta$ as
  \beq\label{eq:difference_1}
  \left\{
  \begin{aligned}
    &\dive_{\mathcal{A}^1}S_{\mathcal{A}^1}(q,v)=\mu\dive_{\mathcal{A}^1}\left(\mathbb{D}_{(\mathcal{A}^1-\mathcal{A}^2)}u^2\right)+H^1,\quad&\text{in}&\quad\Om,\\
    &\dive_{\mathcal{A}^1}v=H^2,\quad&\text{in}&\quad\Om,\\
    &S_{\mathcal{A}^1}(q,v)\mathcal{N}^1=\mu\mathbb{D}_{(\mathcal{A}^1-\mathcal{A}^2)}u^2\mathcal{N}^1+g\theta\mathcal{N}^1-\sigma\pa_1\left(\frac{\pa_1\theta}{(1+|\pa_1\zeta_0|^2)^{3/2}}\right)\mathcal{N}^1\\
    &\quad\quad-\sigma\pa_1F^3\mathcal{N}^1+H^3,\quad&\text{on}&\quad\Sigma,\\
    &(S_{\mathcal{A}^1}(q,v)\nu-\beta v)\cdot\tau=\mu\mathbb{D}_{(\mathcal{A}^1-\mathcal{A}^2)}u^2\nu\cdot\tau,\quad&\text{on}&\quad\Sigma_s,\\
    &v\cdot\nu=0,\quad&\text{on}&\quad\Sigma_s,\\
    &\pa_t\theta=v\cdot\mathcal{N}^1+H^5,\quad&\text{on}&\quad\Sigma,\\
    &\kappa\pa_t\theta(\pm\ell,t)=\mp\sigma\frac{\pa_1\theta}{(1+|\zeta_0|^2)^{3/2}}(\pm\ell,t)\mp F^3-H^6,\\
    &v(t=0)=0,\quad\theta(t=0)=0.
  \end{aligned}
  \right.
  \eeq
  where $H^1$, $H^2$, $H^3$, $H^4$, $H^5$, $H^6$ are defined by
  \begin{align*}
    H^1&=\mu\dive_{(\mathcal{A}^1-\mathcal{A}^2)}(\mathbb{D}_{\mathcal{A}^2}u^2)-\nabla_{(\mathcal{A}^1-\mathcal{A}^2)}p^2,\\
    H^2&=-\dive_{(\mathcal{A}^1-\mathcal{A}^2)}u^2,\\
    H^3&=-p^2(\mathcal{N}^1-\mathcal{N}^2)+\mathbb{D}_{\mathcal{A}^1}u^2(\mathcal{N}^1-\mathcal{N}^2)-\mathbb{D}_{(\mathcal{A}^1-\mathcal{A}^2)}u^2\mathcal{N}^2+g\eta^2(\mathcal{N}^1-\mathcal{N}^2)\\
    &\quad-\sigma\pa_1\left(\frac{\pa_1\eta^2}{(1+|\zeta_0|^2)^{3/2}}\right)(\mathcal{N}^1-\mathcal{N}^2)-\sigma\pa_1 F^{3,2}(\mathcal{N}^1-\mathcal{N}^2),\\
    H^5&=u^2\cdot(\mathcal{N}^1-\mathcal{N}^2),\\
    H^6&=\kappa(\hat{\mathscr{W}}(\pa_t\eta^1(\pm\ell,t))-\hat{\mathscr{W}}(\pa_t\eta^2(\pm\ell,t))).
  \end{align*}
  The solutions are sufficiently regular for us to differentiate \eqref{eq:difference_1} in time, which results in the equations
  \beq\label{eq:difference_2}
  \left\{
  \begin{aligned}
    &\dive_{\mathcal{A}^1}S_{\mathcal{A}^1}(\pa_tq,\pa_tv)=\mu\dive_{\mathcal{A}^1}\left(\mathbb{D}_{(\pa_t\mathcal{A}^1-\pa_t\mathcal{A}^2)}u^2\right)+\tilde{H}^1,\quad&\text{in}&\quad\Om,\\
    &\dive_{\mathcal{A}^1}\pa_tv=\tilde{H}^2,\quad&\text{in}&\quad\Om,\\
    &S_{\mathcal{A}^1}(\pa_tq,\pa_tv)\mathcal{N}^1=\mu\mathbb{D}_{(\pa_t\mathcal{A}^1-\pa_t\mathcal{A}^2)}u^2\mathcal{N}^1+g\pa_t\theta\mathcal{N}^1-\sigma\pa_1\left(\frac{\pa_1\pa_t\theta}{(1+|\pa_1\zeta_0|^2)^{3/2}}\right)\mathcal{N}^1\\
    &\quad\quad-\sigma\pa_1\pa_t(F^{3,1}-F^{3,2})\mathcal{N}^1+\tilde{H}^3,\quad&\text{on}&\quad\Sigma,\\
    &(S_{\mathcal{A}^1}(\pa_tq,\pa_tv)\nu-\beta \pa_tv)\cdot\tau=    \mu\mathbb{D}_{(\pa_t\mathcal{A}^1-\pa_t\mathcal{A}^2)}u^2\nu\cdot\tau+\tilde{H}^4,\quad&\text{on}&\quad\Sigma_s,\\
    &\pa_tv\cdot\nu=0,\quad&\text{on}&\quad\Sigma_s,\\
    &\pa_t^2\theta=\pa_tv\cdot\mathcal{N}^1+\tilde{H}^5,\quad&\text{on}&\quad\Sigma,\\
    &\kappa\pa_t^2\theta(\pm\ell,t)=\mp\sigma\frac{\pa_1\pa_t\theta}{(1+|\zeta_0|^2)^{3/2}}(\pm\ell,t)\mp \tilde{H}^6,\\
    &\pa_tv(t=0)=0,\quad\pa_t\theta(t=0)=0,
  \end{aligned}
  \right.
  \eeq
  where
  \begin{align*}
    \tilde{H}^1&=\pa_tH^1+\dive_{\pa_t\mathcal{A}^1}(\mathbb{D}_{(\mathcal{A}^1-\mathcal{A}^2)}u^2)+\dive_{\mathcal{A}^1}(\mathbb{D}_{(\mathcal{A}^1-\mathcal{A}^2)}\pa_t u^2)+\dive_{\pa_t\mathcal{A}^1}(\mathbb{D}_{\mathcal{A}^1}v)\\
    &\quad+\dive_{\mathcal{A}^1}(\mathbb{D}_{\pa_t\mathcal{A}^1}v)-\nabla_{\pa_t\mathcal{A}^1}q,\\
    \tilde{H}^2&=\pa_tH^2-\dive_{\pa_t\mathcal{A}^1}v,\\
    \tilde{H}^3&=\pa_tH^3+\mathbb{D}_{(\mathcal{A}^1-\mathcal{A}^2)}\pa_tu^2\mathcal{N}^1+\mathbb{D}_{(\mathcal{A}^1-\mathcal{A}^2)}u^2\pa_t\mathcal{N}^1-S_{\mathcal{A}^1}(q,v)\pa_t\mathcal{N}^1+\mathbb{D}_{\pa_t\mathcal{A}^1}v\mathcal{N}^1\\
    &\quad+g\theta\pa_t\mathcal{N}^1-\sigma\pa_1\left(\frac{\pa_1\theta}{(1+|\pa_1\zeta_0|^2)^{3/2}}\right)\pa_t\mathcal{N}^1,\\
    \tilde{H}^4&=    \mu\mathbb{D}_{(\mathcal{A}^1-\mathcal{A}^2)}\pa_tu^2\nu\cdot\tau+\mathbb{D}_{\pa_t\mathcal{A}^1}v\nu\cdot\tau,\\
    \tilde{H}^5&=\pa_tH^5+v\cdot\pa_t\mathcal{N}^1,\\
    \tilde{H}^6&=\pa_tH^6.
  \end{align*}

  Now we multiply \eqref{eq:difference_2} by $J^1\pa_tu^1$, integrate over $\Om$ and integrate by parts to deduce that
  \beq\label{eq:energy_1}
  \begin{aligned}
    &\pa_t\left(\int_{-\ell}^\ell\frac{g}{2}|\pa_t\theta|^2+\frac{\sigma}{2}\frac{|\pa_1\pa_t\theta|^2}{(1+|\pa_1\zeta_0|^2)^{3/2}}\right)+\frac{\mu}{2}\int_\Om|\mathbb{D}_{\mathcal{A}^1}\pa_tv|^2J^1+\beta\int_{\Sigma_s}J^1|\pa_tv\cdot\tau|^2+[\pa_tv\cdot\mathcal{N}^1]_\ell^2\\
    &=\int_\Om\mu\dive_{\mathcal{A}^1}(\mathbb{D}_{(\pa_t\mathcal{A}^1-\pa_t\mathcal{A}^2)}u^2)\cdot\pa_tvJ^1+\tilde{H}^1\cdot\pa_tvJ^1+\pa_tq\tilde{H}^2J^1-\int_{\Sigma_s}J^1(\pa_tv\cdot\tau)\tilde{H}^4\\
    &\quad-\int_{-\ell}^\ell\sigma \pa_tF^3\pa_1(\pa_tv\cdot\mathcal{N}^1)+(\mathbb{D}_{(\pa_t\mathcal{A}^1-\pa_t\mathcal{A}^2)}u^2\mathcal{N}^1+\tilde{H}^3)\cdot \pa_tv-g\pa_t\theta\tilde{H}^5-\sigma\frac{\pa_1\pa_t\theta\pa_1\tilde{H}^5}{(1+|\pa_1\zeta_0|^2)^{3/2}}\\
    &\quad-\int_{\Sigma_s}J^1(\pa_tv\cdot\tau)\mu\mathbb{D}_{(\pa_t\mathcal{A}^1-\pa_t\mathcal{A}^2)}u^2\nu\cdot\tau-\left[\pa_tv\cdot\mathcal{N}^1, \tilde{H}^5+\tilde{H}^6\right]_\ell.
  \end{aligned}
  \eeq
  Here we notice that
  \beq
  \sum_{a=\pm1}\kappa(\pa_tv\cdot\mathcal{N}^1)(a\ell)\tilde{H}^5(a\ell)=0,
  \eeq
  since $v_1^1=v_1^2=0$ at the endpoints $x_1=\pm\ell$, where we denote that $u^1=(v_1^1, v_2^1)$ and $u^2=(v_1^2, v_2^2)$.

  Another integration by parts reveals that
  \beq\label{eq:energy_2}
  \begin{aligned}
  \int_\Om\mu\dive_{\mathcal{A}^1}(\mathbb{D}_{(\pa_t\mathcal{A}^1-\pa_t\mathcal{A}^2)}u^2)\cdot\pa_tvJ^1=-\frac{\mu}{2}\int_\Om J^1\mathbb{D}_{(\pa_t\mathcal{A}^1-\pa_t\mathcal{A}^2)}u^2:\mathbb{D}_{\mathcal{A}^1}\pa_tv\\
  +\int_{-\ell}^\ell\mathbb{D}_{(\pa_t\mathcal{A}^1-\pa_t\mathcal{A}^2)}u^2\mathcal{N}^1\cdot\pa_tv+\int_{\Sigma_s}\mathbb{D}_{(\pa_t\mathcal{A}^1-\pa_t\mathcal{A}^2)}u^2\nu\cdot\pa_tv J^1.
  \end{aligned}
  \eeq
  We combine \eqref{eq:energy_1} and \eqref{eq:energy_2}, and then integrate in time from $0$ to $t<T$ to derive that
  \beq\label{eq:energy_3}
  \begin{aligned}
    &\int_{-\ell}^\ell\frac{g}{2}|\pa_t\theta|^2+\frac{\sigma}{2}\frac{|\pa_1\pa_t\theta|^2}{(1+|\pa_1\zeta_0|^2)^{3/2}}+\frac{\mu}{2}\int_0^t\int_\Om|\mathbb{D}_{\mathcal{A}^1}\pa_tv|^2J^1+\beta\int_0^t\int_{\Sigma_s}J^1|\pa_tv\cdot\tau|^2+\int_0^t[\pa_tv\cdot\mathcal{N}^1]_\ell^2\\
    &=-\frac{\mu}{2}\int_\Om J^1\mathbb{D}_{(\pa_t\mathcal{A}^1-\pa_t\mathcal{A}^2)}u^2:\mathbb{D}_{\mathcal{A}^1}\pa_tv+\int_0^t\int_\Om\tilde{H}^1\cdot\pa_tvJ^1+\pa_tq\tilde{H}^2J^1-\int_0^t\int_{\Sigma_s}J^1(\pa_tv\cdot\tau)\tilde{H}^4\\
    &\quad-\int_0^t\int_{-\ell}^\ell\sigma \pa_tF^3\pa_1(\pa_tv\cdot\mathcal{N}^1)+\tilde{H}^3\cdot \pa_tv-g\pa_t\theta\tilde{H}^5-\sigma\frac{\pa_1\pa_t\theta\pa_1\tilde{H}^5}{(1+|\pa_1\zeta_0|^2)^{3/2}}-[\pa_tv\cdot\mathcal{N}^1,\tilde{H}^6]_{\ell}.
  \end{aligned}
  \eeq

  Step 2 --  Estimate of pressure.

  In order  to handle the term related to $\pa_tq$, we multiply \eqref{eq:difference_2} by $J^1w$, integrate over $\Om$ and integrate by parts to deduce that
  \beq\label{eq:contraction_weak}
  \begin{aligned}
    \frac{\mu}{2}\int_\Om\mathbb{D}_{\mathcal{A}^1}\pa_tv:\mathbb{D}_{\mathcal{A}^1}w J^1+\beta\int_{\Sigma_s}(\pa_tv\cdot\tau)(w\cdot\tau)+(\pa_t\theta,w\cdot\mathcal{N}^1)_{1,\Sigma}+[\pa_tv\cdot\mathcal{N}^1,w\cdot\mathcal{N}^1]_\ell\\
    =\int_\Om(\mu\dive_{\mathcal{A}^1}\left(\mathbb{D}_{(\pa_t\mathcal{A}^1-\pa_t\mathcal{A}^2)}u^2\right)+\tilde{H}^1)\cdot wJ^1-\int_{\Sigma_s}J^1(w\cdot\tau)(\mu\mathbb{D}_{(\pa_t\mathcal{A}^1-\pa_t\mathcal{A}^2)}u^2\nu\cdot\tau+\tilde{H}^4)\\
    -\int_{-\ell}^\ell\sigma\pa_t(F^{3,1}-F^{3,2})\pa_1 (w\cdot\mathcal{N}^1)+(\mu\mathbb{D}_{(\pa_t\mathcal{A}^1-\pa_t\mathcal{A}^2)}u^2\mathcal{N}^1+\tilde{H}^3)\cdot w-[w\cdot\mathcal{N}^1,\tilde{H}^6]_{\ell},
  \end{aligned}
  \eeq
  for each $w\in\mathcal{V}(t)$ and a.e. $t\in[0, T]$. Then $\pa_tq\in \mathring{H}^0(\Om)$ might be recovered from Theorem 4.6 in \cite{GT2} such that
  \beq
  \begin{aligned}
    &\frac{\mu}{2}\int_\Om\mathbb{D}_{\mathcal{A}^1}v:\mathbb{D}_{\mathcal{A}^1}w J^1+\beta\int_{\Sigma_s}(v\cdot\tau)(w\cdot\tau)-(\pa_tq,\dive_{\mathcal{A}^1}w)_0+(\pa_t\theta,w\cdot\mathcal{N}^1)_{1,\Sigma}\\
    &\quad+[\pa_tv\cdot\mathcal{N}^1,w\cdot\mathcal{N}^1]_\ell\\
    &=\int_\Om(\mu\dive_{\mathcal{A}^1}\left(\mathbb{D}_{(\pa_t\mathcal{A}^1-\pa_t\mathcal{A}^2)}u^2\right)+\tilde{H}^1)\cdot wJ^1-\int_{\Sigma_s}J^1(w\cdot\tau)(\mu\mathbb{D}_{(\pa_t\mathcal{A}^1-\pa_t\mathcal{A}^2)}u^2\nu\cdot\tau+\tilde{H}^4)\\
    &\quad-\int_{-\ell}^\ell\sigma\pa_t(F^{3,1}-F^{3,2})\pa_1 (w\cdot\mathcal{N}^1)+(\mu\mathbb{D}_{(\pa_t\mathcal{A}^1-\pa_t\mathcal{A}^2)}u^2\mathcal{N}^1+\tilde{H}^3)\cdot w-[w\cdot\mathcal{N}^1,\tilde{H}^6]_{\ell},
  \end{aligned}
  \eeq
  for each $w\in\mathcal{W}(t)$ and a.e. $t\in[0, T]$. Moreover,
  \beq
  \|\pa_tq\|_{L^2\mathring{H}^0}^2\lesssim \|\pa_tv\|_{L^2H^1}^2+P(\sqrt{\varepsilon})(\|\pa_t\theta\|_{L^2
   \mathring{H}^{3/2}}^2+\|\theta\|_{L^2 W_\delta^{5/2}}^2+\|q\|_{L^2\mathring{W}_\delta^1}^2+\|v\|_{L^2W_\delta^2}^2),
  \eeq
  where the temporal $L^2$ norm is computed on $[0, T]$, and $P(\cdot)$ is a polynomial which would be allowed to change from line to line.

  Step 3 --  Estimates of the forcing terms.

  To handle the term $\pa_t(F^{3,1}-F^{3,2})$, we rewrite it as
  \beq\label{force_3}
  \begin{aligned}
  \int_{-\ell}^\ell\sigma \pa_tF^3\pa_1(\pa_tv\cdot\mathcal{N}^1)=\int_{-\ell}^\ell\sigma[\pa_z\mathcal{R}^1\pa_1\pa_t\theta+\pa_z(\mathcal{R}^1-\mathcal{R}^2)\pa_1\pa_t\eta^2]\pa_1(\pa_t^2\theta-\tilde{H}^5)\\
  =\frac{d}{dt}\left(\int_{-\ell}^\ell\pa_z\mathcal{R}^1\frac{|\pa_1\pa_t\theta|^2}{2}-\pa_z(\mathcal{R}^1-\mathcal{R}^2)\pa_1\pa_t\eta^2\pa_1\pa_t\theta\right)-\int_{-\ell}^\ell\pa_z^2\mathcal{R}^1\pa_1\pa_t\eta^1\frac{|\pa_1\pa_t\theta|^2}{2}\\
  -\int_{-\ell}^\ell|\pa_1\pa_t\theta|^2\pa_z^2\mathcal{R}^1\pa_1\pa_t\eta^2+\pa_z^2(\mathcal{R}^1-\mathcal{R}^2)(\pa_1\pa_t\eta^2)^2\pa_1\pa_t\theta\\
  +\pa_z(\mathcal{R}^1-\mathcal{R}^2)\pa_1\pa_t^2\eta^2\pa_1\pa_t\theta-\pa_z(\mathcal{R}^1-\mathcal{R}^2)\tilde{H}^5,
  \end{aligned}
  \eeq
  Then we rewrite \eqref{eq:energy_1} as
  \beq
  \begin{aligned}
    \frac{d}{dt}\left(\|\pa_t\theta\|_{1,\Sigma}^2+\int_{-\ell}^\ell\pa_z\mathcal{R}^1\frac{|\pa_1\pa_t\theta|^2}{2}-\pa_z(\mathcal{R}^1-\mathcal{R}^2)\pa_1\pa_t\eta^2\pa_1\pa_t\theta\right)+\frac{\mu}{2}\int_\Om|\mathbb{D}_{\mathcal{A}^1}\pa_tv|^2J^1\\
    +\beta\int_{\Sigma_s}J^1|\pa_tv\cdot\tau|^2+[\pa_tv\cdot\mathcal{N}^1]_\ell^2\\
    =-\frac{\mu}{2}\int_\Om J^1\mathbb{D}_{(\pa_t\mathcal{A}^1-\pa_t\mathcal{A}^2)}u^2:\mathbb{D}_{\mathcal{A}^1}\pa_tv+\int_\Om\tilde{H}^1\cdot\pa_tvJ^1+\pa_tq\tilde{H}^2J^1-\int_{\Sigma_s}J^1(\pa_tv\cdot\tau)\tilde{H}^4\\
  -\int_{-\ell}^\ell|\pa_1\pa_t\theta|^2\pa_z^2\mathcal{R}^1\pa_1\pa_t\eta^2+\pa_z^2(\mathcal{R}^1-\mathcal{R}^2)(\pa_1\pa_t\eta^2)^2\pa_1\pa_t\theta+\pa_z(\mathcal{R}^1-\mathcal{R}^2)(\pa_1\pa_t^2\eta^2\pa_1\pa_t\theta-\tilde{H}^5)\\
    -\int_{-\ell}^\ell\pa_z^2\mathcal{R}^1\pa_1\pa_t\eta^1\frac{|\pa_1\pa_t\theta|^2}{2}-\int_{-\ell}^\ell\left(\tilde{H}^3\cdot \pa_tv-g\pa_t\theta\tilde{H}^5-\sigma\frac{\pa_1\pa_t\theta\pa_1\tilde{H}^5}{(1+|\pa_1\zeta_0|^2)^{3/2}}\right)\\
    -[\pa_tv\cdot\mathcal{N}^1,\tilde{H}^6]_{\ell}.
  \end{aligned}
  \eeq
   We now estimate the terms on the right hand side of \eqref{eq:energy_3}.
  \beq\label{force_1}
  \frac{\mu}{2}\int_\Om J^1\mathbb{D}_{(\pa_t\mathcal{A}^1-\pa_t\mathcal{A}^2)}u^2:\mathbb{D}_{\mathcal{A}^1}\pa_tv\lesssim P(\sqrt{\varepsilon})\|\pa_tv\|_1\|\pa_t\theta\|_{3/2}.
  \eeq
  \beq
  \begin{aligned}
    \int_\Om\tilde{H}^1\cdot\pa_tvJ^1\lesssim P(\sqrt{\varepsilon})\|\pa_tv\|_1(\|\pa_t\theta\|_
   {3/2}+\|\theta\|_{W_\delta^{5/2}}+\|q\|_{\mathring{W}_\delta^1}+\|v\|_{W_\delta^2}).
  \end{aligned}
  \eeq
  \beq
  \int_{\Sigma_s}J^1(\pa_tv\cdot\tau)\tilde{H}^4\lesssim P(\sqrt{\varepsilon})\|\pa_tv\|_1(\|\theta\|_{W_\delta^{5/2}}+\|\pa_t\theta\|_{3/2}).
  \eeq
  \beq
  \int_\Om\pa_tq\tilde{H}^2J^1\lesssim P(\sqrt{\varepsilon})\|\pa_tq\|_0(\|\pa_t\theta\|_{3/2}+\|\theta\|_{ W_\delta^{5/2}}+\|v\|_{W_\delta^2}).
  \eeq
  By the direct computation for derivatives of \eqref{def:R}, we may employ the Sobolev embedding theory to derive that
  \beq
  \begin{aligned}
    -\int_{-\ell}^\ell|\pa_1\pa_t\theta|^2\pa_z^2\mathcal{R}^1\pa_1\pa_t\eta^2+\pa_z^2(\mathcal{R}^1-\mathcal{R}^2)(\pa_1\pa_t\eta^2)^2\pa_1\pa_t\theta+\pa_z(\mathcal{R}^1-\mathcal{R}^2)(\pa_1\pa_t^2\eta^2\pa_1\pa_t\theta-\tilde{H}^5)\\
    -\int_{-\ell}^\ell\pa_z^2\mathcal{R}^1\pa_1\pa_t\eta^1\frac{|\pa_1\pa_t\theta|^2}{2}\lesssim P(\sqrt{\varepsilon})(\|\pa_t\theta\|_{3/2}^2+\|\theta\|_{W_\delta^{5/2}}^2+\|\theta\|_{ W_\delta^{5/2}}+\|v\|_{W_\delta^2}),
  \end{aligned}
  \eeq
  and
  \beq
  \int_{-\ell}^\ell\pa_z\mathcal{R}^1\frac{|\pa_1\pa_t\theta|^2}{2}-\pa_z(\mathcal{R}^1-\mathcal{R}^2)\pa_1\pa_t\eta^2\pa_1\pa_t\theta\lesssim P(\sqrt{\varepsilon})(\|\pa_t\theta\|_{3/2}^2+\|\theta\|_{ W_\delta^{5/2}}).
  \eeq
  \beq
  \int_{-\ell}^\ell\sigma\tilde{H}^3\cdot \pa_tv\lesssim P(\sqrt{\varepsilon})\|\pa_tv\|_1(\|\theta\|_{W_\delta^{5/2}}+\|\pa_t\theta\|_{3/2}+\|q\|_{\mathring{W}_\delta^1}+\|v\|_{W_\delta^2}).
  \eeq
  Due to the fact that $v_1^1=v_1^2=0$ at the endpoints $x_1=\pm\ell$, after integrating by parts,
  \beq
  \begin{aligned}
  \int_{-\ell}^\ell-g\pa_t\theta\tilde{H}^5-\sigma\frac{\pa_1\pa_t\theta\pa_1\tilde{H}^5}{(1+|\pa_1\zeta_0|^2)^{3/2}}\lesssim P(\sqrt{\varepsilon})\left[\|\pa_t\theta\|_1(\|\theta\|_{W_\delta^{5/2}}
  +\|\pa_t\theta\|_1+\|v\|_{W_\delta^2})+\|\pa_t\theta\|_{3/2}^2\right]\\
  +\|\pa_t\eta^1\|_{W_\delta^{5/2}}\|\pa_t\theta\|_{3/2}\|v\|_1.
  \end{aligned}
  \eeq
  \beq\label{force_2}
  [\pa_tv\cdot\mathcal{N}^1,\tilde{H}^6]_{\ell}=[\pa_t^2\theta,\tilde{H}^6]_\ell\lesssim P(\sqrt{\varepsilon})\|\pa_t\theta\|_1^2.
  \eeq

  Then combining all the above estimates \eqref{force_1}--\eqref{force_2}, we can derive that
  \beq
  \begin{aligned}
  &\frac{d}{dt}\left(\|\pa_t\theta\|_{1,\Sigma}^2+\int_{-\ell}^\ell\pa_z\mathcal{R}^1\frac{|\pa_1\pa_t\theta|^2}{2}-\pa_z(\mathcal{R}^1-\mathcal{R}^2)\pa_1\pa_t\eta^2\pa_1\pa_t\theta\right)+\|\pa_tv\|_1^2+[\pa_tv\cdot\mathcal{N}^1]^2\\
  &\le CP(\sqrt{\varepsilon})\left(\|\pa_t\theta\|_{1,\Sigma}^2+\int_{-\ell}^\ell\pa_z\mathcal{R}^1\frac{|\pa_1\pa_t\theta|^2}{2}-\pa_z(\mathcal{R}^1-\mathcal{R}^2)\pa_1\pa_t\eta^2\pa_1\pa_t\theta\right)\\
  &\quad+CP(\sqrt{\varepsilon})\|\theta\|_{W_\delta^{5/2}}^2+\|\pa_t\theta\|_{3/2}^2+\|q\|_{\mathring{W}_\delta^1}^2+\|v\|_{W_\delta^2}^2.
  \end{aligned}
  \eeq
  Since
  \begin{align*}
  \sup_{0\le t\le T}\int_{-\ell}^\ell\pa_z\mathcal{R}^1\frac{|\pa_1\pa_t\theta|^2}{2}-\pa_z(\mathcal{R}^1-\mathcal{R}^2)\pa_1\pa_t\eta^2\pa_1\pa_t\theta\lesssim P(\sqrt{\varepsilon})(\|\pa_t\theta\|_{L^\infty\mathring{H}^1}^2+\|\theta\|_{L^\infty\mathring{H}^{3/2}}^2)\\
  \lesssim P(\sqrt{\varepsilon})(\|\pa_t\theta\|_{L^\infty\mathring{H}^1}^2+\|\pa_t\theta\|_{L^2\mathring{H}^{3/2}}^2+\|\theta\|_{L^2W_\delta^{5/2}}^2),
  \end{align*}
  Gronwall's lemma together with the smallness of $\varepsilon$ implies that
  \beq
  \begin{aligned}
    \|\pa_t\theta\|_{L^\infty\mathring{H}^1}^2+\|\pa_tv\|_{L^2H^1}^2+\int_0^T[\pa_tv\cdot\mathcal{N}^1]^2 \lesssim e^{CP(\sqrt{\varepsilon})T_1} CP(\sqrt{\varepsilon})(\|\theta\|_{L^2W_\delta^{5/2}}^2+\|\pa_t\theta\|_{L^2\mathring{H}^{3/2}}^2\\
    +\|\pa_tq\|_{L^2\mathring{H}^0}^2+\|q\|_{L^2\mathring{W}_\delta^1}^2+\|v\|_{L^2W_\delta^2}^2)\\
    \lesssim e^{CP(\sqrt{\varepsilon})T_1}CP(\sqrt{\varepsilon})(\|\theta\|_{L^2W_\delta^{5/2}}^2+\|\pa_t\theta\|_{L^2\mathring{H}^{3/2}}^2
    +\|q\|_{L^2\mathring{W}_\delta^1}^2+\|v\|_{L^2W_\delta^2}^2),
  \end{aligned}
  \eeq
  where the temporal $L^\infty$ and $L^2$ norms are computed over $[0,T]$ and $0<t<T\le T_1$.
  We assume that $\varepsilon_1$ and $T_1$ are sufficiently small for $e^{CP(\sqrt{\varepsilon})T_1}\le e^{CP(\sqrt{\varepsilon_1})T_1}\le 2$. Then we deduce the bound
  \beq
  \|\pa_t\theta\|_{L^\infty\mathring{H}^1}^2+\|\pa_tv\|_{L^2H^1}^2+\int_0^T[\pa_tv\cdot\mathcal{N}^1]^2 \lesssim P(\sqrt{\varepsilon})(\|\theta\|_{L^2W_\delta^{5/2}}^2+\|\pa_t\theta\|_{L^2\mathring{H}^{3/2}}^2
    +\|q\|_{L^2\mathring{W}_\delta^1}^2+\|v\|_{L^2W_\delta^2}^2).
  \eeq
  Since $\pa_t\theta\in \mathring{H}^1((-\ell,\ell))$ and \eqref{eq:contraction_weak}, with $\varepsilon$ sufficient small, Theorem 4.11 in \cite{GT2} reveals that
  \beq
  \|\pa_t\theta\|_{L^2\mathring{H}^{3/2}}^2\lesssim P(\sqrt{\varepsilon})(\|\theta\|_{L^2W_\delta^{5/2}}^2
    +\|q\|_{L^2\mathring{W}_\delta^1}^2+\|v\|_{L^2W_\delta^2}^2).
  \eeq

  Step 4 -- Elliptic estimates for $v$, $q$ and $\theta$.

  In order to close our estimates, we must be able to estimate $v$, $q$ and $\theta$. The elliptic estimates imply that
  \beq
  \begin{aligned}
  \|v\|_{W_\delta^2}^2+\|q\|_{\mathring{W}_\delta^1}^2+\|\theta\|_{W_\delta^{5/2}}^2\lesssim \|\dive_{\mathcal{A}^1}(\mathbb{D}_{\mathcal{A}^1-\mathcal{A}^2}u^2)+H^1\|_{W_\delta^0}^2+\|H^2\|_{W_\delta^1}^2\\
  +\|\pa_t\theta-H^5\|_{W_\delta^{3/2}}^2+\|H^3\|_{W_\delta^{1/2}}^2+\|\mathbb{D}_{\mathcal{A}^1-\mathcal{A}^2}u^2\nu\cdot\tau\|_{W_\delta^{1/2}}^2\\
  +\|\pa_1(F^{3,1}-F^{3,2})\|_{W_\delta^{1/2}}^2+[\pa_t\theta\pm H^6]_\ell^2.
  \end{aligned}
  \eeq
  Then after integrating temporally from $0$ to $T$, we have that
  \beq
  \begin{aligned}
  \|v\|_{L^2W_\delta^2}^2+\|q\|_{L^2\mathring{W}_\delta^1}^2+\|\theta\|_{L^2W_\delta^{5/2}}^2\lesssim P(\sqrt{\varepsilon})\|\theta\|_{L^2W_\delta^{5/2}}^2+\|\pa_t\theta\|_{L^2W_\delta^{3/2}}^2\\
  \le CP(\sqrt{\varepsilon})(\|\theta\|_{L^2W_\delta^{5/2}}^2
    +\|q\|_{L^2\mathring{W}_\delta^1}^2+\|v\|_{L^2W_\delta^2}^2),
    \end{aligned}
  \eeq
  where $P(0)=0$. Since $\varepsilon$ is sufficiently small, we might restrict $\varepsilon_1$ such that $CP(\sqrt{\varepsilon})<1$. Thus
  \beq
  \|v\|_{L^2W_\delta^2}^2+\|q\|_{L^2\mathring{W}_\delta^1}^2+\|\theta\|_{L^2W_\delta^{5/2}}^2=0.
  \eeq
\end{proof}

\subsection{Diffeomorphism of $\Phi$}
From the definition of $J$ and restrict theory in Sobolev spaces, we can derive that
\[
\|J\|_{L^\infty}\ge 1-C(\|\bar{\eta}\|_{L^\infty}+\|\pa_2\bar{\eta}\|_{L^\infty})\ge 1-C\|\eta\|_{W_\delta^{5/2}}.
\]
The smallness of $\mathfrak{K}(\eta)$ sufficiently guarantees that $\Phi$, defined in \eqref{def:map}, is a $C^1$ diffeomorphism for each $t\in[0, T]$. For more details, one can see \cite{GT0} in $3D$ domains.

\begin{appendix}

\section{Properties involving $\mathcal{A}$}
We now record some useful properties involving $\mathcal{A}$.
\begin{lemma}\label{lem:properties_A}
The following identities hold.
  \begin{enumerate}[(1)]
    \item $\pa_j(J\mathcal{A}_{ij})=0$ for $j=1,2$ and each $i=1,2$.
    \item $J\mathcal{A}\mathcal{N}_0=\mathcal{N}$ on $\Sigma$,
    \item $R^\top\mathcal{N}=-\pa_t\mathcal{N}$ on $\Sigma$, where $R$ is defined by \eqref{def:Dt_u}.
  \end{enumerate}
\end{lemma}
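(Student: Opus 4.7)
The plan is to handle each of the three claims by direct calculation, exploiting the block-triangular structure of $\nabla\Phi$ and the explicit formulas \eqref{eq:components}.

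For (1), the Piola-type identity, I would first compute $J\mathcal{A}$ explicitly. Since $\nabla\Phi$ is lower triangular, one finds $J\mathcal{A}=\bigl(\begin{smallmatrix}J & -A\\ 0 & 1\end{smallmatrix}\bigr)$. The $i=2$ row is constant, so $\pa_j(J\mathcal{A}_{2j})=0$ is immediate. For $i=1$ the identity reduces to $\pa_1 J=\pa_2 A$, which I would verify by directly differentiating the formulas $J=1+\tfrac{\phi'}{\zeta_0}\bar\eta+\tfrac{\phi}{\zeta_0}\pa_2\bar\eta$ and $A=\tfrac{\phi}{\zeta_0}\pa_1\bar\eta-\tfrac{\phi}{\zeta_0^2}\pa_1\zeta_0\,\bar\eta$; all cross terms cancel using only $\pa_1\pa_2\bar\eta=\pa_2\pa_1\bar\eta$.

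For (2), I would restrict $J\mathcal{A}\mathcal{N}_0=\bigl(\begin{smallmatrix}-J\pa_1\zeta_0-A\\ 1\end{smallmatrix}\bigr)$ to $\Sigma$, using two simplifications: on $\Sigma$ we have $x_2=\zeta_0(x_1)\ge\tfrac12\min\zeta_0$, so $\phi(x_2)=\zeta_0(x_1)$ and $\phi'(x_2)=1$; and applying the chain rule to $\bar\eta(x_1,x_2)=\mathcal{P}E\eta(x_1,x_2-\zeta_0(x_1))$ at $y=0$ gives $\bar\eta|_\Sigma=\eta$ and $\pa_1\bar\eta|_\Sigma=\pa_1\eta-\pa_1\zeta_0\,\pa_y\tilde\eta(x_1,0)$, while $\pa_2\bar\eta|_\Sigma=\pa_y\tilde\eta(x_1,0)$. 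Substituting into $-J\pa_1\zeta_0-A$ causes the two occurrences of $\pa_y\tilde\eta(x_1,0)\pa_1\zeta_0$ and the two occurrences of $\tfrac{\eta}{\zeta_0}\pa_1\zeta_0$ to cancel in pairs, leaving $-\pa_1\zeta_0-\pa_1\eta=-\pa_1\zeta$, which is exactly the first component of $\mathcal{N}$.

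For (3), my strategy is to differentiate the identity from (2) in time. Because $\mathcal{N}_0$ is time-independent, $\pa_t\mathcal{N}=\pa_t(J\mathcal{A})\mathcal{N}_0$, and substituting $\mathcal{N}_0=(J\mathcal{A})^{-1}\mathcal{N}$ by a second application of (2) yields $\pa_t\mathcal{N}=\pa_t(J\mathcal{A})(J\mathcal{A})^{-1}\mathcal{N}$. It then suffices to identify the matrix $\pa_t(J\mathcal{A})(J\mathcal{A})^{-1}$ with $-R^\top$. Using $M^{-1}=J\mathcal{A}^\top$ from \eqref{def:M}, so that $J\mathcal{A}=(M^{-1})^\top$ and $(J\mathcal{A})^{-1}=M^\top$, I would compute
\[
\pa_t(J\mathcal{A})(J\mathcal{A})^{-1}=(\pa_t M^{-1})^\top M^\top=\bigl(-M^{-1}(\pa_t M)M^{-1}\bigr)^\top M^\top=-M^{-\top}(\pa_t M)^\top=-(\pa_t M\,M^{-1})^\top=-R^\top.
\]

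The only step that requires any care is the boundary trace computation in (2); once the chain rule is written out, the cancellations are automatic. Everything else is a short matrix manipulation.
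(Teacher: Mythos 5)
Your proposal is correct, and parts (2) and (3) follow essentially the same computation as the paper: for (2), restrict $J\mathcal{A}\mathcal{N}_0$ to $\Sigma$ where $\phi=\zeta_0$, $\phi'=1$, and use the chain rule $\pa_1\bar\eta|_\Sigma=\pa_1\eta-\pa_1\zeta_0\,\pa_2\bar\eta|_\Sigma$ to see the cancellations; for (3), your identity $\pa_t(J\mathcal{A})(J\mathcal{A})^{-1}=-R^\top$ is, after right-multiplying by $J\mathcal{A}$, exactly the paper's identity $R^\top J\mathcal{A}=-\pa_t(J\mathcal{A})$, and your manipulation via $M$ with $J\mathcal{A}=M^{-\top}$ is algebraically equivalent to the paper's decomposition $R^\top=J\pa_tK\,I-\pa_t\mathcal{A}\mathcal{A}^{-1}$. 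The only genuine difference is in part (1): the paper simply cites Lemma A.3 of \cite{GT1}, whereas you reduce the Piola identity to the single scalar condition $\pa_1J=\pa_2A$ (the $i=2$ row of $J\mathcal{A}$ being constant) and verify it directly from \eqref{eq:components} using $\phi=\phi(x_2)$, $\zeta_0=\zeta_0(x_1)$, and equality of mixed partials — a self-contained argument that is a reasonable substitute for the citation.
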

\begin{proof}
The first equality comes from Lemma A.3 in \cite{GT1}.
 On $\Sigma$,
 \begin{align*}
 J\mathcal{A}\mathcal{N}_0&=\left(
 \begin{array}{cc}
   J&-A\\
   0&1
 \end{array}\right)\left(
 \begin{array}{c}
   -\pa_1\zeta_0\\
   1
 \end{array}\right)\\
 &=\left(
 \begin{array}{c}
   -(1+\pa_2\bar{\eta}+\frac{\eta}{\zeta_0})\pa_1\zeta_0-\pa_1\eta+\pa_2\bar{\eta}\pa_1\zeta_0+\frac{1}{\zeta_0}\pa_1\zeta_0\eta\\
   1
 \end{array}\right)\\
 &=\left(
 \begin{array}{c}
   -\pa_1\zeta_0-\pa_1\eta\\
   1
 \end{array}\right)=\mathcal{N}.
 \end{align*}
 It is easily to compute that $R^\top=J\pa_tKI_{2\times2}-\pa_t\mathcal{A}\mathcal{A}^{-1}$. Since $J\mathcal{A}\mathcal{N}_0=\mathcal{N}$,
  \begin{align*}
    R^\top\mathcal{N}&=(J\pa_tK-\pa_t\mathcal{A}\mathcal{A}^{-1})J\mathcal{A}\mathcal{N}_0\\
    &=(-K\pa_tJ-\pa_t\mathcal{A}\mathcal{A}^{-1})J\mathcal{A}\mathcal{N}_0\\
    &=(-\pa_tJ\mathcal{A}-J\pa_t\mathcal{A})\mathcal{N}_0=-\pa_t(J\mathcal{A}\mathcal{N}_0)=-\pa_t\mathcal{N}.
  \end{align*}
\end{proof}

\end{appendix}


\begin{thebibliography}{50}

\bibitem{AF} R. Adams, J. Fournier. {\em Sobolev spaces}, Second edition, Pure and Applied Mathematics (Amsterdam), 140. Elsevier/Academic Press, Amsteradm, 2003.

\bibitem{ADN} S. Agmon, A. Douglis, L. Nirenberg. {\em Estimates near the boundary for solutions of elliptic partial differential equations satisfying general boundary conditions II}, Comm. Pure. Appl. Math., 17, 35--92, 1964.

\bibitem{boyer_fabrie} F. Boyer, P. Fabrie. \emph{Mathematical tools for the study of the incompressible Navier-Stokes equations and related models.} Applied Mathematical Sciences, 183. Springer, New York, 2013.

\bibitem{dG} P. de Gennes. {\em Wetting: statics and dynamics}. Rev. Mod. Phys. 57(1985), no. 3, 827--863.


\bibitem{Evans} L. C. Evans. {\em Partial differential equations}, 2nd ed. Graduate Studies in Mathematics 19, American Mathematical Society, Providence, RI, 2010.

\bibitem{Finn} R. Finn. {\em Equilibrium capillary surfaces}. Grundlehren der mathematischen Wissenschaften, 284. Springer-Verlag, New York, 1986.


\bibitem{HG} M. Had{\v{z}}i\'c, Y. Guo. {\em Stability in the Stefan problem with surface tension (I)}, Comm. PDE, 35, 201--244, 2010.

\bibitem{GT0} Y. Guo, I. Tice. {\em Almost exponential decay of periodic viscous surface waves without surface tension},
 Arch. Rational. Mech. Anal, 2, 459--531, 2013.

\bibitem{GT1} Y. Guo, I. Tice. {\em Local well-posedness of the viscous surface wave problem without surface tension},
 Anal PDE, 6, 287--369, 2013.

\bibitem{GT2} Y. Guo, I. Tice. {\em  Stability of contact lines in fluids: 2D Stokes flow}, 2016. arXiv:1603.03721v1.

\bibitem{HLP} G. H. Hardy, J. E. Littlewood, G. P\'olya. {\em Inequalities}, Cambridge Mathematical Library. Cambridge University Press, Cambridge, 1988.


\bibitem{KMR1} V. A. Kozlov, V. G. Mazya, J. Rossmann. {\em Elliptic boundary value problems in domains with point singularites}, Matematical Surveys and Monographs, 52. American Mathematical Society, Providence, RI, 1997.

\bibitem{KMR2} V. A. Kozlov, V. G. Mazya, J. Rossmann. {\em Spectral problems associated with corner singularites of solutions to elliptic equations}, Matematical Surveys and Monographs, 85. American Mathematical Society, Providence, RI, 2001.

\bibitem{LM} J. L. Lions, E. Magenes. {\em Non-Homogeneous Boundary Vaule Problems and Applications. Vol. I}. Translated from the French by P. Kenneth. Die Grundlehren der mathematischen Wissenschaften, Band 181. Springer-Verlag, New York--Heidelberg, 1972.

\bibitem{MR} V. G. Mazya, J. Rossmann. {\em Elliptic equations in polyhedral domains}, Matematical Surveys and Monographs, 162. American Mathematical Society, Providence, RI, 2010.

\bibitem{N} J. Nitsche. {\em On Korn's second inequality}, RAIRO Anal. Numr. 15(1981),3, 237--248.

\bibitem{RE} W. Ren, W. E. {\em Boundary conditions for the moving contact line problem}, Phys. Fluids 19(2007).



\bibitem{Te} R. Temam. {\em Navier-Stokes Equations. Theory and Numerical Analysis}. Reprint of the 1984 edition. AMS Chelsea Publishing, Providence, RI, 2001.

\bibitem{Tr} H. Triebel. {\em Interpolation theory, function spaces, differential operators}. Second edition. Johann Ambrosius Barth, Heidelberg, 1995.

\bibitem{Wa} A. Wazwaz. {\em Linear and nonlinear integral equations: methods and applications}. Higher Education Press, Beijing and Springer-Verlag Berlin Heidelberg, 2011.

\bibitem{Yo} T. Young. {\em An essay on the cohesion of fluids}. Philos. Trans. R. Soc. London 95(1805), 65--87.
\end{thebibliography}
\end{document}